\numberwithin{equation}{section}
\theoremstyle{plain}
\newtheorem{thm}{Theorem}[section]
\newtheorem{example}[thm]{Example}
\newtheorem{prop}[thm]{Proposition}
\newtheorem{lemma}[thm]{Lemma}
\newtheorem{cor}[thm]{Corollary}
\theoremstyle{definition}
\newtheorem{defi}{Definition}
\newtheorem{rmk}{Remark}
\DeclareSymbolFont{bbold}{U}{bbold}{m}{n}
\DeclareSymbolFontAlphabet{\mathbbold}{bbold}
\newcommand{\subalign}[1]{%
  \vcenter{%
    \Let@ \restore@math@cr \default@tag
    \baselineskip\fontdimen10 \scriptfont\tw@
    \advance\baselineskip\fontdimen12 \scriptfont\tw@
    \lineskip\thr@@\fontdimen8 \scriptfont\thr@@
    \lineskiplimit\lineskip
    \ialign{\hfil$\m@th\scriptstyle##$&$\m@th\scriptstyle{}##$\hfil\crcr
      #1\crcr
    }%
  }%
}
\DeclarePairedDelimiter{\abs}{\lvert}{\rvert} 
\DeclarePairedDelimiter{\floor}{\lfloor}{\rfloor}
\newcommand{\R}{\mathbb{R}}
\newcommand{\Z}{\mathbb{Z}}
\newcommand{\Q}{\mathbb{Q}}
\newcommand{\N}{\mathbb{N}}
\newcommand{\C}{\mathbb{C}}
\newcommand{\F}{\mathbb{F}}
\newcommand{\E}{\mathbb{E}}
\newcommand{\bbone}{\mathbbold{1}}
\newcommand{\la}{\lambda}
\newcommand{\La}{\Lambda}
\newcommand{\hLa}{\hat{\Lambda}}
\newcommand{\eps}{\epsilon}
\newcommand{\tX}{\tilde{X}}
\newcommand{\tB}{\tilde{B}}
\newcommand{\ta}{\tilde{a}}
\newcommand{\tI}{\tilde{I}}
\newcommand{\tV}{\tilde{V}}
\newcommand{\tPi}{\tilde{\Pi}}
\newcommand{\inj}{\hookrightarrow}
\newcommand{\tth}{^{th}}
\newcommand{\M}{\mathbb{M}}
\renewcommand{\L}{\Lambda}
\newcommand{\tL}{\tilde{\Lambda}}
\newcommand{\tLa}{\tilde{\Lambda}}
\newcommand{\sB}{\mathscr{B}}
\newcommand{\sS}{\mathscr{S}}
\newcommand{\tP}{\tilde{P}}
\newcommand{\tA}{\tilde{A}}
\newcommand{\cM}{\mathcal{M}}
\newcommand{\tM}{\tilde{M}}
\renewcommand{\vec}[1]{\boldsymbol{#1}}
\DeclareMathOperator{\len}{len}
\DeclareMathOperator{\Span}{span}
\DeclareMathOperator{\coker}{coker}
\DeclareMathOperator{\Sig}{Sig}
\DeclareMathOperator{\bSig}{\overline{Sig}}
\DeclareMathOperator{\Proj}{Proj}
\DeclareMathOperator{\SN}{SN}
\DeclareMathOperator{\diag}{diag}
\DeclareMathOperator{\Mat}{Mat}
\DeclareMathOperator{\val}{val}
\DeclareMathOperator{\col}{col}
\DeclareMathOperator{\corank}{corank}
\DeclareMathOperator{\rank}{rank}
\DeclareMathOperator{\Stab}{Stab}
\DeclareMathOperator{\Law}{Law}
\newcommand{\sqbinom}[2]{\begin{bmatrix}#1\\ #2\end{bmatrix}}
\newcommand{\Pois}{\mathcal{S}}
\newcommand{\bZ}{\bar{\mathbb{Z}}}
\newcommand{\cU}{\mathcal{U}}
\newcommand{\bzero}{\mathbf{0}}
\newcommand{\tOmega}{\widetilde{\Omega}}
\DeclareMathOperator{\Poiss}{Poiss}
\newcommand{\tPois}{\tilde{\Pois}}
\DeclareMathOperator{\jumps}{jumps}
\newcommand{\ou}{o_{unif}}
\newcommand{\GL}{\mathrm{GL}}
\newcommand{\cL}{\mathcal{L}}
\newcommand{\sSig}{\overline{\Sig}}
\renewcommand{\l}{\lambda}
\begin{document}

\begin{frontmatter}

\title{Reflecting Poisson walks and dynamical universality in $p$-adic random matrix theory}
\runtitle{Reflecting walks and universality in $p$-adic random matrix theory}

\begin{aug}
\author[A]{\fnms{Roger}~\snm{Van Peski}\ead[label=e1]{rvp@mit.edu}}

\address[A]{Department of Mathematics, KTH Royal Institute of Technology, Stockholm, Sweden\printead[presep={,\ }]{e1}}

\end{aug}

\begin{abstract}
We prove dynamical local limits for the singular numbers of $p$-adic random matrix products at both the bulk and edge. The limit object which we construct, the \emph{reflecting Poisson sea}, may thus be viewed as a $p$-adic analogue of line ensembles appearing in classical random matrix theory. However, in contrast to those it is a discrete space Poisson-type particle system with only local reflection interactions and no obvious determinantal structure. The limits hold for any $\mathrm{GL}_n(\mathbb{Z}_p)$-invariant matrix distributions under weak universality hypotheses, with no spatial rescaling. 
\end{abstract}

\begin{keyword}[class=MSC]
 \kwd[Primary ]{15B52}
 \kwd[; secondary ]{60F17}
 \end{keyword}

\begin{keyword}
\kwd{p-adic random matrices}
\kwd{cokernels}
\kwd{Poisson random walks}
\kwd{interacting particle systems}
\end{keyword}

\end{frontmatter}

\maketitle

\tableofcontents

\section{Introduction}

\subsection{Preface} The goal of this work is to define a new interacting particle system on $\Z$, which we call the \emph{reflecting Poisson sea}, and show that it is a universal object in random matrix theory. Concretely, it is a bi-infinite collection of Poisson random walkers $(\Pois^{\mu,2\infty}_i(T))_{i \in \Z}$ in continuous time $T \in \R_{\geq 0}$, which remain ordered (meaning $\ldots \geq \Pois^{\mu,2\infty}_i(T) \geq \Pois^{\mu,2\infty}_{i+1}(T) \geq \ldots$) for all time $T$ due to certain local reflection interactions between the walkers $\Pois^{\mu,2\infty}_i(T)$.

\begin{figure}[htbp]
\centering
\includegraphics[scale=.4]{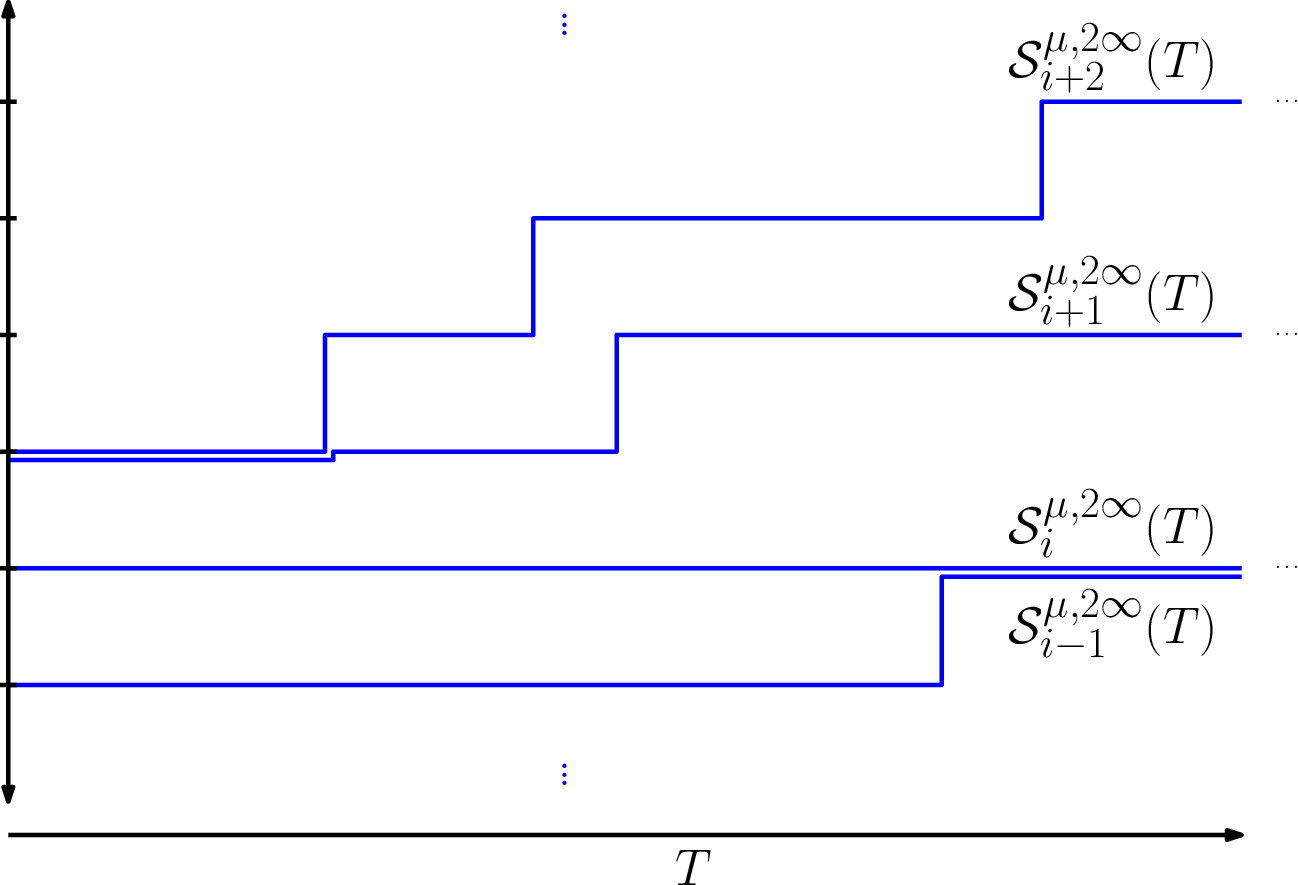}
\caption{A sample path trajectory of $(\Pois^{\mu,2\infty}_i(T))_{i \in \Z}$, where the vertical direction represents space and the horizontal direction represents time, and there are infinitely many paths above and below those pictured. When $\Pois^{\mu,2\infty}_j(T) = \Pois^{\mu,2\infty}_{j+1}(T)$ we draw the paths slightly shifted so both are visible.
}\label{fig:sea_cartoon}
\end{figure}


Other ordered collections of random walks, in both discrete and continuous space, have emerged in the last decade as universal limits for a variety of other random processes. These include discrete tiling models \cite{cohn1996local,johansson2002non,sheffield2005random}, polynuclear growth and polymer models \cite{ferrari2005one,corwin2014brownian,corwin2016kpz,dauvergne2023uniform}, and random matrices over $\R$ and $\C$. 

In the random matrix results, one considers the singular values of a complex matrix or eigenvalues of a Hermitian matrix of size $N$, which form a random ordered $N$-tuple $\la_1 \geq \ldots \geq \la_N$ of real numbers. A now-vast literature is devoted to studying various probabilistic limits of these tuples as $N \to \infty$. Introducing Markov dynamics on the appropriate spaces of matrices, one then obtains Markov processes on such tuples, equivalently collections of interacting random walks on $\R$. For instance, the eigenvalues of Brownian motion on the space of $N \times N$ Hermitian matrices are described by $N$ independent Brownian motions conditioned not to intersect for all time, the so-called Dyson Brownian motion \cite{dyson1962brownian}.

There are also natural discrete-time Markov processes. For example, one may view products of independent matrices $A_1, A_2A_1, A_3A_2A_1,\ldots$ as a Markov process $A_\tau \cdots A_1, \tau = 0,1,2,\ldots$ with the number of products $\tau$ playing the role of time, see for example \cite{bellman1954limit,furstenberg1960products,gol1989lyapunov,crisanti2012products,forrester2016singular,borodin2018product,akemann2019integrable,gorin2018gaussian,liu2018lyapunov,ahn2022extremal} and the references therein. One may obtain discrete-time stochastic processes in other ways, from sums of random matrices \cite{ahn2023airy} or successive submatrices of a larger ambient matrix \cite{baryshnikov2001gues,johansson2006eigenvalues,gorin2020universal,huang2022eigenvalues}. 

Our results concern the matrix product Markov chain, but over the $p$-adic numbers\footnote{Basic background on the $p$-adic numbers and linear algebra over them is given in \Cref{sec:p-adic}.}. To any nonsingular $p$-adic matrix $A \in \Mat_N(\Z_p)$ is associated a tuple of \emph{singular numbers}, nonnegative integers $\SN(A)_1 \geq \ldots \geq \SN(A)_N \geq 0$ for which there exist $U,V \in \GL_N(\Z_p)$ such that
\begin{equation}
A = U\diag_{N \times N}(p^{\SN(A)_1},\ldots,p^{\SN(A)_N})V.
\end{equation}
Structurally these are analogous to singular values of a complex matrix, but in the $p$-adic context they have the additional meaning of parametrizing the matrix's cokernel, a finite abelian $p$-group
\begin{equation}\label{eq:cok_intro}
\coker(A) := \Z_p^N/A\Z_p^N \cong \bigoplus_{i=1}^N \Z/p^{\SN(A)_i}\Z.
\end{equation}
For various random matrices $A$, the resulting random groups have been studied since the 1980s \cite{friedman-washington} in number theory (the so-called Cohen-Lenstra heuristics \cite{cohen-lenstra}), and more recently in connection with random graphs \cite{clancy2015cohen,wood2017distribution,meszaros2020distribution,nguyen2022local} and random simplicial complexes \cite{kahle2014topology,kahle2020cohen} as well. See the ICM notes \cite{wood2023probability} for a fuller account and bibliography.

We consider the singular numbers of products $A_1, A_2A_1, \ldots,A_\tau \cdots A_2 A_1,\ldots$ of iid matrices $A_i \in \Mat_N(\Z_p)$ as a stochastic process on the space
\begin{equation}
\Sig_N := \{(\la_1,\ldots,\la_N) \in \Z^N: \la_1 \geq \ldots \geq \la_N\}
\end{equation}
in discrete time $\tau$, as was done in \cite{van2020limits,nguyen2022universality,van2023local}. Each singular number $\SN(A_\tau \cdots A_1)_i$ is a (random) nondecreasing function of $\tau$ which begins at $\SN(I)_i = 0$, see \Cref{fig:matrix_paths}. The trajectory of this process looks similar to \Cref{fig:sea_cartoon}, except that time is discrete, there is a top and bottom path, and the upward jumps may have size greater than $1$. 

\begin{figure}[htbp]
\centering
\includegraphics[scale=.4]{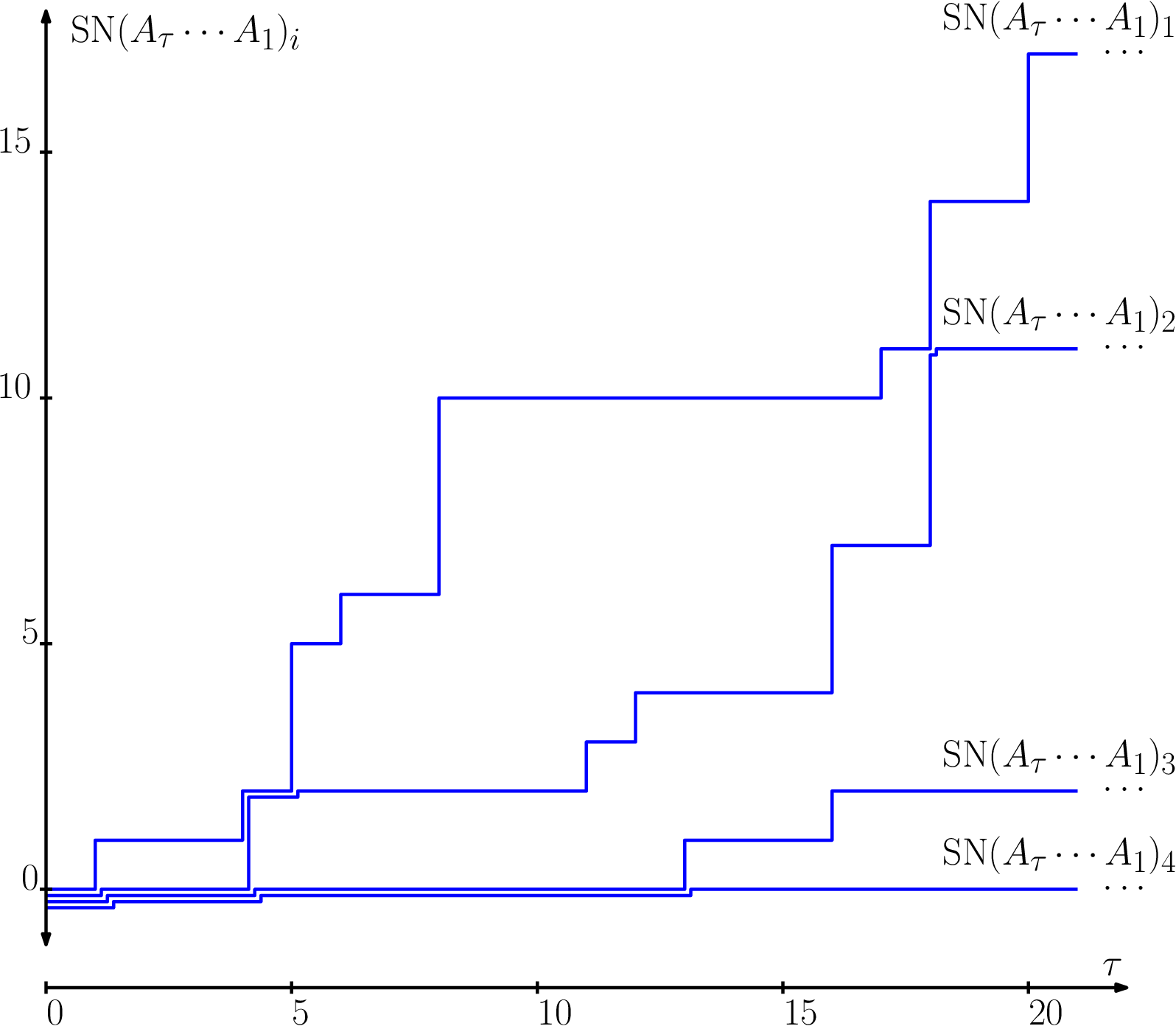}
\caption{Plot of a realization of the paths $\SN(A_\tau \cdots A_1)_i, i = 1,2,3,4$, depicted as piecewise-constant functions on $\R_{\geq 0}$, where the matrices $A_j \in \Mat_4(\Z_2)$ have iid entries distributed by the additive Haar measure on $\Z_2$ (simulated on SAGE, data as in \cite[Figure 1]{van2020limits}). As in \Cref{fig:sea_cartoon} we show equal singular numbers by paths slightly below one another.
}\label{fig:matrix_paths}
\end{figure}

For different choices of distribution on the matrices $A_j$, this stochastic process will in general behave very differently. The surprising observation which precipitated this work is that for many example distributions on the matrices $A_j$, we found that the evolution of the singular numbers $\SN(A_\tau \cdots A_1)_i$ for large $i \gg 1$ (in other words, singular numbers far away from the largest one) converged to the same nontrivial limit as the matrix size $N$ goes to $\infty$.

Slightly more precisely, given some `bulk observation points' $(r_N)_{N \geq 1}$ where $1 \ll r_N \ll N$, and iid $N \times N$ matrices $A_i$, we study the joint evolution of singular numbers
\begin{equation}
(\ldots,\SN(A_\tau \cdots A_1)_{r_N-1},\SN(A_\tau \cdots A_1)_{r_N},\SN(A_\tau \cdots A_1)_{r_N+1},\ldots)
\end{equation}
close to the $r_N\tth$ one. Our results \Cref{thm:uniform_matrix_intro} and \Cref{thm:universality_intro} are precise versions of the imprecise statement that the evolution of this tuple in discrete time $\tau$ converges, in joint distribution across multiple times $\tau$, to the reflecting Poisson sea as $N \to \infty$ and $\tau$ is scaled appropriately with $N$. These are Poisson-type limits: we do not rescale the singular numbers $\SN(A_\tau \cdots A_1)_i$ at all, but we do have to rescale the discrete time $\tau$ to a continuous parameter $T$ to arrive at a meaningful limit. A very similar statement is true for the \emph{edge limit} of the joint evolution of singular numbers $\SN(A_\tau \cdots A_1)_{N-i}, i = 0,1,\ldots$ close to the last one, with the same Poisson-type dynamics, see \Cref{thm:rmt_edge_intro}. 

This work is part of the same program as \cite{van2023local}, which established bulk local limits as above at a single time $\tau$ for certain choices of distribution on the $A_i$. The results here extend to multiple times, and also hold more universally: we establish dynamical convergence to the reflecting Poisson sea using minimal assumptions on the matrix distribution. The results of \cite{van2023local} and the earlier related work \cite{van2020limits} used techniques from integrable probability, which are powerful but apply only to particular matrix ensembles. The proof techniques we introduce here are more explicit and robust, apply more broadly, and are essentially disjoint from the ones we used in those works. Both yield different results and neither currently supersedes the other; in the cases treated in \cite{van2023local} where both apply, combining them yields stronger results which we state as \Cref{thm:uniform_matrix_intro} and \Cref{thm:haar_corner_intro} below. 

Structurally, from the perspective of the complex random matrix literature, the reflecting Poisson sea may be viewed as an analogue of the infinite Dyson process studied in \cite{katori2010non,osada2012infinite,osada2013interacting,osada2020infinite,spohn1987interacting,tsai2016infinite}. This similarly is a particle system with a bi-infinite ordered collection of particles and arises in random matrix bulk limits. The `edge' version of the reflecting Poisson sea, which governs the evolution of the smallest singular numbers in \Cref{thm:rmt_edge_intro}, may likewise be seen as a $p$-adic analogue of the Airy line ensemble of \cite{corwin2014brownian} (or, more properly, a slightly different line ensemble studied in \cite{ahn2022extremal} in the context of complex matrix products). Probabilistically, however, the reflecting Poisson sea behaves quite differently from these structural cousins. They are all examples of \emph{Gibbsian line ensembles}, and most works on them rely on determinantal point process structure, or following \cite{corwin2014brownian} on Gibbs resampling properties. Both of these come from conditioning the lines to never intersect\footnote{While the above examples of Gibbsian line ensembles are in continuous space $\R$, discrete-space examples also exist, see e.g. \cite{borodin2013markov}.}, making their influence on one another in the dynamics highly nonlocal. By contrast, the reflecting Poisson sea is an interacting particle system with only local interactions. 

From the other perspective of \eqref{eq:cok_intro} and the random groups literature, the $p$-adic matrix product process defines a growth process on finite abelian $p$-groups. From this angle, the reflecting Poisson sea defines a continuous-time growth process $\bigoplus_{i \in \Z} \Z/p^{\Pois^{\mu,2\infty}_i(T)}\Z$ on the space of infinitely-generated $p^\infty$-torsion abelian groups. \Cref{thm:uniform_matrix_intro} and our results below are then Poisson-type limit theorems for this process, showing that certain discrete-time stochastic processes on abelian $p$-groups coming from matrix products converge to it. We note that for a fixed number $\tau$ of matrix products, the $N \to \infty$ limit process on cokernels was studied and given a group-theoretic interpretation in \cite{nguyen2022universality}, and we expect the reflecting Poisson sea to describe the further $\tau \to \infty$ limit of that process. It will be interesting to see if a more group-theoretic description or interpretation of the reflecting Poisson sea itself can be found.

Let us now describe these results in more detail.

\subsection{The reflecting Poisson sea} \label{subsec:sea} It is helpful to first speak of the finite version $\Pois^{\nu,n}(T) = (\Pois^{\nu,n}_1(T),\ldots,\Pois^{\nu,n}_n(T))$. This is a collection of $n$ Poisson random walks on $\Z$, started at integer positions $\nu_1 \geq \nu_2 \geq \ldots \geq \nu_n$ at time $T=0$, which interact as follows (see \Cref{fig:finite_poisson_walkers}):
\begin{enumerate}[label=(\Roman*)]
\item Each walk $\Pois^{\nu,n}_i(T)$ has an independent exponential clock of rate $t^i$, where $t \in (0,1)$ is a fixed parameter not to be confused with time.
\item When the clock belonging to $\Pois^{\nu,n}_i$ rings at some time $T_0$, it increases by $1$, \emph{unless} $\Pois^{\nu,n}_i(T_0) = \Pois^{\nu,n}_{i-1}(T_0)$ at that time.
\item In the latter case, the `next available walker' takes the jump instead: when one has $\Pois^{\nu,n}_i(T_0) = \ldots = \Pois^{\nu,n}_{i-d}(T_0) < \Pois^{\nu,n}_{i-d-1}(T_0)$ for some $d$ (formally taking $\Pois^{\nu,n}_0 = \infty$), the walker $\Pois^{\nu,n}_{i-d}$ will instead jump at $T_0$.\label{item:interact}
\end{enumerate}

\begin{figure}[htbp]
\centering
\includegraphics[scale=.5]{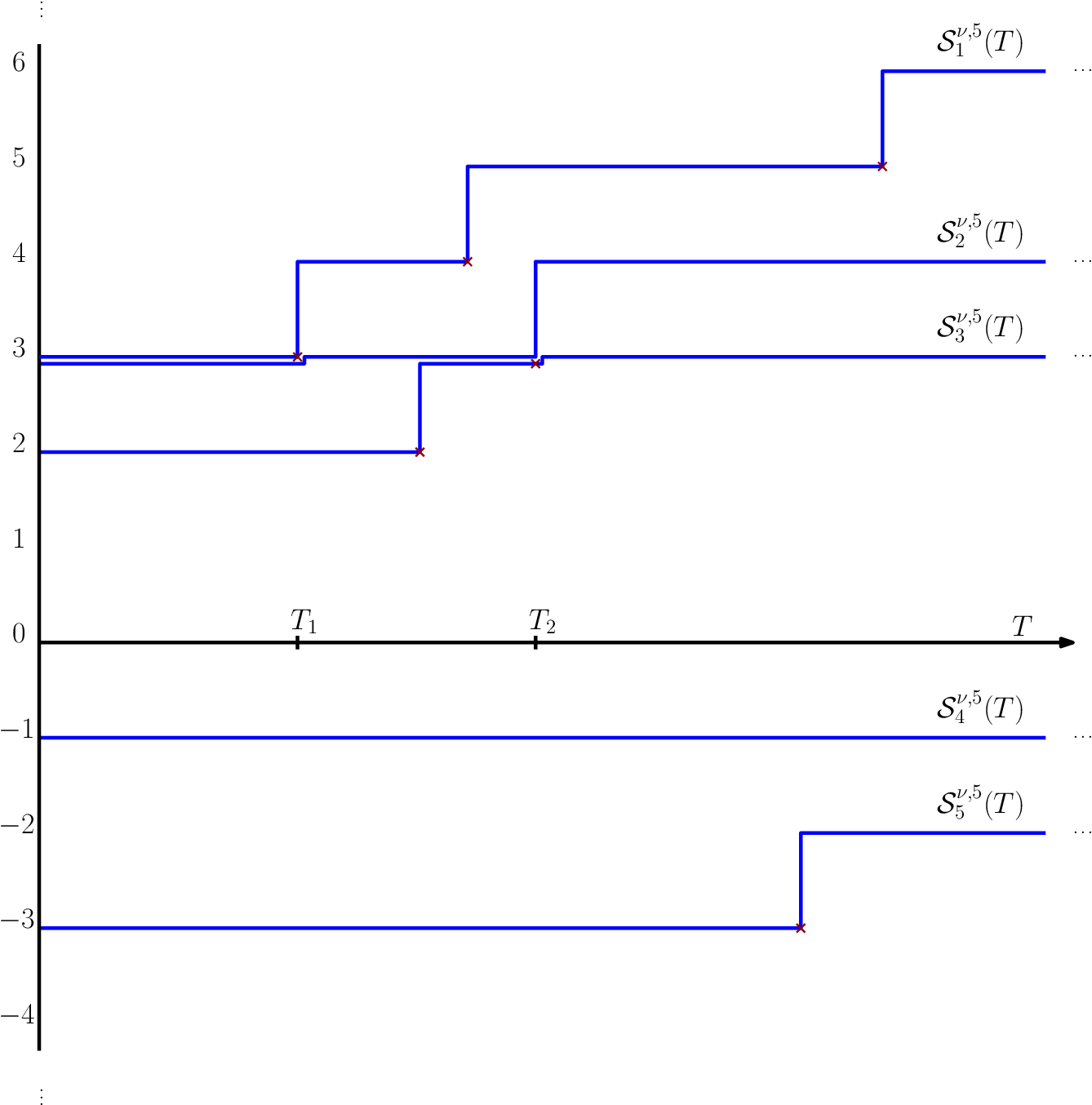}
\caption{A sample trajectory of $\Pois^{\nu,n}(T)$ as $T \geq 0$ varies, for $n=5$ and $\nu = (3,3,2,-1,-3)$. We indicate when a path's clock rings by a cross on the path, and draw two paths at the same position slightly below one another. The top two paths $\Pois^{\nu,n}_1$ and $\Pois^{\nu,n}_2$ both begin at position $3$ and remain there until $\Pois^{\nu,n}_1$'s clock rings at time $T_1 > 0$; later, $\Pois^{\nu,n}_3$ jumps to position $3$ and then has its clock ring again at time $T_2 > T_1$, but because it is blocked by $\Pois^{\nu,n}_2$, the latter path jumps instead by condition \ref{item:interact}.
}\label{fig:finite_poisson_walkers}
\end{figure}

The process $\Pois^{\nu,n}(T) = (\Pois^{\nu,n}_1(T),\ldots,\Pois^{\nu,n}_n(T))$ may equivalently be viewed as a vector of independent Poisson walks which reflects off the walls of the positive type $A$ Weyl chamber $\{(x_1,\ldots,x_n) \in \R^n: x_1 \geq \ldots \geq x_n\}$. The reflecting Poisson sea is a version of this process with a bi-infinite collection of paths, hence the name. While it is not immediate how to define the bi-infinite version, it is however clear how to take $n=\infty$ and define a process $\Pois^{\nu,\infty}(T) = (\Pois^{\nu,\infty}_1(T),\Pois^{\nu,\infty}_2(T),\ldots)$ started at $\nu = (\nu_1,\nu_2,\ldots)$ with a top path and infinitely many paths below it, having jump rates $t,t^2,\ldots$. The sum of the jump rates is still finite and the reflection interactions \ref{item:interact} still make sense because there is always a `next available walker' at any location. This process shares with $\Pois^{\nu,n}$ the natural \emph{Markovian projection property} that for any $d \in \Z$, the truncated process 
\begin{equation}
F_d(\Pois^{\nu,n}(T)) := (\min(d,\Pois^{\nu,n}_1(T)),\ldots,\min(d,\Pois^{\nu,n}_n(T)))
\end{equation} 
is also a Markov process, because the paths at positions $\geq d$ do not influence the lower ones. 

Heuristically, $\Pois^{\mu,2\infty}(T)$ is the same process, but with a $\Z$-indexed collection of paths living in state space
\begin{equation}
\Sig_{2\infty} := \{(\la_n)_{n \in \Z} \in \Z^\Z: \la_{n+1} \leq \la_{n} \text{ for all }n \in \Z\},
\end{equation} 
started at initial condition $\mu = (\mu_i)_{i \in \Z} \in \Sig_{2\infty}$. The difficulty appears when making sense of the above dynamics for initial conditions such as $(\ldots,0,0,\ldots) \in \Sig_{2\infty}$ which have no top path at a given location, but by suitably doing so we show the following.

\begin{thm}\label{thm:sea_intro}
For any $\mu \in \Sig_{2\infty}$, there exists a Markov process $\Pois^{\mu,2\infty}(T), T \in \R_{\geq 0}$ on $\Sig_{2\infty}$ with initial condition $\Pois^{\mu,2\infty}(0) = \mu$, which satisfies the following Markovian projection property. For any $d \in \Z$, the process $F_d(\Pois^{\mu,2\infty}(T)) := (\min(d,\Pois^{\mu,2\infty}_i(T)))_{i \in \Z}$ is Markov. Furthermore, if $\mu_k \geq d$ for some $k$, then the process $(\min(d,\Pois^{\mu,2\infty}_{k+i}(T))_{i \in \Z_{\geq 1}}$, obtained by throwing away the coordinates which are equal to $d$ for all time from $F_d(\Pois^{\mu,2\infty}(T))$, is equal in multi-time distribution to $F_d(\Pois^{(\mu_{k+1},\mu_{k+2},\ldots),\infty}(t^k T))$. 
\end{thm}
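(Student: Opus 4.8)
The plan is to construct $\Pois^{\mu,2\infty}(T)$ as a projective limit of the finite processes $\Pois^{\nu,n}(T)$ already defined, using the Markovian projection property as the compatibility mechanism. First I would fix a large integer $d$ and consider the truncated state space $\Sig_{2\infty}^{\leq d} := \{\la \in \Sig_{2\infty} : \la_i \leq d \text{ for all } i\}$. For an initial condition $\mu$ with $\mu_k \geq d$ for some $k$, the heuristic dynamics only ever moves the coordinates strictly below level $d$, and there are only finitely many such coordinates that are not already at their ``ceiling'' in any bounded time window with positive probability — more precisely, the coordinate $\Pois_{k+i}$ has rate $t^{k+i}$, so the expected total number of jumps of coordinates below index $k+M$ is summable and tends to $0$ as $M \to \infty$. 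This lets me define $F_d(\Pois^{\mu,2\infty}(T))$ directly as (the image under $\min(d,\cdot)$ of) the finite/semi-infinite process $F_d(\Pois^{(\mu_{k+1},\mu_{k+2},\ldots),\infty}(t^k T))$, with the time change $t^k T$ forced by the rate normalization (coordinate $k+i$ of the $2\infty$ process has rate $t^{k+i}$, which after relabeling $j = i$ and factoring out $t^k$ becomes rate $t^{j}$, matching $\Pois^{\cdot,\infty}$). I would check this is well-defined independent of the choice of $k$ with $\mu_k \geq d$ using the Markovian projection property of $\Pois^{\cdot,\infty}$ stated in the text: truncating at level $d$ kills exactly the extra top coordinates $\mu_{k'+1},\ldots,\mu_k$ when $k' < k$, and those sit at positions $\geq d$ so they do not influence the lower ones.

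Next I would establish consistency in $d$: for $d' < d$, applying $\min(d',\cdot)$ to $F_d(\Pois^{\mu,2\infty}(T))$ should yield $F_{d'}(\Pois^{\mu,2\infty}(T))$, again as an identity of Markov processes, which reduces to the nested Markovian projection property $F_{d'} \circ F_d = F_{d'}$ for the semi-infinite process. Since $\Sig_{2\infty} = \varprojlim_d \Sig_{2\infty}^{\leq d}$ as a measurable space (every $\la \in \Sig_{2\infty}$ is determined by its truncations, and the coordinates are eventually below any fixed level... here one uses that a decreasing integer sequence indexed by $\Z$ need not be bounded above, so one must be slightly careful — the correct statement is that for the processes started from $\mu \in \Sig_{2\infty}$ we track $F_d$ for all $d$ and reconstruct the path), I would use the Kolmogorov extension theorem over the directed set of levels $d \in \Z$ to produce a process on $\Sig_{2\infty}$ whose level-$d$ truncation is the prescribed $F_d(\Pois^{\mu,2\infty}(T))$ for every $d$. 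The Markov property of the limit process follows from the Markov property at each finite level together with the fact that the full state is recovered from the compatible family of truncations; the stated Markovian projection property and the semi-infinite identification are then true essentially by construction.

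The main obstacle I expect is the well-definedness and continuity of the construction near initial conditions with ``no top path'' at a given location, e.g. $\mu = (\ldots,0,0,\ldots)$: here no $k$ has $\mu_k \geq d$ for $d > 0$, so the clean reduction to a semi-infinite process fails at high levels, and one must argue that the level-$d$ truncation is still a well-defined (and conservative, non-explosive) Markov process. The natural fix is a further approximation: truncate the \emph{index} set as well, run $\Pois^{\mu^{(m)},n_m}$ for suitable finite restrictions $\mu^{(m)}$ of $\mu$, and show the level-$d$ truncations converge as $m\to\infty$; non-explosion comes from the geometric decay of rates $t^i$, which makes the total jump rate of all coordinates at positions $< d$ finite and in fact small when the relevant index is large. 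One then checks the resulting family is still consistent in $d$ and applies Kolmogorov as before. A secondary technical point is verifying the multi-time (not just one-time-marginal) distributional identity in the last sentence of the theorem; this follows from the corresponding multi-time Markovian projection property of $\Pois^{\cdot,\infty}$, which itself is immediate from the generator-level observation that coordinates at positions $\geq d$ evolve autonomously, so no genuinely new work is needed there beyond bookkeeping with the time change $t^k T$.
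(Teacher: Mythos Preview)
Your approach differs from the paper's in a meaningful way. You propose to build $\Pois^{\mu,2\infty}$ by Kolmogorov extension over the family of level-$d$ truncations $F_d$, defining each $F_d(\Pois^{\mu,2\infty})$ via the semi-infinite identification and then checking consistency in $d$. The paper instead constructs the process \emph{pointwise} on a single explicit probability space $\Omega = \prod_{i\in\Z}\R_{\geq 0}^{\N}$ of clock-ring histories: it couples the half-infinite approximations $\tPois^{\pi_{[-n,\infty)}(\mu),n}(T)$ on this space and proves that for each $\omega$ in a full-measure set, each coordinate $\tPois_i^{\cdot,n}(T)(\omega)$ is \emph{monotone decreasing in $n$} and bounded below, hence converges. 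The limit is then $\Pois^{\mu,2\infty}(T)(\omega)$. This monotonicity lemma---that adding an extra leading particle can only block the others, never advance them---is the one nontrivial combinatorial ingredient, and the Markov property falls out immediately from the clock structure.

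In the regime $\lim_{n\to\infty}\mu_{-n}=\infty$ your projective-limit approach would work and is arguably cleaner, since every $F_d$ is then directly a reindexed semi-infinite process. But for general $\mu\in\Sig_{2\infty}$ (in particular $\mu=(0)_{i\in\Z}$, which the theorem covers) your argument has a real gap. You correctly flag that no $k$ satisfies $\mu_k\geq d$ for large $d$, and you propose to approximate by restricting the index set and passing to a limit; however, you never say what forces that limit to exist. Non-explosion of each approximant (via summability of rates) is not the issue---the issue is that as you add more and more fast particles on the left, you must show their cumulative effect on any fixed coordinate stabilizes. The paper's monotonicity lemma is exactly this mechanism, and it comes with a warning you should heed: the monotonicity holds for the \emph{half-infinite} approximations but \emph{fails} for finite ones $\Pois^{\pi_{[-n,n]}(\mu),2n+1}$, because those do not account for pushing by particles with higher indices. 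Your phrase ``finite restrictions $\mu^{(m)}$'' suggests the latter, which would not work. To close the gap you would need to identify and prove the monotonicity statement (or an equivalent tightness/stability estimate), at which point you have essentially reproduced the paper's construction.
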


The Markovian projection to $F_d(\Pois^{(\mu_{k+1},\mu_{k+2},\ldots),\infty}(t^k T))$ is extremely useful for proving convergence to the reflecting Poisson sea, as one may prove convergence of projections $F_d$ for each $d$, and these are easier objects to understand. Our proofs rely heavily on this.

\begin{rmk}
The Airy line ensemble \cite{corwin2014brownian}, bulk sine process \cite{tsai2016infinite}, the line ensemble of \cite{ahn2022extremal}, and the Bessel line ensemble \cite{wu2023bessel} were all constructed using certain continuous-time processes given by eigenvalues or singular values of Brownian motions on certain spaces of matrices. We similarly construct the reflecting Poisson sea from the continuous-time jump process $\Pois^{\nu,n}(T)$, which may be viewed as a $p$-adic analogue of these processes---see \cite{van2023p}---though in contrast to them it has only local interactions.
\end{rmk}

\begin{rmk}
The dynamics of $\Pois^{\nu,n}(T)$ and the reflecting Poisson sea are similar to some interacting particle systems in the literature, but not the same\footnote{Though we note that the $t \to 1$ asymptotics of $\Pois^{\nu,\infty}(T)$ were treated from an interacting particle system perspective in \cite{van2022q}.}. One of these is the particle system PushTASEP of \cite{borodin2008large}, which similarly features particles with independent clocks which can push one another. There, however, the particle whose clock rings always jumps (possibly also pushing others), while in our case the particle whose clock rings may donate its jump by \ref{item:interact} while not itself jumping. 

Another similar interacting particle system in the literature is the totally-asymmetric zero-range process (TAZRP) and other zero-range processes, see e.g. \cite{kipnis1998scaling} for a textbook treatment. The dynamics described above would be the same as TAZRP if our jump rates were identically $1$ rather than powers of $t$, but the fact that different paths/particles have different jump rates means that our process is not a zero-range process.

\end{rmk}

\subsection{First limit theorem} \label{subsec:limits} The first random matrix result we will state concerns matrices with iid entries coming from the additive Haar probability measure on $\Z_p$. This is a very natural measure, studied in the first work \cite{friedman-washington} on cokernels of $p$-adic random matrices. In particular, taking each entry modulo $p^k$ yields the uniform measure on the finite set $\Mat_N(\Z/p^k\Z)$.

Our limits will take place in the state space of \emph{extended} bi-infinite integer signatures
\begin{equation}
\bSig_{2\infty} := \{(\mu_n)_{n \in \Z} \in (\Z \cup \{\pm \infty\})^\Z: \mu_{n+1} \leq \mu_{n} \text{ for all }n \in \Z\}
\end{equation}
where we allow entries $\pm \infty$; it will soon be clear why $\pm \infty$ entries are technically convenient. We wish to speak of limits of the $N$-tuple of singular numbers $\SN(A) = (\SN(A)_1,\ldots,\SN(A)_N)$, which are integer signatures\footnote{Later we allow matrices which are not full rank by appropriately modifying the space of singular numbers, but we ignore this for now and consider only nonsingular matrices.} living in
\begin{equation}
\Sig_N := \{(\la_1,\ldots,\la_N) \in \Z^N: \la_1 \geq \ldots \geq \la_N\}.
\end{equation}
It is desirable to embed such signatures into $\bSig_{2\infty}$ so limits take place there, which we do by the map $\iota: \Sig_N \to \Sig_{2\infty}$ defined by
\begin{equation}
\iota((\la_1,\ldots,\la_N))_i := \begin{cases}
\infty & i \leq 0 \\ 
\la_i & 1 \leq i \leq N \\ 
-\infty & i > N
\end{cases}
\end{equation}
Because we wish to speak of limits of singular numbers $\SN(A_\tau \cdots A_1)_i$ for $i$ close to an observation point $r_N$, we define the shift map
\begin{align}
\begin{split}
s: \sSig_{2\infty} &\to \sSig_{2\infty} \\
(\mu_n)_{n \in \Z} &\mapsto (\mu_{n+1})_{n \in \Z}
\end{split}
\end{align}
Then $ s^{r_N} \circ \iota(\SN(A^{(N)}_{\tau} \cdots A^{(N)}_1)), \tau = 0,1,2,\ldots$ defines a discrete-time stochastic process on the subset of $\bSig_{2\infty}$ which only has nontrivial parts at indices between $1-r_N$ and $N-r_N$, and these endpoints go to $-\infty$ and $+\infty$ respectively under our hypotheses. There is substantial freedom in the result below regarding how the observation points $r_N$ go to $\infty$, though the number of matrix products ($\propto p^{r_N}$) we must take there to see a nontrivial limit depends on this choice.

\begin{thm}\label{thm:uniform_matrix_intro}
Fix $p$ prime. For each $N \in \Z_{\geq 1}$, let $A^{(N)}_i, i \geq 1$ be iid matrices with iid entries distributed by the additive Haar probability measure on $\Z_p$, and let $(r_N)_{N \geq 1}$ be any integer sequence such that $r_N \to \infty$ and $N-r_N \to \infty$ as $N \to \infty$. Define 
\begin{equation}
\Lambda^{(N)}(T) := s^{r_N} \circ \iota(\SN(A^{(N)}_{\floor{p^{r_N} T}} \cdots A^{(N)}_1)),T \in \R_{\geq 0}
\end{equation}
Then
\begin{equation}\label{eq:la_to_pois_intro_uniform}
\La^{(N)}(T) \to \Pois^{\mu,2\infty}(T)
\end{equation}
in finite-dimensional distribution\footnote{When we speak of convergence in distribution on $\bSig_{2\infty}$ here and elsewhere, we mean with respect to the topology defined in \Cref{subsec:measure_conv}, which is that of convergence in joint distribution of all finite collections of coordinates. We have made no effort to consider other notions of convergence than finite-dimensional distributions, though given that the paths are on $\Z$ and nondecreasing we suspect this can be upgraded if desired.}, where the parameter $t$ of $\Pois^{\mu,2\infty}$ is set to $1/p$ and $\mu = \bzero := (0)_{i \in \Z}$.
\end{thm}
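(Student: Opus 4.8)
The plan is to reduce the bi-infinite convergence to a family of finite-range convergences indexed by a truncation level $d\ge 1$, prove each of those, and assemble them; the two ingredients are a single-time marginal, which for iid-Haar-entry matrices one imports from \cite{van2023local}, and a dynamical (coupling) argument that propagates convergence across several times. Since the topology on $\bSig_{2\infty}$ is joint convergence of all finite coordinate tuples, \eqref{eq:la_to_pois_intro_uniform} is equivalent to the statement that for every finite $S\subset\Z$ and all $0\le T_1<\cdots<T_m$, the law of $(\La^{(N)}_i(T_j))_{i\in S,\,j\le m}$ converges to that of $(\Pois^{\bzero,2\infty}_i(T_j))_{i\in S,\,j\le m}$.

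\textbf{The one-time marginal and reduction to truncations.} First I would dispatch the case $m=1$: the convergence $\La^{(N)}(T)\to\Pois^{\bzero,2\infty}(T)$ at a single fixed $T$ is precisely the one-time bulk local limit, available from \cite{van2023local} for this ensemble (alternatively, re-derived from the Hall--Littlewood-measure description of $\SN(A_\tau\cdots A_1)$ and its asymptotics). Because $\Pois^{\bzero,2\infty}_i(T)$ is a.s.\ a finite integer, this already gives the tightness bound $\sup_N\mathbb{P}\!\big(\La^{(N)}_i(T)\ge d\big)\to 0$ as $d\to\infty$ for each fixed $i$ and $T$; consequently, the full statement will follow once one shows, for each fixed $d$, that $F_d(\La^{(N)}(\cdot))\to F_d(\Pois^{\bzero,2\infty}(\cdot))$ in finite-dimensional distribution.

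\textbf{The truncated problem.} So fix $d$. The crucial observation is that $F_d\!\big(s^{r_N}\circ\iota(\SN(A_\tau\cdots A_1))\big)$ is a deterministic function of the reductions $\bar A_i:=A_i\bmod p^d$: it records the Smith normal form over $\Z/p^d\Z$ of $\bar A_\tau\cdots\bar A_1$, together with the $\{\pm\infty\}$ padding, and the $\bar A_i$ are iid uniform on $\Mat_N(\Z/p^d\Z)$. Thus $F_d(\La^{(N)}(\cdot))$ is a time-rescaled, shifted image of an explicit finite-state Markov chain on Smith normal forms over $\Z/p^d\Z$, whose one-step kernel is computable by counting surjections of finite abelian $p$-groups. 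I would then: (i) estimate its rescaled dynamics — from a configuration whose $(r_N+i)$-th singular number is next in line to increment, one more product makes it jump with probability $\asymp p^{-(r_N+i)}$, so after speeding up by $p^{r_N}$ products per unit time the rate converges to $t^{i}=p^{-i}$, the prefactors matching the ``pile'' rates $t^i/(1-t)$ implicit in \eqref{item:interact}; (ii) identify the limiting dynamics via the Markovian projection property of \Cref{thm:sea_intro} — after approximating the degenerate configuration $\bzero$ by configurations with a genuine top path, that property realizes $F_d(\Pois^{\bzero,2\infty})$ through deep coordinates, run for long times, of the one-sided all-zero process $F_d(\Pois^{(0,0,\dots),\infty})$, whose rates are in turn explicit as the $n\to\infty$ limit of the $\Pois^{\nu,n}$, and these match (i); (iii) build an explicit coupling of the jump chain for $F_d(\La^{(N)}(\cdot))$ with that of $F_d(\Pois^{\bzero,2\infty}(\cdot))$, using the matched rates and the Markov property to extend from one time level to the next (this also requires the $m=1$ convergence from a general bulk initial configuration $x^{(N)}\to x$, which the same rate/coupling estimates supply).

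\textbf{Main obstacle.} The step I expect to be the main obstacle is the dynamical part, (iii) and the general-initial-condition input it rests on: one must show that the $\Z/p^d\Z$ matrix-product chain, started from an arbitrary bulk configuration, relaxes in the manner prescribed by the reflecting Poisson sea uniformly in $N$, and the difficulty is concentrated near degenerate configurations such as $\bzero$ itself, where there is no top path — so that the ``next available walker'' bookkeeping of \eqref{item:interact} and the long burn-in before the singular numbers near index $r_N$ become nontrivial (after $\Theta(p^{r_N})$ products) have to be controlled by hand. Since the $p$-adic setting offers no determinantal or Gibbs-resampling structure, this has to be carried out with explicit rate estimates and monotone couplings, and it is precisely to localize those arguments that one works with the truncations $F_d$, via the Markovian projection of \Cref{thm:sea_intro}, rather than with the full bi-infinite process.
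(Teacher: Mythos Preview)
Your proposal is essentially the paper's approach: import the single-time bulk limit from \cite{van2023local}, reduce to truncations $F_d$, and propagate forward in time using generator/rate estimates and the Markov property. The paper packages the rate estimates as convergence of the rescaled transition matrix to the explicit generator $Q$ of \Cref{thm:compute_Q} (your step (i)), and the propagation step is exactly \Cref{thm:gen_limit_F_d} applied with $B^{(N)}=A^{(N)}_{\lfloor p^{r_N}T_1\rfloor}\cdots A^{(N)}_1$ as initial condition.

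One clarification regarding your ``main obstacle'': the degenerate $\bzero$ initial condition does \emph{not} need to be handled in the dynamical step at all, and your proposed route through step (ii) does not quite work as written. The Markovian projection of \Cref{thm:sea_intro} only identifies $F_d(\Pois^{\mu,2\infty})$ with a tractable half-infinite process when $\mu$ already has a part $\geq d$, which $\bzero$ does not for $d\ge 1$; correspondingly, the generator description of \Cref{thm:compute_Q} (and hence any generator-comparison or coupling argument) only applies under that hypothesis. The paper's resolution---and yours, implicitly, once you parse your own (iii) correctly---is that after conditioning on $(\La^{(N)}(T_1)'_j)_{1\le j\le d}=\eta\in\Sig_d$ for $T_1>0$, the truncated state \emph{does} have a top path (at index $\eta_d$), so the generator comparison applies from $T_1$ onward. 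The ``burn-in'' you worry about is therefore entirely absorbed into the single-time input from \cite{van2023local}; no monotone coupling near $\bzero$ and no control of the chain started from an ``arbitrary'' configuration including $\bzero$ is needed. In short, your two-step bootstrap is correct, but the obstacle you flag dissolves once you recognize that the conditioned state at any $T_1>0$ automatically satisfies the top-path hypothesis the dynamical machinery requires.
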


It is worth noting that in the special case $\mu = \bzero$ of this theorem, the process $\Pois^{\mu,2\infty}(T)$ enjoys additional properties. One is the shift-stationarity property
\begin{equation}\label{eq:shift_stationarity_intro}
(\Pois^{\bzero,2\infty}_{i-1}(t^{-1}T))_{i \in \Z} = \Pois^{\bzero,2\infty}(T) \quad \quad \quad \quad \text{in multi-time distribution.}
\end{equation}
We prove this in \Cref{thm:time_symmetry}, but it is not hard to see heuristically: the jump rates of each path are in geometric progression, and shifting the indices while rescaling time leaves this geometric progression invariant. In addition, there are explicit formulas for its fixed-time marginals coming from \cite{van2023local} and given in \Cref{thm:sea_cL}, such as the following. 

\begin{example}\label{ex:explicit_formula}
When $\mu = (0)_{i \in \Z}$, the quantity $X:= \max\{i: \Pois^{\bzero,2\infty}_i(T) > 0\}$---the index of the lowest path at a position $>0$ at time $T$---has law given by
\begin{equation}
\Pr(X = n) = \frac{1}{\prod_{i \geq 1} (1-t^i)} \sum_{m \geq 0} e^{-\frac{T}{1-t} t^{n-m+1}} \frac{(-1)^m t^{\binom{m}{2}}}{\prod_{i=1}^m (1-t^i)}
\end{equation}
for any $n \in \Z$. At the level of this and other one-point marginals, the shift-stationarity is readily apparent, and was noted in \cite[(1.20)]{van2023local}. 
\end{example}
In particular this formula shows that indeed $\Pois^{\bzero,2\infty}$ is not the constant $\bzero$.

\subsection{Universal bulk limit} Another natural probability measure on $\Mat_N(\Z_p)$ is the Haar probability measure on $\GL_N(\Z_p)$. The singular numbers of this measure are all $0$, but if one instead considers an $N \times N$ corner of a Haar-distributed element of $\GL_{N+D}(\Z_p)$, the singular numbers are nontrivial, and their distribution is different from the above iid Haar case. We prove in \Cref{thm:haar_corner_intro} later that essentially the same limit as in \Cref{thm:uniform_matrix_intro} holds also for these ensembles, after suitably adjusting the time-change $p^{r_N}$. 

Both the additive Haar measure of \Cref{thm:uniform_matrix_intro} and the measure of \Cref{thm:haar_corner_intro} have the very useful property that they are invariant under multiplication by $\GL_N(\Z_p)$. Our next result shows that very little beyond this property is necessary to obtain bulk convergence to the reflecting Poisson sea in a dynamical sense.

\begin{thm}\label{thm:universality_intro}
Let $t=1/p$ and $\mu \in \bSig_{2 \infty}$ be any signature with all parts nonnegative and $\mu_{-n} \rightarrow \infty$ as $n \rightarrow \infty$. For each $N \in \mathbb{N}$, let $A^{(N)}$ be a random matrix in $\operatorname{Mat}_{N}\left(\mathbb{Z}_{p}\right)$ with distribution invariant under left-multiplication by $\GL_N(\Z_p)$, and let $r_{N}$ be a `bulk observation point,' such that
\begin{enumerate}[label=(\roman*)]
\item The singular numbers are nontrivial: $\operatorname{Pr}\left(A^{(N)} \in \mathrm{GL}_{N}\left(\mathbb{Z}_{p}\right)\right)<1$ for every $N$, \label{item:univ_intro_i}
\item $r_{N} \rightarrow \infty$ and $N-r_{N} \rightarrow \infty$ as $N \rightarrow \infty$, and \label{item:univ_intro_ii}
\item The coranks $X_{N}:=\operatorname{corank}\left(A^{(N)}\pmod{p}\right)$ have exponential tails decaying faster than powers of $t$, i.e., there exists $C>0$ such that \label{item:univ_intro_iii}
\begin{equation}\label{eq:X_N_hyp_intro}
\operatorname{Pr}\left(X_{N} \geq k | X_{N}>0\right) < C t^{1.0001 k} \quad \quad \text{ for all $k>0$.}
\end{equation}
\end{enumerate}
Let $A_i^{(N)},i \geq 1$ be iid copies of $A^{(N)}$, and let $B^{(N)} \in \operatorname{Mat}_{N}\left(\mathbb{Z}_{p}\right), N \geq 1$ be left-$\mathrm{GL}_{N}\left(\mathbb{Z}_{p}\right)$-invariant `initial condition' matrices with fixed singular numbers prescribed by $\mu$ via
\begin{equation}
\mathrm{SN}\left(B^{(N)}\right)_{i}=\mu_{i-r_{N}}
\end{equation}
for all $1 \leq i \leq N$, and define the matrix product process with initial condition
\begin{equation}\label{eq:def_pi_n_intro}
\Pi^{(N)}(\tau):=\operatorname{SN}\left(A_{\tau}^{(N)} \cdots A_{1}^{(N)} B^{(N)}\right), \tau \in \mathbb{Z}_{\geq 0}.
\end{equation}
Define the time-scaling
\begin{equation}\label{eq:def_cN_intro}
c_{N}:=\frac{t^{-r_{N}}}{\mathbb{E}\left[\bbone(X_{N} \leq r_{N})\left(t^{-X_{N}}-1\right)\right]},
\end{equation}
and let
\begin{equation}
    \Lambda^{(N)}(T) := s^{r_N} \circ \iota(\Pi^{(N)}(\floor{c_N T})),T \in \R_{\geq 0}.
\end{equation}
Then we have convergence
    \begin{equation}\label{eq:intro_bulk_conv}
        \Lambda^{(N)}(T) \xrightarrow{N \to \infty} \Pois^{\mu,2\infty}(T)
    \end{equation}
    in finite-dimensional distribution.
\end{thm} 

The conditions of \Cref{thm:universality_intro} are easy to check in explicit cases such as iid additive Haar matrices and Haar $\GL_{N+D}(\Z_p)$ corners, as we do in \Cref{sec:deducing}. However, they apply to many others, including singular matrices. Matrices $A_i^{(N)}$ with first column $0$ and all other entries distributed iid from the additive Haar measure, for instance, are perfectly valid. For singular matrices, Smith normal form still gives a decomposition $A=U \diag_{N \times N}(p^{\SN(A)_1},\ldots,p^{\SN(A)_N})V$ where the $\SN(A)_i$ are allowed to be equal to $+\infty$, and one takes $p^\infty = 0$ since $p^n \to 0$ in the $p$-adic norm as $n\to\infty$. The initial condition $\mu$ may similarly have entries equal to $\infty$, making $B^{(N)}$ singular. One may also simply take matrices with deterministic singular numbers, $A_i^{(N)} = U^{(i)}_N D_N$ where $U^{(i)}_N \in \GL_N(\Z_p)$ are iid Haar distributed and $D_N$ is some deterministic matrix with $\corank(D_N \pmod{p})$ satisfying the tail decay condition.

The only reason \Cref{thm:uniform_matrix_intro} is not a special case of \Cref{thm:universality_intro} is the condition $\mu_{-n} \rightarrow \infty$ as $n \rightarrow \infty$, which precludes initial conditions such as the zero signature (in which case the law of $B^{(N)}$ is the Haar measure on the group $\GL_N(\Z_p)$). However, \Cref{thm:universality_intro} does still apply to singular numbers of $A^{(N)}_\tau \cdots A^{(N)}_1$ with no initial condition $B^{(N)}$, in the following sense: one may condition on the matrix $B^{(N)} := A^{(N)}_{s} \cdots A^{(N)}_1$, and apply the theorem to $A^{(N)}_{s+\tau} \cdots A^{(N)}_{s+1}B^{(N)}$. This is why we refer to it as dynamical universality: it shows universality of the dynamics, but not of the fixed-time distribution.

In the additive Haar case of \Cref{thm:uniform_matrix_intro}, we nonetheless were able to prove convergence without any initial condition $B^{(N)}$. This is because the `single-time' bulk limit of $A^{(N)}_{s} \cdots A^{(N)}_1$ is known by integrable probability methods \cite{van2023local}, and the above idea lets us bootstrap to multiple times using \Cref{thm:universality_intro}. We believe that the general version \Cref{thm:universality_intro} is true without the restriction $\mu_{-n} \rightarrow \infty$, but establishing the necessary single-time input in this generality seems quite nontrivial.




\subsection{The edge} In classical random matrix theory, local limits of eigenvalues or singular values far away from the smallest and largest one are usually referred to as \emph{bulk limits}. The \emph{edge limits} close to the smallest or largest are different objects, with different scalings in the limit theorems. At the level convergence of line ensembles for matrix products, a limit of this type for complex matrix products was shown recently in \cite{ahn2022extremal}. In our setting, we find in \Cref{thm:rmt_edge_intro} that essentially the same result as in the bulk holds at the lower edge: the joint evolution of $(\ldots,\SN(A_\tau \cdots A_1)_{N-1},\SN(A_\tau \cdots A_1)_{N})$ converges to a version of $\Pois^{\mu,2\infty}(T)$ which has a lowest path but the same local dynamics, again with no rescaling of the singular numbers. The theorem and proof are essentially identical to \Cref{thm:universality_intro}, and indeed we prove both simultaneously via the general result \Cref{thm:gen_limit} later. From the perspective of classical random matrix theory this similarity between the bulk and edge is quite surprising, but here it arises naturally from our proofs.

The limit process $\Pois^{\mu,edge}(T)$ for the edge version lives on
\begin{equation}\label{eq:sig_edge}
\bSig_{\text {edge }}:=\left\{\left(\mu_{n}\right)_{n \in \mathbb{Z}_{\leq 0}} \in (\Z \cup \{\pm\infty\})^{\mathbb{Z}_{\leq 0}}: \mu_{n+1} \leq \mu_{n} \text { for all } n \in \mathbb{Z}_{<0}\right\}
\end{equation}
because there is a smallest singular number $\operatorname{SN}\left(A_{\tau}^{(N)} \cdots A_{1}^{(N)}\right)_{N}$. The limit object, $\Pois^{\mu, e d g e}(T)$, is constructed the same way as $\Pois^{\mu, 2 \infty}(T)$ and has the same local Poisson/reflection dynamics, see \Cref{def:pois_edge} for details. The result we will now state is exactly the same as \Cref{thm:universality_intro}, except $r_{N}$, $\operatorname{Sig}_{2 \infty}$, and $\Pois^{\mu,2\infty}$ are replaced by $N$, $\operatorname{Sig}_{\text {edge }}$, and $\Pois^{\mu,edge}$.

\begin{thm}\label{thm:rmt_edge_intro}
Let $t=1/p$ and let $\mu \in \bSig_{\text {edge }}$ have $\mu_{0} \geq 0$ and $\mu_{-n} \rightarrow \infty$ as $n \rightarrow \infty$. For each $N \in \mathbb{N}$, let $A^{(N)}$ be a random matrix in $\operatorname{Mat}_{N}\left(\mathbb{Z}_{p}\right)$ with distribution invariant under left-multiplication by $\GL_N(\Z_p)$ and satisfying conditions (i) and (iii) of \Cref{thm:universality_intro}. Let $A_i^{(N)},i \geq 1$ be iid copies of $A^{(N)}$, and let $B^{(N)} \in \operatorname{Mat}_{N}\left(\mathbb{Z}_{p}\right), N \geq 1$ be left-$\mathrm{GL}_{N}\left(\mathbb{Z}_{p}\right)$-invariant `initial condition' matrices with fixed singular numbers
\begin{equation}
\mathrm{SN}\left(B^{(N)}\right)_{i}=\mu_{i-N}
\end{equation}
for all $1 \leq i \leq N$. Let $\Lambda^{(N)}(T)$ be as in \Cref{thm:universality_intro} with $r_N = N$ throughout. Then 
    \begin{equation}\label{eq:intro_edge_conv}
        \Lambda^{(N)}(T) \xrightarrow{N \to \infty} \Pois^{\mu,edge}(T)
    \end{equation}
    in finite-dimensional distribution.
\end{thm}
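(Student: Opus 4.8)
\textbf{Proof proposal for \Cref{thm:rmt_edge_intro}.}

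The plan is to deduce this theorem, together with \Cref{thm:universality_intro}, from a single general convergence statement (referred to in the excerpt as \Cref{thm:gen_limit}), so the bulk of the work is to set up a framework flexible enough to cover both cases and then to verify that the edge hypotheses (i), (ii') feed into it correctly. The central mechanism should be the \emph{Markovian projection property} from \Cref{thm:sea_intro}: to prove convergence of $\Lambda^{(N)}(T)$ in finite-dimensional distribution on $\bSig_{\text{edge}}$, it suffices to prove, for each truncation level $d \in \Z$, that the truncated process $F_d(\Lambda^{(N)}(T))$ converges in finite-dimensional distribution to $F_d(\Pois^{\mu,edge}(T))$. Since $\mu_{-n} \to \infty$, for each fixed $d$ there is (eventually in $N$) a path starting above level $d$, and by the projection property the truncated limit $F_d(\Pois^{\mu,edge})$ is, after discarding the frozen coordinates equal to $d$, a \emph{finite}-path reflecting Poisson process $F_d(\Pois^{\nu,\infty}(t^k T))$ for an appropriate $k$ and $\nu$. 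So the matter reduces to showing that the matrix-product process, cut off at level $d$ and with the frozen top coordinates removed, converges to a finite collection of interacting Poisson walkers with rates $t^{j}$.

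First I would record the algebraic input: for a left-$\GL_N(\Z_p)$-invariant matrix $A^{(N)}$, the singular numbers of $A^{(N)} C$ depend on $C$ only through $\SN(C)$, and in fact $\SN(A^{(N)}C)$ is obtained from $\SN(C)$ by a Markov kernel that can be described explicitly via Hall--Littlewood / cokernel combinatorics (this is the same computation underlying \cite{van2020limits,van2023local,nguyen2022universality}). The key quantitative fact is that a single multiplication by $A^{(N)}$ changes the singular numbers only in the top $X_N = \corank(A^{(N)} \bmod p)$ coordinates (the parts of $\SN(C)$ indexed beyond $X_N$ are untouched), and among those, the effect on the coordinates near a fixed level $d$ — given hypothesis (ii'), which forces $X_N$ to stay below $N-j$, i.e. away from the bottom edge — is, to leading order in $N$, to increment the lowest coordinate currently above $d$ with probability $\asymp t^{(\text{depth})} \cdot (\text{something} \to 1)$ and do nothing otherwise. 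This is exactly where the normalization $c_N$ in \eqref{eq:def_cN_intro} comes from: $c_N^{-1}$ is tuned so that, per unit of rescaled time $T$, the total rate of these increments converges. I would make this precise as a one-step estimate: conditioned on the current truncated state, the probability that the $j$-th path above level $d$ takes a step in one matrix multiplication is $t^{j}/c_N + o(1/c_N)$, uniformly, with multi-step corrections negligible, and the reflection rule (donating the jump to the next available walker) matches item \eqref{item:interact} of the reflecting Poisson sea dynamics.

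Given the one-step estimate, the convergence $F_d(\Lambda^{(N)}) \to F_d(\Pois^{\mu,edge})$ follows by a standard generator/martingale-problem argument: the truncated matrix process is a continuous-time (after the $\floor{c_N T}$ time change) Markov chain on a state space that, restricted to coordinates in a bounded window around $d$, is effectively finite once one uses $\mu_{-n}\to\infty$ to control the top and (ii') to control the bottom; its generator converges to that of the finite reflecting Poisson walk, and tightness in finite-dimensional distributions is immediate because all paths are monotone on $\Z$. One then checks consistency across $d$ (the family $\{F_d\}$ determines the law on $\bSig_{\text{edge}}$ by the chosen topology) and across the window to remove the artificial cutoff, concluding \eqref{eq:intro_edge_conv}. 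The edge case differs from the bulk case \emph{only} in that hypothesis (ii') replaces (iii): in the bulk one must keep $X_N$ away from the observation point $r_N$ on one side while the window extends to $+\infty$ on the indices below, whereas at the edge the window is genuinely bounded below by the last coordinate $N$, and (ii') is precisely the statement needed to prevent the corank from reaching that hard wall. Hence the same proof applies verbatim with $r_N \rightsquigarrow N$, $\Sig_{2\infty} \rightsquigarrow \Sig_{\text{edge}}$, $\Pois^{\mu,2\infty} \rightsquigarrow \Pois^{\mu,edge}$.

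The main obstacle I anticipate is the one-step estimate with its error control: one must show that the Hall--Littlewood-type kernel for $\SN(A^{(N)}C) \mid \SN(C)$, in the regime where the relevant coordinates sit near a fixed level $d$ far from both edges, genuinely linearizes to independent rate-$t^j$ increments with reflection, \emph{uniformly} over the (unbounded) set of possible configurations of the discarded high coordinates, and that the probability of two or more of the tracked coordinates moving in a single step is $O(1/c_N^2)$. This requires a careful analysis of the corank random variable $X_N$ (using hypotheses (i) and (ii')) and of how a $\GL_N(\Z_p)$-Haar factor mixes the cokernel — essentially a concentration statement that the ``action'' of one multiplication is localized near the top of the current signature and, within the tracked window, behaves like the prescribed Poisson clocks. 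Everything else — the projection reduction, the martingale-problem limit, the assembly across truncation levels — is routine once this local estimate is in hand.
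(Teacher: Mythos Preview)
Your proposal is correct and follows essentially the same architecture as the paper: deduce the edge theorem from a general statement (\Cref{thm:gen_limit}) by setting $r_N=N$ and embedding $\Sig_{\text{edge}}\hookrightarrow\bSig_{2\infty}$, reduce via the Markovian projections $F_d$ to convergence of finitely many interacting walkers, and prove generator convergence via a one-step estimate (no jump with probability $1-O(c_N^{-1})$, a single jump at index $j$ with probability $t^j c_N^{-1}+o(c_N^{-1})$, and two or more jumps with probability $o(c_N^{-1})$). One point worth flagging: the paper deliberately \emph{avoids} Hall--Littlewood combinatorics for this step, instead obtaining the one-step bounds by direct linear algebra over $\Z_p$ (the variational characterization of singular numbers and the explicit column-by-column description of Haar measure on $\GL_N(\Z_p)$), which is what makes the argument robust enough to apply to arbitrary $\GL_N(\Z_p)$-invariant distributions rather than only integrable ones; your invocation of Hall--Littlewood kernels would work for specific ensembles but is not how the paper achieves the stated generality.
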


Hints of \Cref{thm:rmt_edge_intro} appeared in \cite[Theorem 1.2]{van2020limits}, where it was shown that the $\tau \to \infty$ law of large numbers of the smallest singular numbers converged as $N \to \infty$ to a geometric progression with common ratio $p$, which corresponds to the geometric progression of jump rates in the reflecting Poisson sea. \Cref{thm:rmt_edge_intro} applies much more broadly and gives more detailed information, however. Note also that we do not have a result like \Cref{thm:uniform_matrix_intro} at the edge, because the input of \cite{van2023local} was only shown in the bulk.

\begin{rmk}
Many previous works \cite{wood2017distribution,wood2018cohen,wood2015random,meszaros2020distribution,cheong2021cohen,cheong2022generalizations,cheong2023cokernel} prove a different form of universality for singular numbers/cokernels, namely that for a single $N \times N$ matrix with iid entries (or a product of finitely many such matrices \cite{nguyen2022universality}), the large $N$ limit distribution of singular numbers is universal for many choices of entry distribution and in particular agrees with the additive Haar case. In \Cref{thm:universality_intro} and \Cref{thm:rmt_edge_intro}, by contrast, we do not require that the distribution of singular numbers of a single matrix $A_i^{(N)}$ converges. The mixing of many matrices in the product still manages to average out the nonuniversal behavior of each individual matrix, yielding a form of universality for products which holds even in settings where universality for a single matrix breaks down.
\end{rmk}

\begin{rmk}
Though we have stated our results over $\Z_p$ for simplicity, one may obtain results over $\Q_p$ with only marginally more effort. We also suspect that the results may be proven over any other ring of integers of a non-Archimedean local field with finite residue field via the same methods.
\end{rmk}

\subsection{Outline of the proofs and rest of the paper}\label{subsec:methods}

\Cref{sec:p-adic} gives some basic background on $p$-adic numbers and matrices over them, as well as a less-standard variational characterization of singular numbers and some useful consequences. \Cref{sec:constructing} constructs $\Pois^{\mu,2\infty}(T)$ and proves some useful properties of it. The basic idea of the construction is to couple many processes $\Pois^{\nu,\infty}(T)$ together on the same probability space, which corresponds to `histories of clock ring times for $\Pois^{\mu,2\infty}(T)$ for all time,' and take a suitable limit.

Apart from \Cref{sec:deducing}, the rest of the text is devoted to proving \Cref{thm:gen_limit}, which combines the similar results of \Cref{thm:universality_intro} and \Cref{thm:rmt_edge_intro} into one statement so they can be proven simultaneously. That result also assumes weaker hypotheses than the ones assumed \Cref{thm:universality_intro}, but they are somewhat less transparent to state, so we have given the above hypotheses for the sake of exposition. 

To prove \Cref{thm:gen_limit}, we show convergence of the truncated processes $F_d(\La^{(N)}(T))$ to $F_d(\Pois^{\mu,2\infty}(T))$, which are both still Markov; on the matrix side this simply corresponds to taking all matrices modulo $p^d$. The process $\Pois^{\mu,2\infty}(T)$ is a complicated object, with infinitely many jumps occurring on any time interval, but this truncation simplifies it. Namely, \Cref{thm:sea_intro} guarantees that \emph{provided that $\mu$ has a part at least $d$}, such a truncation is given by a process $\Pois^{\nu,\infty}(t^k T)$, which has only finitely many jumps on any time interval and is determined by its Markov generator (which is explicit, see \Cref{thm:compute_Q}). This reduction is why the hypothesis $\lim_{n \to \infty} \mu_{-n} = \infty$ is so necessary. In \Cref{sec:markov_preliminaries}, we show in this manner that the proof reduces to showing that the Markov generator of the matrix product process converges to that of an appropriate process $\Pois^{\nu,\infty}(t^k T)$.

We show this convergence of Markov generators in \Cref{sec:transition_asymptotics}, using explicit nonasymptotic bounds on the transition probabilities of singular numbers which we establish in \Cref{sec:nonasymp_lin_alg}. These crucially use the $\GL_N(\Z_p)$-invariance of the matrices $A_i^{(N)}$, as it yields that 
\begin{equation}
\SN(A^{(N)}_{\tau+1} \cdots A_1^{(N)}) = \SN(\diag_{N \times N}(p^{\SN(A^{(N)}_{\tau+1})}) U \diag_{N \times N}(p^{\SN(A^{(N)}_{\tau} \cdots A_1^{(N)})}))
\end{equation}
in distribution, where $U \in \GL_N(\Z_p)$ is Haar-distributed and independent of the singular numbers. Hence it suffices to understand the singular numbers of matrices $\diag_{N \times N}(p^\la)U\diag_{N \times N}(p^\mu)$ for fixed $\la,\mu \in \Sig_N$ and $U$ Haar-distributed, which we do by explicit linear-algebraic manipulations using the explicit characterization of the Haar measure and variational characterization of the singular numbers. The reason we are able to establish \Cref{thm:universality_intro} and \Cref{thm:rmt_edge_intro} in the generality that we do is that these bounds are very robust and use no special structure beyond the $\GL_N(\Z_p)$-invariance of the matrix distributions involved. 

Once \Cref{thm:universality_intro} is established, we show \Cref{thm:uniform_matrix_intro}, and a related result \Cref{thm:haar_corner_intro} for $\GL_{N+D}(\Z_p)$-corners, in \Cref{sec:deducing}. As mentioned, these would follow directly from \Cref{thm:universality_intro} if the hypothesis $\lim_{n \to \infty} \mu_{-n}$ on the initial condition were not present. In particular, this means that if one can establish the limit \eqref{eq:la_to_pois_intro_uniform} of \Cref{thm:uniform_matrix_intro} at a single time $T_1$, one can bootstrap the single-time limit for $A_{\floor{p^{r_N}T_1}}^{(N)} \cdots A_1^{(N)}$ to a multi-time limit by conditioning on the matrix $A_{\floor{p^{r_N}T_1}}^{(N)} \cdots A_1^{(N)}$, and then applying \Cref{thm:universality_intro} with $A_{\floor{p^{r_N}T_1}}^{(N)} \cdots A_1^{(N)}$ playing the role of $B^{(N)}$. This yields asymptotics for the subsequent evolution of the singular numbers at time $T>T_1$. 


To show the convergence at $T_1$ we use results from the related work \cite{van2023local}, which rely on asymptotic analysis of Hall-Littlewood processes and hold for a very special class of examples. These gave explicit formulas similar to \Cref{ex:explicit_formula} for the random matrix limit, as well as for limits of $\Pois^{\nu,\infty}(T)$ and hence for $\Pois^{\mu,2\infty}(T)$ via our coupling construction, see \Cref{thm:use_other_paper}. It does not matter what these formulas are, and they may be treated as a black box. All that matters here is that the formulas agree for $\Pois^{\mu,2\infty}(T)$ and for random matrices, which establishes the single-time limit.

We find it interesting that the only way we know to arrive at \Cref{thm:uniform_matrix_intro} uses entirely disjoint techniques for the single-time limit and the bootstrap to multiple times. For Theorems \ref{thm:universality_intro} and \ref{thm:rmt_edge_intro}, we find that the matrix product computations in the proof give a satisfying conceptual reason as to why the result is true. The proofs of single-time convergence and limit formulas in \cite{van2023local}, however, are still quite mysterious to us. We hope that future efforts can bring the ideas of this work to bear on single-time convergence, both to illuminate the formulas in \cite{van2023local} and to enlarge the scope of their universality class to different matrix distributions. It is also worth noting that while we do not use the techniques of \cite{van2020limits,van2023local} in the proofs of \Cref{thm:universality_intro} and \Cref{thm:rmt_edge_intro}, we would never have guessed such results without the explicit examples those techniques afforded.

\textbf{Remark on notation.} In formulas for probabilities in terms of the prime $p$ below, we typically instead use the variable $t=1/p$, to match the parameter in $\Pois^{\mu,2\infty}$ (which does not have to be the inverse of a prime, in general). We have used $t$ for this parameter to be consistent with notation for the related Hall-Littlewood polynomials, see \cite[Proposition 3.13]{van2023local} for the relation.

\section{Preliminaries on $p$-adic random matrices and singular numbers}\label{sec:p-adic}

The following is a condensed version of the exposition in \cite[Section 2]{evans2002elementary}, to which we refer any reader desiring a more detailed introduction to $p$-adic numbers geared toward a probabilistic viewpoint. Fix a prime $p$. Any nonzero rational number $r \in \Q^\times$ may be written as $r=p^k (a/b)$ with $k \in \Z$ and $a,b$ coprime to $p$. Define $|\cdot|: \Q \to \R$ by setting $|r|_p = p^{-k}$ for $r$ as before, and $|0|_p=0$. Then $|\cdot|_p$ defines a norm on $\Q$ and $d_p(x,y) :=|x-y|_p$ defines a metric. We additionally define $\val_p(r)=k$ for $r$ as above and $\val_p(0) = \infty$, so $|r|_p = p^{-\val_p(r)}$. We define the \emph{field of $p$-adic numbers} $\Q_p$ to be the completion of $\Q$ with respect to this metric, and the \emph{$p$-adic integers} $\Z_p$ to be the unit ball $\{x \in \Q_p : |x|_p \leq 1\}$. It is not hard to check that $\Z_p$ is a subring of $\Q_p$. We remark that $\Z_p$ may be alternatively defined as the inverse limit of the system $\ldots \to \Z/p^{n+1}\Z \to \Z/p^n \Z \to \cdots \to \Z/p\Z \to 0$, and that $\Z$ naturally includes into $\Z_p$. 

$\Q_p$ is noncompact but is equipped with a left- and right-invariant (additive) Haar measure; this measure is unique if we normalize so that the compact subgroup $\Z_p$ has measure $1$. The restriction of this measure to $\Z_p$ is the unique Haar probability measure on $\Z_p$, and is explicitly characterized by the fact that its pushforward under any map $r_n:\Z_p \to \Z/p^n\Z$ is the uniform probability measure. For concreteness, it is often useful to view elements of $\Z_p$ as `power series in $p$' $a_0 + a_1 p + a_2 p^2 + \ldots$, with $a_i \in \{0,\ldots,p-1\}$; clearly these specify a coherent sequence of elements of $\Z/p^n\Z$ for each $n$. The Haar probability measure then has the alternate explicit description that each $a_i$ is iid uniformly random from $\{0,\ldots,p-1\}$. Additionally, $\Q_p$ is isomorphic to the ring of Laurent series in $p$, defined in exactly the same way.

Similarly, $\GL_N(\Q_p)$ has a unique left- and right-invariant measure for which the total mass of the compact subgroup $\GL_N(\Z_p)$ is $1$. We denote this measure by $\M$. The restriction of $\M$ to $\GL_N(\Z_p)$ pushes forward to $\GL_N(\Z/p^n\Z)$; these measures are the uniform measures on the finite groups $\GL_N(\Z/p^n\Z)$. This gives an alternative characterization of the measure.

The following standard result is sometimes known as Smith normal form and holds also for more general rings.

\begin{prop}\label{thm:smith}
Let $n \leq m$. For any $A \in M_{n \times m}(\Q_p)$, there exist $U \in \GL_n(\Z_p), V \in \GL_m(\Z_p)$ such that $UAV = \diag_{n \times m}(p^{\l_1},\ldots,p^{\l_n})$ where $\l$ is a weakly decreasing $n$-tuple of integers when $A$ is nonsingular, when $A$ is singular we formally allow parts of $\l$ to equal $\infty$ and define $p^\infty = 0$. Furthermore, there is a unique such $n$-tuple $\l$.
\end{prop}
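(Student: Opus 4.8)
The plan is the classical one: prove existence by a $p$-adic version of Gaussian elimination, and uniqueness by the invariance of the $p$-adic valuations of $k\times k$ minors. A harmless first reduction is to the case $A\in M_{n\times m}(\Z_p)$: since $A$ has finitely many entries, $c:=\min_{i,j}\val_p(A_{ij})$ is attained, equal to $\infty$ precisely when $A=0$ (in which case $\l=(\infty,\dots,\infty)$ works), and for finite $c$ the matrix $p^{-c}A$ lies in $M_{n\times m}(\Z_p)$; a decomposition of $p^{-c}A$ with exponents $\mu_i$ gives one for $A$ with exponents $\mu_i+c$, still a weakly decreasing integer tuple.

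For existence, assume $A\in M_{n\times m}(\Z_p)$ is nonzero and proceed by elimination. Let $a_1$ be the least valuation among the nonzero entries of $A$, occurring at some position $(i,j)$, so $A_{ij}=p^{a_1}v$ with $v\in\Z_p^\times$. Interchanging rows and columns (permutation matrices, which lie in $\GL_n(\Z_p)$ and $\GL_m(\Z_p)$) and rescaling the first row (left multiplication by $\diag(v^{-1},1,\dots,1)\in\GL_n(\Z_p)$) bring $A$ to a form with $(1,1)$ entry $p^{a_1}$. Since every other entry has valuation $\ge a_1$, each $A_{k1}/p^{a_1}$ lies in $\Z_p$, so the $\GL_n(\Z_p)$ row operations ``row $k\mapsto$ row $k-(A_{k1}/p^{a_1})\cdot$row $1$'' clear the rest of column $1$; they add only $\Z_p$-multiples of entries of valuation $\ge a_1$, so afterward every entry still has valuation $\ge a_1$, and the analogous column operations clear the rest of row $1$ without disturbing rows $2,\dots,n$. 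Now $A$ is block diagonal, $\begin{pmatrix}p^{a_1}&0\\0&A^{(1)}\end{pmatrix}$ with $A^{(1)}\in M_{(n-1)\times(m-1)}(\Z_p)$ all of whose entries have valuation $\ge a_1$. Iterating this on $A^{(1)}$, then on the resulting $A^{(2)}$, and so on, produces exponents $a_1\le a_2\le\cdots\le a_n$ (each $a_{k+1}$ being a minimal valuation of a matrix whose entries all have valuation $\ge a_k$) together with a decomposition $UAV=\diag_{n\times m}(p^{a_1},\dots,p^{a_n})$; a permutation of the rows and columns reverses the order to give the weakly decreasing tuple $\l=(a_n,\dots,a_1)$. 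The conventions $p^\infty=0$ and $\val_p(0)=\infty$ absorb the case where some $A^{(k)}$ vanishes, so the singular case needs no separate argument.

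For uniqueness, for $1\le k\le n$ let $\delta_k(A)\in\Z\cup\{\infty\}$ be the minimum of $\val_p$ over all $k\times k$ minors of $A$ (set to $\infty$ if they all vanish), with $\delta_0:=0$. By the Cauchy--Binet formula each $k\times k$ minor of $UAV$ is a $\Z_p$-linear combination of the $k\times k$ minors of $A$ (the coefficients being products of minors of $U$ and $V$, which have valuation $\ge 0$), so $\delta_k(UAV)\ge\delta_k(A)$; applying this with $A=U^{-1}(UAV)V^{-1}$, where $U^{-1}\in\GL_n(\Z_p)$ and $V^{-1}\in\GL_m(\Z_p)$ by Cramer's rule, gives equality, so $\delta_k$ is invariant under $\GL_n(\Z_p)\times\GL_m(\Z_p)$. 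For $D=\diag_{n\times m}(p^{a_1},\dots,p^{a_n})$ with $a_1\le\cdots\le a_n$, every nonzero $k\times k$ minor is $p^{a_{i_1}+\cdots+a_{i_k}}$ for some $i_1<\cdots<i_k$, the least valuation being $a_1+\cdots+a_k$; hence $\delta_k(A)=a_1+\cdots+a_k$, so $a_k=\delta_k(A)-\delta_{k-1}(A)$ (with the obvious conventions once $\delta_k=\infty$, i.e.\ for $k>\rank_{\Q_p}A$) is determined by $A$ alone. Thus the sorted multiset of exponents, equivalently the weakly decreasing tuple $\l$, is unique.

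I do not expect any step to be a serious obstacle, since this is a classical fact; the single point worth stating carefully, and the only non-mechanical idea, is that choosing the pivot to have \emph{globally} minimal valuation is simultaneously what keeps every elementary operation inside $\GL_n(\Z_p)$ or $\GL_m(\Z_p)$ and what forces the divisibility chain $a_1\le a_2\le\cdots\le a_n$ through the iteration. Over $\Q_p$ any nonzero pivot would suffice, but then one loses control of integrality, hence the normalization $U\in\GL_n(\Z_p)$, $V\in\GL_m(\Z_p)$.
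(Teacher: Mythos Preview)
Your proof is correct and is the standard argument for Smith normal form over a discrete valuation ring. The paper itself does not supply a proof of this proposition; it simply states it as a standard result (``sometimes known as Smith normal form and holds also for more general rings''), so there is nothing to compare against beyond noting that your argument is exactly the classical one the paper implicitly invokes.
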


\begin{defi}\label{def:finite_sigs}
We denote the tuple $\la$ of \Cref{thm:smith} by $\SN(A)$, and refer to its elements $\la_1,\ldots,\la_n$ as the \emph{singular numbers} of $A$. We call such tuples (extended) integer signatures, and write the set of such signatures as follows:
\begin{equation}
\bSig_n := \{(\la_1,\ldots,\la_n) \in (\Z \cup \{\infty\})^n: \la_1 \geq \ldots \geq \la_n\}.
\end{equation}
Given $\la \in \bSig_n$, we set $|\la| := \la_1+\ldots+\la_n$, interpreting this as $\infty$ if any of the parts are infinite. We additionally write $m_i(\la) = \#\{j: \la_j = i\}$.
\end{defi}

Similarly to eigenvalues and singular values, singular numbers have a variational characterization. We first recall the version for singular values, one version of which states that for $A \in \Mat_{n \times m}(\C)$ (assume without loss of generality $n \leq m$) with singular values $a_1 \geq \ldots \geq a_n$,
\begin{equation}\label{eq:sv_minmax}
\prod_{i=1}^k a_i = \sup_{\substack{V \subset \C^m: \dim(V) = k \\ W \subset \C^n: \dim(W) = k}} |\det(\Proj_W \circ A|_V)|
\end{equation}
where $\Proj$ is the orthogonal projection and $A|_V$ is the restriction of the linear operator $A$ to the subspace $V$. Equation \eqref{eq:sv_minmax} holds because the right hand side is unchanged by multiplying $A$ by unitary matrices, hence $A$ may be taken to be diagonal with singular values on the diagonal by singular value decomposition, at which point the result is easy to see. For a slightly different version which picks out the $k\tth$ largest singular value rather than the product of the $k$ largest, see \cite[Section 5]{fulton2000eigenvalues}.

For $p$-adic matrices, we state the result slightly differently to avoid referring to orthogonal projection, the reason being that unlike $U(n)$, $\GL_n(\Z_p)$ does not preserve a reasonable inner product, only the norm.

\begin{prop}\label{thm:minmax}
Let $1 \leq n \leq m$ be integers and $A \in \Mat_{n \times m}(\Q_p)$ with $\SN(A) = (\la_1,\ldots,\la_n)$. Then for any $1 \leq k \leq n$,
\begin{align}\label{eq:raleigh}
\la_n+\ldots+\la_{n-k+1} &= \inf_{\substack{P:\Q_p^n \to \Q_p^n \text{ rank $k$ projection}\\ W \subset \Q_p^m: \dim W = k}} \val_p(\det(PA|_W)) 
\end{align}
\end{prop}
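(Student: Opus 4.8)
The plan is to reduce to the diagonal case by exploiting the $\GL_n(\Z_p)$-invariance built into the right-hand side of \eqref{eq:raleigh}, exactly as in the complex case after \eqref{eq:sv_minmax}. First I would observe that the quantity $\inf \val_p(\det(PA|_W))$ on the right is invariant under replacing $A$ by $UAV$ for $U \in \GL_n(\Z_p)$ and $V \in \GL_m(\Z_p)$. For $V$ this is clear, since right multiplication by $V$ just permutes the set of rank-$k$ subspaces $W \subset \Q_p^m$ and multiplies $\det(PA|_W)$ by a unit $\det(V|_W\text{-block})$, which has valuation $0$; one has to be slightly careful here and phrase the $V$-invariance as a bijection $W \mapsto V^{-1}W$ together with the fact that $V$ restricted to any $k$-dimensional subspace, read in suitable bases, has determinant of valuation $0$ — more cleanly, $PA|_W$ and $PAV|_{V^{-1}W}$ differ by precomposition with an isomorphism of lattices of the form $\Z_p^k \to \Z_p^k$ of determinant a unit. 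For $U$, left multiplication carries a rank-$k$ projection $P$ to the rank-$k$ projection $UPU^{-1}$ (note $P$ need not be orthogonal — "projection" here means idempotent of rank $k$), and $\det((UPU^{-1})(UA)|_W) = \det(U(PA|_W))$, again changing the valuation by $\val_p(\det U) = 0$. So both sides of \eqref{eq:raleigh} are $\GL_n(\Z_p) \times \GL_m(\Z_p)$-invariant.

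By \Cref{thm:smith} we may therefore assume $A = \diag_{n\times m}(p^{\la_1},\ldots,p^{\la_n})$ with $\la_1 \geq \cdots \geq \la_n$ (allowing $\infty$). It then remains to prove the identity for this diagonal $A$, i.e. that
\begin{equation}
\inf_{\substack{P \text{ rank } k \text{ projection} \\ W \subset \Q_p^m,\ \dim W = k}} \val_p(\det(PA|_W)) = \la_n + \cdots + \la_{n-k+1}.
\end{equation}
For the upper bound I would exhibit an explicit choice: take $W = \Span(e_{n-k+1},\ldots,e_n) \subset \Q_p^m$ and $P$ the coordinate projection onto $\Span(e_{n-k+1},\ldots,e_n) \subset \Q_p^n$; then $PA|_W = \diag(p^{\la_{n-k+1}},\ldots,p^{\la_n})$, whose determinant has valuation $\la_{n-k+1}+\cdots+\la_n$, giving "$\leq$". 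For the lower bound — the more substantial direction — I would argue that for any rank-$k$ projection $P$ and any $k$-dimensional $W$, one has $\val_p(\det(PA|_W)) \geq \la_n+\cdots+\la_{n-k+1}$. The key point is that $PA|_W \colon W \to \im P$ can be represented, after choosing a basis of $W$ adapted to the standard lattice (i.e. a basis whose $\Z_p$-span is $W \cap$ a chosen lattice, scaled appropriately) and a basis of $\im P$, by a matrix whose entries lie in an appropriate fractional shift; concretely, after normalizing $W$ and $\im P$ to be unimodular lattices, $A$ sends the lattice of $W$ into $\sum_i p^{\la_i}(\text{lattice in } \im P$-direction$)$, so the composite's determinant picks up valuation at least $\sum$ of the $k$ smallest $\la_i$, which is $\la_{n-k+1}+\cdots+\la_n$. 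Here I would make this precise using the Cauchy–Binet / exterior-power formulation: $\det(PA|_W)$ equals (up to a unit coming from basis choices) a sum of $k\times k$ minors of $\diag(p^{\la_i})$ indexed by size-$k$ subsets $S \subset \{1,\ldots,n\}$, each of valuation $\sum_{i \in S}\la_i \geq \sum_{i=n-k+1}^{n}\la_i$, and the non-Archimedean triangle inequality $\val_p(\sum x_j) \geq \min_j \val_p(x_j)$ then yields the bound.

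The main obstacle I anticipate is bookkeeping rather than conceptual: carefully justifying the reductions when entries of $\la$ are $\infty$ (so $A$ is singular and $PA|_W$ may be genuinely non-invertible, in which case $\val_p(\det) = \infty$ and both sides are $+\infty$ once $k$ exceeds the rank), and pinning down the "unit from basis choices" factors so that the valuation identities are exact and not merely inequalities up to error. The cleanest route is probably to phrase everything lattice-theoretically: identify $W$ with a rank-$k$ lattice $L = W \cap \Lambda$ for a suitable lattice $\Lambda$, identify $\im P$ similarly, note $\GL_n(\Z_p)$ acts transitively on unimodular lattices and rank-$k$ projections with unimodular image, and reduce $\val_p(\det(PA|_W))$ to the length of the cokernel of $L \to A L \to \im P$, which is computed by the standard divisibility structure of $\diag(p^{\la_i})$ acting on lattices. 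With that framework the exterior-algebra/Cauchy–Binet estimate for the lower bound and the explicit $W,P$ for the upper bound both go through, and the $\infty$ cases are handled uniformly by the convention $p^\infty = 0$, $\val_p(0) = \infty$.
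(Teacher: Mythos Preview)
Your approach is essentially the same as the paper's: establish $\GL_n(\Z_p)\times\GL_m(\Z_p)$-invariance of the right-hand side via $P\mapsto U_1PU_1^{-1}$ and $W\mapsto U_2^{-1}W$, reduce to the diagonal matrix by Smith normal form, and then identify the minimizer as $W=\Span(e_{n-k+1},\ldots,e_n)$ with $P$ the coordinate projection onto the same span. The paper simply asserts that this choice ``clearly'' achieves the infimum, whereas you go further and sketch the lower bound via Cauchy--Binet and the ultrametric inequality; this extra care is fine and arguably fills a gap the paper leaves implicit.
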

\begin{proof}
If $U_1 \in \GL_n(\Z_p),U_2 \in \GL_m(\Z_p)$, then for any a rank $k$ projection $P$ the matrix $U_1PU_1^{-1}$ is also a rank $k$ projection, and similarly for any $W$ as above $U_2 W$ is also a dimension $k$ subspace. Hence
\begin{equation}
\inf_{\substack{P:\Q_p^n \to \Q_p^n \text{ rank $k$ projection}\\ W \subset \Q_p^m: \dim W = k}} \val_p(\det(PA|_W)) = \inf_{\substack{P:\Q_p^n \to \Q_p^n \text{ rank $k$ projection}\\ W \subset \Q_p^m: \dim W = k}} \val_p(\det(P(U_1AU_2)|_W)).
\end{equation}
By Smith normal form we may choose $U_1,U_2$ so that $U_1AU_2 = \diag_{n \times m}(p^{\la_1},\ldots,p^{\la_n})$, hence
\begin{equation}
\text{RHS\eqref{eq:raleigh}} = \inf_{\substack{P:\Q_p^n \to \Q_p^n \text{ rank $k$ projection}\\ W \subset \Q_p^m: \dim W = k}} \val_p(\det(P \diag_{n \times m}(p^{\la_1},\ldots,p^{\la_n})|_W)).
\end{equation}
The infimum on the right hand side is clearly achieved by taking $W = \Span(\vec{e_{n-k+1}},\ldots,\vec{e_n})$ (where $\vec{e_i}$ are the standard basis vectors) and $P$ to be the projection onto $\Span(\vec{e_{n-k+1}},\ldots,\vec{e_n})$. This proves \eqref{eq:raleigh}. 
\end{proof}

We record a few corollaries of \Cref{thm:minmax} which will be useful later.

\begin{cor}
\label{thm:multiply_smaller_dimension}
Let $n \leq m$, $A \in \Mat_{n \times m}(\Q_p)$, and $\kappa \in \Sig_m$. Then 
\begin{equation}
|\SN(\diag_{m \times m}(p^{\kappa_1},\ldots,p^{\kappa_n}) A)| = |\SN(A)| + |\kappa|.
\end{equation}
\end{cor}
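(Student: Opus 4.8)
The plan is to reduce everything to a single application of the variational formula \Cref{thm:minmax}, specialized to $k=n$. This specialization is the crux: a rank-$n$ projection of $\Q_p^n$ onto itself must be the identity, so the double infimum over projections $P$ and subspaces $W$ collapses to
\begin{equation}
|\SN(A)| \;=\; \inf_{\substack{W \subset \Q_p^m \\ \dim W = n}} \val_p\big(\det(A|_W)\big),
\end{equation}
where $\det(A|_W)$ is computed in the same normalization as in \Cref{thm:minmax} (a $\Z_p$-basis of the lattice $W \cap \Z_p^m$ on the source, the standard basis of $\Q_p^n$ on the target), so that its valuation is independent of the basis choices. The identical formula applies verbatim to $\diag(p^{\kappa_1},\ldots,p^{\kappa_n})A$.

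First I would record this specialization for both $A$ and $\diag(p^{\kappa_1},\ldots,p^{\kappa_n})A$. Then, for each fixed $n$-dimensional subspace $W \subset \Q_p^m$, I would observe that $\diag(p^{\kappa_1},\ldots,p^{\kappa_n})\circ(A|_W)$ is just $A|_W \colon W \to \Q_p^n$ post-composed with a fixed automorphism of $\Q_p^n$; in the bases above its matrix is $\diag(p^{\kappa_1},\ldots,p^{\kappa_n})$ times that of $A|_W$, so multiplicativity of the determinant gives $\val_p(\det(\diag(p^{\kappa_1},\ldots,p^{\kappa_n})\circ(A|_W))) = |\kappa| + \val_p(\det(A|_W))$ with $|\kappa| = \kappa_1 + \cdots + \kappa_n$. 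The shift by $|\kappa|$ is the same for every $W$, so taking the infimum over all such $W$ and invoking the specialized \Cref{thm:minmax} twice yields $|\SN(\diag(p^{\kappa_1},\ldots,p^{\kappa_n})A)| = |\kappa| + |\SN(A)|$.

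The remaining case is when $A$ is singular: then $|\SN(A)| = \infty$, and since the parts of $\kappa$ are finite the matrix $\diag(p^{\kappa_1},\ldots,p^{\kappa_n})$ is invertible over $\Q_p$, so $\diag(p^{\kappa_1},\ldots,p^{\kappa_n})A$ has the same deficient rank and $|\SN|=\infty$ as well; equivalently $\det(A|_W)=0$ for every $W$ and both infima are $+\infty$, so the identity persists. I do not anticipate a substantive obstacle here: the only delicate points are the normalization of $\det(A|_W)$ inherited from \Cref{thm:minmax} and the (elementary) fact that left-multiplying $A|_W$ by $\diag(p^{\kappa_1},\ldots,p^{\kappa_n})$ scales its determinant by $p^{|\kappa|}$ regardless of the chosen basis of $W$.
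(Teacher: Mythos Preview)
Your proposal is correct and follows exactly the paper's approach: the paper's proof is the one-liner ``Follows immediately from \Cref{thm:minmax} with $k=n$ and multiplicativity of the determinant,'' which is precisely what you have spelled out in detail. Your additional care about basis normalization for $\det(A|_W)$ and the explicit treatment of the singular case are reasonable elaborations, but the core argument is identical.
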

\begin{proof}
Follows immediately from \Cref{thm:minmax} with $k=n$ and multiplicativity of the determinant.
\end{proof}

\begin{cor}
\label{thm:minor_increase_sns}
If $d \leq m$ and $\ell \leq n$ are nonnegative integers, $A \in \Mat_{m \times n}(\Q_p)$, and $B$ is any $d \times \ell$ submatrix of $A$, then the $j$ smallest singular numbers satisfy
\begin{equation}
    \label{eq:minor_increase_sns}
    \sum_{j=1}^k \SN(B)_{\min(d,\ell)-j+1} \geq \sum_{j=1}^k \SN(A)_{\min(m,n) - j+1}
\end{equation}
for any $1 \leq k \leq \min(d,\ell)$.
\end{cor}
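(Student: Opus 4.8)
The plan is to exhibit $B$ as a composition $B = \rho A \iota$ of $A$ with coordinate projection/inclusion maps, and then, starting from a near-optimal pair in the variational formula of \Cref{thm:minmax} for $B$, to build a competing pair in the same formula for $A$ achieving the same value. The engine is \Cref{thm:minmax}, which I first want in a form symmetric in the two matrix dimensions: for any $C \in \Mat_{a\times b}(\Q_p)$, writing $r := \min(a,b)$, and any $1 \le k \le r$,
\begin{equation}\label{eq:star}
\sum_{j=1}^k \SN(C)_{r-j+1} \;=\; \inf_{\substack{P:\Q_p^a\to\Q_p^a\text{ rank }k\text{ projection}\\ W\subseteq\Q_p^b,\ \dim W = k}} \val_p\bigl(\det(PC|_W)\bigr).
\end{equation}
For $a\le b$ this is \Cref{thm:minmax} verbatim; for $a>b$ it follows by the identical argument, since that proof uses only Smith normal form (\Cref{thm:smith})---valid in either orientation---to reduce $C$ to diagonal form, after which $W = \Span(\vec{e_{b-k+1}},\ldots,\vec{e_b})$ together with the projection onto that span is still visibly optimal. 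I would insert \eqref{eq:star} as a short lemma immediately after \Cref{thm:minmax}.

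With \eqref{eq:star} available, fix the rows $S$ (with $|S|=d$) and columns $S'$ (with $|S'|=\ell$) defining the submatrix $B$. Let $\iota:\Q_p^\ell\hookrightarrow\Q_p^n$ be the coordinate inclusion onto the coordinates indexed by $S'$, let $\rho:\Q_p^m\twoheadrightarrow\Q_p^d$ be the coordinate projection onto the coordinates indexed by $S$, and let $\rho^*:\Q_p^d\hookrightarrow\Q_p^m$ be the coordinate inclusion splitting $\rho$ (so $\rho\rho^*=\Id$); then $B = \rho A \iota$. Given any rank-$k$ projection $Q:\Q_p^d\to\Q_p^d$ and any $W\subseteq\Q_p^\ell$ with $\dim W = k$, set $\widetilde W := \iota(W)\subseteq\Q_p^n$ and $\widetilde Q := \rho^* Q\rho:\Q_p^m\to\Q_p^m$. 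A one-line check gives $\widetilde Q^2 = \rho^* Q(\rho\rho^*)Q\rho = \rho^* Q^2\rho = \widetilde Q$ and $\rank\widetilde Q = \rank Q = k$, so $\widetilde Q$ is a legitimate competitor in \eqref{eq:star} for $A$. Reading $\widetilde Q A|_{\widetilde W}$ through the isomorphism $W\xrightarrow{\ \sim\ }\widetilde W$ induced by $\iota$, one finds $\widetilde Q A|_{\widetilde W} = \rho^*\circ(QB|_W)$; and since $\rho^*$ identifies the standard $\Z_p$-lattice of $\Q_p^d$ with a direct summand of the standard $\Z_p$-lattice of $\Q_p^m$, precomposing by $\rho^*$ leaves the $p$-adic valuation of the determinant unchanged, so $\val_p(\det(\widetilde Q A|_{\widetilde W})) = \val_p(\det(QB|_W))$.

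To finish, write $s := \min(d,\ell)$ and $r := \min(m,n)$, fix $1\le k\le s$, and note $s\le r$ because $d\le m$ and $\ell\le n$, so the right-hand side of \eqref{eq:minor_increase_sns} really involves singular numbers of $A$. Applying \eqref{eq:star} to $A$ bounds $\val_p(\det(\widetilde Q A|_{\widetilde W})) \ge \sum_{j=1}^k\SN(A)_{r-j+1}$ for every such $Q,W$, hence $\val_p(\det(QB|_W)) \ge \sum_{j=1}^k\SN(A)_{r-j+1}$ for all $Q,W$; taking the infimum over $Q,W$ and invoking \eqref{eq:star} for $B$ yields $\sum_{j=1}^k\SN(B)_{s-j+1} \ge \sum_{j=1}^k\SN(A)_{r-j+1}$, which is \eqref{eq:minor_increase_sns}.

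I expect the only steps that are not pure bookkeeping to be the case $a>b$ of \eqref{eq:star} (a short rerun of the proof of \Cref{thm:minmax}) and the observation that routing the determinant through the coordinate maps $\iota,\rho^*$ does not change $\val_p\det$---which holds precisely because those maps carry standard lattices onto direct summands of standard lattices, so the only ambiguity in the determinant is multiplication by a $\Z_p$-unit. An alternative plan would be to prove \eqref{eq:minor_increase_sns} first for deleting a single row or column (using the same lifting trick) and then iterate; but that still needs \Cref{thm:minmax} for a tall matrix at some point, so the direct route above seems cleaner.
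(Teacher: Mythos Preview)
Your proof is correct and follows essentially the same route as the paper, which simply says ``both sides may be expressed as an infimum, and the left-hand side is an infimum over a smaller set.'' Your coordinate-map construction $(Q,W)\mapsto(\widetilde Q,\widetilde W)$ is precisely the explicit realization of that embedding, and your care about the symmetric form \eqref{eq:star} of \Cref{thm:minmax} (covering both $a\le b$ and $a>b$) and about why $\val_p\det$ is preserved under $\rho^*,\iota$ fills in details the paper leaves implicit.
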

\begin{proof}
By \Cref{thm:minmax} both sides of \eqref{eq:minor_increase_sns} may be expressed as an infimum, and the left hand side is an infimum over a smaller set.
\end{proof}

We will often write $\diag_{n \times N}(p^\l)$ for $\diag_{n \times N}(p^{\l_1},\ldots,p^{\l_n})$, and also omit the dimensions $n \times N$ when they are clear from context. We note also that for any $\l \in \Sig_N$, the double coset $\GL_N(\Z_p) \diag(p^\l) \GL_N(\Z_p)$ is compact. The restriction of the additive Haar measure $\M$ to such a double coset, normalized to be a probability measure, is the unique $\GL_N(\Z_p) \times \GL_N(\Z_p)$-invariant probability measure on $\GL_N(\Q_p)$ with singular numbers $\l$, and all $\GL_N(\Z_p) \times \GL_N(\Z_p)$-probability measures are convex combinations of these for different $\l$. These measures may be equivalently described as $U \diag(p^{\l_1},\ldots,p^{\l_N}) V$ where $U,V$ are independently distributed by the Haar probability measure on $\GL_N(\Z_p)$. More generally, if $n \leq m$ and $U \in \GL_n(\Z_p), V \in \GL_m(\Z_p)$ are Haar distributed and $\mu \in \bSig_n$, then $U \diag_{n \times m}(p^\mu) V$ is invariant under $\GL_n(\Z_p) \times \GL_m(\Z_p)$ acting on the left and right, and is the unique such bi-invariant measure with singular numbers given by $\mu$. 

The Haar measure on $\GL_N(\Z_p)$ also has an explicit characterization which is well-known and will be very useful in \Cref{sec:nonasymp_lin_alg}.

\begin{prop}[{\cite[Proposition 2.1]{van2023p}}]\label{thm:haar_sampling}
Let 
\begin{equation}
A \in \Mat_N(\Z_p)
\end{equation}
be a random matrix with distribution given as follows: sample its columns $v_N,v_{N-1},\ldots,v_1$ from right to left, where the conditional distribution of $v_i$ given $v_{i+1},\ldots,v_N$ is that of a random column vector with additive Haar distribution conditioned on the event 
\begin{equation}\label{eq:notinspan}
v_i \pmod{p} \not \in \Span(v_{i+1} \pmod{p},\ldots,v_N \pmod{p}) \subset \F_p^N,
\end{equation}
where in the case $i=N$ we take the span in \eqref{eq:notinspan} to be the $0$ subspace. Then $A$ is distributed by the Haar measure on $\GL_N(\Z_p)$.
 \end{prop}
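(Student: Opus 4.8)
The plan is to verify two things about the measure $\nu$ on $\Mat_N(\Z_p)$ produced by this sequential sampling procedure: that $\nu$ is supported on $\GL_N(\Z_p)$, and that its pushforward under reduction mod $p^n$ is the uniform measure on $\GL_N(\Z/p^n\Z)$ for every $n \geq 1$. Since, as recalled above, the Haar probability measure on $\GL_N(\Z_p)$ is the unique probability measure with the latter property, this suffices.

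First I would check that the procedure is well-defined and lands in $\GL_N(\Z_p)$. At step $i$ the conditioning event depends only on $v_i \bmod p$, and it has positive probability because $\Span(v_{i+1}\bmod p,\ldots,v_N\bmod p)$ is a proper subspace of $\F_p^N$, its dimension being at most $N-i < N$. By downward induction on $i$, the residues $v_N\bmod p,\ldots,v_1\bmod p$ are linearly independent in $\F_p^N$ --- at step $i$ we append a vector outside the span of those already chosen --- so $A \bmod p \in \GL_N(\F_p)$, hence $A \in \GL_N(\Z_p)$ almost surely and $\nu$ is supported there.

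Next, fix $n \geq 1$ and $g \in \GL_N(\Z/p^n\Z)$ with columns $w_N,\ldots,w_1$. I would write $\Pr(A \equiv g \bmod{p^n})$ as the telescoping product $\prod_{i=1}^N \Pr(v_i \equiv w_i \bmod{p^n} \mid v_j \equiv w_j \bmod{p^n}\text{ for }j>i)$. Conditionally on the later columns, $v_i$ is uniform on $(\Z/p^n\Z)^N$ restricted to the event that $v_i \bmod p$ avoids the subspace $V_i := \Span(w_{i+1}\bmod p,\ldots,w_N\bmod p)$; since $g$ is invertible mod $p$, $\dim V_i = N-i$ exactly. Writing an element of $(\Z/p^n\Z)^N$ as $u_0 + p u'$ with $u_0 \in \F_p^N$ and $u' \in (\Z/p^{n-1}\Z)^N$, the allowed set has size $(p^N - p^{N-i})\,p^{(n-1)N}$, which depends only on $i,n,N,p$ and not on $g$. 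Hence $\Pr(A \equiv g \bmod{p^n}) = \prod_{i=1}^N \frac{1}{(p^N - p^{N-i})\,p^{(n-1)N}}$ is the same for every $g \in \GL_N(\Z/p^n\Z)$, so the pushforward of $\nu$ is uniform (one can double-check consistency with $|\GL_N(\Z/p^n\Z)| = p^{(n-1)N^2}\prod_{i=1}^N(p^N - p^{N-i})$). Letting $n$ vary gives the proposition.

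The content here is routine; the only point requiring care is the invariance of these conditional probabilities across the choice of $g$ --- equivalently, that the count of ``allowed residues'' at step $i$ depends only on $i$ (and $n,N,p$). This rests on $\dim V_i$ being exactly $N-i$, which is precisely where the invertibility of $g$, equivalently the linear independence built into the construction, is used.
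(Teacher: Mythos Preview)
Your proof is correct and takes a genuinely different route from the paper's. The paper verifies directly that the constructed measure is left-invariant under $\GL_N(\Z_p)$: it shows by induction on $j$ that $(Bv_{N-j},\ldots,Bv_N)$ has the same law as $(v_{N-j},\ldots,v_N)$ for any fixed $B\in\GL_N(\Z_p)$, using that additive Haar on $\Z_p^N$ is $\GL_N(\Z_p)$-invariant and that the conditioning event \eqref{eq:notinspan} is preserved by $B$. You instead use the alternative characterization of Haar measure (mentioned in the paper's preliminaries) as the unique measure whose pushforwards to $\GL_N(\Z/p^n\Z)$ are uniform, and compute those pushforwards explicitly by counting. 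Your argument is slightly more elementary in that it avoids appealing to the $\GL_N(\Z_p)$-invariance of additive Haar measure, and as a byproduct it makes explicit both that the construction is supported on $\GL_N(\Z_p)$ and the exact probability of each residue class; the paper's approach is a bit more conceptual and generalizes more readily to other compact groups. The one subtle step in your telescoping---that the conditional law of $v_i$ given the full later columns depends on them only through their reductions mod $p$, hence is constant on the event $\{v_j\equiv w_j\bmod p^n\text{ for }j>i\}$---is handled correctly.
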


\section{Constructing $\Pois^{\mu,2\infty}$}\label{sec:constructing}

In this section we construct the bulk and edge limit processes mentioned in the Introduction, by coupling together many copies of the process $\Pois^{\mu,\infty}(T)$ discussed previously. We will give a uniform construction with general initial condition, and to set up this formalism we define an extended version of earlier signature notation. Throughout this section, $t \in (0,1)$ is a fixed real parameter.

Our goal is to speak of limits of the tuple of singular numbers of a matrix, which is a finite signature, to bi-infinite signatures. A reasonable way to formalize this is to embed all sets $\bSig_n$ into the set of bi-infinite signatures. It is technically convenient to allow these to include $-\infty$ entries, as we will see later, even though it does not make sense to have $-\infty$ as a singular number.

\begin{defi}\label{def:bi-infinite_sig}
Let $\bZ = \Z \cup \{\pm \infty\}$, and define
\begin{equation}\Sig_{\infty} := \{(\mu_n)_{n \in \Z_{\geq 1}} \in \Z^{\Z_{\geq 1}}: \mu_{n+1} \leq \mu_{n} \text{ for all }n \in \Z_{\geq 1}\}
\end{equation}
and
\begin{equation}\bSig_{\infty} := \{(\mu_n)_{n \in \Z} \in \bZ^{\Z_{\geq 1}}: \mu_{n+1} \leq \mu_{n} \text{ for all }n \in \Z_{\geq 1}\},
\end{equation}
and the bi-infinite versions
\begin{equation}\Sig_{2\infty} := \{(\mu_n)_{n \in \Z} \in \Z^\Z: \mu_{n+1} \leq \mu_{n} \text{ for all }n \in \Z\}
\end{equation}
and
\begin{equation}\bSig_{2\infty} := \{(\mu_n)_{n \in \Z} \in \bZ^\Z: \mu_{n+1} \leq \mu_{n} \text{ for all }n \in \Z\}.
\end{equation}
For $x \in \bZ$, we write $(x[2\infty]) = (x)_{n \in \Z} \in \bSig_{2\infty}$. We refer to the elements $\la_n,\mu_n$ above as \emph{parts}, as is standard terminology with integer partitions, and use notation $m_i(\la)$ as in \Cref{def:finite_sigs}. Finally, we use $\Sig_n^+,\sSig_n^+,\Sig_{2\infty}^+,\sSig_{2\infty}^+$ to denote the subsets where all parts lie in $\Z_{\geq 0} \cup \{\infty\}$.
\end{defi}

\begin{defi}\label{def:pi}
    For any finite set $I \subset \Z$, define
\begin{align*}
\pi_I: \sSig_{2\infty} & \to \bZ^I \\ 
\mu & \mapsto (\mu_i)_{i \in I}.
\end{align*}
When it is convenient to do so we will often abuse notation and identify the image of $\pi_I$ in $\bZ^I$, which is just weakly decreasing $n$-tuples, with $\bSig_{|I|}$. Similarly, for a half-infinite interval $I = [a,\infty)$ we define $\pi_I: \sSig_{2\infty} \to \bZ^I$ in the same way, and often abuse notation and identify the image in $\bZ^I$ with $\bSig_\infty$. 
\end{defi}

\begin{defi}
    \label{def:conjugate_parts}
    Given $\mu = (\mu_n)_{n \in \Z} \in \sSig_{2\infty}$, we define $\mu' = (\mu_n')_{n \in \Z} \in \sSig_{2\infty}$ by
    \begin{equation}
\mu_i' = \begin{cases}
    \text{the unique index $j$ such that $\mu_j \geq i, \mu_{j+1} < i$} & \text{ if }\lim_{n \to \infty} \mu_{-n} \geq i > \lim_{n \to \infty} \mu_n \\
    -\infty & \text{ if }i > \lim_{n \to \infty} \mu_{-n} \\
    \infty & \text{ if }i \leq \lim_{n \to \infty} \mu_n
\end{cases}
    \end{equation}
\end{defi}


\begin{defi}
    \label{def:poisson_walks}
    For a fixed parameter $t \in (0,1)$, length $n \in \N \cup \{\infty\}$, and initial condition $\nu \in \Sig_{n}$, we define the stochastic process $\Pois^{\nu,n}(T) = (\Pois^{\nu,n}_1(T),\ldots,\Pois^{\nu,n}_n(T))$ on $\Sig_n$ as follows. For each $1 \leq i \leq n$, $\Pois^{\nu,n}_i$ has an exponential clock of rate $t^i$, and when $\Pois^{\nu,n}_i$'s clock rings, $\Pois^{\nu,n}_i$ increases by $1$ if the resulting $n$-tuple is still weakly decreasing. If not, then $\Pois^{\nu,n}_{i-d}$ increases by $1$ instead and $\Pois^{\nu,n}_i$ remains the same, where $d \geq 0$ is the smallest index so that the resulting tuple is weakly decreasing. In the case of trivial initial condition we will often write $\Pois^{(n)}$ for $\Pois^{(0[n]),n}$. 
\end{defi}

Strictly speaking, the description in \Cref{def:poisson_walks} only makes sense if the set of clock ringing times is discrete. This is simple to show, and we do so in \Cref{thm:tomega_full_measure} once we have set up the relevant probability space. To couple many processes $\Pois^{\mu,\infty}(T)$ together, it is helpful to define notation for certain shifted versions. 

\begin{defi}\label{def:half_infinite_process}
For $\mu \in \sSig_{\infty}$ and $t \in (0,1)$, we define the stochastic process
\begin{equation}\label{eq:def_tpois}
\tPois^{\mu,n}(T) = (\tPois^{\mu,n}_{-n}(T),\tPois^{\mu,n}_{-n+1}(T),\ldots) = (\Pois_1^{\mu,\infty}(t^{-n-1}T),\Pois_2^{\mu,\infty}(t^{-n-1}T),\ldots).
\end{equation}
\end{defi}

We also emphasize that $\tPois^{\mu,n}(T)$ is merely a notational shift of $\Pois^{\mu,\infty}(T)$ as defined in \Cref{def:poisson_walks}, where we make the indices start at $-n$ rather than $1$, and speed up time by a factor of $t^{-n-1}$ so that $\tPois_1^{\mu,n}(T)$ has jump rate $t$, similarly to $\Pois^{\mu,n}_1(T)$ and $\Pois^{\mu,\infty}_1(T)$. 

\begin{defi}\label{def:omega}
Define the probability space 
\begin{equation}
\Omega := \prod_{i \in \Z} \R_{\geq 0}^\N 
\end{equation}
with the product Borel $\sigma$-algebra. Define the probability measure 
\begin{equation}
\Poiss := \prod_{i \in \Z} \Poiss_{t^i} \in \cM(\Omega)
\end{equation}
where $\Poiss_r \in \cM(\R_{\geq 0}^\N)$ is the product over the $\N$ factors of the distributions of rate-$r$ exponential variables.
\end{defi}

Clearly $\Poiss_r$ may be identified with the law of a rate $r$ Poisson jump process on time $T \geq 0$ by viewing each $\R_{\geq 0}$ factor as specifying the waiting time between adjacent jumps (or in the case of the first factor, the waiting time between time $T=0$ and the first jump). Heuristically, $\Pois^{\mu,2\infty}(T)$ is defined by giving each $\Pois_i^{\mu,2\infty}(T)$ an independent exponential clock with rate $t^i$, and having $\Pois_i^{\mu,2\infty}(T)$ jump when its clock rings; here, $\Omega$ is exactly the space of possible sequences of ring times of all of the $\Z$-many clocks, and the measure $\Poiss$ is exactly the desired Poisson measure on the ring times. The main difficulty consists in making sense of this when $\lim_{n \to -\infty} \mu_n$ is finite, i.e. when infinitely many particles with rates in increasing geometric progression are all located at a single point and so infinitely many of their clocks ring on any time interval. 

However, we first make formal the above claim that with probability $1$ only finitely many clocks with indices belonging to any half-infinite interval $[i,\infty)$ ring on a given time interval, which was necessary for \Cref{def:half_infinite_process} to make sense. First define notation 
\begin{align}
\begin{split}
\jumps: \R_{\geq 0} \times \left(\R_{\geq 0}^\N\right) &\to \Z_{\geq 0} \\
(T,(a_1,a_2,\ldots)) &\mapsto \sup\left(\right\{n \geq 0: \sum_{i=1}^n a_i \leq T\left\}\right)
\end{split}
\end{align}
i.e. $\jumps(T,\cdot)$ tells how many times the clock parametrized by the element of $\R_{\geq 0}^\N$ has rung by time $T$. 

\begin{defi}\label{def:tomega}
Denote
\begin{equation}
\tOmega := \{\omega \in \Omega: \sum_{j=i}^\infty \jumps(T, \pi_j(\omega)) < \infty \text{ holds for every $T \geq 0$ and $i \in \Z$}\}.
\end{equation}
\end{defi}

\begin{lemma}\label{thm:tomega_full_measure}
The set $\tOmega \subset \Omega$ has full measure. 
\end{lemma}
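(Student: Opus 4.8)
The claim is that almost surely, for every $T \geq 0$ and every $i \in \Z$, only finitely many of the clocks indexed by $j \geq i$ ring before time $T$. First I would reduce to a countable family of events: it suffices to check the condition for $T$ ranging over positive integers (since $\jumps(T,\cdot)$ is nondecreasing in $T$) and $i$ ranging over $\Z$, so $\tOmega$ is a countable intersection of events and it is enough to show each has full measure, i.e. to fix $T \in \Z_{\geq 1}$ and $i \in \Z$ and show $\sum_{j \geq i} \jumps(T,\pi_j(\omega)) < \infty$ almost surely. For this I would use the first Borel--Cantelli lemma: I will show $\sum_{j \geq i} \Pr(\jumps(T,\pi_j(\omega)) \geq 1) < \infty$, which by Borel--Cantelli implies that almost surely $\jumps(T,\pi_j(\omega)) \geq 1$ for only finitely many $j \geq i$, and moreover I should control the total, not just finiteness of the number of nonzero terms.

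The key estimate is a tail bound on $\jumps(T, \pi_j(\omega))$, where $\pi_j(\omega) \in \R_{\geq 0}^\N$ has law $\Poiss_{t^j}$, i.e. it encodes a rate-$t^j$ Poisson process and $\jumps(T,\pi_j(\omega))$ is a Poisson random variable $N_j$ with mean $t^j T$. Since $t \in (0,1)$, as $j \to \infty$ the mean $t^j T \to 0$. I would bound $\E[N_j] = t^j T$ directly, so that $\sum_{j \geq i} \E[\jumps(T,\pi_j(\omega))] = \sum_{j \geq i} t^j T = \frac{t^i T}{1-t} < \infty$. Then by the monotone convergence theorem (all terms nonnegative) $\E\left[\sum_{j \geq i} \jumps(T,\pi_j(\omega))\right] = \frac{t^i T}{1-t} < \infty$, which forces $\sum_{j \geq i} \jumps(T,\pi_j(\omega)) < \infty$ almost surely. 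This is cleaner than Borel--Cantelli since it directly gives finiteness of the sum (the quantity actually appearing in \Cref{def:tomega}) rather than merely finiteness of the number of nonzero summands. Taking the intersection over the countably many pairs $(T,i) \in \Z_{\geq 1} \times \Z$ and using that a countable intersection of full-measure sets has full measure completes the argument.

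\begin{proof}
It suffices to show that for each fixed $T \in \Z_{\geq 1}$ and $i \in \Z$, the event $E_{T,i} := \{\omega \in \Omega: \sum_{j=i}^\infty \jumps(T,\pi_j(\omega)) < \infty\}$ has full measure; indeed, since $\jumps(T,\cdot)$ is nondecreasing in $T$, one has $\tOmega = \bigcap_{T \in \Z_{\geq 1}} \bigcap_{i \in \Z} E_{T,i}$, a countable intersection. Fix such $T$ and $i$. Under $\Poiss$, the coordinate $\pi_j(\omega) = (a_1^{(j)},a_2^{(j)},\ldots)$ has law $\Poiss_{t^j}$, the product over $\N$ of rate-$t^j$ exponential distributions, so the partial sums $\sum_{k=1}^n a_k^{(j)}$ are the jump times of a rate-$t^j$ Poisson process and $\jumps(T,\pi_j(\omega))$ is a Poisson random variable with mean $t^j T$. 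Hence $\E[\jumps(T,\pi_j(\omega))] = t^j T$, and since all summands are nonnegative, the monotone convergence theorem gives
\begin{equation}
\E\left[\sum_{j=i}^\infty \jumps(T,\pi_j(\omega))\right] = \sum_{j=i}^\infty t^j T = \frac{t^i T}{1-t} < \infty,
\end{equation}
using $t \in (0,1)$. A nonnegative random variable with finite expectation is almost surely finite, so $\sum_{j=i}^\infty \jumps(T,\pi_j(\omega)) < \infty$ almost surely, i.e. $\Poiss(E_{T,i}) = 1$. Taking the intersection over the countably many pairs $(T,i) \in \Z_{\geq 1} \times \Z$ yields $\Poiss(\tOmega) = 1$.
\end{proof}
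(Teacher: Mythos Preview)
Your proof is correct and follows essentially the same approach as the paper: show that for each fixed $T$ and $i$ the finiteness event has full measure (the paper calls this ``an elementary computation with exponential random variables''), then take a countable intersection over $i \in \Z$ and $T \in \N$. Your version is in fact more explicit, carrying out the expectation computation that the paper leaves implicit.
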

\begin{proof}
It is an elementary computation with exponential random variables that for any $T$ and $i$,
\begin{equation}
    \label{eq:finite_omega}
    \sum_{j=i}^\infty \jumps(T, \pi_j(\omega)) < \infty 
\end{equation}
with probability $1$. 
Hence the set of $\omega \in \Omega$ for which \eqref{eq:finite_omega} holds for all $T' \in [0,T]$ is full measure, and the complement $\Omega \setminus \tOmega$ is therefore a union over $i \in \Z, T \in \N$ of measure $0$ sets. It therefore has measure $0$, so $\tOmega$ has full measure.
\end{proof}

We may couple the processes $\tPois^{\pi_{[-n,\infty)}(\mu),n}(T)$ on the probability space $\tOmega$ as follows. Simply note that any sequence of clock ring times for $\tPois_{-n}^{\pi_{[-n,\infty)}(\mu),n},\tPois_{-n+1}^{\pi_{[-n,\infty)}(\mu),n},\ldots$, viewed as an element of $\prod_{i=-n}^\infty \R_{\geq 0}^\N$, determines $(\tPois_{-n}^{\pi_{[-n,\infty)}(\mu),n}(T),\tPois_{-n+1}^{\pi_{[-n,\infty)}(\mu),n}(T),\ldots)$ for all $T \geq 0$ by the jump rules of \Cref{def:poisson_walks}. The random variable $\tPois^{\pi_{[-n,\infty)}(\mu),n}(T)$ is then a function on this probability space,
\begin{equation}
\tPois^{\pi_{[-n,\infty)}(\mu),n}(T): \prod_{i=-n}^\infty \R_{\geq 0}^\N \to \sSig_{\infty}
\end{equation}
for any $T \geq 0$. Therefore
\begin{equation}
\prod_{n \geq 1} \tPois^{\pi_{[-n,\infty)}(\mu),n}(T) \circ \Proj_{[-n,\infty)}: \tOmega \to \prod_{n \geq 1} \sSig_{\infty}
\end{equation}
defines a coupling of all random variables $\{\tPois^{\pi_{[-n,\infty)}(\mu),n}(T): n \geq 1\}$ on $\tOmega$, where $\Proj_{[-n,\infty)}$ denotes projection onto coordinates $-n,-n+1,\ldots$. For each $\omega \in \tOmega$ we denote by $(\Pois^{\pi_{[-n,\infty)}(\mu),n}_i(T))(\omega) \in \bZ$ the corresponding coordinate of $\Pois^{\pi_{[-n,\infty)}(\mu),n}(T)$ under $\omega$. Finally, we may define the desired object.


\begin{defi}\label{def:limit_process}
For any $\mu \in \sSig_{2\infty}$, we define the continuous-time stochastic process $\Pois^{\mu,2\infty}_T, T \geq 0$ on $\sSig_{2\infty}$ by setting
\begin{align}
    \label{eq:define_infinite_dynamics}
    \begin{split}
    \Pois^{\mu,2\infty}(T): \tOmega &\to \sSig_{2\infty} \\
    \omega &\mapsto \left(\lim_{n \to \infty} (\tPois^{\pi_{[-n,\infty)(\mu),n}}_i(T))(\omega) \right)_{i \in \Z}
    \end{split}
\end{align}
for each $T \geq 0$.
\end{defi}

We note that the limit must be taken along $n \in \Z_{\geq -i}$, as $\tPois^{\pi_{[-n,\infty)(\mu),n}}_i(T))(\omega)$ is only well-defined if $n \geq -i$.

\begin{prop}\label{thm:construct_limit}
For any $T \geq 0$ and $\omega \in \tOmega$, the limit \eqref{eq:define_infinite_dynamics} exists and defines a $\sSig_{2\infty}$-valued random variable\footnote{i.e. it is measurable in the $\sigma$-algebra on $\sSig_{2\infty} \subset \bZ^\Z$ inherited from the product $\sigma$-algebra, where each $\bZ$ factor has the discrete $\sigma$-algebra.}. Furthermore, the resulting stochastic process in $T \geq 0$ is Markovian.
\end{prop}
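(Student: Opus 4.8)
The plan is to construct $\Pois^{\mu,2\infty}(T)(\omega)$ as a monotone limit and extract existence from the fact that the relevant sequences are eventually constant. Concretely, I would first prove \emph{monotonicity in the number of particles}: for every $i \in \Z$, $T \geq 0$, and $\omega \in \tOmega$, the sequence $\big(\tPois^{\pi_{[-n,\infty)}(\mu),n}_i(T)\big)(\omega)$ is nonincreasing in $n$ (for $n \geq -i$), and it is bounded below by $\mu_i$ since the dynamics of \Cref{def:poisson_walks} only increase coordinates from their initial values. As these are integers, the sequence stabilizes, so the limit in \eqref{eq:define_infinite_dynamics} exists; moreover this coordinatewise eventual stabilization is exactly what I would use later for the Markov property. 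Weak monotonicity in $i$ and finiteness of all parts then pass to the limit from the approximants, giving $\Pois^{\mu,2\infty}(T)(\omega) \in \Sig_{2\infty}$, and each coordinate is measurable because it is a pointwise limit of the maps $\tPois^{\pi_{[-n,\infty)}(\mu),n}_i(T)$, each of which depends measurably on only finitely many clock rings (the process $\tPois^{\pi_{[-n,\infty)}(\mu),n}$ has finite total jump rate $\sum_{j \geq -n} t^j$, so on $\tOmega$ only finitely many of its clocks ring before $T$).

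The heart of the argument is the monotonicity, which I would isolate as a coupling lemma: run two instances of the dynamics on the same $\omega$ --- instance $A$ with particles at indices $\{-n,-n+1,\dots\}$ started from $(\mu_{-n},\mu_{-n+1},\dots)$, and instance $B$ with particles at indices $\{-n-1,-n,-n+1,\dots\}$ started from $(\mu_{-n-1},\mu_{-n},\mu_{-n+1},\dots)$ and additionally driven by the clock $\pi_{-n-1}(\omega)$ --- and show $B_i(T) \leq A_i(T)$ for all $i \geq -n$ and $T \geq 0$; iterating over $n$ gives the claimed monotonicity. I would prove this by induction over the (a.s.\ discrete) set of clock-ring times: a ring of clock $-n-1$ only moves $B$'s top particle and so changes nothing at indices $i \geq -n$; and at a ring of a clock $j \geq -n$ the jump is, in each instance, donated to the top of the maximal run of equal coordinates containing $j$. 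A short case analysis shows $B_i \leq A_i$ is preserved: the equal-target case and the case where $B$ donates into its extra top particle (``$B$ donates a jump into the sea'') are immediate; whenever $B$'s target $a_B$ lies strictly above $A$'s target in index, the run structure of $A$ together with weak monotonicity forces a strict slack $A_{a_B} > B_{a_B}$ before the ring, which absorbs the increment; and the remaining configurations that would require unavailable strictness are excluded because they contradict the inductive hypothesis. I expect this induction --- the bookkeeping of which slacks are strict --- to be the main obstacle; the rest is soft.

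For the Markov property I would establish a cocycle (flow) identity for the construction: writing $\theta_T\omega$ for the clock environment ``restarted at time $T$'' (the residual waiting time of each clock at $T$ followed by its later waiting times, which is $\Poiss$-a.s.\ well defined since each individual clock rings only finitely often before $T$), I claim $\Pois^{\mu,2\infty}(T+u)(\omega) = \Pois^{\,\Pois^{\mu,2\infty}(T)(\omega),\,2\infty}(u)(\theta_T\omega)$ for all $u \geq 0$. This holds for each finite-$n$ approximant by the standard cocycle property of continuous-time Markov chains, and I would pass to the $n\to\infty$ limit using the particle-count monotonicity above together with a companion monotone-coupling statement expressing continuity of the dynamics in the initial condition, so that the $n$-dependent restart configurations may be replaced by their limit. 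By the memorylessness of the exponential clocks, $\theta_T\omega$ is, conditionally on the $\sigma$-algebra $\mathcal{F}_T$ generated by the clock rings up to time $T$ --- in particular conditionally on $(\Pois^{\mu,2\infty}(s))_{s \leq T}$ --- an independent copy of $\Poiss$; combined with the cocycle identity this gives that $T \mapsto \Pois^{\mu,2\infty}(T)$ is Markov. The one subtle point is that a single clock ring can donate a jump upward across a long run of equal coordinates, so one must check that the time-$T$ configuration, and not the whole past, determines the evolution; this again uses that on $\tOmega$ only finitely many clocks with index in any half-line $[i,\infty)$ ring in finite time (\Cref{thm:tomega_full_measure}), so every relevant such run is finite and is broken after finitely many rings.
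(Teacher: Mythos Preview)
Your proposal is correct and follows essentially the same route as the paper. The monotonicity-in-$n$ statement you isolate is exactly the paper's \Cref{thm:monotonicity}, proved there by the same induction over clock-ring times; existence of the limit then follows identically (nonincreasing integer sequence bounded below by the initial value), and measurability as a limit of measurable functions is the same.

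The only noticeable difference is in the Markov argument. The paper dispatches it in three lines: $\Pois^{\mu,2\infty}(T+s)$ is determined by the full collection $(\tPois^{\pi_{[-n,\infty)}(\mu),n}(T))_{n\geq 1}$ together with the clock rings on $[T,T+s]$, and the latter are independent of the past by memorylessness. Your cocycle formulation $\Pois^{\mu,2\infty}(T+u)(\omega) = \Pois^{\,\Pois^{\mu,2\infty}(T)(\omega),\,2\infty}(u)(\theta_T\omega)$ is a sharper statement which, as you correctly note, requires an additional monotone-coupling lemma in the initial condition to replace the $n$-dependent restart configurations by their limit. The paper avoids this extra step by conditioning on the whole tower of approximants rather than on the limit alone; your version does more work but yields the cleaner flow identity.
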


We first establish a preparatory lemma:

\begin{lemma}
    \label{thm:monotonicity}
    For every $n \in \Z_{\geq 1}, \omega \in \tOmega, T \in \R_{\geq 0}, i \in \Z_{\geq -n}$, the following inequality holds:
\begin{equation}
    \label{eq:monotonicity}
    (\tPois_i^{\pi_{[-n,\infty)}(\mu),n}(T))(\omega) \geq (\tPois_i^{\pi_{[-n-1,\infty)}(\mu),n+1}(T))(\omega).
\end{equation}
\end{lemma}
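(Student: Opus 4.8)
# Proof Proposal for Lemma (Monotonicity)

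The plan is to prove the inequality \eqref{eq:monotonicity} by a coupling argument, comparing the two processes $\tPois^{\pi_{[-n,\infty)}(\mu),n}$ and $\tPois^{\pi_{[-n-1,\infty)}(\mu),n+1}$ jump-by-jump on the common probability space $\tOmega$. The key observation is that both processes are driven by the \emph{same} clock ring times: the clock for the walker with index $i$ (counting from the top, so corresponding to jump rate $t^{i+n+1}$ in the $n$-indexed process, equivalently the $(i+n+1)$-th component of $\omega$ after the shift) rings at exactly the same times for both. The process with more walkers has one extra walker at the bottom (index $-n-1$) starting at $\mu_{-n-1} \leq \mu_{-n}$, and otherwise starts from the same configuration $\pi_{[-n,\infty)}(\mu)$ on indices $-n, -n+1, \ldots$.

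First I would set up the comparison carefully. Write $X(T) = \tPois^{\pi_{[-n-1,\infty)}(\mu),n+1}(T)(\omega)$ for the taller process and $Y(T) = \tPois^{\pi_{[-n,\infty)}(\mu),n}(T)(\omega)$ for the shorter one, with coordinates indexed by $\Z_{\geq -n-1}$ and $\Z_{\geq -n}$ respectively. By \Cref{thm:tomega_full_measure} (more precisely, the definition of $\tOmega$), on any finite time interval $[0,T]$ only finitely many of the relevant clocks — those with index $\geq -n-1$, restricted to the indices that matter — ring, so we may enumerate the ring times $0 < T_1 < T_2 < \cdots$ and proceed by induction on the number of ring times that have occurred. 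The inductive hypothesis is that $X_i(T_k) \leq Y_i(T_k)$ for all $i \geq -n$. At time $T=0$ this holds with equality since both are initialized by $\mu$ on these indices.

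The inductive step is the heart of the argument. Suppose the $(k+1)$-st ring belongs to the clock of walker with a fixed index $j \geq -n$ (if it belongs to walker $-n-1$, only $X$ is affected and $X_{-n-1}$ increases, which cannot violate the inequality on indices $\geq -n$; actually one must check this does not push walker $-n$ in $X$ — it can, so this case needs the monotone-pushing lemma too). When the clock rings, each process applies the jump rule of \Cref{def:poisson_walks}: the jump is "donated upward" to the first available walker $j - d$ where the configuration permits an increase. The main thing to verify is that, given $X_i \leq Y_i$ for all $i \geq -n$ before the ring, after applying the jump rule to both we still have $X_i \leq Y_i$ for all $i \geq -n$. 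This is a statement about the monotonicity of the pushing dynamics with respect to the coordinatewise partial order, together with the fact that the $X$-configuration may have an extra walker below index $-n$ which can only \emph{absorb} jumps (receive a donated jump at index $-n$ or lower) rather than block them from above. I expect to handle this by a short case analysis: the donated jump in $Y$ lands at some index $j - d_Y \geq -n$, the donated jump in $X$ lands at $j - d_X$; since $X \leq Y$ coordinatewise and the "blocking" in the $X$ side can only be looser at the bottom, one shows $d_X \geq d_Y$ isn't quite the right comparison — rather one directly checks that incrementing coordinate $j-d_X$ in $X$ and coordinate $j-d_Y$ in $Y$ preserves the inequality, using that whenever $Y_i$ increases at index $i \geq -n$, either $X_i$ also increased or $X_i$ was already strictly less than the new $Y_i$.

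The main obstacle will be making this monotonicity-of-pushing step fully rigorous: the reflection/donation rule is a global operation on the configuration (the donated jump can travel arbitrarily far up through a block of equal coordinates), so one must argue that it is order-preserving and that the extra bottom walker in the taller process genuinely cannot cause a "larger" push at any index $\geq -n$. I would isolate this as a clean sub-lemma: if $x \leq y$ coordinatewise in $\Sig_\infty$ (suitably interpreted with the index offset), then applying one jump-donation step triggered by the same walker index to both yields $x' \leq y'$. Proving this sub-lemma is a finite, elementary check on weakly decreasing sequences, but it is where all the content sits; once it is in hand, the induction and the passage to the limit defining $\Pois^{\mu,2\infty}$ in \Cref{def:limit_process} are routine, and in particular this lemma is exactly what guarantees the limit in \eqref{eq:define_infinite_dynamics} is a monotone (hence convergent in $\bZ$) limit.
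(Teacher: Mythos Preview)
Your strategy --- coupling both processes on $\tOmega$ and inducting over the (finitely many) clock rings --- is exactly the paper's approach. However, you have the geometry reversed at a crucial point. In $\tPois^{\cdot,n}$ the walker at index $-n$ is the \emph{top} walker (it corresponds to $\Pois^{\cdot,\infty}_1$ under the shift in \Cref{def:half_infinite_process}), so the extra walker at index $-n-1$ in the $(n+1)$-process sits \emph{above} all the others, with initial position $\mu_{-n-1} \geq \mu_{-n}$ (not $\leq$ as you write). Two of your specific claims then fail. First, when clock $-n-1$ rings in $X$, walker $-n-1$ is never blocked (donations go to \emph{smaller} indices, and there is no smaller index), so it simply increments $X_{-n-1}$ and cannot ``push walker $-n$''; that case is trivial. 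Second, your stated intuition that ``the extra bottom walker cannot cause a larger push'' is built on the wrong picture: the correct mechanism is that the extra \emph{top} walker in $X$ may absorb a donated jump that in $Y$ would have landed at index $-n$, which is exactly why $X_i$ can lag behind $Y_i$.

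Once the geometry is corrected, the inductive step reduces to showing that whenever $X_i = Y_i$ before a ring and $X_i$ increments, then $Y_i$ also increments. The paper handles this by a short contradiction: if $Y_i$ were blocked by $Y_{i-1}$ (necessarily $i > -n$), then combining the inductive hypothesis $Y_{i-1} \geq X_{i-1}$ with $Y_{i-1} = Y_i = X_i$ and $X_{i-1} \geq X_i$ forces $X_{i-1} = X_i$, so $X_i$ would be blocked as well. Your sub-lemma formulation would amount to the same check, but as written it rests on the inverted picture and would not go through until top and bottom are swapped.
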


\Cref{thm:monotonicity} is a purely deterministic/combinatorial fact, and the idea behind it is that $\tPois^{\cdot,n+1}_T$ has an extra particle in front compared to $\tPois^{\cdot,n}_T$, which may block the others but will never bring them further ahead. It holds for the half-infinite processes $\tPois$ but not for the finite $n$ approximations $\Pois^{\pi_{[-n,n]}(\mu),n}$, as these do not account for pushing by higher-indexed particles. This is the main reason we use the former process rather than the latter in our construction. 

\begin{proof}[Proof of \Cref{thm:monotonicity}]
Since $\omega \in \tOmega$, the clocks $-n-1,-n,-n+1,\ldots$ only ring a finite number of times in any interval $[0,T]$. Additionally, the lemma clearly holds at time $T=0$. Hence it suffices to show that if \eqref{eq:monotonicity} is true for each $i$ before a given clock rings, then it is also true for each $i$ after that clock rings, for then we may induct on the (finite, by above) number of rings. Let $T,\eps \geq 0$ be such that \eqref{eq:monotonicity} holds at time $T$, and under the event $\omega$ exactly one clock rings on the interval $[T,T+\eps]$.

If the strict inequality case of \eqref{eq:monotonicity} holds for a given $i$ before the clock rings (i.e. at time $T$), then clearly \eqref{eq:monotonicity} still holds after at time $T+\eps$ because the $\tPois_i^{\pi_{[-n,\infty)}(\mu),n}$ can change by at most $1$ when any clock rings. So it remains to consider the equality case
\begin{equation}
    \label{eq:pre-jump}
    (\tPois_i^{\pi_{[-n,\infty)}(\mu),n}(T))(\omega) = (\tPois_i^{\pi_{[-n-1,\infty)}(\mu),n+1}(T))(\omega)
\end{equation}
of \eqref{eq:monotonicity} holds for some index $i$ at time $T$, and the $(n+1)\tth$ approximation has a jump at the same index,
\begin{equation}\label{eq:i_jump}
    (\tPois_i^{\pi_{[-n-1,\infty)}(\mu),n+1}(T+\eps))(\omega) = (\tPois_i^{\pi_{[-n-1,\infty)}(\mu),n+1}(T))(\omega)+1.
\end{equation}
To show that \eqref{eq:monotonicity} continues to hold at time $T+\eps$, we must show that this jump occurs at the same location for the $n\tth$ approximation,
\begin{equation}
\label{eq:i_jump_wts}
 (\tPois_i^{\pi_{[-n,\infty)}(\mu),n}(T+\eps))(\omega) = (\tPois_i^{\pi_{[-n,\infty)}(\mu),n}(T))(\omega)+1
\end{equation}

The clock that rings to induce the jump \eqref{eq:i_jump} must be the $j\tth$ clock, for some $j \geq i$ for which $(\tPois_j^{\pi_{[-n-1,\infty)}(\mu),n+1}(T))(\omega) = (\tPois_i^{\pi_{[-n-1,\infty)}(\mu),n+1}(T))(\omega)$, by the definition of our dynamics. Since \eqref{eq:monotonicity} held before the jump, we have 
\begin{align}
\begin{split}
(\tPois_i^{\pi_{[-n,\infty)}(\mu),n}(T))(\omega) &\geq (\tPois_j^{\pi_{[-n,\infty)}(\mu),n}(T))(\omega) \\ 
&\geq (\tPois_j^{\pi_{[-n-1,\infty)}(\mu),n+1}(T))(\omega) \\
&= (\tPois_i^{\pi_{[-n-1,\infty)}(\mu),n+1}(T))(\omega) \\ 
&= (\tPois_i^{\pi_{[-n,\infty)}(\mu),n}(T))(\omega)
\end{split}
\end{align}
(using \eqref{eq:pre-jump}), so all above inequalities must be equalities. It follows that the particle of $\tPois^{\pi_{[-n(\mu),\infty)},n}_T$ which jumps on $[T,T+\eps]$ began at position $(\tPois_i^{\pi_{[-n,\infty)}(\mu),n}(T))(\omega)$ rather than some other one. Hence one of the following must be true: (a) \eqref{eq:i_jump_wts} holds,
or (b) $i>-n$ and $(\tPois_i^{\pi_{[-n,\infty)}(\mu),n}(T))(\omega) = (\tPois_{i-1}^{\pi_{[-n,\infty)}(\mu),n}(T))(\omega)$ (for then $\tPois_i^{\pi_{[-n,\infty)}(\mu),n}$ is blocked by $\tPois_{i-1}^{\pi_{[-n,\infty)}(\mu),n}$). 

Suppose for the sake of contradiction that (b) holds. Then since \eqref{eq:monotonicity} holds for $i-1$ at time $T$ by inductive hypothesis,
\begin{align}
\begin{split}
(\tPois_i^{\pi_{[-n,\infty)}(\mu),n}(T))(\omega) &= (\tPois_{i-1}^{\pi_{[-n,\infty)}(\mu),n}(T))(\omega) \\ 
&\geq (\tPois_{i-1}^{\pi_{[-n-1,\infty)}(\mu),n+1}(T))(\omega) \\ 
&\geq (\tPois_i^{\pi_{[-n-1,\infty)}(\mu),n+1}(T))(\omega) \\ 
&= (\tPois_i^{\pi_{[-n,\infty)}(\mu),n}(T))(\omega),
\end{split}
\end{align}
so again all inequalities must be equalities and 
\begin{equation}\label{eq:n+1_still_equal}
(\tPois_{i-1}^{\pi_{[-n-1,\infty)}(\mu),n+1}(T))(\omega) = (\tPois_i^{\pi_{[-n-1,\infty)}(\mu),n+1}(T))(\omega).
\end{equation}
Since only one jump occurs on the interval $[T,T+\eps]$, \eqref{eq:i_jump} and \eqref{eq:n+1_still_equal} imply that 
\begin{equation}
(\tPois_i^{\pi_{[-n-1,\infty)}(\mu),n+1}(T+\eps))(\omega) = (\tPois_{i-1}^{\pi_{[-n-1,\infty)}(\mu),n+1}(T+\eps))(\omega) + 1,
\end{equation}
which violates the weakly decreasing order. Hence (b) cannot hold, so \eqref{eq:i_jump_wts} holds, which completes the proof.
\end{proof}

\begin{proof}[Proof of \Cref{thm:construct_limit}]
We show that for any $\omega \in \tOmega,i \in \Z,T \in \R_{\geq 0}$, the limit
\begin{equation}
    \label{eq:want_limit_exists}
    \lim_{n \to \infty} (\tPois^{\pi_{[-n,\infty)(\mu),n}}_i(T))(\omega) 
\end{equation}
exists.

The sequence $((\tPois_i^{\pi_{[-n,\infty)}(\mu),n}(T))(\omega))_{n \geq -i}$ is bounded below by $(\tPois_i^{\pi_{[-n,\infty)}(\mu),n}(0))(\omega)$ (which is independent of $n \geq -i$), because coordinates of $\tPois^{\cdot,n}_T$ are nondecreasing in time. Since $((\tPois_i^{\pi_{[-n,\infty)}(\mu),n}(T))(\omega))_{n \geq -i}$ is also decreasing in $n$ by \Cref{thm:monotonicity}, it is immediate that the limit \eqref{eq:want_limit_exists} exists. Hence $\Pois^{\mu,2\infty}(T)$ is well-defined. Furthermore, each coordinate $\Pois^{\mu,2\infty}_i(T)$ is a limit of measurable functions $\tPois_i^{\pi_{[-n,\infty)}(\mu),n}(T): \tOmega \to \bZ$ and hence measurable, so $\Pois^{\mu,2\infty}(T)$ is measurable with respect to the product $\sigma$-algebra on $\bZ^\Z$.

We now show $\Pois^{\mu,2\infty}(T)$ is Markovian, which holds by the following facts:
\begin{itemize}
    \item For any fixed $T \geq 0$, $\Pois^{\mu,2\infty}(T)$ is determined by $(\tPois^{\pi_{[-n,\infty)}(\mu),n})_{n \geq 1}(T)$ by the above.
    \item 
    For $s \geq 0$ and for each $n \geq 1$, $\tPois^{\pi_{[-n,\infty)}(\mu),n}({T+s})$ is determined by $\tPois^{\pi_{[-n,\infty)}(\mu),n}(T)$ together with the complete data of which clocks ring when on the interval $[T,T+s]$, by definition. 
    \item The complete data of which clocks ring when on the interval $[T,T+s]$ is independent of everything earlier, by the memoryless property of exponential distributions.
\end{itemize}
This completes the proof.
\end{proof}

Some properties of $\Pois^{\mu,2\infty}(T)$ will be useful later.

\begin{defi}
    \label{def:trunc_map}
    For any $d \in \Z$ we define $F_d: \sSig_{2\infty} \to \sSig_{2\infty}$ by 
    \begin{equation}
F_d((\mu_n)_{n \in \Z}) = (\min(\mu_n,d))_{n \in \Z}.
    \end{equation}
    We define $F_d$ on $\sSig_\infty$ and $\sSig_n$ in exactly the same way.
\end{defi}

\begin{prop}\label{thm:markovian_projections}
For any $d \in \Z$ and $\mu \in \sSig_\infty$, $F_d(\Pois^{\mu,2\infty}(T))$ is a Markov process.
\end{prop}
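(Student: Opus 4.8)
## Proof proposal for Proposition~\ref{thm:markovian_projections}

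\textbf{Overall strategy.} The plan is to exploit the construction of $\Pois^{\mu,2\infty}(T)$ as a limit of the half-infinite processes $\tPois^{\pi_{[-n,\infty)}(\mu),n}(T)$ on the single probability space $(\tOmega,\Poiss)$, and to show that truncating at level $d$ commutes with this limiting procedure in a way that produces a self-contained dynamics. The key structural point is the familiar feature of pushing/reflection systems: a clock ring at index $j$ only ever propagates the jump \emph{leftward} (to a lower index), to the ``next available walker.'' Consequently, whether a particle currently at a position $\leq d$ ever moves, and where, depends only on the configuration of particles at positions $\leq d$ together with the presence or absence of a blocking particle immediately to its left --- and that blocker, if it sits at position $\le d$, is itself tracked by $F_d$, while if it sits at position $> d$ it blocks $\emph{permanently}$ from the point of view of the truncated chain. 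So the lower part of the configuration evolves autonomously.

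\textbf{Step 1: reduce to the half-infinite processes.} First I would observe that $F_d(\Pois^{\mu,2\infty}(T))(\omega) = \lim_{n\to\infty} F_d(\tPois^{\pi_{[-n,\infty)}(\mu),n}(T))(\omega)$ coordinatewise, since $\min(\cdot,d)$ is continuous and the limit in \eqref{eq:define_infinite_dynamics} exists (Proposition~\ref{thm:construct_limit}); moreover by the monotonicity Lemma~\ref{thm:monotonicity} the sequences are eventually monotone, so each coordinate of $F_d(\Pois^{\mu,2\infty}(T))$ is \emph{eventually constant in $n$} once the true value is attained, uniformly on compact time intervals (using that on $[0,T]$ only finitely many of the clocks with index $\ge -i$ ring). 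This lets me transfer the Markov property: I would show that for each finite index window $[-d_0,d_0]$ and each $n$ large, $\pi_{[-d_0,d_0]}\bigl(F_d(\tPois^{\pi_{[-n,\infty)}(\mu),n}(T+s))\bigr)$ is a function of $F_d(\tPois^{\pi_{[-n,\infty)}(\mu),n}(T))$ and of the clock-ring data on $[T,T+s]$, which is $\Poiss$-independent of the past by memorylessness exactly as in the proof of Proposition~\ref{thm:construct_limit}.

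\textbf{Step 2: the truncated half-infinite dynamics is closed.} This is the combinatorial heart. For fixed $n$, I would argue that $F_d(\tPois^{\pi_{[-n,\infty)}(\mu),n}(T))$ is itself a (time-changed) Markov jump process whose transitions depend only on its own current state: when clock $j$ rings, trace the pushing rule of Definition~\ref{def:poisson_walks}. If $\tPois^{\cdot,n}_j(T) > d$, the resulting jump --- wherever it lands after pushing --- is at an index $i\le j$ with $\tPois^{\cdot,n}_i(T)=\tPois^{\cdot,n}_j(T)$, hence still $>d$, so $F_d$ is unchanged; the only subtlety is if $i$ is the smallest index where the tie breaks and $\tPois^{\cdot,n}_i(T)=d+1$ before the jump --- but then $\tPois^{\cdot,n}_i$ moves to $d+2>d$ and again $F_d$ is unchanged. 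If instead $\tPois^{\cdot,n}_j(T)\le d$, then all tied indices down to the jumping index $i$ satisfy $\tPois^{\cdot,n}_i(T)\le d$, and whether the jump is ``absorbed'' (because the next particle up sits at $>d$, equivalently $F_d$-value $d$, so $F_d$ doesn't change) or actually increments some $F_d$-coordinate is determined entirely by $F_d$ of the current state. Thus there is a well-defined generator $Q^{(n)}_d$ acting on $F_d$-configurations, and $F_d(\tPois^{\pi_{[-n,\infty)}(\mu),n}(T))$ is Markov with generator $Q^{(n)}_d$.

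\textbf{Step 3: pass to the limit.} Finally, combining Steps~1 and~2: for any cylinder event on finitely many coordinates at finitely many times $0\le T_1<\dots<T_k$, write the joint law of $F_d(\Pois^{\mu,2\infty}(T_j))$ as the $n\to\infty$ limit of the joint law of $F_d(\tPois^{\pi_{[-n,\infty)}(\mu),n}(T_j))$ (valid by the eventual-constancy in Step~1), each of which factors through the Markov transition kernels of $Q^{(n)}_d$ from Step~2. The kernels $Q^{(n)}_d$ are monotone in $n$ in the appropriate sense and, restricted to any finite coordinate window, stabilize --- or at worst one takes the limit directly on finite-dimensional distributions --- so the limiting finite-dimensional distributions satisfy the Chapman--Kolmogorov equations, giving the Markov property of $F_d(\Pois^{\mu,2\infty}(T))$. (One should note the cosmetic point that here $\mu\in\sSig_\infty$ or more generally $\sSig_{2\infty}$; the statement and argument are identical.)

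\textbf{Main obstacle.} I expect Step~2 --- checking that the pushing rule genuinely closes up under the $\min(\cdot,d)$ truncation, handling the boundary cases around position $d$ and $d+1$ carefully, and in particular confirming that a particle at position $>d$ acts as a \emph{permanent} blocker for the truncated chain --- to be the delicate part, though it is ultimately elementary. The secondary technical annoyance is making the interchange of limits in Step~3 fully rigorous when $\lim_{n\to-\infty}\mu_n$ is finite, i.e.\ ensuring that the eventual-constancy in $n$ from Lemma~\ref{thm:monotonicity} is uniform enough on compact time intervals to push through the multi-time distributions; the Markovian projection content of Theorem~\ref{thm:sea_intro} (the identification with $F_d(\Pois^{(\mu_{k+1},\dots),\infty}(t^kT))$ when $\mu$ has a part $\ge d$) is the clean way to package this.
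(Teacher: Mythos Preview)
Your proposal is correct and follows essentially the same approach as the paper's proof: show that $F_d(\tPois^{\nu,n}(T))$ is Markov for each half-infinite approximant, then pass to the limit using the coupling on $\tOmega$ and memorylessness of the clocks. The paper's proof is extremely terse (three sentences, deferring to Definitions~\ref{def:poisson_walks}--\ref{def:half_infinite_process} for Step~2 and to the Markov argument in Proposition~\ref{thm:construct_limit} for Step~3), whereas you spell out the combinatorial closure argument and the limit passage in more detail; but the structure is identical. One small caution: your final parenthetical invoking Theorem~\ref{thm:sea_intro} as a ``clean way to package'' the limit would be circular in the paper's logical order, since that theorem is assembled from Proposition~\ref{thm:markovian_projections} and Proposition~\ref{thm:describe_projections}; your main argument does not actually need it.
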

\begin{proof}
It is clear from \Cref{def:poisson_walks} and \Cref{def:half_infinite_process} that $F_d(\tPois^{\nu,n}(T))$ is a Markov process for any $\nu \in \sSig_{\infty}$. Clearly $F_d(\Pois^{\mu,2\infty}(T))$ is a limit of $F_d(\tPois^{\pi_{[-n,\infty]}(\mu),n}(T))$, by the same proof as \Cref{thm:construct_limit}, and the Markov property is inherited by the limit as in that proof.
\end{proof}

\begin{prop}\label{thm:describe_projections}
Fix $d \in \Z$. If $\mu \in \sSig_{2\infty}$ has a part $\mu_k \geq d$ for some $k$, then the process $(\min(d,\Pois^{\mu,2\infty}_{k+i}(T))_{i \in \Z_{\geq 1}}$, obtained by throwing away the coordinates which are equal to $d$ for all time from $F_d(\Pois^{\mu,2\infty}(T))$, is equal in multi-time distribution to $F_d(\Pois^{(\mu_{k+1},\mu_{k+2},\ldots),\infty}(t^k T))$. 
\end{prop}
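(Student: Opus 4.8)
The plan is to establish the claim via the same limiting construction used to define $\Pois^{\mu,2\infty}(T)$, showing that the corresponding statement holds for each finite approximation $\tPois^{\pi_{[-n,\infty)}(\mu),n}$ and then passing to the limit. First I would reduce to a statement about the half-infinite processes: by \Cref{def:limit_process}, $F_d(\Pois^{\mu,2\infty}(T))$ is the coordinatewise limit of $F_d(\tPois^{\pi_{[-n,\infty)}(\mu),n}(T))$, and likewise $F_d(\Pois^{(\mu_{k+1},\mu_{k+2},\ldots),\infty}(t^kT))$ is (up to the notational reindexing in \Cref{def:half_infinite_process}) directly one of these half-infinite processes, namely $\tPois^{(\mu_{k+1},\mu_{k+2},\ldots),?}$ suitably shifted, with the time speed-up $t^k$ matching the jump rates. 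So it suffices to prove the analogous deterministic/combinatorial fact at the level of the coupled processes on $\tOmega$, and then take $n \to \infty$.

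The key structural input is a \emph{decoupling} observation: in the dynamics of \Cref{def:poisson_walks}, particles at positions $\geq d$ never interact with particles at positions $< d$, and more sharply, a particle at a position $\geq d$ can push a lower-indexed particle, but once we apply $F_d$ the positions $\geq d$ are all collapsed to $d$ and their precise values become irrelevant to the evolution of the truncated lower particles. Concretely, fix $n$ large enough that $-n \leq k$ and $\pi_{[-n,\infty)}(\mu)$ has a part $\geq d$ at index $\leq k$ (possible since $\mu_k \geq d$). I would argue that in $\tPois^{\pi_{[-n,\infty)}(\mu),n}$, the coordinates with index $> k$ that ever sit strictly below $d$, together with their truncation at $d$, evolve exactly as the process $\Pois^{(\mu_{k+1},\mu_{k+2},\ldots),\infty}$ truncated at $d$, run at the time-rescaled speed $t^k$ dictated by the fact that the $(k+1)$st clock has rate $t^{k+1}$ (the extra factor $t$ versus $t^1$ accounting for the index shift). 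The point is that clocks of indices $\leq k$, when they ring, either act on a particle at position $\geq d$ (irrelevant after truncation) or get donated to a particle strictly below $d$ — but the latter cannot happen as long as some particle of index $\leq k$ sits at position $\geq d$, which by monotonicity of the coordinates in time and the assumption $\mu_k \geq d$ persists for all time (a particle only moves up). Hence the first particle that can ever dip below $d$ is particle $k+1$, and the induced dynamics on $(\min(d, \Pois_{k+1}), \min(d,\Pois_{k+2}),\ldots)$ is governed solely by clocks $k+1, k+2, \ldots$ with their pushing rules among themselves, which is precisely $F_d$ applied to a copy of $\Pois^{(\mu_{k+1},\ldots),\infty}$ with clock rates $t^{k+1}, t^{k+2},\ldots$, i.e. time-rescaled by $t^k$ relative to the standard rates $t^1, t^2, \ldots$.

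Having established this identity of processes for each finite $n$ (on the coupled space $\tOmega$, hence in particular in multi-time distribution), I would pass to the limit $n \to \infty$. For the left-hand side this is immediate from the definition of $\Pois^{\mu,2\infty}$ and the continuity of $F_d$ and of throwing away the identically-$d$ coordinates. For the right-hand side, $\Pois^{(\mu_{k+1},\ldots),\infty}$ does not depend on $n$ at all, so nothing needs to be done; one only needs to check that the coordinates thrown away (those equal to $d$ for all time) are consistently indexed, which follows because $\mu_k \geq d$ forces the $k$th coordinate of $\Pois^{\mu,2\infty}(T)$ to stay $\geq d$ for all $T$, so exactly the coordinates with index $\leq k$ are the ones collapsed. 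Convergence in multi-time finite-dimensional distribution then transfers the identity to the limit.

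The main obstacle I anticipate is making the decoupling argument fully rigorous in the presence of the pushing interactions: one must carefully verify that a ringing clock of index $j \leq k$ can never be donated \emph{downward} past index $k$ into the region below $d$, i.e. that there is always an "available" particle of index $\leq k$ at a position $\geq d$ to absorb such a ring. This is where the hypothesis $\mu_k \geq d$ and the monotonicity of each coordinate in $T$ (coordinates of $\tPois$ are nondecreasing in time, as used in the proof of \Cref{thm:construct_limit}) are essential, together with the fact established in \Cref{thm:monotonicity}-style reasoning that the relevant quantities are controlled; I would handle it by an induction on the (a.s. finite, by $\omega \in \tOmega$) number of clock rings of indices in $[-n, k]$ on a bounded time interval, showing the invariant "some particle of index $\leq k$ sits at position $\geq d$" is preserved. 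The remaining steps — the reindexing bookkeeping and the limit transfer — are routine.
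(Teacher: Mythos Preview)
Your approach is essentially the same as the paper's: reduce to the finite approximations $\tPois^{\pi_{[-n,\infty)}(\mu),n}$, verify the identity there from the explicit dynamics, and pass to the limit. The paper does this in two lines, simply asserting that the prelimit identity holds ``by the explicit description of the dynamics \Cref{def:poisson_walks} and \Cref{def:half_infinite_process}''; you are filling in the details of that assertion.

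One point of confusion in your obstacle paragraph: donations in \Cref{def:poisson_walks} go from index $i$ to $i-d$, i.e.\ to \emph{lower} indices (higher-ranked particles), never to higher indices. So a clock of index $j \leq k$ can only donate to indices $\leq j \leq k$; there is no danger of it being ``donated past index $k$'' in the direction you worry about. The subtlety actually runs the other way: a clock of index $j > k$ could donate its jump to some index $\leq k$, but this only happens when particles $k+1,\ldots,j$ are all at the same position as particle $k$, which is $\geq d$, so after applying $F_d$ all of these are pinned at $d$ and the jump is invisible---matching what happens in $F_d(\Pois^{(\mu_{k+1},\ldots),\infty})$ where particle $1$ (the top) absorbs the donation and jumps from $d$ to $d+1$, also invisible after $F_d$. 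Once you correct the direction, your induction-on-ring-times argument goes through straightforwardly and no additional invariant is needed beyond the fact that particle $k$ stays $\geq d$ for all time (immediate from monotonicity and $\mu_k \geq d$).
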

\begin{proof}
By our construction,
\begin{equation}
(\min(d,\Pois^{\mu,2\infty}_{k+i}(T))_{i \in \Z_{\geq 1}} = \left(\lim_{n \to \infty} F_d(\tPois^{\pi_{[-n,\infty)(\mu),n}}(T))_{k+i} \right)_{i \in \Z_{\geq 1}}.
\end{equation}
For all $n \geq -k$, 
\begin{equation}
(F_d(\tPois^{\pi_{[-n,\infty)(\mu),n}}(T))_{k+i})_{i \in \Z_{\geq 1}} = F_d(\Pois^{(\mu_{k+1},\mu_{k+2},\ldots),\infty}_i(t^k T))_{i \in \Z_{\geq 1}})
\end{equation}
in multi-time distribution by the explicit description of the dynamics \Cref{def:poisson_walks} and \Cref{def:half_infinite_process}. This completes the proof.
\end{proof}

For completeness, we relate the above discussion to what was stated in the Introduction.

\begin{proof}[Proof of \Cref{thm:sea_intro}]
The construction was done in \Cref{thm:construct_limit}, the Markovian projection property is \Cref{thm:markovian_projections}, and the explicit description of these projections is given in \Cref{thm:describe_projections}.
\end{proof}

We now prove that our construction is the bulk limit of the processes $\Pois^{\nu,n}$, which we will not use in further proofs, but reassures us that the above coupling did indeed capture the notion of a bi-infinite limiting version of $\Pois^{\nu,n}$.

\begin{prop}\label{thm:construct_limit_intro_slight_generalization}
For any $\mu \in \sSig_{2\infty}$, there exists a stochastic process $\Pois^{\mu,2\infty}(T),T \geq 0$, with $\Pois^{\mu,2\infty}(0)=\mu$, which is a bulk limit of the processes $\Pois^{\nu,n}$ above in the following sense. The processes $\Pois^{(\mu_{1-r_n},\ldots,\mu_{n-r_n}),n}(T), n \geq 1$ may be coupled on $\tOmega$ such that for any $D \in \N$, $T_1 \in \R_{\geq 0}$ and sequence of `bulk observation points' $r_n, n \geq 1$ with $r_n \to \infty$ and $n-r_n \to \infty$,
\begin{equation}
 (\Pois^{(\mu_{1-r_n},\ldots,\mu_{n-r_n}),n}_{r_n-D}(t^{-r_n}T),\ldots,\Pois^{(\mu_{1-r_n},\ldots,\mu_{n-r_n}),n}_{r_n+D}(t^{-r_n}T)) \to (\Pois^{\mu,2\infty}_{-D}(T),\ldots,\Pois^{\mu,2\infty}_{D}(T)) 
\end{equation}
almost surely for all $0 \leq T \leq T_1$. 
\end{prop}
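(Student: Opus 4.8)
The plan is to reuse the coupling on $\tOmega$ that was already built in \Cref{def:limit_process}, but to identify each finite approximation $\Pois^{(\mu_{1-r_n},\ldots,\mu_{n-r_n}),n}$ as a truncation-compatible object living on the same probability space. Concretely, I would first relabel indices: the process $\Pois^{(\mu_{1-r_n},\ldots,\mu_{n-r_n}),n}(t^{-r_n}T)$, after shifting indices down by $r_n$ (so that the coordinates run over $\{1-r_n,\ldots,n-r_n\}$) and rescaling time by $t^{-r_n}$, is a process whose $i$-th walker has jump rate $t^{i}$, exactly matching the rate convention of $\tPois^{\cdot, r_n - 1}$ restricted to its first $n$ coordinates. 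So the natural thing is to couple it on $\tOmega$ by feeding it the clock data $\pi_{[1-r_n, n-r_n]}(\omega)$ — i.e.\ use the clocks indexed $1-r_n$ through $n-r_n$ — and run the jump rules of \Cref{def:poisson_walks}. This makes all of the processes $\Pois^{(\mu_{1-r_n},\ldots,\mu_{n-r_n}),n}(t^{-r_n}T)$, suitably reindexed, functions of $\omega \in \tOmega$.

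Next I would establish a two-sided squeeze for the reindexed coordinate. Write $Q^{(n)}_i(T)(\omega)$ for the shifted finite process's $i$-th coordinate. On one side, $Q^{(n)}_i(T)(\omega) \le (\tPois^{\pi_{[1-r_n,\infty)}(\mu), r_n-1}_i(T))(\omega)$: the finite process has fewer walkers above position $i$ than the half-infinite process $\tPois^{\pi_{[1-r_n,\infty)}(\mu),r_n-1}$, and by the same deterministic blocking argument as in \Cref{thm:monotonicity} (adding walkers in front can only slow coordinate $i$ down, never speed it up), dropping the walkers indexed $> n - r_n$ can only increase $Q^{(n)}_i$. On the other side, $Q^{(n)}_i(T)(\omega) \ge (\tPois^{\pi_{[1-r_n,\infty)}(\mu), r_n-1}_i(T))(\omega)$ would be false in general, so instead I compare the finite process to the half-infinite one with the \emph{bottom} truncated: for any fixed $D$ and $T \le T_1$, on the event $\omega \in \tOmega$ only finitely many of the clocks indexed $\ge -D$ ring on $[0,T_1]$, and the coordinate $Q^{(n)}_{i}(T)$ for $|i| \le D$ cannot be affected by walkers starting far below — more precisely, since $n - r_n \to \infty$, for $n$ large enough all walkers of the finite process that could reach up to near position $i$ within time $T_1$ are present, and the finite-process trajectory of coordinates $-D,\ldots,D$ agrees identically with that of $\tPois^{\pi_{[1-r_n,\infty)}(\mu),r_n-1}$. (This is a finite-speed-of-propagation / locality statement: in time $\le T_1$ only finitely many jumps occur among clocks indexed $\ge$ some threshold, so only finitely many walkers below are ever "consulted.") I would make this precise by showing that for each fixed $D$ and $T_1$ there is $N_0 = N_0(\omega, D, T_1)$ such that for $n \ge N_0$ the coordinates $-D,\ldots,D$ of the finite process and of $\tPois^{\pi_{[1-r_n,\infty)}(\mu),r_n-1}$ coincide on $[0,T_1]$.

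Finally, I would combine these with the monotone convergence already proved in \Cref{thm:construct_limit}: for each fixed $i$, $(\tPois^{\pi_{[-n,\infty)}(\mu),n}_i(T))(\omega)$ decreases to $\Pois^{\mu,2\infty}_i(T)(\omega)$, and the half-infinite processes appearing above (with lower index $1-r_n \to -\infty$) are exactly this sequence along the subsequence $n \mapsto r_n - 1$. Since $r_n \to \infty$, the squeeze forces $Q^{(n)}_i(T)(\omega) \to \Pois^{\mu,2\infty}_i(T)(\omega)$ for all $\omega \in \tOmega$, all $|i| \le D$, and all $T \le T_1$, which is almost sure convergence as claimed. I expect the main obstacle to be the locality/finite-speed-of-propagation lemma in the middle step: one must argue carefully, using $\omega \in \tOmega$ and the weakly-decreasing constraint, that within a bounded time window the evolution of a finite window of coordinates near position $i$ depends only on finitely many walkers and hence stabilizes once $n$ (equivalently $n - r_n$) is large — the blocking dynamics make this intuitively clear but the bookkeeping of which walker is "consulted" when a clock rings needs to be done with the same inductive-on-jumps care as in the proof of \Cref{thm:monotonicity}.
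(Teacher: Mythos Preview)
Your coupling (feeding clocks indexed $1-r_n$ through $n-r_n$ to the finite process) and your final step (convergence of $\tPois^{\pi_{[1-r_n,\infty)}(\mu),r_n-1}_i$ to $\Pois^{\mu,2\infty}_i$ along $r_n \to \infty$) match the paper exactly. But the middle step is overcomplicated, and your squeeze argument is both unnecessary and garbled: the extra walkers in the half-infinite process sit at \emph{higher} indices (behind), not in front, so \Cref{thm:monotonicity} does not apply as you invoke it, and the sentence ``dropping the walkers indexed $> n - r_n$ can only increase $Q^{(n)}_i$'' has the direction backwards relative to the inequality $Q^{(n)}_i \le \tPois_i$ you are trying to justify.

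The paper replaces all of this with a single observation: by the definition of $\tOmega$, for each $\omega$ and $T_1$ there is an index $j_0 = j_0(\omega,T_1)$ such that \emph{no} clock indexed $\geq j_0$ rings on $[0,T_1]$. Hence once $n - r_n \geq j_0$, the extra walkers of the half-infinite process are completely inert, and the reindexed finite process equals $\tPois^{\pi_{[1-r_n,\infty)}(\mu),r_n-1}$ identically on all common coordinates for all of $[0,T_1]$ --- not merely on the window $|i| \leq D$, and not via an inequality. Your ``finite-speed-of-propagation'' intuition points the right way, but the mechanism is far simpler than any donation bookkeeping: the extra clocks simply never ring, so no induction on jumps is needed.
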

\begin{proof}
We couple $\Pois^{(\mu_{1-r_n},\ldots,\mu_{n-r_n}),n}(t^{-r_n}T), n \geq 1$ on $\tOmega$ in the obvious way, namely by defining
\begin{equation}
\Pois^{(\mu_{1-r_n},\ldots,\mu_{n-r_n}),n}(t^{-r_n}T): \pi_{[1-r_n,n-r_n]}(\tOmega) \to \sSig_n
\end{equation}
by identifying the $n$ coordinates of $\pi_{[1-r_n,n-r_n]}(\tOmega)$ with the clock times\footnote{Note that the $n$ clocks corresponding to these $n$ coordinates $1-r_n,\ldots,n-r_n$ in $\tOmega$ have rates $t^{1-r_n},\ldots,t^{n-r_n}$, while the clocks of $\Pois^{(\mu_{1-r_n},\ldots,\mu_{n-r_n}),n}$ have rates $t,\ldots,t^n$, and the time change $t^{-r_n}$ exactly accounts for this difference.} of the $n$ particles of $\Pois^{(\mu_{1-r_n},\ldots,\mu_{n-r_n}),n}$. Similarly, we have the coupling
\begin{equation}
\tPois^{(\mu_{1-r_n},\mu_{2-r_n},\ldots),r_n-1}(T): \pi_{[1-r_n,\infty)}(\tOmega) \to \sSig_\infty.
\end{equation}
For each $\omega \in \tOmega$, there exists an index $j_0$ such that clocks $j_0,j_0+1,\ldots$ do not ring on the interval $[0,T_1]$. Hence as long as $n-r_n \geq j_0$, 
\begin{equation}\label{eq:finite_tpois_omega_match}
\Pois^{(\mu_{1-r_n},\ldots,\mu_{n-r_n}),n}(T)(\omega) = \pi_{[1,n]}\left(\tPois^{(\mu_{1-r_n},\mu_{2-r_n},\ldots),r_n-1}(T)(\omega)\right)
\end{equation}
for any $T \in [0,T_1]$. Because $n-r_n \to \infty$, this is true for all sufficiently large $n$. Since $r_n \to \infty$,
\begin{equation}\label{eq:r_n_limit}
\lim_{n \to \infty} \tPois^{(\mu_{1-r_n},\mu_{2-r_n},\ldots),r_n-1}_i(T) = \lim_{n \to \infty} \tPois^{(\mu_{-n},\mu_{-n+1},\ldots),n}_i(T) = \Pois^{2\infty,\mu}(T).
\end{equation}
Combining \eqref{eq:r_n_limit} with \eqref{eq:finite_tpois_omega_match} completes the proof.
\end{proof}

The `edge version' we saw earlier in \Cref{thm:rmt_edge_intro} is derived easily from the above:

\begin{defi}\label{def:pois_edge}
One may identify the set $\bSig_{edge}$ of \eqref{eq:sig_edge} with
\begin{equation}
\{\nu \in \sSig_{2\infty}: \nu_i \in \Z \text{ for }i \leq 0 \text{ and }\nu_1 = \nu_2 = \ldots = -\infty\}.
\end{equation} 
For any $\mu \in \bSig_{edge}$, letting $\hat{\mu} \in \sSig_{2\infty}$ be its image under the above map, we define
\begin{equation}
\Pois^{\mu,edge}(T) = \Pois^{\hat{\mu},2\infty}(T).
\end{equation}
\end{defi}

Note that if $\mu$ has a part $\mu_i \geq d$, then only the parts $F_d(\Pois^{\mu,2\infty})_j, j > i$ can evolve, leading to a much simpler process because the sum of their jump rates is finite. The following result, which we have stated in terms of Markov generators because we will need this later, says informally that if $\mu$ has a part $\geq d$, then $F_d(\Pois^{\mu,2\infty})$ evolves by the same reflecting Poisson dynamics as the prelimit process. This will be extremely useful for random matrix results, as for such $\mu$ we may check convergence to $F_d(\Pois^{\mu,2\infty})$ by taking asymptotics of generators/transition matrices.

\begin{prop}\label{thm:compute_Q}
Let $d \in \N$ and let $\mu \in \sSig_{2\infty}$ be such that 
\begin{equation}
\label{eq:def_i_0}
i_0 := \mu_d'+1 = \inf(\{i \in \Z: \mu_i < d\}) > -\infty,
\end{equation}
and let
\begin{equation}
N := \begin{cases}
\infty & \mu_i > -\infty \text{ for all }i \\ 
\max(\{i: \mu_i > -\infty\}) & \text{else}
\end{cases}
\end{equation}
Let $Q: F_d(\sSig_{2\infty}) \to F_d(\sSig_{2\infty})$ be the matrix defined by 
\begin{equation}
\label{eq:explicit_Q}
Q(\nu,\kappa) = 
\begin{cases}
-\frac{t^{\nu_d'+1} - t^{N+1}}{1-t} & \kappa = \nu \\
 \frac{t^i(1-t^{m_{\nu_i}(\nu)})}{1-t} & \text{there exists $i \in \Z$ such that }\kappa_j = \nu_j + \bbone(j = i) \text{ for all }j \in \Z \\ 
0 & \text{else}
\end{cases}
\end{equation}
for all $\nu,\kappa \in F_d(\sSig_{2\infty})$, where we recall the definition of $m_\ell(\nu)$ from \Cref{def:finite_sigs}. Then the matrix exponential $e^{TQ}: F_d(\sSig_{2\infty}) \to F_d(\sSig_{2\infty})$ is well-defined, and
\begin{equation}\label{eq:gen_Q_def}
\Pr(F_d(\Pois^{\mu,2\infty}(T+T_0)) = \kappa| F_d(\Pois^{\mu,2\infty}(T_0))=\nu) = (e^{TQ})(\nu,\kappa).
\end{equation} 
\end{prop}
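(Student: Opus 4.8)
The plan is to identify $F_d(\Pois^{\mu,2\infty}(T))$ with a process of the form $F_d(\Pois^{\nu,\infty}(t^k T))$ using \Cref{thm:describe_projections}, and then to verify that $Q$ is exactly the Markov generator of this latter process, so that \eqref{eq:gen_Q_def} follows from the standard fact that a continuous-time Markov chain with a bounded (or otherwise well-behaved) generator has transition semigroup $e^{TQ}$. The hypothesis \eqref{eq:def_i_0} that $i_0 = \mu_d'+1$ is finite is precisely what guarantees there is an index $k$ (namely $k = i_0 - 1 = \mu_d'$) with $\mu_k \geq d$, so that \Cref{thm:describe_projections} applies: after discarding the coordinates that are pinned at $d$ for all time, $F_d(\Pois^{\mu,2\infty}(T))$ equals in multi-time distribution $F_d(\Pois^{(\mu_{k+1},\mu_{k+2},\ldots),\infty}(t^k T))$, where the length of this process is $N - k$ (finite or infinite according to the definition of $N$). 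Restoring the pinned coordinates is a trivial bijection on state spaces that commutes with the dynamics, so it suffices to work with this shifted half-infinite reflecting Poisson process.

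The main computational step is then to check that $Q$ as written in \eqref{eq:explicit_Q} is the generator of $F_d(\Pois^{(\mu_{k+1},\ldots),\infty}(t^k T))$. First I would record the off-diagonal entries. From \Cref{def:poisson_walks} and the time change by $t^k$, the $i\tth$ particle (in the original $\Z$-indexing) jumps at rate $t^k \cdot t^{i-k} = t^i$; but when several consecutive particles sit at a common height, a ring of any of their clocks results in a single jump of the topmost among them, so the total rate at which a jump occurs \emph{at height} $\nu_i$ (equivalently, at the position index realizing that height) is $t^k$ times the sum of a geometric run of clock rates of length $m_{\nu_i}(\nu)$ — the multiplicity of the value $\nu_i$ in $\nu$ — which is $t^i (1-t^{m_{\nu_i}(\nu)})/(1-t)$ after the arithmetic. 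This matches the second case of \eqref{eq:explicit_Q}. One must also check that such a jump keeps the tuple weakly decreasing and hence stays in $F_d(\sSig_{2\infty})$ (it does, because the jumping coordinate is the topmost one at its level), and that the truncation at $d$ only kills jumps of coordinates already at $d$, so the formula is unaffected on the truncated state space. The diagonal entry is then minus the sum of the off-diagonal row entries; summing the geometric contributions over all distinct levels telescopes to $-(t^{\nu_d'+1} - t^{N+1})/(1-t)$, which is the first case of \eqref{eq:explicit_Q}. Here $\nu_d'+1 = \inf\{i : \nu_i < d\}$ is the index of the first coordinate strictly below $d$, i.e. the smallest index that can actually move, and $N+1$ is one past the last finite coordinate; I would double-check this telescoping identity carefully, since it is the one genuinely bookkeeping-heavy piece, and also verify it degenerates correctly when $N = \infty$ (the $t^{N+1}$ term drops).

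The remaining point is the analytic one: well-definedness of $e^{TQ}$ and the identity \eqref{eq:gen_Q_def}. When $N < \infty$ the total jump rate out of any state is bounded (indeed the whole process is a finite-state-at-each-level jump chain with summable rates), $Q$ acts as a bounded operator, and $e^{TQ}$ is the norm-convergent exponential series giving the unique transition semigroup — this is classical. When $N = \infty$ the out-rates are still uniformly bounded by $\sum_{i \geq i_0} t^i/(1-t) < \infty$ since $t \in (0,1)$, so the process is non-explosive and the same conclusion holds; I would phrase this via the standard construction of a jump Markov chain from a bounded generator, or alternatively appeal directly to the coupling construction in \Cref{sec:constructing} (where $F_d(\Pois^{\mu,2\infty})$ was already shown Markov in \Cref{thm:markovian_projections}) and match its finite-dimensional distributions with those produced by $e^{TQ}$ via Kolmogorov's forward equations. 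The main obstacle I anticipate is not conceptual but purely the careful identification of indices — matching the combinatorial jump rule of \Cref{def:poisson_walks} (donation of jumps up a run of equal coordinates) with the clean level-based rates in \eqref{eq:explicit_Q}, and getting the diagonal telescoping sum exactly right in both the finite and infinite $N$ cases.
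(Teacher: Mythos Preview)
Your proposal is correct and follows essentially the same route as the paper: reduce via \Cref{thm:describe_projections} to the half-infinite process $F_d(\Pois^{(\mu_{k+1},\ldots),\infty}(t^k T))$ and read off the generator directly from the clock-and-reflection description in \Cref{def:poisson_walks}. The paper's own proof is considerably more terse --- it simply asserts that the active coordinates jump with Poisson clocks of rate $t^j$ and concludes the generator is $Q$ --- whereas you spell out the geometric-sum computation for the off-diagonal rates, the telescoping for the diagonal, and the boundedness argument for the exponential, all of which are implicit in the paper's one-paragraph argument.
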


\begin{proof}

For any $n > i_0$, by \Cref{thm:describe_projections} and \Cref{def:half_infinite_process} $F_d(\Pois^{\mu,2\infty}(T))$ is equal in (multi-time joint) distribution to a copy of $F_d(\tPois^{\pi_{[-n,\infty)}(\mu),n}(T))$, padded with infinitely many entries $d$ on the left. Note that for any $\mu$ as in the statement, all entries $F_d(\tPois^{\pi_{[-n,\infty)}(\mu),n}(T))_j, j \leq \nu_d'$ never change because they are already equal to $d$. Additionally, if $N$ is finite, the entries $F_d(\Pois^{\mu,2\infty}(T))_j, j > N$ do not change because they are equal to $-\infty$. Meanwhile, all entries $F_d(\tPois^{\pi_{[-n,\infty)}(\mu),n}(T))_j,  i_0 \leq j \leq N$ jump according to Poisson clocks of rate $t^j$ as before, until they reach $d$, at which point they jump no longer. 

With this explicit description, we may compute the transition rates $Q(\nu,\kappa)$ for $F_d(\Pois^{\mu,2\infty}(T))$. By this description, the transition rate out of a state $\nu \in F_d(\bSig_{2\infty})$ is given by the negative sum of the jump rates of particles which are at position (a) strictly less than $d$, and (b) greater than $-\infty$, since these are exactly the particles for which having their clock ring will cause the process $F_d(\Pois^{\mu,2\infty}(T))$ to leave $\nu$. This is the sum of jump rates of the particles of index $\nu_d'+1,\nu_d'+2,\ldots,N$, so this transition rate is 
\begin{equation}
    \label{eq:diagonal_gen_entry}
    Q(\nu,\nu) = -(t^{\nu_d'+1}+\ldots+t^N) = - \frac{t^{\nu_d'+1} - t^{N+1}}{1-t}.
\end{equation}
Now we compute the jump rate $Q(\nu,\kappa)$ where $\kappa \neq \nu$. From the above description it is clear that this is $0$ unless the condition on the second line of \eqref{eq:explicit_Q} is met, i.e. only one particle needs to jump in order to get from $\nu$ to $\kappa$. As in \eqref{eq:explicit_Q}, let us take $i$ to be the index of that particle, so $\kappa_i=\nu_i+1$. Due to the jump-donation condition \ref{item:interact} from the Introduction, the transition rate from $\nu$ to $\kappa$ is then just the sum of jump rates of all particles at position $\nu_i$. By the definition of $m_{\nu_i}(\nu)$ (\Cref{def:bi-infinite_sig}), the particles at position $\nu_i$ are exactly the ones of indices $i,i+1,\ldots,i+m_{\nu_i}(\nu)-1$. Hence the transition rate from $\nu$ to $\kappa$ is 
\begin{equation}
    \label{eq:off_diagonal_gen_entry}
    Q(\nu,\kappa) = t^i + \ldots + t^{i+m_{\nu_i}(\nu)-1} = \frac{t^i(1-t^{m_{\nu_i}(\nu)})}{1-t}.
\end{equation}
Combining \eqref{eq:diagonal_gen_entry}, \eqref{eq:off_diagonal_gen_entry}, and the above discussion that these are the only nonzero entries of the transition matrix completes the proof.
\end{proof}

\begin{defi}
    \label{def:shift}
    Define the \emph{forward shift map}
\begin{align*}
s: \sSig_{2\infty} &\to \sSig_{2\infty} \\
(\mu_n)_{n \in \Z} &\mapsto (\mu_{n+1})_{n \in \Z}
\end{align*}
\end{defi}

Because the $i\tth$ coordinate $\mu_i(T)$ of $\Pois^{\mu,2\infty}(T)$ behaves as a Poisson jump process with rate $t^i$ (neglecting interactions with the other coordinates), the $i\tth$ coordinate of $s(\Pois^{\mu,2\infty}(T))$ has rate $t^{i+1} = t \cdot t^i$, i.e. $s$ has the effect of slowing down each jump rate by a factor of $t$. Heuristically this justifies the following.

\begin{prop}\label{thm:time_symmetry}
If $a \in \Z$ and $\mu = (a)_{n \in \Z}$, then 
\begin{equation}
    \label{eq:time_sym}
    s(\Pois^{\mu,2\infty}(t^{-1} \cdot T)) = \Pois^{\mu,2\infty}(T)
\end{equation}
in distribution as stochastic processes.
\end{prop}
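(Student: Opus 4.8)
\noindent\emph{Proof proposal.} The plan is to realize the operation ``$s$ followed by the time change $T\mapsto t^{-1}T$'' as a single measure\nobreakdash-preserving map of the clock space $\tOmega$ on which $\Pois^{\mu,2\infty}$ was built in \Cref{def:limit_process}. First I would reduce to the case $a=0$: since the reflection dynamics of \Cref{def:poisson_walks} depend only on the \emph{differences} of particle positions and on the clock data, running them from $(a)_{n\in\Z}$ and from $\bzero$ on the same $\omega\in\tOmega$ gives trajectories that differ by the constant signature $(a)_{n\in\Z}$ for all time, so $\Pois^{(a)_{n\in\Z},2\infty}(T)(\omega)=\Pois^{\bzero,2\infty}(T)(\omega)+(a)_{n\in\Z}$ pathwise; as both $s$ and the time change commute with adding $(a)_{n\in\Z}$, it suffices to prove \eqref{eq:time_sym} for $\mu=\bzero$.

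Recall from \Cref{def:omega} that $\Omega=\prod_{i\in\Z}\R_{\geq 0}^\N$ carries $\Poiss=\prod_{i\in\Z}\Poiss_{t^i}$, and that in the coupling of \Cref{def:limit_process} the coordinate $\omega_i$ is the sequence of inter\nobreakdash-ring waiting times of the $i\tth$ clock, which has rate $t^i$ in every approximating process $\tPois^{\pi_{[-n,\infty)}(\bzero),n}$. I would define $\Phi:\Omega\to\Omega$ by $\Phi(\omega)_i:=t\cdot\omega_{i+1}$, i.e.\ relabel the clock index by $i\mapsto i+1$ and multiply every waiting time by $t$. Because multiplying a rate\nobreakdash-$t^{i+1}$ exponential by $t$ produces a rate\nobreakdash-$t^i$ exponential, $\Phi$ is a bimeasurable bijection with $\Phi_*\Poiss=\Poiss$; since $\tOmega$ has full measure and is cut out by countably many a.s.\ conditions (\Cref{def:tomega}, \Cref{thm:tomega_full_measure}), $\Phi$ restricts to a measure\nobreakdash-preserving bijection between two full\nobreakdash-measure subsets of $\tOmega$, which is all that is needed. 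The role of the $t$\nobreakdash-dilation is that under $\Phi$ the $i\tth$ clock reads $t\cdot\omega_{i+1}$ and therefore rings $\jumps(t^{-1}T,\omega_{i+1})$ times by time $T$: it behaves exactly like the $(i+1)\tth$ clock of the original data $\omega$, observed at the dilated time $t^{-1}T$.

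Next I would establish the pathwise identity. Fix $\omega$ in the above full\nobreakdash-measure set and run $\tPois^{\pi_{[-n,\infty)}(\bzero),n}$ on $\Phi(\omega)$. Since all clocks are dilated by the same factor $t$, running this finite system up to time $T$ on $\Phi(\omega)$ coincides with running it up to time $t^{-1}T$ with the $i\tth$ clock reading $\omega_{i+1}$ for each $i\geq -n$. Relabelling particles by $i\mapsto i+1$, and using that the initial condition $\pi_{[-n,\infty)}(\bzero)=(0,0,\dots)$ is shift\nobreakdash-invariant while the reflection rule of \Cref{def:poisson_walks} is translation\nobreakdash-covariant in the particle index, the relabelled system is exactly $\tPois^{\pi_{[-(n-1),\infty)}(\bzero),\,n-1}$ run on $\omega$. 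Hence
\begin{equation}
\tPois^{\pi_{[-n,\infty)}(\bzero),n}_i(T)(\Phi(\omega))=\tPois^{\pi_{[-(n-1),\infty)}(\bzero),\,n-1}_{i+1}(t^{-1}T)(\omega)
\end{equation}
for all $i\geq -n$ and $T\geq 0$. Letting $n\to\infty$ (so $n-1\to\infty$ too) and invoking the construction of $\Pois^{\bzero,2\infty}$ as the limit of these processes (\Cref{def:limit_process}, \Cref{thm:construct_limit}) gives $\Pois^{\bzero,2\infty}_i(T)(\Phi(\omega))=\Pois^{\bzero,2\infty}_{i+1}(t^{-1}T)(\omega)$ simultaneously for all $i$ and $T$, i.e.\ $\Pois^{\bzero,2\infty}(T)\circ\Phi=s\bigl(\Pois^{\bzero,2\infty}(t^{-1}T)\bigr)$ as functions on a full\nobreakdash-measure subset of $\tOmega$. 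As $\Phi$ preserves $\Poiss$, the left-hand side is equal in distribution to $\Pois^{\bzero,2\infty}(T)$, which is precisely \eqref{eq:time_sym}.

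The hard part will not be analytic. The content lies entirely in choosing $\Phi$ with the correct $t$\nobreakdash-dilation so that it simultaneously preserves $\Poiss$ \emph{and} implements the time change, and in keeping the index bookkeeping consistent as one passes between $\Pois^{\bzero,\infty}$, its shifted and sped-up avatar $\tPois^{\cdot,n}$, and the $n\to\infty$ limit. The one subtlety worth care is that one needs the identity for the entire space-time trajectory at once, not merely a fixed $T$, in order to conclude equality of the processes rather than of one-time marginals; but that is automatic here because the displayed identity is pathwise and holds for all $i$ and $T$ on a single full\nobreakdash-measure event.
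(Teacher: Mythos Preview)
Your proposal is correct and takes essentially the same approach as the paper: define the measure-preserving relabeling-plus-dilation map on the clock space $\Omega$ (your $\Phi$ is exactly the paper's $\xi$), establish a pathwise identity between approximating half-infinite systems, and pass to the $n\to\infty$ limit. The preliminary reduction to $a=0$ is harmless but unnecessary, since the only property of $\mu$ used is shift-invariance; also, your displayed identity $\tPois^{\cdot,n}_i(T)(\Phi(\omega))=\tPois^{\cdot,n-1}_{i+1}(t^{-1}T)(\omega)$ has the indices and the placement of $\Phi(\omega)$ arranged correctly for the stated direction of \eqref{eq:time_sym}.
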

\begin{proof}
Define a map 
\begin{align*}
    \xi:\Omega & \to \Omega \\
    ((a_{n,i})_{i \in \N})_{n \in \Z} & \mapsto ((t \cdot a_{n+1,i})_{i \in \N})_{n \in \Z}
\end{align*}
The map $\xi$ scales the waiting times $a_{n,i}$ by $t$ and shifts which coordinate $\mu_n$ they correspond to. Since these waiting times are exponential variables with rates in geometric progression with common ratio $t$ under the measure $\Poiss \in \cM(\Omega)$ defined in the proof of \Cref{thm:construct_limit}, it follows that 
\begin{equation}
    \label{eq:omega_meas_inv}
    \xi_*(\Poiss) = \Poiss.
\end{equation} 
It is also immediate from the definition of $\xi$ that for any $T \geq 0$ and $\omega \in \tOmega$,
\begin{equation}\label{eq:xi_exact_equality}
(\tPois_{i}^{\pi_{[-n,\infty)}(\mu),n}(T))(\omega) = (\tPois_{i-1}^{\pi_{[-n-1,\infty)}(\mu),n+1}(t^{-1}T))(\xi(\omega)).
\end{equation}
Hence clearly
\begin{equation}
    \label{eq:limits_equal2}
    \lim_{n \to \infty} (\tPois_i^{\pi_{[-n,\infty)}(\mu),n}(T))(\omega) = \lim_{n \to \infty} (\tPois_{i-1}^{\pi_{[-n-1,\infty)}(\mu),n+1}(t^{-1}T))(\xi(\omega)),
\end{equation}
and in view of the construction in \Cref{def:limit_process} this implies \eqref{eq:time_sym}.
\end{proof}



\subsection{Convergence of measures on $\sSig_{2\infty}$} \label{subsec:measure_conv} Having constructed the putative universal object $\Pois^{\mu,2\infty}$ and shown some basic properties, we now set up what is needed to prove convergence to it. To speak of weak convergence of $\sSig_{2\infty}$-valued random variables, we must at minimum equip $\sSig_{2\infty}$ with a topology. We first equip $\bZ$ with the toplogy where open sets are generated by the basis of sets
\begin{equation}\label{eq:set_basis}
\sB := \{\{n\}: n \in \Z \cup \{-\infty\}\} \cup \{[n,\infty]: n \in \Z\} \cup \{\emptyset\} \subset \mathcal{P}(\bZ)
\end{equation}
(here $\mathcal{P}$ denotes the power set). In other words, open sets are either finite subsets of $\Z \cup \{-\infty\}$ or unions of these with intervals $[n,\infty]$, where here and below we use square braces to indicate that the interval includes the infinite endpoint. For concreteness later we note that the closed sets in this topology are those which, if they contain arbitrarily large positive finite integers, they also contain $\infty$. 

\begin{rmk}
The reason for this topology, which treats $\infty$ and $-\infty$ on unequal footing, comes from the topology of $\Q_p$. Since the elements of $\bZ$ will correspond to singular numbers, we wish the map $\Z \cup \{\infty\} \to \Q_p$ given by $n \mapsto p^n$ to be continuous in the $p$-adic norm topology, where as usual we set $p^\infty = 0$. The reason we include $-\infty$ entries in $\bSig_{2\infty}$, even though $p^{-\infty}$ is not defined, is more a notational convenience, as it allows us to embed infinite signatures with a last element $\mu_i$ into $\bSig_{2\infty}$ as $(\ldots,\mu_{i-1},\mu_i,-\infty,\ldots)$.
\end{rmk}

We now give $\sSig_{2\infty}$ the topology it inherits from the product topology on $\bZ^\Z$, where each $\bZ$ factor has the topology above. Equivalently, this is the topology of pointwise convergence on $\bZ^\Z$. When we speak of measures on $\sSig_{2\infty}$, we will always mean measures with respect to the Borel $\sigma$-algebra determined by this topology. Note that this is just the product $\sigma$-algebra of the discrete $\sigma$-algebras on each $\bZ$ factor, which is the one we took earlier in \Cref{thm:construct_limit}.

The space $\bZ$ is second-countable and separable, hence metrizable by Urysohn's theorem, hence the product $\bZ^\Z$ (and therefore $\sSig_{2\infty}$) is metrizable as well. This makes the following two definitions of weak convergence equivalent by the portmanteau theorem.

\begin{defi}\label{def:weak}
A sequence of probability measures $(M_n)_{n \geq 1}$ on $\sSig_{2\infty}$ converges weakly to $M$ if, for every $S \subset \sSig_{2\infty}$ which is a continuity set of $M$ (i.e. $M(\partial S)=0$), $M_n(S) \to M(S)$ as $n \to \infty$. Equivalently, for every bounded, continuous $f: \sSig_{2\infty} \to \R$, 
\begin{equation}
\int_{\bZ^\Z} f dM_n \to \int_{\bZ^\Z} f dM.
\end{equation}
\end{defi}

We reduce weak convergence to more checkable, combinatorial conditions, which are what we will actually show. 

\begin{defi}\label{def:sS_cU}
For $I \subset \Z$ let $\pi_I: \bZ^\Z \to \bZ^I$ be the projection. For any finite $I \subset \Z$, let
\begin{equation}
\sS_I := \{\prod_{i \in I} A_i \subset \bZ^I: A_i \in \sB \text{ for all }i\} \subset \mathcal{P}(\bZ^I),
\end{equation}
where $\sB$ is as defined in \eqref{eq:set_basis}. Furthermore, let
\begin{equation}
\cU := \bigcup_{\substack{I \subset \Z \\ I \text{ finite}}} \pi_I^{-1}(\sS_I) \subset \mathcal{P}(\bSig_{2\infty}).
\end{equation}
\end{defi}

\begin{lemma}\label{thm:weak_eq}
A sequence of probability measures $(M_n)_{n \geq 1}$ on $\sSig_{2\infty}$ converges weakly to a probability measure $M$ if, for every finite $I \subset \Z$ and $A \in \sS_I$,
\begin{equation}\label{eq:pushforward_converge}
((\pi_I)_*(M_n))(A) \to ((\pi_I)_*(M))(A).
\end{equation}
More generally, for any $k \in \Z_{\geq 1}$, a sequence of probability measures $(M_n)_{n \geq 1}$ on $\sSig_{2\infty}^k$ converges weakly to a probability measure $M$ if, for every collection of finite sets $\prod_{i=1}^k I_i \subset \Z^k$, and sets $A_i \in \sS_{I_i}, 1 \leq i \leq k$,
\begin{equation} \label{eq:k_pushforward_cvg}
\left(\prod_{i=1}^k \pi_{I_i}\right)_*(M_n)\left(\prod_{i=1}^k A_i\right) \to \left(\prod_{i=1}^k \pi_{I_i}\right)_*(M)\left(\prod_{i=1}^k A_i\right).
\end{equation}
\end{lemma}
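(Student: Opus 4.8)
The statement reduces weak convergence to convergence of probabilities of certain cylinder-type sets. The plan is to exploit the metrizability of $\sSig_{2\infty}$ (and of $\sSig_{2\infty}^k$) established above together with the portmanteau theorem, reducing to showing that it suffices to test against continuity sets from the restricted family $\cU$ (respectively products of such families). First I would observe that the sets in $\sB$ are both open and closed in $\bZ$: each singleton $\{n\}$ with $n \in \Z$ is open (it lies in $\sB$) and its complement is a finite set together with an interval $[m,\infty]$, hence open; each $[n,\infty]$ is open by definition and its complement is the finite set $\{-\infty,\ldots\}$... more carefully, $\bZ \setminus [n,\infty] = \{-\infty\} \cup \{k \in \Z : k < n\}$, which is a union of singletons from $\sB$, hence open; and $\{-\infty\}$ is open with complement $[n,\infty] \cup (\text{finite})$ for any $n$, wait one must check $\bZ\setminus\{-\infty\} = \Z \cup \{\infty\} = \bigcup_{n}[n,\infty] \cup \{\text{finite}\}$ is open. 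So every element of $\sB$ is clopen, hence every $A \in \sS_I$ (a finite product of clopen sets, with the remaining coordinates free) is clopen in $\bZ^\Z$, and therefore a continuity set for \emph{any} Borel probability measure $M$: $\partial A = \emptyset$.

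Next I would argue that the family $\cU$ (resp.\ the family of finite products $\prod_{i=1}^k A_i$ with $A_i \in \cU$) generates the Borel $\sigma$-algebra and is closed under finite intersection, so it is a convergence-determining class once we know the sets are continuity sets. Concretely: the collection $\sB$ is a basis for the topology on $\bZ$, so products of basis elements in finitely many coordinates (with the rest free) form a basis for the product topology on $\bZ^\Z$; since $\bZ^\Z$ is second countable, every open set is a countable union of such basic sets. By the portmanteau theorem, $M_n \to M$ weakly iff $\liminf_n M_n(U) \geq M(U)$ for every open $U$. The standard argument then runs: given open $U = \bigcup_{j \geq 1} V_j$ with each $V_j \in \cU$, for any $\eps > 0$ choose finite $J$ with $M(\bigcup_{j \leq J} V_j) > M(U) - \eps$; the set $\bigcup_{j\le J}V_j$ is a \emph{finite} union of clopen sets from $\cU$, which can be refined using the lattice structure (intersections of elements of $\sB$ lie in $\sB \cup \{\emptyset\}$, so finite unions decompose into finite disjoint unions of elements of $\cU$) into a finite disjoint union of sets in $\cU$, on each of which $M_n \to M$ by hypothesis \eqref{eq:pushforward_converge}; hence $\liminf_n M_n(U) \geq \liminf_n M_n(\bigcup_{j \leq J} V_j) = M(\bigcup_{j \leq J} V_j) > M(U) - \eps$. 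Letting $\eps \to 0$ gives the portmanteau criterion, so $M_n \to M$ weakly. The case of $\sSig_{2\infty}^k$ is identical, using that $\sSig_{2\infty}^k$ carries the product topology and that the sets $\prod_{i=1}^k A_i$ with $A_i \in \cU$ play the role of the basic clopen sets there.

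\textbf{Main obstacle.} The only genuinely fiddly point is the combinatorial bookkeeping: verifying that a \emph{finite union} of sets from $\cU$ can always be rewritten as a finite \emph{disjoint} union of sets in $\cU$, so that the convergence \eqref{eq:pushforward_converge} for individual members of $\sS_I$ can be summed. This follows because within a single coordinate the trace $\sB$ is closed under intersection ($\{n\} \cap [m,\infty] \in \{\{n\},\emptyset\}$, $[m,\infty]\cap[m',\infty] = [\max(m,m'),\infty]$, etc.) and any interval $[m,\infty]$ differs from $[m',\infty]$ (with $m < m'$) by finitely many singletons, so all Boolean combinations in finitely many coordinates stay inside the algebra generated by $\cU$, which is just finite disjoint unions of members of $\cU$; one then needs to check that coordinates appearing in some $A_j$ but not others are handled by writing $\bZ = \{-\infty\} \sqcup [m,\infty]$... no, $\bZ$ is not a finite disjoint union of members of $\sB$, but this is harmless: one simply takes $I$ to be the union of the finite index sets appearing, and pads each $A_j$ by declaring the new coordinates free (equivalently, $A_j$ already has those coordinates equal to all of $\bZ$, which is an allowed "free" coordinate since only \emph{finitely many} coordinates are constrained). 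I expect this to be a short, routine argument that I would spell out with one lemma on the structure of $\sB$ under Boolean operations, after which the portmanteau reduction is standard.
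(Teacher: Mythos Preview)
Your overall strategy is correct and is essentially what the paper does, though the paper compresses the argument by directly invoking \cite[Theorem 2.2]{bil1968convergence}: since $\cU$ is closed under finite intersections and every open set in $\bZ^\Z$ is a countable union of sets in $\cU$, convergence $M_n(U)\to M(U)$ for all $U\in\cU$ implies weak convergence. Your clopen observation and portmanteau reduction essentially reprove this.

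However, your disjointification step has a genuine gap. You claim that a finite union of elements of $\cU$ can always be written as a finite \emph{disjoint} union of elements of $\cU$, and you justify this by saying intersections of elements of $\sB$ stay in $\sB$. But closure under intersection does not give closure under relative complement. Concretely, take $V_1=\{x:x_0\in[5,\infty]\}$ and $V_2=\{x:x_1=3\}$. Any disjointification of $V_1\cup V_2$ forces one piece to restrict a coordinate to the complement of an element of $\sB$; but the complement of $[n,\infty]$ in $\bZ$ is $\{-\infty\}\cup\{k\in\Z:k<n\}$, an \emph{infinite} union of elements of $\sB$, and similarly for singletons. You noticed this obstruction for the ``padding'' issue but misdiagnosed its scope: it bites whenever you need to complement, not only when you pad a free coordinate. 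The Boolean algebra generated by $\sB$ in a single coordinate is strictly larger than the class of finite unions of elements of $\sB$.

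The fix is immediate: replace disjointification by inclusion--exclusion. For any finite $J$,
\[
M_n\Bigl(\bigcup_{j\le J}V_j\Bigr)=\sum_{\emptyset\neq S\subset\{1,\ldots,J\}}(-1)^{|S|+1}\,M_n\Bigl(\bigcap_{j\in S}V_j\Bigr),
\]
and each finite intersection $\bigcap_{j\in S}V_j$ lies in $\cU$ (since $\sB$ is closed under intersection, as you correctly checked), so each term converges by hypothesis \eqref{eq:pushforward_converge}. This gives $M_n(\bigcup_{j\le J}V_j)\to M(\bigcup_{j\le J}V_j)$ exactly, and your portmanteau argument then goes through unchanged. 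This is precisely the content of Billingsley's theorem, which is what the paper cites.
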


\begin{proof}[Proof of \Cref{thm:weak_eq}]
We prove the $k=1$ case first. Note that (i) $\cU$ is closed under finite intersections, and (ii) every open set in $\bZ^\Z$ is a countable union of elements of $\cU$, which follows since $\bZ$ is countable. By \cite[Theorem 2.2]{bil1968convergence}, the two properties (i), (ii) imply that for weak convergence $M_n \to M$, it suffices to check that 
\begin{equation}
    \label{eq:check_cylinder}
    M_n(U) \to M(U)
\end{equation}
for every $U \in \cU$. This follows immediately from the hypothesis \eqref{eq:pushforward_converge} by the definition of $\cU$, completing the proof for $k=1$. For general $k$, we simply note that $\prod_{i=1}^k \cU \in \mathcal{P}(\sSig_{2\infty}^k)$ is also closed under finite intersections and its countable unions yield all open sets in $(\bZ^\Z)^k$ for the same reason as above, so \cite[Theorem 2.2]{bil1968convergence} applies.
\end{proof}

\begin{lemma}\label{thm:F_d_suffices}
Let $M$ be a probability measure on $(\sSig_{2\infty})^k$, and $(M_n)_{n \geq 1}$ be a sequence of probability measures on $(\sSig_{2\infty})^k$ such that for any $d \in \Z$, the sequence $((F_d^k)_*(M_n))_{n \geq 1}$ obeys the condition \eqref{eq:k_pushforward_cvg} with respect to $(F_d^k)_*(M)$. Then $M_n$ converges weakly to $M$.
\end{lemma}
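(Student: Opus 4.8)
The plan is to reduce to \Cref{thm:weak_eq} and then exploit the fact that the basic cylinder sets generating the topology on $(\sSig_{2\infty})^k$ are insensitive to the truncation maps $F_d$ once $d$ is taken large. First, by \Cref{thm:weak_eq} it suffices to verify that $M_n$ itself satisfies the pushforward-convergence condition \eqref{eq:k_pushforward_cvg} relative to $M$. So I would fix finite sets $I_1,\dots,I_k \subset \Z$ and sets $A_i \in \sS_{I_i}$, and write $A_i = \prod_{j \in I_i} A_{i,j}$ with each $A_{i,j} \in \sB$; by the definition \eqref{eq:set_basis} of $\sB$, each $A_{i,j}$ is a singleton $\{m\}$ with $m \in \Z \cup \{-\infty\}$, a ray $[m,\infty]$ with $m \in \Z$, or the empty set.

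The key observation is that membership in such a set is unchanged by capping a coordinate at a sufficiently large level. Indeed, for $x \in \bZ$ and an integer $d$ one has $\min(x,d) \in [m,\infty] \iff x \geq m$ whenever $d \geq m$, $\min(x,d) = -\infty \iff x = -\infty$ for any $d$, and $\min(x,d) = m \iff x = m$ whenever $d > m$; hence $\min(x,d) \in A_{i,j} \iff x \in A_{i,j}$ as soon as $d$ is strictly larger than every finite integer occurring among the finitely many sets $A_{i,j}$. I would fix one such $d$. Since $F_d$ acts coordinatewise and so commutes with every projection $\pi_{I_i}$, this gives, for all $\vec{\mu} = (\mu^{(1)},\dots,\mu^{(k)}) \in (\sSig_{2\infty})^k$,
\begin{equation}
\Big(\prod_{i=1}^k \pi_{I_i}\Big)(\vec{\mu}) \in \prod_{i=1}^k A_i \quad\Longleftrightarrow\quad \Big(\prod_{i=1}^k \pi_{I_i}\Big)\big(F_d^k(\vec{\mu})\big) \in \prod_{i=1}^k A_i .
\end{equation}

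Equating the $M_n$-measures of these two events, and likewise for $M$, yields
\begin{equation}
\Big(\prod_{i} \pi_{I_i}\Big)_*(M_n)\Big(\prod_{i} A_i\Big) = \Big(\prod_{i} \pi_{I_i}\Big)_*\big((F_d^k)_*(M_n)\big)\Big(\prod_{i} A_i\Big),
\end{equation}
and the same identity with $M$ in place of $M_n$. By hypothesis $(F_d^k)_*(M_n)$ satisfies \eqref{eq:k_pushforward_cvg} with respect to $(F_d^k)_*(M)$, so the right-hand side converges to the corresponding quantity for $M$, hence so does the left-hand side. As $I_1,\dots,I_k$ and $A_1,\dots,A_k$ were arbitrary, $M_n$ satisfies \eqref{eq:k_pushforward_cvg} relative to $M$, and \Cref{thm:weak_eq} gives $M_n \to M$ weakly.

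I do not expect any genuine obstacle: the argument is essentially just the reduction above. The only points needing care are the strict inequality $d > m$ in the singleton case (with $d = m$ the event $\{\min(x,d) = m\}$ degenerates to $\{x \geq m\}$), and the fact that the argument only uses one convenient value of $d$ per choice of test sets — but since the hypothesis supplies \eqref{eq:k_pushforward_cvg} for every $d$, this is harmless.
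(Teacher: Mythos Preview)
Your proposal is correct and follows essentially the same approach as the paper: reduce to \Cref{thm:weak_eq}, observe that each basic set $A_{i,j}\in\sB$ is preserved (in the sense that $F_d(x)\in A_{i,j}\iff x\in A_{i,j}$) once $d$ exceeds all the finite thresholds appearing, and then invoke the hypothesis for that single $d$. The only cosmetic difference is that the paper writes the equivalence as $F_d(x)\in F_d(A_{i,j})\iff x\in A_{i,j}$, but the content is identical.
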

\begin{proof}
By \Cref{thm:weak_eq} it suffices to check, for all finite sets $I_1,\ldots,I_k \subset \Z$ and all $A_1 \in \sS_{I_1},\ldots,A_k \in \sS_{I_k}$ (in the notation of \Cref{def:sS_cU}), that
\begin{equation}\label{eq:joint_meas_wts}
\left(\prod_{i=1}^k \pi_{I_i}\right)_*(M_n)\left(\prod_{i=1}^k A_i\right) \to \left(\prod_{i=1}^k \pi_{I_i}\right)_*(M)\left(\prod_{i=1}^k A_i\right).
\end{equation}
Each set $A_i$ is a product over $j \in I_i$ of sets $\{b_{i,j}\}$ or $[b_{i,j},\infty]$ with $b_{i,j} \in \Z \cup \{-\infty\}$. It is trivial that for $d > b_{i,j}$, $x \in \{b_{i,j}\}$ if and only if\footnote{Here we slightly abuse notation and view $F_d: \bSig_1 \to \bSig_1$ as a map on $\bZ$.} $F_d(x) \in F_d(\{b_{i,j}\})$, and $x \in [b_{i,j},\infty]$ if and only if $F_d(x) \in F_d([b_{i,j},\infty])$ (of course, both forward directions are automatic, but the backward directions would not be true if $\{b_{i,j}\}$ and $[b_{i,j},\infty]$ are replaced with arbitrary subsets of $\bZ$). Hence for any
\begin{equation}
d > \sup_{\substack{1 \leq i \leq k}} \sup_{j \in I_i} b_{i,j},
\end{equation}
we have that
\begin{equation}\label{eq:use_F_d_meas}
\left(\prod_{i=1}^k \pi_{I_i} \circ F_d\right)_*(M_n)\left(\prod_{i=1}^k F_d(A_i)\right) = \left(\prod_{i=1}^k \pi_{I_i}\right)_*(M_n)\left(\prod_{i=1}^k A_i\right)
\end{equation}
and similarly with $M_n$ replaced by $M$. By hypothesis, 
\begin{equation}
\left(\prod_{i=1}^k \pi_{I_i} \circ F_d\right)_*(M_n)\left(\prod_{i=1}^k F_d(A_i)\right) \to \left(\prod_{i=1}^k \pi_{I_i} \circ F_d\right)_*(M)\left(\prod_{i=1}^k F_d(A_i)\right),
\end{equation}
and applying \eqref{eq:use_F_d_meas} to both sides yields \eqref{eq:joint_meas_wts} and completes the proof.
\end{proof}

It will be useful in \Cref{sec:deducing} to use the following variant as well.

\begin{lemma}\label{thm:finite_sets_to_conj_parts}
Let $M$ be a probability measure on $(\sSig_{2\infty}^+)^k$ and $(\mu(1),\ldots,\mu(k)) \sim M$. Let $(M_n)_{n \geq 1}$ be a sequence of probability measures on $(\sSig_{2\infty}^+)^k$ such that if $(\mu(n,1),\ldots,\mu(n,k)) \sim M_n$ for each $n \in \Z_{\geq 1}$, then for any $d \in \Z_{\geq 1}$
\begin{equation}\label{eq:conj_part_convergence}
(\mu(n,i)_j')_{\substack{1 \leq i \leq k \\ 1 \leq j \leq d}} \to (\mu(i)_j')_{\substack{1 \leq i \leq k \\ 1 \leq j \leq d}}\quad \quad \quad \text{in distribution as $n \to \infty$.}
\end{equation}
Then $M_n \to M$ weakly as $n \to \infty$. 
\end{lemma}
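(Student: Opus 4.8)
The plan is to deduce this from \Cref{thm:F_d_suffices}, so that it suffices to prove, for each fixed $d \in \Z$, that the pushforwards $\bigl((F_d^k)_*(M_n)\bigr)_{n \geq 1}$ satisfy the cylinder-convergence condition \eqref{eq:k_pushforward_cvg} relative to $(F_d^k)_*(M)$. For $d \leq 0$ this is vacuous, since on $\sSig_{2\infty}^+$ every part is $\geq 0 \geq d$, so $F_d$ is the constant map $\mu \mapsto (d)_{n\in\Z}$ and both sides of \eqref{eq:k_pushforward_cvg} are determined; so fix $d \geq 1$. The structural input I would use is that, on $\sSig_{2\infty}^+$, the truncation $F_d(\mu)$ depends only on the first $d$ conjugate parts $\mu_1',\ldots,\mu_d'$. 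Concretely, define $\Phi_d : \sSig_{2\infty}^+ \to \bZ^d$ by $\Phi_d(\mu) = (\mu_1',\ldots,\mu_d')$ and $\Psi_d : \bZ^d \to \bZ^{\Z}$ by $\Psi_d(a)_i := \sum_{c=1}^d \bbone(a_c \geq i)$; then I would check, directly from \Cref{def:conjugate_parts} using the elementary equivalence $\mu_c' \geq i \iff \mu_i \geq c$ (valid for all $i\in\Z$ and $c\geq 1$, including the cases $\mu_c' = \pm\infty$), that $\Psi_d(\Phi_d(\mu))_i = \#\{c\in\{1,\dots,d\}: \mu_i\geq c\} = \min(\mu_i,d) = F_d(\mu)_i$. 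Hence $F_d = \Psi_d \circ \Phi_d$ on $\sSig_{2\infty}^+$.

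Next I would verify that $\Psi_d$ is continuous for the topology of \eqref{eq:set_basis} on each copy of $\bZ$ (and the product topologies on source and target). Since the target carries the product topology, it is enough to show each coordinate map $a \mapsto \Psi_d(a)_i$ is continuous, and this is a finite sum of indicator functions of the sets $\{a : a_c \geq i\}$, each of which is the preimage of $[i,\infty] \in \sB$ under a coordinate projection. The key observation is that every member of $\sB$ is in fact \emph{clopen} in $\bZ$ (the complement of $\{n\}$, of $\{-\infty\}$, or of $[n,\infty]$ is again a union of members of $\sB$), so these sets are clopen and their indicators continuous; thus $\Psi_d$, and hence $\Psi_d^k : (\bZ^d)^k \to (\bZ^{\Z})^k$, is continuous. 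The same clopen-ness observation I would record once more: it shows that for any finite $I_1,\dots,I_k\subset\Z$ and $A_i\in\sS_{I_i}$ the set $\bigl(\prod_i\pi_{I_i}\bigr)^{-1}\bigl(\prod_i A_i\bigr)$ is clopen in $(\sSig_{2\infty})^k$, hence a continuity set of any measure, so that \emph{weak} convergence of $(F_d^k)_*(M_n)$ to $(F_d^k)_*(M)$ already entails \eqref{eq:k_pushforward_cvg} (cf.\ \Cref{thm:weak_eq}).

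It remains to assemble these. Writing $(\mu(n,1),\dots,\mu(n,k)) \sim M_n$ and $(\mu(1),\dots,\mu(k)) \sim M$, the identity $F_d = \Psi_d\circ\Phi_d$ gives $(F_d^k)_*(M_n) = (\Psi_d^k)_* \Law\bigl((\Phi_d(\mu(n,i)))_{i=1}^k\bigr)$, and likewise for $M$. Hypothesis \eqref{eq:conj_part_convergence} says precisely that $\Law\bigl((\Phi_d(\mu(n,i)))_{i=1}^k\bigr) \to \Law\bigl((\Phi_d(\mu(i)))_{i=1}^k\bigr)$ in distribution on $(\bZ^d)^k$, which — this being a metrizable space — is weak convergence of these laws; since $\Psi_d^k$ is continuous, the continuous mapping theorem yields $(F_d^k)_*(M_n) \to (F_d^k)_*(M)$ weakly, hence (by the clopen-ness remark) \eqref{eq:k_pushforward_cvg}, and \Cref{thm:F_d_suffices} finishes the proof. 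I do not expect any serious obstacle here: the content is bookkeeping with the asymmetric topology on $\bZ$ and the $\pm\infty$ conventions in \Cref{def:conjugate_parts}, and the one point that genuinely has to be gotten right is that the basic sets $\sB$ are clopen — this is simultaneously what makes $\Psi_d$ continuous and what makes weak convergence and the cylinder-set convergence \eqref{eq:k_pushforward_cvg} interchangeable.
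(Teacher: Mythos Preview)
Your proposal is correct and follows the same approach as the paper's proof, which simply notes that on $\sSig_{2\infty}^+$ the truncation $F_d(\mu)$ is determined by (and determines) $\mu_1',\ldots,\mu_d'$ and invokes \Cref{thm:F_d_suffices}. You have carefully unpacked the topological bookkeeping the paper leaves implicit --- the factorization $F_d=\Psi_d\circ\Phi_d$, continuity of $\Psi_d$ via the clopenness of the basic sets in $\sB$, and the passage from weak convergence back to the cylinder-set condition \eqref{eq:k_pushforward_cvg} --- and all of this is sound.
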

\begin{proof}
For $\mu \in \sSig_{2\infty}^+$, $F_d(\mu)$ is uniquely determined by $\mu_1',\ldots,\mu_d'$ and vice versa, so the result follows immediately from \Cref{thm:F_d_suffices}.
\end{proof}

\section{Main theorem statement and comments}\label{sec:state_rmt_result}

We wish to talk about random finite-length signatures---singular numbers of the matrix product process---converging to random elements of $\sSig_{2\infty}$, so it is convenient to define an embedding of $\sSig_N$ into $\sSig_{2\infty}$.

\begin{defi}\label{def:shift_and_embedding}
For $\la \in \sSig_N$ define 
\begin{equation}
\iota_n(\la) = \begin{cases}
\infty & n \leq 0 \\
\la_n & 1 \leq n \leq N \\
-\infty & n > N
\end{cases},
\end{equation}
and let
\begin{align*}
\iota: &\sSig_N \inj \sSig_{2 \infty} \\
& (\la_1,\ldots,\la_N) \mapsto (\iota_n(\la))_{n \in \Z}.
\end{align*}
\end{defi}

We are now able to state the main dynamical result, which in the bulk case we will later augment to include the single-time marginal as well. It applies to both the bulk and edge: the sequence $(r_N)_{N \geq 1}$, which represents `observation points' of the matrix product process, may be taken such that $0 \ll r_N \ll N$ for a bulk limit, or $r_N = N-k$ for fixed $k$ for an edge limit. 

\begin{thm}\label{thm:gen_limit}
Fix $p$ prime and let $t=1/p$. For each $N \in \N$, let $A^{(N)}_i, i \geq 1$ be an iid sequence of left-$\GL_N(\Z_p)$-invariant random matrices in $\Mat_N(\Z_p)$, and let $r_N$ be a `bulk observation point' such that $r_N$ and the random variable
    \begin{equation}
    X_N := \corank(A^{(N)}_i \pmod{p})
    \end{equation}
    satisfy
    \begin{enumerate}
        \item[(i)] \label{eq:rN_bulk} $r_N \to \infty$ as $N \to \infty$,
        \item[(ii)]\label{eq:not_always_0} $\Pr(X_N = 0) < 1$ for every $N$, and 
        \item[(iii)] $X_N$ is far away from $r_N$ with high probability, in the sense that for every $j \in \Z$, 
        \begin{equation}\label{eq:lambda_not_large}
        \Pr(X_N > r_N+j) = o(c_N^{-1}) \quad \quad \quad \quad \text{as $N \to \infty$}
        \end{equation}
        where 
        \begin{equation}\label{eq:def_cN}
    c_N := \frac{t^{-r_N}}{\E [\bbone(X_N \leq r_N)(t^{-X_N} - 1)]} \quad \quad N =1,2,\ldots 
    \end{equation}
    \end{enumerate}
    Let $\mu \in \sSig_{2\infty}^+$ be such that $\lim_{n \to \infty} \mu_{-n} = \infty$, and let $B^{(N)} \in \Mat_N(\Z_p), N \geq 1$ be any left-$\GL_N(\Z_p)$-invariant matrices with singular numbers around $r_N$ matching $\mu$, i.e. for every $i \in \Z$
    \begin{equation}\label{eq:B_mu_limit}
        (s^{r_N} \circ \iota(\SN(B^{(N)})))_i = \mu_i
    \end{equation}
    for all sufficiently large $N$. Define the prelimit matrix product process $\Pi^{(N)}(\tau) = \SN(A^{(N)}_\tau \cdots A^{(N)}_1 B^{(N)})$ for $\tau \in \Z_{\geq 0}$, and the shifted version on $\sSig_{2\infty}$
    \begin{equation}
    \Lambda^{(N)}(T) := s^{r_N} \circ \iota(\Pi^{(N)}(\floor{c_N T})),T \in \R_{\geq 0}.
    \end{equation}
    Then we have convergence
    \begin{equation}\label{eq:gen_limit_conv}
        \Lambda^{(N)}(T) \xrightarrow{N \to \infty} \Pois^{\mu,2\infty}(T)
    \end{equation}
    in finite-dimensional distribution.
\end{thm}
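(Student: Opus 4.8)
The plan is to reduce everything to the truncations $F_d$ and then to a convergence of Markov generators. Fix $d \in \N$. Since $\mu \in \sSig_{2\infty}^+$ with $\mu_{-n} \to \infty$, the signature $\mu$ has parts $\geq d$, so $i_0 := \mu_d'+1 > -\infty$ and \Cref{thm:compute_Q} (together with \Cref{thm:describe_projections}) applies: $F_d(\Pois^{\mu,2\infty}(T))$ is a continuous-time Markov chain with the explicit, locally finite generator $Q$ of \eqref{eq:explicit_Q} and well-defined stochastic semigroup $e^{TQ}$. On the matrix side, capping singular numbers at $d$ is reduction mod $p^d$: $F_d(\SN(A))$ records the elementary divisor exponents of $A \bmod p^d$, and since $(A^{(N)}_\tau \cdots A^{(N)}_1 B^{(N)}) \bmod p^d$ depends only on the factors mod $p^d$, the process $\tau \mapsto F_d(\Pi^{(N)}(\tau))$ is a Markov chain on the finite set of signatures in $\Sig_N$ with all parts in $\{0,\dots,d\}$. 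By \Cref{thm:F_d_suffices} it then suffices, for every $d$, to show that the time-rescaled chain $F_d(\Pi^{(N)}(\floor{c_N T}))$, shifted onto $F_d(\sSig_{2\infty})$ by $s^{r_N} \circ \iota$, converges in finite-dimensional distribution to the chain generated by $Q$; its initial condition matches $F_d(\mu)$ on any fixed finite window once $N$ is large by \eqref{eq:B_mu_limit}, so by the standard theory of Markov chain convergence what remains is a convergence of generators.

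The generator of the shifted truncated chain is $c_N(P_N - I)$ with $P_N$ its one-step transition kernel, and I would compute its limit using the $\GL_N(\Z_p)$-bi-invariance of the $A^{(N)}_i$: conditionally on $\Pi^{(N)}(\tau) = \lambda$,
\[
\Pi^{(N)}(\tau+1) \;\stackrel{d}{=}\; \SN\!\big(\diag(p^{S})\, V\, \diag(p^{\lambda})\big),
\]
where $S = \SN(A^{(N)}_{\tau+1})$ has $X_N = \corank(A^{(N)}_{\tau+1}\bmod p)$ nonzero parts and $V \in \GL_N(\Z_p)$ is Haar-distributed and independent. The task is thus to control $\SN(\diag(p^S)V\diag(p^\lambda))$ for fixed $\lambda,S$ with $\lambda$ of $\mu$-shape on a large window around $r_N$. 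Here I would use the explicit sampling of Haar measure on $\GL_N(\Z_p)$ (\Cref{thm:haar_sampling}) together with the variational characterization of singular numbers (\Cref{thm:minmax}) and \Cref{thm:multiply_smaller_dimension} and \Cref{thm:minor_increase_sns} to get exact formulas and sharp bounds for $\Pr(\SN(\diag(p^S)V\diag(p^\lambda)) = \kappa \mid S)$ by direct linear algebra; this is the nonasymptotic input of \Cref{sec:nonasymp_lin_alg}.

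With those bounds I would average over $S$---discarding the event $\{X_N > r_N + j\}$, whose probability is $o(c_N^{-1})$ by hypothesis (iii) and so negligible after multiplication by $c_N$---and read off that in one step the shifted coordinate at position $i$ increases by exactly $1$ with probability $\sim c_N^{-1}\, \tfrac{t^{i}(1 - t^{m_{\nu_i}(\nu)})}{1-t}$, while a jump of size $\geq 2$, or a simultaneous change at two positions near a fixed index, has probability $o(c_N^{-1})$; the precise scaling $c_N = t^{-r_N}/\E[t^{-X_N}-1]$ from \eqref{eq:def_cN} is exactly what makes these rates finite and nonzero, and they reproduce the off-diagonal and diagonal entries of $Q$ in \eqref{eq:explicit_Q}. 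Convergence of generators on a countable state space with $Q$ conservative and non-explosive then gives $P_N^{\floor{c_N T}} \to e^{TQ}$, and a routine localization---the coordinates are nondecreasing and bounded by $d$, so on a finite time interval only finitely many of them can have moved, with uniform control---reduces matters to finitely many coordinates; the multi-time statement follows from the Markov property. I expect the main obstacle to be exactly this generator-convergence step: extracting the one-step transition asymptotics, with the correct combinatorial constants, robustly from nothing more than $\GL_N(\Z_p)$-invariance. This is carried out in \Cref{sec:nonasymp_lin_alg} and \Cref{sec:transition_asymptotics}.
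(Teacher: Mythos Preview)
Your proposal is correct and follows essentially the same route as the paper: reduce to truncations $F_d$ via \Cref{thm:F_d_suffices}, identify both sides as Markov, and show $\tilde P_N = I + c_N^{-1}Q + o(c_N^{-1})$ using the linear-algebraic bounds of \Cref{sec:nonasymp_lin_alg} averaged over $X_N$ as in \Cref{sec:transition_asymptotics}. The one device the paper adds that you only gesture at is the upper-triangularity of both $P_N$ and $Q$ with respect to the containment order $\subset$: this restricts the generator convergence to \emph{finite} poset intervals $[\nu^{(i-1)},\nu^{(i)}] \subset \Sigma(d,\mu)$, so that $P_N^{\lfloor c_N T\rfloor} \to e^{TQ}$ becomes a finite-matrix statement rather than an appeal to abstract countable-state-space convergence theory.
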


Many remarks on \Cref{thm:gen_limit} are in order. First of all, the hypothesis (iii) is not the same as the more simply stated one given in the Introduction. That one is in fact a consequence of (iii) above, as \Cref{thm:prob_hypothesis} below shows. We gave the simpler-to-state version in \Cref{thm:universality_intro}, but the weaker one above is the condition that is naturally needed in the proof. Let us explain where this hypothesis comes from.

Without the indicator, $\E[t^{-X_N} - 1]$ is just the expected size of the set of nonzero vectors in the kernel of $A^{(N)}_i \pmod{p}$, which is a measure of how much multiplication by $A^{(N)}_i \pmod{p}$ should change the corank of a matrix. Somewhat surprisingly, this is also the only relevant feature of $A^{(N)}_i$ to determine how the singular numbers change asymptotically. 

The linear-algebraic lemmas used later in the proof rests on the assumption that the observation point $r_N$ is larger than $X_N = \corank(A^{(N)}_i \pmod{p})$, and sometimes much larger. However, for many reasonable matrix distributions such as the additive Haar measure, $X_N$ can be arbitrarily large, though this is a low-probability event. Because all of our lemmas break down on this event, we need that it is rare enough that it will not happen for any of the matrices $A^{(N)}_i$ that we consider. The number of such matrices is on the order of $c_N$ thanks to the above-mentioned fact that $c_N$ controls how much each matrix affects the singular numbers of the product. Hence, we need that $c_N$ times the probability of this bad event is $o(1)$, which is exactly \eqref{eq:lambda_not_large}. The hypothesis is just saying that the only important contribution to the change in corank of the matrix product comes from higher-probability events that $A_i^{(N)}$ have low (but nonzero) coranks, rather than lower-probability rare events of extremely high corank.

We now show that this hypothesis is implied by the one stated in \Cref{thm:universality_intro}, which takes the simpler form of a tail bound on the coranks.

\begin{prop}\label{thm:prob_hypothesis}
Let $(r_N)_{N \in \N}$ be a sequence with $r_N \to \infty$. Let $X_N, N \geq 1$ be any $\Z_{\geq 0}$-valued random variables satisfying a uniform tail bound
\begin{equation}
    \label{eq:intro_hyp}
    \Pr(X_N \geq k| X_N > 0) < C t^{1.0001 k}
\end{equation}
for some $C$ independent of $N$, where $t \in (0,1)$ as usual. Then 
\begin{equation}\label{eq:little_o_tfac}
\Pr(X_N \geq r_N + j) = o(\E[\bbone(X_N \leq r_N)(t^{r_N-X_N}-t^{r_N})])
\end{equation}
for every $j \in \Z$.
\end{prop}
\begin{proof}

Let us bound the right-hand side of \eqref{eq:little_o_tfac} below. First note that 
\begin{align}
    \begin{split}
        \bbone(X_N \leq r_N)(t^{r_N-X_N}-t^{r_N}) &= \bbone(1 \leq X_N \leq r_N)(t^{r_N-X_N}-t^{r_N}) \\ 
        & \geq t^{r_N} (t^{-1}-1)\bbone(1 \leq X_N \leq r_N) \\ 
        &= t^{r_N}(t^{-1}-1)(\bbone(X_N \geq 1) - \bbone(X_N > r_N)),
    \end{split}
\end{align}
where we used that $t^{-n} -1 \geq t^{-1}-1$ for $n \geq 1$. Taking expectations,
\begin{align}\label{eq:finishes_prop4.2}
\begin{split}
        \E[ \bbone(X_N \leq r_N)(t^{r_N-X_N}-t^{r_N})] &\geq t^{r_N}(t^{-1}-1)(\Pr(X_N \geq 1) - \Pr(X_N > r_N)) \\ 
        &\geq  t^{r_N} (t^{-1}-1)\left(\Pr(X_N \geq 1) - \Pr(X_N \geq 1) \cdot C t^{1.0001r_N}\right) \\ 
        &\geq t^{r_N}(t^{-1}-1)\Pr(X_N \geq 1) (1+o(1)),
\end{split}
\end{align}
where in the second inequality we have used the rewritten form 
\begin{equation}
    \Pr(X_N \geq r_N) < \Pr(X_N \geq 1) \cdot C t^{1.0001 r_N}
\end{equation}
of \eqref{eq:intro_hyp}, and in the the third simply used that $r_N \to \infty$. Now just note that for any $j \in \Z$,
\begin{equation}
    \Pr(X_N \geq r_N+j) \leq \Pr(X_N \geq 1) \cdot C t^{1.0001(r_N+j)} = o(\Pr(X_N \geq 1) t^{r_N}),
\end{equation}
and in combination with \eqref{eq:finishes_prop4.2} this proves \eqref{eq:little_o_tfac}.
\end{proof}

\begin{proof}[Proof of \Cref{thm:universality_intro} from \Cref{thm:gen_limit}]
By \Cref{thm:prob_hypothesis}, the hypotheses in \Cref{thm:universality_intro} imply those in \Cref{thm:gen_limit}, and the result follows.
\end{proof}

\begin{proof}[Proof of \Cref{thm:rmt_edge_intro}]
Exactly as for \Cref{thm:universality_intro}, taking $r_N = N$ in \Cref{thm:gen_limit} and using the natural inclusion $\bSig_{edge}  \hookrightarrow \sSig_{2\infty} $ taking $(\mu_i)_{i \in \Z_{\leq 0}}$ to $(\ldots,\mu_{-1},\mu_0,-\infty,-\infty,\ldots)$.
\end{proof}

One might also wonder where the definition of $c_N$ came from; why $\bbone(X_N \leq r_N)$ rather than, say, $\bbone(X_N \leq r_N - 1)$? We show that this is simply a matter of convenience and our hypothesis guarantee that any cutoff near $r_N$ will give the same result.

\begin{prop}\label{thm:hyp_cutoff_indep}
Suppose $r_N$ and $X_N$ are such that for every $j \in \Z$,
\begin{equation}\label{eq:prob_o}
\Pr(X_N \geq r_N + j) = o(\E[\bbone(X_N \leq r_N)(t^{r_N-X_N}-t^{r_N})]) \quad \quad \quad \text{as $N \to \infty$}.
\end{equation}
Then for every $j \in \Z$,
\begin{equation}
\E[\bbone(X_N \leq r_N+j)(t^{r_N-X_N}-t^{r_N})] = (1+o(1))\E[\bbone(X_N \leq r_N)(t^{r_N-X_N}-t^{r_N})].
\end{equation}
\end{prop}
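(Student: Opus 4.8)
The plan is to compare both quantities appearing in the claim to the single reference expectation
$a_N := \E[\bbone(X_N \leq r_N)(t^{r_N-X_N}-t^{r_N})]$ (the denominator of $c_N$), and to show that moving the cutoff from $r_N$ to $r_N+j$ perturbs it by only $o(a_N)$. The first observation is that since $t \in (0,1)$ and $X_N \geq 0$, the integrand $t^{r_N-X_N}-t^{r_N}$ is nonnegative; hence the difference between $\E[\bbone(X_N \leq r_N+j)(t^{r_N-X_N}-t^{r_N})]$ and $a_N$ is exactly the expectation of this same integrand restricted to the ``annulus'' between the two cutoffs, namely $\{r_N < X_N \leq r_N+j\}$ when $j \geq 1$ and $\{r_N+j < X_N \leq r_N\}$ when $j \leq -1$. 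The case $j=0$ is trivial, so it suffices to bound this annulus contribution.

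First I would treat $j \geq 1$. On $\{r_N < X_N \leq r_N+j\}$ one has $r_N - X_N \geq -j$, so $t^{r_N-X_N} \leq t^{-j}$ and therefore the integrand is bounded by the $N$-independent constant $t^{-j}$. Consequently the annulus contribution is at most $t^{-j}\,\Pr(X_N \geq r_N+1)$, which by hypothesis \eqref{eq:prob_o} applied with the integer $1$ is $o(a_N)$.

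Next, for $j \leq -1$, the relevant difference is $a_N - \E[\bbone(X_N \leq r_N+j)(t^{r_N-X_N}-t^{r_N})]$, an expectation over $\{r_N+j < X_N \leq r_N\}$; there $X_N \leq r_N$ forces $t^{r_N-X_N} \leq 1$, so the integrand is bounded by $1$, and the event has probability at most $\Pr(X_N \geq r_N+j+1)$, which is again $o(a_N)$ by \eqref{eq:prob_o} (now applied with the integer $j+1 \leq 0$). In either case $\E[\bbone(X_N \leq r_N+j)(t^{r_N-X_N}-t^{r_N})] - a_N = o(a_N)$, which is precisely the assertion $\E[\bbone(X_N \leq r_N+j)(t^{r_N-X_N}-t^{r_N})] = (1+o(1))\,a_N$; if $a_N=0$ for some $N$ the statement \eqref{eq:prob_o} already forces the annulus term to vanish there, so the identity holds trivially.

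I do not expect a genuine obstacle here: the only things to be careful about are that the bound on the integrand over the annulus is uniform in $N$ (it is, because $j$ is fixed) and that the annulus probability is controlled by a single tail bound \eqref{eq:prob_o} with the correctly shifted index. Everything then reduces to a two-line estimate, with the nonnegativity of the integrand doing the structural work of turning the comparison into a one-sided bound.
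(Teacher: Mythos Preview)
Your proposal is correct and follows essentially the same approach as the paper: both arguments observe that the difference of the two expectations is the integral over the annulus between the two cutoffs, bound the integrand there by a constant depending only on $j$ (namely $t^{-j}$ for $j>0$, and $1$ for $j<0$), and then control the annulus probability by a single tail bound from \eqref{eq:prob_o}. The only cosmetic difference is that the paper writes out the case $j>0$ and reduces $j<0$ to it by shifting $r_N$, whereas you handle both signs of $j$ explicitly.
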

\begin{proof}
We will prove the case $j > 0$, as the case $j < 0$ is the same after replacing $r_N$ by $r_N-j$. It suffices to show
\begin{equation}\label{eq:cutoff_wts}
\E[\bbone(r_N < X_N \leq r_N + j)(t^{r_N-X_N}-t^{r_N})] = o(1) \E[\bbone(X_N \leq r_N)(t^{r_N-X_N}-t^{r_N})].
\end{equation}
Since 
\begin{equation}
\E[\bbone(r_N < X_N \leq r_N + j)(t^{r_N-X_N}-t^{r_N})] \leq t^{-j} \Pr(r_N < X_N \leq r_N + j) \leq t^{-j} \Pr(X_N > r_N), 
\end{equation}
which is $o(\E[\bbone(X_N \leq r_N)(t^{r_N-X_N}-t^{r_N})])$ by \eqref{eq:prob_o}, \eqref{eq:cutoff_wts} follows.
\end{proof}

\section{Reducing {\Cref{thm:gen_limit}} to Markov generator asymptotics}\label{sec:markov_preliminaries}

Our goal is to understand the asymptotic dynamics of singular numbers $\Pi^{(N)}(\tau) = \SN(A_\tau \cdots A_1)$ (in the notation of \Cref{thm:gen_limit}) under matrix products $A_1,A_2, \ldots \in \Mat_N(\Z_p)$ in an `observation window' around some $r_N$, i.e. $\Pi^{(N)}_i(\tau)$ where $i = r_N + const$. It is helpful to view the $\Pi^{(N)}_i(\tau)$ as a collection of particles on $\Z$, which may inhabit the same location, and which `jump' in discrete time $\tau$ by $\Pi^{(N)}_i(\tau+1) - \Pi^{(N)}_i(\tau)$ at each `time step' $\tau \mapsto \tau+1$. To establish a continuous-time Poisson-type limit of this evolution, we show the following: 
\begin{enumerate}[label=(\arabic*)]
    \item  \label{item:nochange} With probability $1-O(p^{-r_N})$, none of the singular numbers $\Pi^{(N)}_i(\tau), i \approx r_N$ change under the time step $\tau \mapsto \tau+1$ (and in fact, we see this is true for all $i \geq  r_N$ as well).
    \item For each $i \approx r_N$, we show the probability $\Pi^{(N)}_i(\tau)$ jumps at a given time step is $c p^{-i} + O(p^{-2r_N})$ for $c$ independent of $i$ which we explicitly compute, in the case when $\Pi^{(N)}_i(\tau)$ is not equal to any other part of $\la(\tau)$, and otherwise is given by a slightly different formula since multiple parts may push one another. This leads to the jump rates of the continuous-time process seen in \Cref{thm:gen_limit}. \label{item:1jump}
    \item We show that the probability that more than one jump occurs among $i \approx r_N$ is $O(p^{-2r_N})$ and hence may be discounted.\label{item:2jump}
\end{enumerate}

This section contains the general Markov process theory portion of the proof of \Cref{thm:gen_limit}. We first state three lemmas about random matrices, which correspond to \ref{item:nochange}, \ref{item:1jump}, and \ref{item:2jump} of the above sketch and contain all of the needed hard computations, and then show how they imply \Cref{thm:gen_limit}. The proofs of the lemmas themselves will be deferred to \Cref{sec:transition_asymptotics}. Below we as usual fix a prime $p$ and let $t=1/p$, and will use the following convenient notations:

\begin{defi}\label{def:order_and_skew}
Define a partial order $\subset$ on $\sSig_{2\infty}$ by 
\begin{equation}
    \label{eq:partial_order}
    \nu \subset \kappa \quad \iff \quad \nu_i \leq \kappa_i \quad \text{ for all $i$.}
\end{equation}
For $\nu \subset \kappa$, we define
\begin{equation}
|\kappa/\nu| = \sum_{i \in \Z} \kappa_i-\nu_i \in \Z_{\geq 0} \cup \{\infty\},
\end{equation}
where in the sum we take the convention that $\infty - \infty = (-\infty) - (-\infty) = 0$ and $\infty - n = n - (-\infty) = \infty$ for all $n \in \Z$. 
\end{defi}

\begin{defi}\label{def:finite_trans_mat}
Let $d \in \N$ and let $A$ be a random element of $\Mat_N(\Z_p)$. Then we define the Markov transition matrix on pairs $\kappa,\nu \in F_d(\Sig_N^+)$ by
\begin{equation}
P_{A,d}(\nu,\kappa) := \Pr(F_d(\SN(A B)) = \kappa),
\end{equation}
where $B$ is any matrix with independent of $A$ with left-$\GL_N(\Z_p)$-invariant distribution and deterministic singular numbers $\SN(B) = \nu$. Here $F_d$ is as in \Cref{def:trunc_map}.
\end{defi}

\begin{lemma}\label{thm:kappa=nu_case}
Let $(r_N)_{N \in \N}$ be a sequence with $r_N \leq N$ and $r_N \to \infty$, and for each $N \in \N$ let $A^{(N)}$ be a random matrix in $\Mat_N(\Z_p)$ with $\Pr( A^{(N)} \in \GL_N(\Z_p)) < 1$ and
\begin{equation}
\Pr(\corank(A^{(N)} \pmod{p}) \geq r_N - j) = o(c_N^{-1}) \quad \quad \text{ for all $j \geq 0$}
\end{equation}
where
\begin{equation}
c_N^{-1} := \E[\bbone(\corank(A^{(N)} \pmod{p}) \leq r_N)(t^{r_N-\corank(A^{(N)}\pmod{p})} - t^{r_N})].
\end{equation}
Fix $d \in \N$ and let $P_{A^{(N)},d}$ be as in \Cref{def:finite_trans_mat}. Then as $N \to \infty$,
\begin{equation}
P_{A^{(N)},d}(\nu^{(N)},\nu^{(N)}) = 1 - \frac{t^{(\nu^{(N)})'_d-r_N+1} - t^{N-r_N+1}}{1-t}c_N^{-1} + o(c_N^{-1}).
\end{equation}
Furthermore, for any $L \in \Z$ the implied constant in $o(c_N^{-1})$ is uniform over all $\nu^{(N)} \in F_d(\Sig_N^+)$ with $(\nu^{N})'_d \geq L+r_N$.
\end{lemma}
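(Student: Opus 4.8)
The plan is to strip the randomness of $A^{(N)}$ off using its invariance, reduce to a Haar‑random element of $\GL_N(\Z_p)$ sandwiched between two diagonals, and then carry out an explicit computation with the variational characterization of singular numbers. First, by right‑$\GL_N(\Z_p)$‑invariance and Smith normal form (\Cref{thm:smith}): writing any $B$ with $\SN(B)=\nu$ as $B=UD_\nu V$ with $U,V\in\GL_N(\Z_p)$ and $D_\nu:=\diag(p^{\nu_1},\dots,p^{\nu_N})$, we get $\SN(A^{(N)}B)=\SN((A^{(N)}U)D_\nu)\stackrel{d}{=}\SN(A^{(N)}D_\nu)$, so $P_{A^{(N)},d}(\nu,\nu)=\Pr(F_d(\SN(A^{(N)}D_\nu))=\nu)$. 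I would then condition on $\SN(A^{(N)})=\sigma$: on $\{\sigma=0\}$ (i.e.\ $A^{(N)}\in\GL_N(\Z_p)$) nothing changes, and in general $\#\{i:\sigma_i\ge1\}=\corank(A^{(N)}\bmod p)=X_N$. For fixed $\sigma$ the conditional law of $A^{(N)}$ is right‑$\GL_N(\Z_p)$‑invariant on $\{\SN=\sigma\}$, hence — on multiplying by an independent Haar factor on the right and applying \Cref{thm:smith} — equals the law of $WD_\sigma U$ with $W\in\GL_N(\Z_p)$ and $U$ Haar on $\GL_N(\Z_p)$; left‑multiplication by $W$ leaves $\SN$ unchanged, so $\SN(A^{(N)}D_\nu)\stackrel{d}{=}\SN(D_\sigma UD_\nu)$ conditionally on $\SN(A^{(N)})=\sigma$. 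The whole problem thus reduces to the estimate, uniform over $\sigma$ and $\nu$, of $q_\sigma(\nu):=\Pr(F_d(\SN(D_\sigma UD_\nu))\ne\nu)$ with $U$ Haar.

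For the core estimate, note first that (truncating $\sigma$ above level $d$, which changes neither $D_\sigma UD_\nu\bmod p^d$ nor $F_d(\SN(D_\sigma UD_\nu))$) the exact sequence $0\to\coker(D_\nu)\to\coker(D_\sigma UD_\nu)\to\coker(D_\sigma)\to0$ gives $\coker(D_\nu)\hookrightarrow\coker(D_\sigma UD_\nu)$, whence $\nu'_v\le\SN(D_\sigma UD_\nu)'_v$ for every $v$; since $F_d(\SN(M))$ is determined by the conjugate parts $\SN(M)'_1,\dots,\SN(M)'_d$ (each a function of $M\bmod p^v$), the event $F_d(\SN(D_\sigma UD_\nu))=\nu$ is exactly the event that none of these conjugate parts strictly exceeds the corresponding $\nu'_v$. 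By \Cref{thm:minmax} and \Cref{thm:minor_increase_sns} this is equivalent to the nondegeneracy, for each $v\le d$, of a suitable family of minors of $D_\sigma UD_\nu$ read modulo $p^v$. Let $m$ be the number of parts of $\nu$ equal to $d$; by the hypothesis $(\nu^{(N)})'_k\ge L+r_N$ (together with the convention relating $(\nu^{(N)})'_k$ to the relevant part of the $r_N$‑shifted signature, which gives $t^{m+1}=t^{r_N}t^{(\nu^{(N)})'_k+1}$), $m$ is large. The first $m$ columns of $D_\sigma UD_\nu$ vanish modulo $p^d$, and the remaining columns are $p^{\nu_j}D_\sigma u_j$ with $0\le\nu_j<d$. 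I would sample $U$ column‑by‑column from the right via \Cref{thm:haar_sampling} and express $q_\sigma(\nu)$ as the disjoint union, over the least column index $j$ and value $v$ at which such a minor first degenerates, of explicit events whose conditional probabilities are powers of $t=1/p$ determined by $j$, $v$, $\sigma$ and the local profile of $\nu$; resumming (the geometric sum $\sum_{i=m+1}^{N}t^i=\frac{t^{m+1}-t^{N+1}}{1-t}$ from the admissible positions $j$, the factor $t^{-X_N}-1=p^{X_N}-1$ from the non‑unit rows of $D_\sigma$) should give
\[
q_\sigma(\nu)=\frac{t^{m+1}-t^{N+1}}{1-t}\big(t^{-X_N(\sigma)}-1\big)+(\text{error}),
\]
with error of strictly smaller order than the main term, uniformly over $\sigma\in\Sig_N^+$ and over $\nu$ with $m$ large enough. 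The largeness of $m$ is what makes this work: degenerations among the first $m$ columns only move singular numbers that are already $\ge d$, hence are invisible to $F_d$.

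Finally, averaging over $\sigma=\SN(A^{(N)})$: since $q_\sigma(\nu)$ depends on $\sigma$ only through $X_N$ and vanishes for $X_N=0$, and since the hypothesis $\Pr(X_N\ge r_N-j)=o(c_N^{-1})$ lets one bound crudely (e.g.\ by $1$) the contribution of $\{X_N>r_N\}$, one gets
\[
\Pr\big(F_d(\SN(A^{(N)}D_\nu))\ne\nu\big)=\frac{t^{m+1}-t^{N+1}}{1-t}\,\E\big[\bbone(X_N\le r_N)(t^{-X_N}-1)\big]+o(c_N^{-1}).
\]
Using $\E[\bbone(X_N\le r_N)(t^{-X_N}-1)]=t^{-r_N}c_N^{-1}$, $t^{m+1}=t^{r_N}t^{(\nu^{(N)})'_k+1}$ and $t^{N+1}=t^{r_N}t^{N-r_N+1}$, the right side equals $\frac{t^{(\nu^{(N)})'_k+1}-t^{N-r_N+1}}{1-t}c_N^{-1}+o(c_N^{-1})$, and subtracting from $1$ gives the claim; the asserted uniformity over $\nu$ with $(\nu^{(N)})'_k\ge L+r_N$ is inherited from that of the core estimate. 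That the answer must take this shape can be anticipated from the generator $Q$ of \Cref{thm:compute_Q} for the limiting process after the $r_N$‑shift.

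The main obstacle is the core estimate of the second paragraph: enumerating precisely the degeneration events for the Haar‑random columns of $U$, evaluating each probability as a power of $t$ via \Cref{thm:haar_sampling}, and resumming — with a uniformly controlled error — into the stated closed form. This is the genuinely nonasymptotic linear algebra in the argument; the invariance reductions and the averaging step are routine by comparison.
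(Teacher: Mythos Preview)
Your reduction (via right-invariance and conditioning on $\SN(A^{(N)})=\sigma$) and your averaging step match the paper's strategy, and the exact sequence argument for $\nu\subset\SN(D_\sigma UD_\nu)$ is correct. The paper obtains the conditional probability exactly via \Cref{thm:parts_don't_change}: given $X_N=\ell<j_0+r_N$ (with $j_0+r_N=m+1$ in your notation),
\[
\Pr(F_d(\SN(D_\sigma UD_\nu))=\nu)=\prod_{j=j_0+r_N}^{N}\frac{1-t^{j-\ell}}{1-t^j},
\]
then averages over $\ell$ and extracts the main term via a $q$-binomial expansion.

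The gap in your argument is the claim that the error in the pointwise approximation $q_\sigma(\nu)\approx\frac{t^{m+1}-t^{N+1}}{1-t}(t^{-X_N}-1)$ is ``of strictly smaller order than the main term, uniformly over $\sigma$.'' This is false. For $X_N=\ell$ with $m+1-\ell$ bounded (e.g.\ $\ell=m$), the exact probability above is bounded away from $1$ while your main term is $O(1)$ as well; the ratio error/main does not go to zero. More generally, the next-order term behaves like $t^{2(m+1-\ell)}$ versus a main term of size $t^{m+1-\ell}$, so you only get smallness when $m+1-\ell$ is large. Your averaging step then cuts only at $\{X_N>r_N\}$, which still leaves the problematic range $\{r_N-j<X_N\le r_N\}$ (and more precisely $X_N$ within $O(1)$ of $m+1$) uncontrolled.

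The paper deals with exactly this: after taking the expectation it introduces a cutoff parameter $b$, separates the contribution of $\{X_N>r_N+j_0-b\}$ (bounded by $\Pr(X_N\ge r_N-j)=o(c_N^{-1})$ via the hypothesis) from $\{X_N\le r_N+j_0-b\}$ (where the $j\ge2$ terms in the $q$-binomial expansion gain a factor $t^{(j-1)b}$), and then sends $b\to\infty$ slowly via a diagonalization. Without an argument of this type, your averaging of the main term does not yield the claimed $o(c_N^{-1})$ error: you must show $\E[\bbone(X_N<m+1)t^{2(m+1-X_N)}]=o(c_N^{-1})$, and that is precisely where the tail hypothesis on $X_N$ and the cutoff are used.
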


\begin{rmk}
Note that we do not require the asymptotic in \Cref{thm:kappa=nu_case} and below to be uniform over choices of the distribution of $A^{(N)}$ or the sequence $(r_N)_{N \in \N}$ which we assume to be fixed. Also, we will not need the uniformity of implied constants for \Cref{thm:gen_limit}, but we show it because we believe it may be useful for later work.
\end{rmk}

\begin{lemma}\label{thm:kappa-nu=1_case}
Assume the same setup as \Cref{thm:kappa=nu_case}. Then for any sequence of pairs $\nu^{(N)},\kappa^{(N)} \in F_d(\Sig_N^+)$ with $\nu^{(N)}\prec \kappa^{(N)}$ and $|\kappa^{(N)}/\nu^{(N)}| = 1$,
\begin{equation}\label{eq:1box_asymp_wts}
P_{A^{(N)},d}(\nu^{(N)},\kappa^{(N)}) = t^{j(N)} \frac{1-t^{m_{\nu_{j(N)}}(\nu)}}{1-t} c_N^{-1} + o(c_N^{-1})
\end{equation}
where $j(N)$ is the unique index such that $\kappa^{(N)}_{j(N)+r_N} = \nu^{(N)}_{j(N)+r_N}+1$. Furthermore, for any $L \in \Z$ the implied constant is uniform over all $\nu^{(N)} \in F_d(\Sig_N^+)$ with $(\nu^{N})'_d \geq L+r_N$.
\end{lemma}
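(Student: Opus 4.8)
\emph{Plan.} The proof would follow the template of \Cref{thm:kappa=nu_case}, sharing its reduction step and differing only in the final combinatorics. First, by right-$\GL_N(\Z_p)$-invariance of $A^{(N)}$ and Smith normal form for a matrix $B$ with $\SN(B)=\nu^{(N)}$, one has $F_d(\SN(A^{(N)}B))\stackrel{d}{=}F_d(\SN(A^{(N)}\diag(p^{\nu^{(N)}})))$; transposing (using $\SN(X)=\SN(X^T)$), $(A^{(N)})^T$ is left-$\GL_N(\Z_p)$-invariant, so conditionally on $\SN(A^{(N)})=\lambda$ one may write $(A^{(N)})^T\stackrel{d}{=}U\diag(p^\lambda)W$ with $U$ Haar on $\GL_N(\Z_p)$ and $W$ an independent matrix whose law is immaterial. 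Since $\SN$ is unchanged by right multiplication by $W\in\GL_N(\Z_p)$, this gives
\begin{equation}
P_{A^{(N)},d}(\nu^{(N)},\kappa^{(N)})=\E_\lambda\Big[\Pr_U\big(F_d(\SN(\diag(p^{\nu^{(N)}})U\diag(p^\lambda)))=\kappa^{(N)}\big)\Big],
\end{equation}
with $\lambda$ distributed as $\SN(A^{(N)})$ and $U$ Haar and independent; I would then condition further on $X_N=\corank(A^{(N)}\bmod p)=\lambda_1'$, the tail hypothesis $\Pr(X_N\ge r_N-j_0)=o(c_N^{-1})$ confining the leading-order contribution to $X_N<r_N-j_0$.

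Next, I would exploit that $\nu^{(N)}\prec\kappa^{(N)}$ with $|\kappa^{(N)}/\nu^{(N)}|=1$ forces $j$ to be the first row of a block of equal parts of $\nu^{(N)}$ of size $m=m_{\nu^{(N)}_j}(\nu^{(N)})$, and (since $\kappa^{(N)}_j\le d$) that the event $\{F_d(\SN(\diag(p^{\nu^{(N)}})U\diag(p^\lambda)))=\kappa^{(N)}\}$ is detected modulo $p^d$. The conditional probability $\Pr_U(\cdot)$ for fixed $\lambda$ would be evaluated using the nonasymptotic estimates of \Cref{sec:nonasymp_lin_alg} for $\SN(\diag(p^\nu)U\diag(p^\lambda))$ with $U$ Haar --- built from the variational characterization \Cref{thm:minmax}, the explicit Haar sampling of \Cref{thm:haar_sampling}, and the total-weight identity \Cref{thm:multiply_smaller_dimension} (which forces $|\SN(\diag(p^\nu)U\diag(p^\lambda))|=|\nu|+|\lambda|$, making a box at the frontier a second-order event). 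Conditionally on $\lambda_1'=x$ in the admissible range, these should yield, with error of strictly smaller order, a closed-form probability for adding exactly one box at row $j$ --- heuristically the chance that one unit of cokernel weight, produced by the $x$ columns that $\diag(p^\lambda)$ scales up, cascades all the way down to the frontier row and is absorbed into the block there. The one new feature relative to \Cref{thm:kappa=nu_case} is the prefactor $\frac{1-t^{m}}{1-t}=1+t+\cdots+t^{m-1}$: since $\nu^{(N)}_j=\cdots=\nu^{(N)}_{j+m-1}$, the box can be absorbed by any of the $m$ equal parts, all producing the same signature $\kappa^{(N)}$, so the $m$ contributions --- carrying the geometric weights $t^{j},t^{j+1},\dots,t^{j+m-1}$ from the reflection/pushing mechanism, exactly as in the generator \eqref{eq:explicit_Q} --- add.

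Finally, taking $\E_\lambda$, the leading terms summed over the admissible range of $X_N$ should assemble, by the definition $c_N^{-1}=\E[\bbone(X_N\le r_N)(t^{r_N-X_N}-t^{r_N})]$, into $t^{j}\frac{1-t^{m}}{1-t}c_N^{-1}$, while the contribution of $X_N$ outside that range together with the accumulated errors is $o(c_N^{-1})$ by the tail hypothesis and the uniformity of the \Cref{sec:nonasymp_lin_alg} bounds --- yielding \eqref{eq:1box_asymp_wts}. The uniformity of the implied constant over all pairs with $(\nu^{(N)})'_k\ge L+r_N$ would follow, as in \Cref{thm:kappa=nu_case}, from the fact that the estimates depend on $\nu^{(N)}$ only through the location of its frontier and the local block structure around row $j$.

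\emph{The hard part.} The main obstacle is entirely inside \Cref{sec:nonasymp_lin_alg}: proving, with explicit and summable error control, that for a Haar $U$ the singular numbers of $\diag(p^\nu)U\diag(p^\lambda)$ grow by exactly one box at a prescribed frontier row with the stated probability, uniformly over the infinitely many admissible $(\nu,\lambda)$. The delicate point is the repeated-part structure: a block of $m$ equal parts fuses several a priori distinct one-box moves into the single transition $\nu^{(N)}\to\kappa^{(N)}$, and one must check both that they combine into the clean factor $\frac{1-t^{m}}{1-t}$ and that the contributions from non-initial rows of the block and from longer cascades are genuinely of lower order, so as not to perturb the leading asymptotics required for the Markov-generator convergence in \Cref{sec:transition_asymptotics}.
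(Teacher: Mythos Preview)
Your plan matches the paper's approach in its broad strokes: reduce to $\SN(\diag(p^\la) U \diag(p^{\nu^{(N)}}))$ with $U$ Haar, condition on $X_N=\len(\la)$, invoke the nonasymptotic bounds of \Cref{sec:nonasymp_lin_alg}, and then take the expectation over $X_N$ to assemble $c_N^{-1}$. Two technical points that the paper makes explicit are worth flagging, since your sketch glosses over them.

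First, the event $\{F_d(\SN)=\kappa^{(N)}\}$ is \emph{not} the event computed by \Cref{thm:single_box}: the latter controls only ``one box at row $j$ and nothing changes beyond row $j$,'' allowing further boxes to appear at rows $<j$ (still below $d$). The paper bridges this by first invoking \Cref{thm:kappa-nu_geq_2_case} (and indeed proves that lemma first precisely because it is needed here) to show the discrepancy is $o(c_N^{-1})$; your ``longer cascades'' remark gestures at this, but you should be aware that it is a separate input, not a byproduct of the one-box estimate.

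Second, \Cref{thm:single_box} does not give an exact formula but a sandwich $(1-t^{r-\len(\la)})C \le \Pr(\cdot) \le C$. The paper carries both bounds through the expectation, shows their difference is $O(t^b c_N^{-1})$ after restricting to $X_N \le r_N+L-b$, and then diagonalizes over a slowly growing $b=b(N)$. Your sketch treats the conditional probability as if it were a closed form.

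Finally, your heuristic for the factor $\frac{1-t^m}{1-t}$ --- summing Poisson jump rates $t^j+\cdots+t^{j+m-1}$ --- is the \emph{limit-side} intuition from the generator \eqref{eq:explicit_Q}. In the actual matrix computation this factor does not arise that way: it comes out of the corank-$1$ probability for a submatrix (via \Cref{thm:corank_pt2} and \Cref{thm:prob_coker_1}) inside the proof of \Cref{thm:single_box}. That the two routes agree is exactly the content of the lemma, so one cannot use the Poisson heuristic to \emph{derive} the matrix-side answer.
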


\begin{lemma}\label{thm:kappa-nu_geq_2_case}
Assume the same setup as \Cref{thm:kappa=nu_case}. Then
\begin{equation}
P_{A^{(N)},d}(\nu^{(N)},\{\kappa \in F_d(\sSig_N): \kappa \supset \nu^{(N)} \text{ and }|\kappa/\nu^{(N)}| \geq 2\}) = o(c_N^{-1}).
\end{equation}
Furthermore, for any $L \in \Z$ the implied constant is uniform over all $\nu^{(N)} \in F_d(\Sig_N^+)$ with $(\nu^{N})'_d \geq L+r_N$.
\end{lemma}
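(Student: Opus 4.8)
The plan is to bound the probability that the time step $\tau \mapsto \tau+1$ moves the truncated singular numbers by a total of at least two boxes, by exploiting $\GL_N(\Z_p)$-invariance to reduce to a concrete linear-algebra computation with a Haar-random $U$. By \Cref{def:finite_trans_mat} and the bi-invariance hypothesis, $F_d(\SN(A^{(N)}B))$ has the same law as $F_d(\SN(\diag(p^{\SN(A^{(N)})}) U \diag(p^{\nu^{(N)}})))$ where $U \in \GL_N(\Z_p)$ is Haar-distributed and independent of $\SN(A^{(N)})$; conditioning on the value of $X_N = \corank(A^{(N)} \bmod p)$, it suffices to bound, for each fixed $\ell = \corank$ value, the probability that $|\SN(\diag(p^{\SN(A^{(N)})}) U \diag(p^{\nu^{(N)}}))/\nu^{(N)}| \geq 2$ within the window of coordinates near $r_N$, then sum against $\Pr(X_N = \ell)$. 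The key structural input is \Cref{thm:haar_sampling}: sampling $U$ column by column from right to left, the event ``column $i$ lies in the span of later columns mod $p$'' is exactly what allows a singular number to increase, and these events are what one must control.

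The main steps, in order. First, I would use \Cref{thm:haar_sampling} together with \Cref{thm:minmax} (the variational characterization of singular numbers) to express the event $|\kappa/\nu^{(N)}| \geq j$ in terms of how many of the sampled columns of $U$ fall into successive spans mod $p$, or more precisely into the relevant $p$-adic sublattices detected by the $\val_p(\det(\,\cdot\,))$ infimum. Second, I would show that each individual ``a box is added at coordinate $i$'' event near $r_N$ has probability $O(t^{r_N} c_N^{-1})$-ish — essentially what \Cref{thm:kappa-nu=1_case} already quantifies precisely — and crucially that the events producing a \emph{second} box are, conditionally, of the same order, so that the joint event has probability $O(c_N^{-2}) = o(c_N^{-1})$ (using $c_N \to \infty$, which follows from $r_N \to \infty$ and hypothesis (iii)/the definition of $c_N$). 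Third, I would handle the contribution of large corank: on $\{X_N > r_N - j\}$ for suitable fixed $j$, I bound the probability crudely by $1$ and use the hypothesis $\Pr(X_N \geq r_N - j) = o(c_N^{-1})$ to discard it; on the complementary event the corank is bounded away from the window and the preceding estimate applies with constants uniform in $\ell$. Finally, summing and using uniformity in $\nu^{(N)}$ with $(\nu^{(N)})'_k \geq L + r_N$ (which controls how many coordinates of $\nu^{(N)}$ sit above level $d$, hence how many coordinates are ``frozen'' at $d$ and cannot contribute a box) gives the claim.

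The delicate point — and the step I expect to be the main obstacle — is obtaining the second-box estimate with the correct $o(c_N^{-1})$ order \emph{uniformly}, rather than merely $o(1)$. A naive union bound over pairs of coordinates at which a box could appear gives $O((\text{something} \cdot c_N^{-1})^2)$ only if the per-coordinate estimates are genuinely independent-like; in reality the reflection/pushing interaction means the relevant column-span events are correlated, and a box ``donated'' upward by the dynamics can land far from where the clock rang. I would resolve this by working with the projected process $F_d$ from the start (so only finitely many coordinates, those with index $> \nu_d'$, are live), bounding the total number of boxes added by the number of sampled columns of $U$ that fall into a fixed finite-corank sublattice mod $p^d$, and showing that this count is stochastically dominated by a sum of indicator variables each of order $O(t^{r_N})$ conditioned on $X_N$, so that ``at least two'' has probability $O(t^{2 r_N} \E[(t^{-X_N}-1)^2 \mathbf{1}(X_N \le r_N)] + \Pr(X_N > r_N - j))$; comparing the first term to $c_N^{-1} = t^{r_N}/\E[t^{-X_N}-1]$ via Cauchy--Schwarz-type manipulations, together with hypothesis (iii) to control the tail of $X_N$, yields the desired $o(c_N^{-1})$ bound. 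All of this is deferred to \Cref{sec:transition_asymptotics}, where the nonasymptotic linear-algebra bounds of \Cref{sec:nonasymp_lin_alg} supply the per-coordinate estimates in explicit form.
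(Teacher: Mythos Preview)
Your high-level structure---condition on $X_N$, split into $\{X_N \leq r_N+j_0-b\}$ and its complement, discard the latter using hypothesis (iii)---is right and matches the paper. But the core estimate has a genuine gap.

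The claim that the joint event has probability $O(c_N^{-2})$ is false in general. Conditioned on $X_N=\ell$, the probability of adding at least one box in the window is of order $t^{r_N-\ell}$, and the probability of adding at least two is of order $t^{2(r_N-\ell)}$ (this is the content of the nonasymptotic \Cref{thm:two_unlikely}). Hence after randomizing over $X_N$,
\[
\Pr(\text{$\geq 2$ boxes}) \asymp t^{2r_N}\,\E\big[(t^{-X_N}-1)^2\,\bbone(X_N\le r_N)\big],
\]
which is \emph{not} $(c_N^{-1})^2 = t^{2r_N}\big(\E[(t^{-X_N}-1)\bbone(X_N\le r_N)]\big)^2$ because the box-adding events, while roughly independent given $X_N$, are correlated through $X_N$ itself. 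Your later corrected bound is of this second-moment form, but then you propose to compare it to $c_N^{-1}$ by ``Cauchy--Schwarz-type manipulations''; Cauchy--Schwarz goes the wrong way here (it upper-bounds $\E[Y]^2$ by $\E[Y^2]$, not the reverse). What is actually needed is a cutoff: on $\{X_N\le r_N+j_0-b\}$ one has the pointwise bound $t^{2(r_N+j_0-X_N)} \le t^b\, t^{r_N+j_0-X_N}$, giving a contribution $\le C t^b c_N^{-1}$, while the contribution from $\{X_N>r_N+j_0-b\}$ is $\le \Pr(X_N>r_N+L-b)=o(c_N^{-1})$ for each fixed $b$; then let $b=b(N)\to\infty$ slowly. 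This is exactly the mechanism in the paper's proof (and also in the proof of \Cref{thm:kappa=nu_case}, equations \eqref{eq:lbound}--\eqref{eq:sumbound_l}).

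The paper also organizes the nonasymptotic input differently than you propose: rather than counting columns of $U$ falling into sublattices and dominating by a sum of indicators, it uses \Cref{thm:parts_bound} to bound the total increment $\sum_{j\ge r}(\nu_j-\mu_j)$ by $|\SN(A')|$ for the $(N-\len(\la))\times(N-r+1)$ submatrix $A'$ of the Haar matrix, and then computes $\Pr(|\SN(A')|=0)$ and $\Pr(|\SN(A')|=1)$ exactly via \Cref{thm:corank_pt2} and \Cref{thm:prob_coker_1}. This gives the clean closed-form bound in \Cref{thm:two_unlikely}, whose $q$-binomial expansion makes the cancellation (the $\ell=0,1$ terms combine to zero) and the subsequent cutoff argument transparent.
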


Now we show \Cref{thm:gen_limit} conditional on the above lemmas. As a technical convenience, we work with slightly different prelimit processes.

\begin{defi}\label{def:modified_prelimit_proc}
In the setting of \Cref{thm:gen_limit}, define the (shifted) discrete-time singular number process $\tPi^{(N)}(\tau) = (\tPi^{(N)}(\tau)_i)_{i \in \Z}, \tau \in \Z_{\geq 0}$ on $\sSig_{2\infty}$ by
\begin{equation}
\tPi^{(N)}(\tau)_i = \begin{cases}
    \infty & i < 1-r_N \\
    \Pi^{(N)}(\tau)_{i+r_N} & 1-r_N \leq i \leq N-r_N \\
    \mu_i & i > N-r_N
\end{cases}.
\end{equation}
Define the continuous-time version by
\begin{equation}
\tL^{(N)}(T) = (\tL_i^{(N)}(T))_{i \in \Z} = \tPi^{(N)}(\floor{c_N T}).
\end{equation} 
\end{defi}

In other words, $\tPi^{(N)}$ agrees with $\Pi^{(N)}$ on all coordinates $i \leq N-r_N$, and all later coordinates are the same as those of $\mu$ and do not change as time $T$ increases. The process $\tL^{(N)}(T)$ has the advantage that $F_d(\tL^{(N)}(0)) = F_d(\mu)$ whenever $N$ is large enough so that $\SN(B^{(N)})$ has at least one part $\geq d$ (of course, the fact that this is true for large $N$ requires the hypothesis $\lim_{n \to \infty} \mu_{-n} = \infty$ of \Cref{thm:gen_limit}), hence $F_d(\tL^{(N)}(T))$ and $F_d(\Pois^{\mu,2\infty}(T))$ have the same initial condition. 

We now wish to prove the desired convergence in finite-dimensional distribution by analyzing the transition matrix and generator, respectively, of the discrete-time process $F_d(\tPi^{(N)}(\tau)), \tau =0,1,\ldots$ and the continuous-time process $F_d(\Pois^{\mu,2\infty}(T))$. For these considerations it is natural to consider a restricted state space, $\Sigma(d,\mu)$, which we now define. 

\begin{defi}\label{def:Sigma}
Recalling the notation $\nu/\mu$ and $\nu \supset \mu$ from \Cref{def:order_and_skew}, for any $\mu \in \sSig_{2\infty}$ and $d \geq 1$ we define
\begin{equation}
\Sigma(d,\mu) := \{\nu \in F_d(\sSig_{2\infty}): \nu \supset F_d(\mu), |\nu/F_d(\mu)| < \infty\}.
\end{equation}
\end{defi}

\begin{lemma}\label{thm:pois_on_Sigma}
For $\mu \in \sSig_{2\infty}$ with $\lim_{n \to \infty} \mu_{-n} = \infty$ and any $d \in \N$, the Markov process $F_d(\Pois^{\mu,2\infty}(T))$ remains on $\Sigma(d,\mu)$ for all time with probability $1$, and its transition matrix $Q$ (restricted to $\Sigma(d,\mu)$) is upper-triangular with respect to the partial order $\subset$ of \Cref{def:order_and_skew}.
\end{lemma}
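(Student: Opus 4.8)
The plan is to analyze $F_d(\Pois^{\mu,2\infty}(T))$ via the explicit generator $Q$ computed in \Cref{thm:compute_Q}, which applies precisely because the hypothesis $\lim_{n \to \infty} \mu_{-n} = \infty$ guarantees that $\mu$ has a part $\geq d$, so $i_0 = \mu_d'+1 > -\infty$ is finite. First I would verify that $F_d(\Pois^{\mu,2\infty}(0)) = F_d(\mu) \in \Sigma(d,\mu)$, which is immediate since $F_d(\mu) \supset \mu$ fails only where $\mu_i > d$, but there $F_d(\mu)_i = d < \mu_i$ is impossible — wait, more carefully: $F_d(\mu)_i = \min(\mu_i,d)$, and we need $F_d(\mu) \supset \mu$ in the sense of \eqref{eq:partial_order}, i.e. $\min(\mu_i,d) \geq \mu_i$, which holds iff $\mu_i \leq d$; this is not automatic. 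So instead the correct reading is that $\Sigma(d,\mu)$ should be anchored at $F_d(\mu)$ rather than $\mu$; I would either note that $F_d$ is a retraction and the relevant base point is $F_d(\mu)$, or observe that since all dynamics of $F_d(\Pois^{\mu,2\infty})$ only increase coordinates up to the cap $d$, and coordinates already $\geq d$ are frozen at $d$, the process stays in $\{\nu \in F_d(\sSig_{2\infty}): \nu \supset F_d(\mu),\ |\nu/F_d(\mu)| < \infty\}$; if the paper intends $\Sigma(d,\mu)$ with $\mu$ replaced by $F_d(\mu)$ implicitly (since inside $F_d(\sSig_{2\infty})$ one has $\nu \supset \mu \iff \nu \supset F_d(\mu)$ for $\nu$ already $d$-capped), then the two descriptions coincide and I would make this identification explicit at the start.

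The key steps are then: (1) By \Cref{thm:describe_projections} and \Cref{def:half_infinite_process}, $F_d(\Pois^{\mu,2\infty}(T))$ is equal in multi-time distribution to a copy of $F_d(\tPois^{\pi_{[-n,\infty)}(\mu),n}(T))$ for $n > i_0$, padded on the left with entries equal to $d$; the left padding and (if $N < \infty$) the right $-\infty$ tail never move. So it suffices to argue for the half-infinite process. (2) Each non-frozen coordinate $i$ with $i_0 \leq i \leq N$ performs a Poisson-clock jump of $+1$ (possibly via the reflection/donation rule \eqref{item:interact}), and once it reaches $d$ it jumps no more. Hence every coordinate is non-decreasing in $T$ and bounded above by $d$, so $\nu \supset F_d(\mu)$ is preserved. (3) For finiteness of $|\nu/F_d(\mu)|$: the only coordinates that can ever change are those with index in $[i_0, N]$ and initial value $< d$; each such coordinate increases by at most $d - F_d(\mu)_i \leq d - \mu_i$ total, and I would show $\sum_{i \geq i_0}(d - F_d(\mu)_i) < \infty$ using that $\mu_i \uparrow d$ as $i \downarrow i_0$... actually this sum is over $i \geq i_0$ where $\mu_i < d$, i.e. only finitely many $i$ if $\mu$ is eventually $< d$ but possibly infinitely many with $\mu_i$ approaching but not reaching... no: for $i \geq i_0$ we have $\mu_i < d$ so $F_d(\mu)_i = \mu_i$, and $|\nu/F_d(\mu)| = \sum_{i \geq i_0}(\nu_i - \mu_i) \leq \sum_{i \geq i_0}(d - \mu_i)$; this is finite because $\Pois^{\nu,\infty}$ has finitely many jumps in any bounded time interval (\Cref{thm:tomega_full_measure}), so almost surely $|\nu/F_d(\mu)|$ — being the total number of jumps up to time $T$ — is finite for each fixed $T$. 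That is the cleanest argument: $|F_d(\Pois^{\mu,2\infty}(T))/F_d(\mu)|$ equals the number of clock rings affecting indices $\geq i_0$ in $[0,T]$, which is a.s. finite by \Cref{thm:tomega_full_measure}.

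(4) Upper-triangularity of $Q$ with respect to $\subset$ is then read directly off formula \eqref{eq:explicit_Q}: the only off-diagonal entries $Q(\nu,\kappa) \neq 0$ occur when $\kappa_j = \nu_j + \bbone(j=i)$ for some $i$, i.e. $\kappa \supsetneq \nu$, so $Q(\nu,\kappa) = 0$ whenever $\kappa \not\supset \nu$; combined with the diagonal term, $Q$ restricted to $\Sigma(d,\mu)$ has nonzero entries only on pairs $\nu \subset \kappa$, which is precisely upper-triangularity for the partial order. The main obstacle I anticipate is purely bookkeeping: reconciling whether the statement's $\Sigma(d,\mu)$ is anchored at $\mu$ or $F_d(\mu)$ and confirming these agree inside $F_d(\sSig_{2\infty})$, and handling the degenerate cases ($N$ finite, or coordinates stuck at $\pm\infty$) so that "remains on $\Sigma(d,\mu)$" is literally true rather than true up to trivial identifications. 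No deep estimate is needed — everything follows from \Cref{thm:compute_Q}, \Cref{thm:describe_projections}, and \Cref{thm:tomega_full_measure}.
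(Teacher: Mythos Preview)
Your approach is correct and essentially matches the paper's: the paper simply invokes \Cref{thm:compute_Q} to observe that the total exit rate out of any state is bounded by $(t^{i_0}-t^{N+1})/(1-t) < \infty$, hence finitely many jumps occur almost surely and the process stays on $\Sigma(d,\mu)$, then reads upper-triangularity directly off \eqref{eq:explicit_Q}. Your finiteness argument via \Cref{thm:tomega_full_measure} is an equivalent route, and your concern about whether $\Sigma(d,\mu)$ should be anchored at $F_d(\mu)$ rather than $\mu$ is a valid bookkeeping point that the paper glosses over.
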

\begin{proof}
By \Cref{thm:compute_Q}, the sum of transition rates of $F_d(\Pois^{\mu,2\infty})$ out of any state is bounded above by the sum of transition rates out of state $\mu$, which is $(t^{i_0}-t^{N+1})/(1-t)$ and hence finite. Therefore with probability $1$, $F_d(\Pois^{\mu,2\infty}(T))$ stays on $\Sigma(d,\mu)$. Upper-triangularity follows from the explicit definition in \Cref{thm:compute_Q}. 
\end{proof}

\begin{lemma}\label{thm:F_d(Lambda)_markov}
In the setting of \Cref{def:modified_prelimit_proc}, for any $d \in \N$, $F_d(\tPi^{(N)}(\tau))$ is a Markov chain. Furthermore, the set $\sSig_{2\infty} \setminus \Sigma(d,\mu)$ is absorbing for this process, so it projects to a Markov process on $\Sigma(d,\mu) \cup \{\aleph\}$ by identifying all states in $\sSig_{2\infty} \setminus \Sigma(d,\mu)$ with $\aleph$. Finally, the transition matrix $\tP_N$ of this Markov process is upper-triangular with respect to the partial order $\subset$ of \Cref{def:order_and_skew}.
\end{lemma}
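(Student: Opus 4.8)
The plan is to deduce all three assertions from two facts: the $\GL_N(\Z_p)$-invariance of the $A^{(N)}_i$ (together with the behaviour of singular numbers modulo $p^d$), and the monotonicity of singular numbers under left multiplication.

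\emph{Markov property.} First I would record the standard consequence of right-$\GL_N(\Z_p)$-invariance (available since the $A^{(N)}_i$ are bi-invariant): for $M \in \Mat_N(\Z_p)$ independent of $A := A^{(N)}_{\tau+1}$, conditionally on $M$ one has
\[
\SN(AM) \;\overset{d}{=}\; \SN\bigl(A\,\diag(p^{\SN(M)})\bigr),
\]
obtained by inserting a Haar factor ($AM \overset{d}{=} AUM$ with $U$ Haar independent of $(A,M)$), using it to reduce $M$ to its Smith form $\diag(p^{\SN(M)})$, and reabsorbing the remaining $\GL_N(\Z_p)$ factors, which drop out of $\SN(\cdot)$. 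In particular this law depends on $M$ only through $\SN(M)$. Next I would note that $\diag(p^{\SN(M)}) \equiv \diag(p^{F_d(\SN(M))}) \pmod{p^d}$, since the entries with $\SN(M)_i \ge d$, and $p^d$, all vanish mod $p^d$, and that $F_d(\SN(X))$ depends only on $X \bmod p^d$, being read off from the Smith normal form of $X$ over $\Z/p^d\Z$. Hence $\Law\bigl(F_d(\SN(AM))\mid M\bigr)$ depends on $M$ only through $F_d(\SN(M))$. Since $F_d(\tPi^{(N)}(\tau))$ and $F_d(\Pi^{(N)}(\tau)) = F_d(\SN(A^{(N)}_\tau\cdots A^{(N)}_1 B^{(N)}))$ determine one another through the deterministic operations of \Cref{def:modified_prelimit_proc} (shift by $r_N$, the embedding $\iota$, repadding by the constants $\mu_i$), and $A^{(N)}_{\tau+1}$ is independent of $\sigma(A^{(N)}_1,\dots,A^{(N)}_\tau)$, it follows that $\Law\bigl(F_d(\tPi^{(N)}(\tau+1))\mid\sigma(A^{(N)}_1,\dots,A^{(N)}_\tau)\bigr)$ is a function of $F_d(\tPi^{(N)}(\tau))$ alone; restricting to the history of $F_d(\tPi^{(N)})$ yields the Markov property, with one-step kernel $P_{A^{(N)},d}$ acting on the window coordinates $1-r_N \le i \le N-r_N$ (\Cref{def:finite_trans_mat}, applied with $\diag(p^{F_d(\Pi^{(N)}(\tau))})$ in the role of $B$).

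\emph{Triangularity and absorbing complement.} Here I would use that singular numbers are nondecreasing under left multiplication: $\SN(AM)_i \ge \SN(M)_i$ for every $i$, equivalently $\coker(M)$ is a quotient of $\coker(AM)$ via the exact sequence $M\Z_p^N/AM\Z_p^N \hookrightarrow \Z_p^N/AM\Z_p^N \twoheadrightarrow \Z_p^N/M\Z_p^N$, with the usual $\infty$ conventions in the singular case. Iterating over the product, $\Pi^{(N)}(\tau) \subseteq \Pi^{(N)}(\tau+1)$ in the order of \Cref{def:order_and_skew}; since the coordinates of $\tPi^{(N)}$ outside the window are frozen by construction, also $F_d(\tPi^{(N)}(\tau)) \subseteq F_d(\tPi^{(N)}(\tau+1))$, which is precisely upper-triangularity of the transition matrix with respect to $\subseteq$. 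For the absorbing claim, the matching condition \eqref{eq:B_mu_limit} gives $\tPi^{(N)}(0) \supseteq \mu$ once $N$ is large, hence $F_d(\tPi^{(N)}(\tau)) \supseteq F_d(\mu)$ for all $\tau$ and the process lives in the forward-invariant set $\{\rho \in F_d(\sSig_{2\infty}) : \rho \supseteq F_d(\mu)\}$; within this set $\Sigma(d,\mu)$ is the part with $|\rho/F_d(\mu)| < \infty$, and its complement $\{\rho \supseteq F_d(\mu): |\rho/F_d(\mu)| = \infty\}$ is closed upward under $\subseteq$, hence absorbing because the chain only moves up. Collapsing this complement to a single state $\aleph$ produces a Markov chain on $\Sigma(d,\mu) \cup \{\aleph\}$ whose transition matrix $\tP_N$ is still upper-triangular, with $\aleph$ the top element. (In fact for finite $N$ only the finitely many window coordinates ever move, each by a bounded amount, so $|F_d(\tPi^{(N)}(\tau))/F_d(\mu)| < \infty$ always and $\aleph$ is never visited; the collapse just provides a clean countable state space for the generator analysis to come.)

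The only step that is not pure bookkeeping is the reduction modulo $p^d$ in the Markov argument — the assertion that the parts of $\Pi^{(N)}(\tau)$ exceeding $d$ are irrelevant to the future of $F_d(\Pi^{(N)})$ — and this is where I would be most careful, though it is still mild; the triangularity and absorbing assertions follow immediately once monotonicity of singular numbers and the frozen-coordinate structure of $\tPi^{(N)}$ are in hand.
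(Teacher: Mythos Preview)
Your argument is correct and follows the same route as the paper: the Markov property comes from combining $\GL_N(\Z_p)$-invariance with the observation $F_d(\SN(A)) = \SN(A \bmod p^d)$, and the triangularity/absorbing claims from monotonicity of singular numbers under multiplication. You are more explicit than the paper about where invariance enters, which is helpful.

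One small slip in your justification of monotonicity: the short exact sequence $M\Z_p^N/AM\Z_p^N \hookrightarrow \Z_p^N/AM\Z_p^N \twoheadrightarrow \Z_p^N/M\Z_p^N$ requires $AM\Z_p^N \subseteq M\Z_p^N$, which fails for left multiplication in general (take $M=\diag(p,1)$ and $A$ the $2\times 2$ permutation matrix). The conclusion $\SN(AM)_i \ge \SN(M)_i$ is still correct, but the clean cokernel argument goes the other way: $A$ induces an \emph{injection} $\coker(M)\hookrightarrow\coker(AM)$ (send $v+M\Z_p^N$ to $Av+AM\Z_p^N$; injectivity uses $A$ nonsingular, with the $\infty$ convention handling the singular case), and an embedding of finite abelian $p$-groups forces the partwise inequality. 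Alternatively one can transpose to reduce to right multiplication, where your exact sequence does hold, or invoke the variational characterization of \Cref{thm:minmax}. This does not affect the rest of your argument.
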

\begin{proof}
The diagonal entries of the Smith normal form of any $\tA \in \Mat_N(\Z/p^d\Z)$ will lie in $\{1,p,\ldots,p^{d-1},0\}$, and so we define $\SN(\tA) \in \Sig_N$ to have all parts in $\{0,1,\ldots,d\}$, where all $0$ entries in the diagonal of the Smith normal form correspond to parts $d$. It is then clear that for any $A \in \Mat_N(\Z_p)$, 
\begin{equation}
    \label{eq:sn_mod}
    F_d(\SN(A)) = \SN(A \pmod{p^d}).
\end{equation}
Since $A^{(N)}_{\floor{c_N T}} \cdots A^{(N)}_1 B^{(N)} \pmod{p^d}$ is a product of independent matrices over $\Z/p^d\Z$, $F_d(\tPi^{(N)}(\tau))$ 
\begin{equation}
 F_d(\tPi^{(N)}(\tau)) = s^{r_N} \circ \iota(\SN(A^{(N)}_{\tau} \cdots A^{(N)}_1 B^{(N)} \pmod{p^d}))
\end{equation}
is a Markov process. Because $\tPi^{(N)}(\tau)$ is the same as above except on coordinates $i > N-r_N$, which do not evolve in time under either process, $\tPi^{(N)}(\tau)$ is also a Markov process. Clearly $F_d(\tPi^{(N)}(\tau))$ has upper-triangular transition matrix with respect to $\supset$, since multiplying matrices over $\Z_p$ can only increase their singular numbers. Hence if it ever leaves $\Sigma(d,\mu)$, it will not return, so it projects to a Markov process on $\Sigma(d,\mu) \cup \{\aleph\}$.
\end{proof}

Note that if $d > \lim_{n \to \infty} \mu_{-n}$, then $F_d(\tPi^{(N)}(\tau))_i = d > \mu_i$ for all sufficiently large negative $i$, hence in fact $F_d(\tPi^{(N)}(\tau))$ lives on $\sSig_{2\infty} \setminus \Sigma(d,\mu)$. When $d \leq \lim_{n \to \infty} \mu_{-n}$, however, it is not hard to see that $F_d(\tPi^{(N)}(\tau))$ will remain on $\Sigma(d,\mu)$ with probability $1$, though we find this fact as a consequence of later statements rather than explicitly deriving it. Finally, we prove the desired convergence for $F_d(\tL^{(N)}(T_i))$, from which \Cref{thm:gen_limit} is an easy consequence.

\begin{prop}\label{thm:gen_limit_F_d}
In the setting and notation of \Cref{thm:gen_limit},
\begin{equation}
    F_d(\tL^{(N)}(T)) \xrightarrow{N \to \infty} F_d(\Pois^{\mu,2\infty}(T))
\end{equation}
in finite-dimensional distribution.
\end{prop}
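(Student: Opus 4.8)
The strategy is to prove convergence of the discrete-time Markov chains $F_d(\tPi^{(N)}(\tau))$ (run on the time-scale $\tau = \lfloor c_N T\rfloor$) to the continuous-time Markov process $F_d(\Pois^{\mu,2\infty}(T))$ by a generator-convergence argument, using the three transition-probability asymptotics in Lemmas \ref{thm:kappa=nu_case}, \ref{thm:kappa-nu=1_case}, \ref{thm:kappa-nu_geq_2_case} as the analytic input, together with the structural facts collected in Lemmas \ref{thm:pois_on_Sigma} and \ref{thm:F_d(Lambda)_markov}. First I would fix $d \in \N$; since $\lim_{n\to\infty}\mu_{-n}=\infty$, for all large $N$ the matrix $\SN(B^{(N)})$ has a part $\geq d$, so that $F_d(\tPi^{(N)}(0)) = F_d(\mu)$ agrees with the initial condition of $F_d(\Pois^{\mu,2\infty})$, and $i_0 = \mu_d'+1 > -\infty$ so that Proposition \ref{thm:compute_Q} applies and describes the limiting process via the explicit generator $Q$ on the state space $\Sigma(d,\mu)$. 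By Lemma \ref{thm:F_d(Lambda)_markov} the prelimit chain lives on $\Sigma(d,\mu)\cup\{\aleph\}$ with upper-triangular transition matrix $\tilde P_N$ with respect to $\subset$, and by Lemma \ref{thm:pois_on_Sigma} the limit process stays on $\Sigma(d,\mu)$ with upper-triangular generator $Q$. Both the possibly-absorbing state $\aleph$ and the observation point $r_N$ contribute: the condition $(\nu^{(N)})'_k \geq L + r_N$ that appears in the three lemmas is automatically satisfied for states $\nu$ reachable from $\mu$ within any bounded number of steps, because $\nu \supset \mu$, $|\nu/\mu|$ is bounded, and $\mu'_d = i_0 - 1$ is a fixed integer while $r_N \to \infty$ — hence $(\nu)'_k \geq \mu'_d \geq L + r_N$ eventually for any fixed $L$; I would record this uniformity as a short observation at the start.

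The core step is to show that $c_N(\tilde P_N - I) \to Q$ entrywise on $\Sigma(d,\mu)$, with $Q$ as in Proposition \ref{thm:compute_Q}. This is exactly the content of the three lemmas after the substitution $t^{-X_N} \leftrightarrow$ corank: Lemma \ref{thm:kappa=nu_case} gives $c_N(\tilde P_N(\nu,\nu)-1) \to -(t^{\nu'_d+1}-t^{N-r_N+1})/(1-t)$, which matches $Q(\nu,\nu)$ once one observes that under the embedding $s^{r_N}\circ\iota$ the quantity $\nu'_d$ for the shifted signature equals the "$(\nu^{(N)})'_k$" of the lemma minus $r_N$, and $N-r_N+1$ plays the role of "$N+1$" in \eqref{eq:explicit_Q} (since the $N$ of Proposition \ref{thm:compute_Q} is the largest index with a finite part, which for $\tPi^{(N)}$ is $N-r_N$); Lemma \ref{thm:kappa-nu=1_case} gives the off-diagonal single-box rate $t^{j}(1-t^{m_{\nu_j}(\nu)})/(1-t)$ matching the middle case of \eqref{eq:explicit_Q}; and Lemma \ref{thm:kappa-nu_geq_2_case} shows the total mass of multi-box transitions is $o(c_N^{-1})$, so those contribute nothing in the limit. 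I would then invoke a standard criterion for convergence of a sequence of discrete-time chains, sped up by $c_N$, to a continuous-time chain whose generator is a pointwise limit of $c_N(\tilde P_N - I)$. Because $\Sigma(d,\mu)$ is countable and the total exit rate out of every state is bounded uniformly by $(t^{i_0}-t^{N-r_N+1})/(1-t) \leq t^{i_0}/(1-t)$ (a bound independent of $N$ and of the state, by upper-triangularity and Lemma \ref{thm:compute_Q}'s structure), the limiting chain is non-explosive and the generator convergence is "uniform enough": one can apply e.g. the discrete-to-continuous generator-convergence theorem (Ethier–Kurtz type, or a direct argument via the skeleton chain and a uniformly bounded number of jumps on $[0,T]$) to deduce convergence of the finite-dimensional distributions $(F_d(\tL^{(N)}(T_1)),\ldots,F_d(\tL^{(N)}(T_m)))$. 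I would carry this out by first showing one-time convergence $F_d(\tL^{(N)}(T)) \to F_d(\Pois^{\mu,2\infty}(T))$ via $\tilde P_N^{\lfloor c_N T\rfloor} \to e^{TQ}$ (which follows from $c_N(\tilde P_N-I)\to Q$ plus the uniform bound), and then bootstrapping to joint distributions by the Markov property of both chains, conditioning successively on the states at times $T_1 < \cdots < T_m$ and using the one-time convergence at each increment together with continuity of the entries of $e^{(T_{i+1}-T_i)Q}$. Finally I would note the translation between $F_d(\tL^{(N)})$ and the stated $F_d(\tilde\Lambda^{(N)})$: they differ only in coordinates $i > N - r_N$, which are frozen equal to $\mu_i$ and hence, in finite-dimensional distribution restricted to any fixed finite window of coordinates, agree for all large $N$.

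The main obstacle, and where I expect to spend the most care, is making the passage from pointwise generator convergence to convergence of the semigroups $\tilde P_N^{\lfloor c_N T\rfloor} \to e^{TQ}$ fully rigorous on the infinite (countable) state space $\Sigma(d,\mu)$. The subtlety is that, although the total exit rate is uniformly bounded, the state space is not finite, so one cannot simply take matrix exponentials naively; one needs to control the contribution of states far from $\mu$ in $\subset$-order. The clean way around this is to exploit upper-triangularity together with the uniform bound on exit rates: for fixed $T$ the chain started at $\mu$ makes at most $\mathrm{Poisson}(t^{i_0}T/(1-t))$-many jumps, so with probability $1-\epsilon$ it stays within a $\subset$-interval $[\mu, \kappa_\epsilon]$ of $\Sigma(d,\mu)$ depending only on $\epsilon, T$ (not on $N$); on this finite sub-state-space the generator convergence is finite-dimensional and elementary matrix analysis ($A_N \to A$ entrywise with uniformly bounded rows implies $(I + A_N/c_N)^{\lfloor c_N T\rfloor} \to e^{TA}$) applies directly, and one lets $\epsilon \to 0$. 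I would also need to verify the corank-to-$t^{-X_N}$ dictionary so that hypothesis (iii) and the definition \eqref{eq:def_cN} of $c_N$ in Theorem \ref{thm:gen_limit} line up precisely with the hypotheses of Lemma \ref{thm:kappa=nu_case} — this is a routine unwinding of definitions but must be stated. The rest — the structural Markov/upper-triangularity facts and the coordinate-window comparison between $\tL^{(N)}$ and $\tilde\Lambda^{(N)}$ — is bookkeeping already essentially done in Lemmas \ref{thm:pois_on_Sigma} and \ref{thm:F_d(Lambda)_markov}.
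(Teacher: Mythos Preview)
Your proposal is correct and follows the same overall strategy as the paper: reduce to generator convergence $c_N(\tilde P_N - I) \to Q$ via Lemmas \ref{thm:kappa=nu_case}--\ref{thm:kappa-nu_geq_2_case}, then deduce semigroup convergence and hence finite-dimensional distributions by the Markov property.

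The one place you take a longer route than the paper is in handling the infinite state space $\Sigma(d,\mu)$. You propose a probabilistic truncation: bound the exit rates uniformly, argue that with probability $1-\epsilon$ the chain stays in a finite $\subset$-interval, apply finite-dimensional matrix convergence there, and send $\epsilon \to 0$. This works, but the paper exploits upper-triangularity more sharply to avoid any approximation. Namely, to compute the single entry $\tilde P_N^{\,m}(\nu^{(i-1)},\nu^{(i)})$ (and likewise $e^{TQ}(\nu^{(i-1)},\nu^{(i)})$), one observes that every path contributing to this entry must remain in the finite poset interval $[\nu^{(i-1)},\nu^{(i)}]$: the chain cannot go below $\nu^{(i-1)}$ since it only moves up, and once above $\nu^{(i)}$ it can never return. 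Hence $\tilde P_N^{\,m}(\nu^{(i-1)},\nu^{(i)}) = (\tilde P_N|_{[\nu^{(i-1)},\nu^{(i)}]})^{\,m}(\nu^{(i-1)},\nu^{(i)})$ \emph{exactly}, and similarly for $Q$. This reduces \eqref{eq:trans_to_semigroup} to a statement about fixed finite matrices $\tilde P_{N,i}$ and $Q_i$, for which $\tilde P_{N,i} = I + c_N^{-1}Q_i + o(c_N^{-1})$ immediately gives $\tilde P_{N,i}^{\lfloor c_N T\rfloor} \to e^{TQ_i}$ with no $\epsilon$-argument, no uniform exit-rate bound, and no need to track the absorbing state $\aleph$. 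Your route would get there too, but with more bookkeeping; the paper's is the cleaner execution of the same idea.
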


\begin{proof}[Proof of \Cref{thm:gen_limit_F_d}, assuming \Cref{thm:kappa=nu_case}, \Cref{thm:kappa-nu=1_case}, and \Cref{thm:kappa-nu_geq_2_case}]
We must show for any sequence of times $0 \leq T_1 < \ldots < T_k$ that 
\begin{equation}\label{eq:genlimit_wts}
    (F_d(\tL^{(N)}(T_i))_{1 \leq i \leq k} \to (F_d(\Pois^{\mu,2\infty}(T_i)))_{1 \leq i \leq k}
\end{equation}
weakly as $N \to \infty$. It follows from \Cref{thm:compute_Q} that
\begin{multline}
    \label{eq:joint_Q_exp}
    \Pr(\Pois^{\mu,2\infty}(T_i) = \nu^{(i)} \text{ for all }1 \leq i \leq k) \\ 
    = (e^{T_1Q})(F_d(\mu),\nu^{(1)}) (e^{(T_2-T_1)Q})(\nu^{(1)},\nu^{(2)}) \cdots (e^{(T_k-T_{k-1})Q})(\nu^{(k-1)},\nu^{(k)})
\end{multline}
when all $\nu^{(i)}$ lie in $\Sigma(d,\mu)$, and \eqref{eq:joint_Q_exp} is $0$ otherwise. Hence to show \eqref{eq:genlimit_wts}, by \eqref{eq:joint_Q_exp} we must show 
\begin{equation}
\label{eq:trans_to_semigroup}
\tP_N^{\floor{c_N T_i} - \floor{c_N T_{i-1}}}(\nu^{(i-1)},\nu^{(i)}) \xrightarrow{N \to \infty} (e^{(T_i-T_{i-1})Q})(\nu^{(i-1)},\nu^{(i)}),
\end{equation}
where $\tP_N$ is the transition matrix as defined in \Cref{thm:F_d(Lambda)_markov}. 

For technical reasons we wish to modify the state space to make it finite, so let $\tP_{N,i}$ and $Q_i$ be the restrictions of $\tP_N$ and $Q$ to the finite poset interval $[\nu^{(i-1)},\nu^{(i)}] \subset \Sigma(d,\mu)$. Then by upper-triangularity (see \Cref{thm:F_d(Lambda)_markov} and \Cref{thm:pois_on_Sigma} respectively), 
\begin{align}
\tP_{N,i}^{\floor{c_N T_i} - \floor{c_N T_{i-1}}}(\nu^{(i-1)},\nu^{(i)}) &= \tP_N^{\floor{c_N T_i} - \floor{c_N T_{i-1}}}(\nu^{(i-1)},\nu^{(i)}) \\ 
(e^{(T_i-T_{i-1})Q_i})(\nu^{(i-1)},\nu^{(i)}) &= (e^{(T_i-T_{i-1})Q})(\nu^{(i-1)},\nu^{(i)}).
\end{align} 
This implies that in order to show \eqref{eq:trans_to_semigroup}, it suffices to show 
\begin{equation}
\label{eq:trans_to_semigroup_finite}
\tP_{N,i}^{\floor{c_N T_i} - \floor{c_N T_{i-1}}}(\nu^{(i-1)},\nu^{(i)}) \xrightarrow{N \to \infty} (e^{(T_i-T_{i-1})Q_i})(\nu^{(i-1)},\nu^{(i)}). 
\end{equation}
The latter is an equality of finite matrices, and because they are finite it suffices to show 
\begin{equation}
    \label{eq:id_plus_small}
    \tP_{N,i} = I + c_N^{-1}Q_i + o(c_N^{-1}).
\end{equation}
(In fact, our reason for restricting to finite poset intervals was to be able to reduce to proving \eqref{eq:id_plus_small}). For any $\eta,\kappa \in [\nu^{(i-1)},\nu^{(i)}] \subset \Sigma(d,\mu)$, we have
\begin{equation}\label{eq:rephrase_transition_probs}
\tP_{N,i}(\eta,\kappa) = \tP_N(\eta,\kappa) = P_{A^{(N)}_i,d}((\eta_i)_{1-r_N \leq i \leq N-r_N},(\kappa_i)_{1-r_N \leq i \leq N-r_N})
\end{equation}
and $Q_i(\eta,\kappa) = Q(\eta,\kappa)$. We recall from \Cref{thm:compute_Q} that\footnote{Note that the $N$ in \Cref{thm:compute_Q} is not the same as the matrix size $N$ as we have used it elsewhere in this proof, but rather corresponds to the limit of $N-r_N$ in the notation of this proof.}
\begin{equation}\label{eq:recall_Q}
Q(\eta,\kappa) = 
\begin{cases}
-\frac{t^{\eta_d'+1} - t^{N+1}}{1-t} & \kappa = \eta \\
 \frac{t^i(1-t^{m_{\eta_i}(\eta)})}{1-t} & \text{there exists $i \in \Z$ such that }\kappa_j = \eta_j + \bbone(j = i) \text{ for all }j \in \Z \\ 
0 & \text{else}
\end{cases}.
\end{equation}
The asymptotics of \Cref{thm:kappa=nu_case}, \Cref{thm:kappa-nu=1_case}, and \Cref{thm:kappa-nu_geq_2_case} for \eqref{eq:rephrase_transition_probs}, which correspond to the three cases of \eqref{eq:recall_Q}, yield \eqref{eq:id_plus_small} in these three cases and hence complete the proof.
\end{proof}

\begin{proof}[Proof of \Cref{thm:gen_limit} from \Cref{thm:gen_limit_F_d}]
We claim that for any finite collection $I$ of coordinates, we have equality (in joint distribution) of projections
\begin{equation}\label{eq:pipi}
\pi_I(\tL^{(N)}(T)) = \pi_I(\La^{(N)}(T)).
\end{equation}
Note that $\tL^{(N)}$ and $\La^{(N)}$ only potentially differ at coordinates $i < 1-r_N$ and $i > N-r_N$. In the first case, any given finite set of coordinates will all be $>1-r_N$ for all sufficiently large $N$, since $r_N \to \infty$. In the second case, when $N - r_N \to \infty$ the same argument suffices. For choices of $r_N$ for which $N - r_N$ does not go to $\infty$, we must have $N - r_N = k$ for some $k$ and all sufficiently large $N$, and $\mu_{k+1} = -\infty$, in order to satisfy \eqref{eq:B_mu_limit}, and in this case $\tL^{(N)}$ and $\La^{(N)}$ agree on all coordinates $i > N-r_N$. This shows \eqref{eq:pipi}. It follows by \Cref{thm:weak_eq} that to show \Cref{thm:gen_limit}, it suffices to show the same statement with $\La^{(N)}$ replaced by $\tL^{(N)}$. This follows from \Cref{thm:gen_limit_F_d} by \Cref{thm:F_d_suffices}.
\end{proof}

\section{Nonasymptotic linear-algebraic bounds}\label{sec:nonasymp_lin_alg}

The purpose of this section is to prove three nonasymptotic statements about random matrix products, \Cref{thm:parts_don't_change}, \Cref{thm:single_box}, and \Cref{thm:two_unlikely}. In the next section we will use these to prove \Cref{thm:kappa=nu_case}, \Cref{thm:kappa-nu=1_case}, and \Cref{thm:kappa-nu_geq_2_case} respectively.

For the remainder of this section, we fix the following notation: Let $N \in \Z_{\geq 1}$ and $\mu,\la \in \sSig_N^+$ be fixed extended signatures with all parts nonnegative, let $A = (a_{ij})_{1 \leq i,j \leq N}$ be a Haar-distributed element of $\GL_N(\Z_p)$, and let $\nu = \SN(\diag(p^\la) A \diag(p^\mu))$ (a random element of $\sSig_N^+$). We write $\col_j(A) = (a_{ij})_{1 \leq i \leq N} \in \Z_p^N$ and similarly for other matrices. We also use $t=1/p$ without comment as usual, and recall the convention that $p^\infty := 0$ in the case when $\mu,\la$ have some entries equal to $\infty$. Finally, for any $\kappa \in \sSig_N^+$ we write 
\begin{equation}
    \len(\kappa) := \#\{1 \leq i \leq N: \kappa_i > 0\}
\end{equation}
with the usual convention $\infty > 0$.

\begin{lemma}\label{thm:parts_don't_change}
In the setting of this section, for any $1 \leq r \leq N$
\begin{equation}\label{eq:1box_lin_lemma}
\Pr(\nu_j = \mu_j \text{ for all }j \geq r) \geq \prod_{j=r}^N \frac{1 - t^{j-\len(\la)}}{1 - t^j} = \frac{(t;t)_{N-\len(\la)} (t;t)_{r-1}}{(t;t)_{r-1-\len(\la)} (t;t)_N}
\end{equation}
with equality if $\mu_{r-1} > \mu_{r}$. 
\end{lemma}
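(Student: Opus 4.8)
The plan is to reduce the probability to a purely $\F_p$-linear event $E$ that does not see the higher $p$-adic structure of $A$, compute $\Pr(E)$ via the column sampling of \Cref{thm:haar_sampling}, and then prove the set inclusion $E \subseteq \{\nu_j = \mu_j \text{ for all } j \ge r\}$ (with equality of events when $\mu_{r-1}>\mu_r$) by elementary manipulation of minors. Write $\ell := \len(\la)$; I may assume $\ell \le r-1$, since otherwise the stated product is $\le 0$ and the inequality is vacuous, and the equality statement will fall out of the argument below. Let $\bar A := A \bmod p$, uniform on $\GL_N(\F_p)$, with columns $\bar v_1,\dots,\bar v_N$, and let $e_1,\dots,e_N$ be the standard basis of $\F_p^N$. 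I will take
\[ E := \{e_1,\dots,e_\ell,\bar v_r,\bar v_{r+1},\dots,\bar v_N \text{ are linearly independent in } \F_p^N\}. \]

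To compute $\Pr(E)$ I will reduce \Cref{thm:haar_sampling} modulo $p$, so $\bar v_N,\bar v_{N-1},\dots$ are sampled from right to left with $\bar v_j$ uniform on $\F_p^N$ conditioned on $\bar v_j \notin \Span(\bar v_{j+1},\dots,\bar v_N)$, a subspace of dimension $N-j$. Conditioning on $e_1,\dots,e_\ell,\bar v_{j+1},\dots,\bar v_N$ being already independent, their span has dimension exactly $\ell+(N-j)$ and contains $\Span(\bar v_{j+1},\dots,\bar v_N)$, so the conditional probability that $\bar v_j$ avoids it — which both continues the independence and meets the sampling constraint — is $\tfrac{p^N-p^{\ell+N-j}}{p^N-p^{N-j}} = \tfrac{1-t^{j-\ell}}{1-t^j}$. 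Multiplying over $j=N,N-1,\dots,r$ gives $\Pr(E)=\prod_{j=r}^N\tfrac{1-t^{j-\ell}}{1-t^j}$, and the closed form in $q$-Pochhammer symbols is the evident telescoping of numerator and denominator.

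For the deterministic part I will use the standard identity (a consequence of \Cref{thm:minmax} together with the Cauchy--Binet formula, since minors of $\GL_N(\Z_p)$-matrices are $p$-integral; equivalently, the theory of determinantal divisors over a DVR) that for $M := \diag(p^\la)A\diag(p^\mu)$ and any $1\le k\le N$, the tail sum $\SN(M)_{N-k+1}+\dots+\SN(M)_N$ equals $\min\{\val_p(\det M_{I,J}): |I|=|J|=k\}$, where $M_{I,J}$ is the submatrix on rows $I$ and columns $J$. Since $M_{I,J} = p^{\sum_{i\in I}\la_i+\sum_{j\in J}\mu_j}A_{I,J}$, we get $\val_p(\det M_{I,J}) = \sum_{i\in I}\la_i+\sum_{j\in J}\mu_j+\val_p(\det A_{I,J})$, a sum of three nonnegative terms with $\sum_{j\in J}\mu_j \ge \sum_{j=r'}^N\mu_j$ whenever $|J|=N-r'+1$. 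This immediately yields $\sum_{j=r'}^N\SN(M)_j \ge \sum_{j=r'}^N\mu_j$ for every $r'$. Conversely, on the event $E$, restricting the independence to indices $\ge r'$ shows that the images of $\bar v_{r'},\dots,\bar v_N$ under the coordinate projection $\F_p^N\to\F_p^{N-\ell}$ forgetting the first $\ell$ coordinates are independent, so some $(N-r'+1)\times(N-r'+1)$ submatrix of $\bar A$ with rows in $\{\ell+1,\dots,N\}$ and columns exactly $\{r',\dots,N\}$ is invertible; the corresponding minor of $M$ then has valuation precisely $\sum_{j=r'}^N\mu_j$ (the $\la$-term vanishes because $I$ avoids $\{1,\dots,\ell\}$), so $\sum_{j=r'}^N\SN(M)_j \le \sum_{j=r'}^N\mu_j$. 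Hence the tail sums agree for all $r'\ge r$, whence $\nu_{r'}=\mu_{r'}$ for all $r'\ge r$, proving $E\subseteq\{\nu_j=\mu_j\ \forall j\ge r\}$.

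Finally, for the equality when $\mu_{r-1}>\mu_r$: if $\nu_j=\mu_j$ for all $j\ge r$ then $\sum_{j=r}^N\SN(M)_j=\sum_{j=r}^N\mu_j$, so the minimum above is attained by some $M_{I,J}$ with $|I|=|J|=N-r+1$, which forces $\sum_{i\in I}\la_i=0$, $\val_p(\det A_{I,J})=0$, and $\sum_{j\in J}\mu_j=\sum_{j=r}^N\mu_j$. The strict drop $\mu_{r-1}>\mu_r$ makes $\{r,\dots,N\}$ the unique size-$(N-r+1)$ index set realizing the minimal $\mu$-sum, so $J=\{r,\dots,N\}$, while $\sum_{i\in I}\la_i=0$ forces $I\subseteq\{\ell+1,\dots,N\}$; then $\bar A_{I,\{r,\dots,N\}}$ is invertible, i.e. $E$ holds, giving the reverse inclusion. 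I expect the main obstacle to be getting $E$ exactly right so that simultaneously $\Pr(E)$ is precisely the stated product — which is why $E$ must assert joint independence of $e_1,\dots,e_\ell$ with the last $N-r+1$ columns, rather than a weaker avoidance condition — and the minor bookkeeping in both inclusions closes up cleanly; pinpointing that the hypothesis $\mu_{r-1}>\mu_r$ is exactly what makes the minimizing $J$ unique is the subtle point in the equality claim.
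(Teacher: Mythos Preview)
Your proof is correct and follows essentially the same strategy as the paper: the event $E$ you define is exactly the paper's event that the truncated columns $(a_{i,j})_{\ell < i \le N} \bmod p$, $r \le j \le N$, are linearly independent (the two formulations are equivalent since adjoining $e_1,\dots,e_\ell$ to $\bar v_r,\dots,\bar v_N$ and asking for joint independence is the same as asking that the projections to the last $N-\ell$ coordinates be independent), and your probability computation via sequential column sampling agrees with the paper's counting argument.

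The one genuine difference is in the deterministic implications. The paper proves $E \Rightarrow \{\nu_j=\mu_j\ \forall j\ge r\}$ by explicit Gaussian elimination, iteratively clearing columns $N,N-1,\dots,r$ to reach a block form \eqref{eq:up_to_r}, and proves the converse by exhibiting a column operation that forces $\nu_k > \mu_k$ when dependence first appears. You instead invoke the determinantal-divisor identity $\sum_{j>N-k}\SN(M)_j = \min_{|I|=|J|=k}\val_p(\det M_{I,J})$ and argue with minors throughout, which is cleaner and makes the role of the hypothesis $\mu_{r-1}>\mu_r$ especially transparent (it pins down $J=\{r,\dots,N\}$ as the unique minimizing column set). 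Both approaches are standard; yours is somewhat more conceptual, while the paper's hands-on reduction has the advantage of setting up the block form reused in the proof of \Cref{thm:single_box}.
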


We remark that the right hand side of \eqref{eq:1box_lin_lemma} is $0$ when $r \leq \len(\la)$, in which case the lemma gives no information. 

\begin{lemma}\label{thm:single_box}
In the setting of this section, if $\len(\la) +1 \leq r \leq N$ is such that $\mu_{r-1} > \mu_r$ and $m_{\mu_r}(\mu) = m$, then 
\begin{align}\label{eq:single_box}
    \begin{split}
    (1-t^{r-\len(\la)})  C(r,N,m,\la) \leq \Pr(\nu_r = \mu_r+1 \text{ and } \nu_j = \mu_j \text{ for all } j > r) \leq C(r,N,m,\la) 
    \end{split}
\end{align}
where 
\begin{equation}
C(r,N,m,\la) := (t^{r-\len(\la)} -t^r) \frac{1-t^{m}}{1-t} \frac{(t;t)_{r-1}(t;t)_{N-\len(\la)}}{(t;t)_{N}(t;t)_{r-\len(\la)}}
\end{equation}
\end{lemma}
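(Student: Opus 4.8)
The plan is to follow the event $E:=\{\nu_r=\mu_r+1,\ \nu_j=\mu_j\text{ for all }j>r\}$ through the same column-peeling recursion that proves \Cref{thm:parts_don't_change}, and to replace its terminal step by one that inserts a single box. Write $\ell:=\len(\la)$ and $c:=\mu_r$, so that $\mu$ has a block $\mu_r=\cdots=\mu_{r+m-1}=c$ with $\mu_{r-1}>c$ (and $\mu_{r+m}<c$ unless $r+m-1=N$). First I would reduce to the \emph{bottom-block case} $\mu_r=\cdots=\mu_N=c$: applying \Cref{thm:parts_don't_change} at index $r+m$ (legitimate since $\mu_{r+m-1}=c>\mu_{r+m}$) disposes of the constraints $\nu_j=\mu_j$ for $j\geq r+m$ and contributes the factor $\prod_{j=r+m}^{N}\frac{1-t^{j-\ell}}{1-t^{j}}=\frac{(t;t)_{N-\ell}(t;t)_{r+m-1}}{(t;t)_{r+m-1-\ell}(t;t)_{N}}$; moreover, in the refined form of that recursion (reveal $\col_N(A),\col_{N-1}(A),\dots$ from the right via \Cref{thm:haar_sampling}, and each time the revealed column has a unit in a coordinate $>\ell$ split off a $1\times1$ Smith block by unimodular row and column operations, descending to a Haar matrix of one smaller size), conditionally on this event the remaining singular numbers $(\nu_1,\dots,\nu_{r+m-1})$ are distributed as $\SN(\diag(p^{\tilde\la})\tilde A\diag(p^{\tilde\mu}))$ with $\tilde A$ Haar in $\GL_{r+m-1}(\Z_p)$, $\len(\tilde\la)=\ell$, and $\tilde\mu=(\mu_1,\dots,\mu_{r+m-1})$. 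Since $C(r,N,m,\la)$ equals this telescoping factor times $C(r,r+m-1,m,\la)$, it suffices to prove the two-sided bound in the bottom-block case; I relabel $N:=r+m-1$, so that $\mu_r=\cdots=\mu_N=c$, $m=N-r+1$, and we must show $(1-t^{r-\ell})C_0\leq\Pr(\nu_r=c+1,\ \nu_{r+1}=\cdots=\nu_N=c)\leq C_0$ with $C_0:=(t^{r-\ell}-t^{r})\frac{1-t^{m}}{1-t}\frac{(t;t)_{r-1}(t;t)_{N-\ell}}{(t;t)_{N}(t;t)_{r-\ell}}$.

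For this I would reduce modulo $p^{c+1}$. Since $\mu_j\geq c+1$ for $j<r$ and $\mu_j=c$ for $j\geq r$, the matrix $\diag(p^\la)A\diag(p^\mu)\bmod p^{c+1}$ has first $r-1$ columns zero and remaining columns $p^{c}\,\pi_{>\ell}(\bar v_j)$, where $\bar v_j:=\col_j(A)\bmod p$ and $\pi_{>\ell}$ kills the first $\ell$ coordinates (those scaled by positive powers of $p$ in $\diag(p^\la)$). Hence $F_{c+1}(\nu)=\SN(\diag(p^\la)A\diag(p^\mu)\bmod p^{c+1})$ depends only on $w:=\dim_{\F_p}W$, $W:=\Span_{\F_p}\{\pi_{>\ell}(\bar v_j):r\leq j\leq N\}\subseteq\F_p^{N}$; a direct computation of the cokernel of $p^{c}W\subseteq(\Z/p^{c+1}\Z)^{N}$ gives $F_{c+1}(\nu)=((c+1)^{N-w},c^{w})$. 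As $F_{c+1}(\mu)=((c+1)^{r-1},c^{m})$, one checks that the target event equals $\{w=m-1\}\cap\{\nu_r=c+1\}$: on $\{w=m-1\}$, i.e. $F_{c+1}(\nu)=((c+1)^{r},c^{m-1})$, one automatically has $\nu_j=c$ for $r<j\leq N$ and $\nu_r\geq c+1$. By \Cref{thm:haar_sampling} the tuple $(\bar v_1,\dots,\bar v_N)$ is a uniformly random ordered basis of $\F_p^{N}$, so $V:=\Span\{\bar v_j:r\leq j\leq N\}$ is a uniformly random $m$-dimensional subspace and $w=m-\dim(V\cap\F_p^{\ell})$; thus $\{w=m-1\}=\{\dim(V\cap\F_p^{\ell})=1\}$, whose probability is $\frac{\binom{\ell}{1}_{p}\binom{N-\ell}{m-1}_{p}\,p^{(\ell-1)(m-1)}}{\binom{N}{m}_{p}}$ by the standard formula for intersecting a uniform with a fixed subspace, and a routine $q$-binomial simplification identifies this with $C_0$. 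Since the target event lies inside $\{w=m-1\}$, this gives $\Pr(\nu_r=c+1,\nu_{r+1}=\cdots=\nu_N=c)\leq C_0$, the upper bound.

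For the matching lower bound I must estimate $\Pr(w=m-1,\ \nu_r\geq c+2)$, which is what $\{w=m-1\}$ loses relative to the target event $\{w=m-1\}\cap\{\nu_r=c+1\}$. On $\{w=m-1\}$ the line $V\cap\F_p^{\ell}$ is spanned by the reduction $\bar v^{\ast}$ of a $\Z_p$-combination $v^{\ast}$ of the columns $p^{c}\diag(p^\la)\col_j(A)$, $r\leq j\leq N$; the extra requirement $\nu_r\geq c+2$ (equivalently $\nu'_{c+2}=r$) forces $v^{\ast}$ to be divisible by $p^{c+2}$ after deleting its first $\ell$ coordinates, i.e. a further constraint on the next $p$-adic digit of $A$ which, conditionally on the mod-$p$ data already used, has probability at most $t^{\,r-\ell}$. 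Hence $\Pr(w=m-1,\ \nu_r\geq c+2)\leq t^{\,r-\ell}\Pr(w=m-1)$, and subtracting yields $\Pr(\nu_r=c+1,\nu_{r+1}=\cdots=\nu_N=c)\geq(1-t^{r-\ell})C_0$, which together with the telescoping factor is the claimed lower bound.

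The main obstacle is the first step: making the column-peeling recursion and, in particular, the conditional-Haar property after the unimodular operations fully rigorous — but this is exactly the mechanism underlying \Cref{thm:parts_don't_change}, from which \Cref{thm:single_box} differs only in the terminal step. A secondary difficulty is the digit computation modulo $p^{c+2}$ in the last paragraph: the crude estimate $\Pr(w=m-1,\nu_r\geq c+2)=O(t^{r-\ell})\Pr(w=m-1)$ is immediate, and only extracting the precise constant $t^{r-\ell}$ (needed to get exactly $1-t^{r-\ell}$) requires care.
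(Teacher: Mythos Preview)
Your proposal is correct and tracks the paper's proof closely in structure: both factor through the ``parts don't change'' event at index $r+m$ to reduce to the bottom-block situation (the paper phrases this as conditioning on $W(A)=\tilde I$, while you run a recursive column-peel to a Haar $\GL_{r+m-1}(\Z_p)$ matrix---these are equivalent, and your Schur-complement style reduction can indeed be made rigorous one column at a time), and both obtain the upper bound from the corank-$1$ event on the $(r+m-1-\ell)\times m$ submatrix $M$, which is exactly \Cref{thm:corank_pt2}.

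The genuine difference is the lower bound. The paper invokes \Cref{thm:prob_coker_1} (imported from \cite{van2020limits}) to compute $\Pr(\SN(M)=(1,0[m-1]))$ exactly and uses the implication $\SN(M)=(1,0[m-1])\Rightarrow\nu_r=c+1$. You instead bound $\Pr(\nu_r\geq c+2\mid w=m-1)\leq t^{r-\ell}$ directly via a second-digit argument. This works, but your justification is imprecise as written: what you need is (a) on $\{w=m-1\}$, $\nu_r\geq c+2$ forces $\SN(M)_1\geq 2$---this follows from \Cref{thm:parts_bound} (or \Cref{thm:minmax}) applied with $k=m$, which gives $\nu_r\leq c+|\SN(M)|$; and (b) conditional on $A\bmod p$ with $\corank(\bar M)=1$, the event $\SN(M)_1\geq 2$ is the event that the unique kernel direction lifts mod $p^2$, which has probability exactly $|\operatorname{Im}(\bar M)|/p^{\,r+m-1-\ell}=t^{r-\ell}$ since the second $p$-adic digit of a Haar $\GL_N(\Z_p)$ matrix is uniform on $\Mat_N(\F_p)$ given the first. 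Your route thus avoids the external reference and is more self-contained, at the cost of having to supply the min-max step (a) and the digit computation (b) explicitly.
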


\begin{lemma}\label{thm:two_unlikely}
In the setting of this section, for any $r$ such that $\len(\la) + 1 \leq r \leq N$, 
\begin{equation}
\Pr\left(\sum_{j=r}^N \nu_j - \mu_j \geq 2\right) \leq 1 - \frac{(t;t)_{r-1}(t;t)_{N-\len(\la)}}{(t;t)_{N}(t;t)_{r-\len(\la)}}\left(1 -t^{r-\len(\la)} + t^{r-\len(\la)}(1-t^{N-r+1}) \frac{1-t^{\len(\la)}}{1-t}\right).
\end{equation}
\end{lemma}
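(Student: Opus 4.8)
Write $\ell=\len(\la)$ and $k=N-r+1$. The plan is to dominate $\{\sum_{j=r}^N\nu_j-\mu_j\ge2\}$ by an event depending only on a fixed submatrix of $A$, and then evaluate that submatrix's probability exactly. Let $B$ be the $(N-\ell)\times k$ submatrix of $A$ keeping rows $\ell+1,\dots,N$ and columns $r,\dots,N$; it has at least as many rows as columns precisely because $\ell+1\le r$. Applying \Cref{thm:minmax} to $\diag(p^\la)A\diag(p^\mu)$ with the parameter $k=N-r+1$, and evaluating the infimum at the subspace $W=\Span(e_r,\dots,e_N)$ together with the coordinate projection onto any $k$-element set of rows $I\subseteq\{\ell+1,\dots,N\}$, the two diagonal factors contribute $\sum_{i\in I}\la_i=0$ and $\sum_{j=r}^N\mu_j$, leaving $\nu_r+\dots+\nu_N\le\sum_{j=r}^N\mu_j+\val_p(\det A_{I,\{r,\dots,N\}})$. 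Minimizing over $I$ and applying \Cref{thm:minmax} once more, this time to $B$, gives
\[
\sum_{j=r}^N(\nu_j-\mu_j)\ \le\ \min_{I}\val_p\!\big(\det A_{I,\{r,\dots,N\}}\big)\ =\ |\SN(B)| .
\]
Hence $\Pr\!\big(\sum_{j=r}^N\nu_j-\mu_j\ge2\big)\le\Pr\big(|\SN(B)|\ge2\big)=1-\Pr\big(|\SN(B)|=0\big)-\Pr\big(|\SN(B)|=1\big)$, and the problem reduces to computing the last two probabilities.

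For these I would first pin down the law of $B$. By \Cref{thm:haar_sampling}, the columns $v_r,\dots,v_N$ of a Haar element of $\GL_N(\Z_p)$ form a uniformly random ordered tuple whose reductions mod $p$ are linearly independent over $\F_p$; equivalently the $N\times k$ matrix $[v_r|\cdots|v_N]$ is additive-Haar on $\Mat_{N\times k}(\Z_p)$ conditioned on full column rank mod $p$, and $B$ is its bottom $N-\ell$ rows. Since full column rank of $\bar B$ forces full column rank of the whole matrix, $\{|\SN(B)|=0\}$ lies inside the conditioning event, and the standard count of full-rank rectangular matrices over $\F_p$ gives
\[
\Pr\big(|\SN(B)|=0\big)=\frac{\prod_{i=0}^{N-r}\big(1-t^{\,N-\ell-i}\big)}{\prod_{i=0}^{N-r}\big(1-t^{\,N-i}\big)}=\prod_{j=r}^{N}\frac{1-t^{\,j-\ell}}{1-t^{\,j}} ,
\]
which is exactly the full-rank case of \Cref{thm:parts_don't_change} --- a reassuring consistency check. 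For $\Pr(|\SN(B)|=1)$ I would condition on the reduction $\bar B$, which must then have corank exactly $1$; writing $B=B_0+pW'$ with $B_0$ a fixed lift of $\bar B$ and $W'$ independent and uniform in $\Mat_{(N-\ell)\times k}(\Z_p)$, and taking $\alpha_0$ a $\Z_p$-primitive lift of a spanning vector of the one-dimensional space of mod-$p$ column relations of $\bar B$, one has $B\alpha_0=p\eta$ with $\eta=B_0\alpha_0/p+W'\alpha_0$, whose reduction $\bar\eta$ is (conditionally on $\bar B$) uniform in $\F_p^{\,N-\ell}$. A standard Smith-normal-form computation shows $|\SN(B)|\ge2$ iff $\bar\eta\in\operatorname{im}\bar B$, an event of conditional probability $t^{\,r-\ell}$. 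Multiplying the resulting $1-t^{\,r-\ell}$ by the $q$-Pochhammer ratio for the mod-$p$ corank-$1$ event --- which, once the single dropped pivot column is allowed to lie in any of the positions $r,\dots,N$, contributes a factor $\tfrac{1-t^{\,N-r+1}}{1-t}=\sum_{r'=r}^{N}t^{\,r'-r}$ --- and simplifying the $q$-Pochhammer algebra via $\tfrac{(t;t)_a}{(t;t)_{a-1}}=1-t^{\,a}$ produces $\Pr(|\SN(B)|\le1)$ in the shape that, subtracted from $1$, is the stated bound.

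The step I expect to be the main obstacle is the evaluation of $\Pr(|\SN(B)|=1)$: the reduction mod $p$ is clean, but the refinement distinguishing torsion $\Z/p$ from torsion $\Z/p^{\ge2}$ in $\coker(B)$ depends on the chosen lift of the column relation only through $\operatorname{im}\bar B$, so the ``divisible by $p$?'' test must be read modulo $\operatorname{im}\bar B$; keeping this bookkeeping exact --- and, in particular, recovering the position sum $\sum_{r'=r}^{N}t^{\,r'-r}$ with the correct coefficients rather than merely up to a bounded factor --- is where the care lies. By contrast the inequality $\sum_{j=r}^N(\nu_j-\mu_j)\le|\SN(B)|$ is only used as an inequality (it becomes an equality when $\mu_{r-1}>\mu_r$, but that is not needed here), so nothing is lost there; the sole sharp input required is the exact value of $\Pr(|\SN(B)|\le1)$.
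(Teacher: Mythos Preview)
Your approach coincides with the paper's: both reduce to the submatrix $B=(a_{ij})_{\ell<i\le N,\,r\le j\le N}$ via $\sum_{j\ge r}(\nu_j-\mu_j)\le|\SN(B)|$ (this is \Cref{thm:parts_bound}; note the displayed inequality there is typeset in the wrong direction, though its proof and its use in \Cref{thm:two_unlikely} require the direction you state), then compute $\Pr(|\SN(B)|=0)$ via \Cref{thm:corank_pt2} and $\Pr(|\SN(B)|=1)$ separately.

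The divergence is in this last step. The paper invokes \Cref{thm:prob_coker_1} as a black box and obtains a value that in fact equals $\Pr(\corank\bar B=1)$, with no factor $(1-t^{r-\ell})$. Your conditional argument, by contrast, is correct: given $\corank\bar B=1$, the vector $\bar\eta$ really is uniform in $\F_p^{\,N-\ell}$, one has $|\SN(B)|\ge2$ iff $\bar\eta\in\operatorname{im}\bar B$, and this event has probability $t^{r-\ell}$. Your $\Pr(|\SN(B)|=1)$ therefore carries an extra factor $(1-t^{r-\ell})$, and the bound you would actually prove is the stated one with the parenthesized factor $1-t^{r-\ell}+t^{r-\ell}(1-t^{N-r+1})\tfrac{1-t^\ell}{1-t}$ replaced by $(1-t^{r-\ell})\big(1+t^{r-\ell}(1-t^{N-r+1})\tfrac{1-t^\ell}{1-t}\big)$. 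The stated bound is genuinely too tight: for $N=3$, $\ell=1$, $r=2$ it evaluates to $0$, yet with $\la=(2,0,0)$, $\mu=(10,0,0)$ and $A=\left(\begin{smallmatrix}0&0&1\\0&1&0\\1&0&p^2\end{smallmatrix}\right)\in\GL_3(\Z_p)$ one computes $\nu=(10,2,0)$, so $\sum_{j\ge2}(\nu_j-\mu_j)=2$ on an open set of positive Haar measure. The formula in \Cref{thm:prob_coker_1} thus appears to be misstated by that same factor $(1-t^{m-n+1})$.

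None of this matters downstream: the two bounds differ by $O(t^{2(r-\ell)})$, which is absorbed into the $o(c_N^{-1})$ in \Cref{thm:kappa-nu_geq_2_case} where $r-\ell\to\infty$. But your closing assertion that the algebra ``produces $\Pr(|\SN(B)|\le1)$ in the shape that, subtracted from $1$, is the stated bound'' is exactly where you would find that it does not. One practical note: for the corank-$1$ probability itself, use \Cref{thm:corank_pt2} directly rather than the ``dropped pivot column'' heuristic, which as described does not quite yield the correct $q$-Pochhammer expression.
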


Proving \Cref{thm:parts_don't_change}, \Cref{thm:single_box}, and \Cref{thm:two_unlikely} requires many auxiliary steps, which we begin proving. The following fact will be useful in proving \Cref{thm:parts_don't_change} and later.

\begin{lemma}\label{thm:nonsingular_submatrix}
In the setting of this section, let 
\begin{equation}
v_j = (a_{i,j})_{\len(\la) < i \leq N} \pmod{p} \in \F_p^{N-\len(\la)}.
\end{equation}
Then for any $\len(\la) < r \leq N$, the following implication and partial converse hold:
\begin{enumerate}
    \item If the set $\{v_j: r \leq j \leq N\}$ is linearly independent, then $\nu_j = \mu_j$ for all $j \geq r$.
    \item Suppose that additionally $\mu_{r-1} > \mu_r$. If $\nu_j = \mu_j$ for all $j \geq r$, then $\{v_j: r \leq j \leq N\}$ is linearly independent.
\end{enumerate}
\end{lemma}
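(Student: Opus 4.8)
The plan is to work directly with the matrix $M := \diag(p^\la) A \diag(p^\mu)$ and understand its singular numbers via reduction mod powers of $p$, together with the variational characterization of singular numbers from \Cref{thm:minmax}. Write $\ell = \len(\la)$, so rows $\ell+1,\ldots,N$ of $\diag(p^\la)$ are the identity and rows $1,\ldots,\ell$ carry strictly positive powers of $p$. The key observation underlying both parts is that the singular numbers $\nu_j$ with $j \geq r$ — that is, the $N-r+1$ \emph{smallest} singular numbers — are governed, via \Cref{thm:minmax}, by how $M$ acts on small-covolume sublattices, and the only part of $M$ that can be ``small'' in the relevant sense is the bottom $N-\ell$ rows; the vectors $v_j \in \F_p^{N-\ell}$ are exactly the mod-$p$ reductions of the columns of that bottom block, further twisted by $\diag(p^\mu)$ (but since $\diag(p^\mu)$ is invertible over $\Q_p$ it does not affect linear independence of reductions in the appropriate sense — one has to be a little careful here because $\mu$ may have varying parts, and this is where the hypothesis $\mu_{r-1} > \mu_r$ enters for the converse).

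\textbf{Part (1).} First I would reduce to the case $\mu = \bzero$ by factoring $\diag(p^\mu)$ appropriately: since right-multiplication by $\diag(p^{\mu_r}[N]) = p^{\mu_r} I$ just shifts all singular numbers by $\mu_r$, and since the columns $j \geq r$ all carry exactly the factor $p^{\mu_r}$ (as $\mu_j = \mu_r$ there... this needs the monotonicity and is cleanest when $\mu$ is constant on $[r,N]$; in general one handles the remaining columns as a perturbation that can only increase $\nu_j$ for $j \geq r$ by \Cref{thm:minor_increase_sns}-type monotonicity). Granting the linear independence of $\{v_j : r \leq j \leq N\}$ over $\F_p$, I would argue that the $(N-\ell) \times (N-r+1)$ submatrix $B$ of $M$ formed by rows $\ell+1,\ldots,N$ and columns $r,\ldots,N$ has all $N-r+1$ of its singular numbers equal to $0$: indeed $B \equiv (v_r | \cdots | v_N) \cdot (\text{diagonal of units after dividing by } p^{\mu_r}) \pmod p$ has full column rank mod $p$, so by \Cref{thm:minmax} with $k = N-r+1$ and the obvious choice of $W = \Span(e_r,\ldots,e_N)$ and $P$ the coordinate projection, $\val_p(\det(PM|_W)) = 0$, giving $\sum_{j=r}^{N}\nu_j \le 0$, hence (after the $\mu_r$ shift is restored) $\nu_j = \mu_j$ for all $j \geq r$. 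One also needs $\nu_j \geq \mu_j$ always, which follows because $M = \diag(p^\la) A \diag(p^\mu)$ and $A \in \GL_N(\Z_p)$ so $M\Z_p^N \subseteq \diag(p^\la)\Z_p^N$ pushes covolumes up — concretely $\nu_j \geq \SN(\diag(p^\mu))_j = \mu_j$ by \Cref{thm:minor_increase_sns} or a direct cokernel argument.

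\textbf{Part (2), the main obstacle.} The converse is the delicate direction, and I expect it to be the hard part. Here I would argue the contrapositive: if $\{v_r,\ldots,v_N\}$ is linearly dependent over $\F_p$, I want to produce a witness showing some $\nu_j > \mu_j$ for $j \geq r$, using $\mu_{r-1} > \mu_r$. The point of that hypothesis is that it lets us isolate columns $r,\ldots,N$: they are exactly the columns of $M$ divisible by $p^{\mu_r}$ but (for column $r-1$ and earlier) column $r-1$ is divisible by the strictly larger power $p^{\mu_{r-1}}$, so the sublattice spanned by columns $\geq r$ is ``visible'' as its own block in Smith normal form at the scale $p^{\mu_r}$. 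Concretely, a dependence $\sum_{j \geq r} c_j v_j \equiv 0 \pmod p$ with not all $c_j$ zero means the bottom $(N-\ell)$-row block of the columns $\{M_j : j \geq r\}$, after dividing by $p^{\mu_r}$, has a nontrivial kernel mod $p$; combined with the fact that the top $\ell$ rows of those same columns are divisible by $p^{\mu_r} \cdot p$ (wait — not quite; $\diag(p^\la)$ acts on rows, so the top $\ell$ rows of $M$ are $p^{\la_i}(\text{row of } A\diag(p^\mu))$ with $\la_i \geq 1$), one sees that $p^{-\mu_r} M$ restricted to columns $\geq r$ reduces mod $p$ to a matrix of rank $< N - r + 1$, so the span of these columns has index divisible by $p$ inside the expected rank-$(N-r+1)$ lattice, forcing $\sum_{j \geq r}(\nu_j - \mu_j) \geq 1$ by \Cref{thm:minmax}. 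The care needed is in making the ``visible block'' argument rigorous — that columns $1,\ldots,r-1$ cannot ``rescue'' the deficiency because they live at a coarser $p$-adic scale — and this is precisely what $\mu_{r-1} > \mu_r$ buys; without it the block is not separated and the converse genuinely fails.
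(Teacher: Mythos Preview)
Your minmax strategy is viable, but both parts share a recurring flaw: you repeatedly ``divide columns $r,\ldots,N$ of $M$ by $p^{\mu_r}$,'' which does not yield an integer matrix since $\mu_j$ is not constant on $[r,N]$ (only $\mu_j \leq \mu_r$ holds there). In Part~(1) the repair is easy: take $W=\Span(e_r,\ldots,e_N)$ and let $P$ project onto $N-r+1$ rows among $\{\ell+1,\ldots,N\}$ chosen so that the corresponding minor of $A$ is a unit (possible exactly by the linear independence of the $v_j$); then $PM|_W$ factors as that unit times $\diag(p^{\mu_r},\ldots,p^{\mu_N})$, giving $\sum_{j\geq r}\nu_j \leq \sum_{j\geq r}\mu_j$ with no constancy assumption on $\mu$. (Your lower bound $\nu_j\geq\mu_j$ is correct but comes from a corank comparison $\corank(M\bmod p^d)\geq \corank(\diag(p^\mu)\bmod p^d)$, not from \Cref{thm:minor_increase_sns}, which concerns submatrices.) In Part~(2) your sentence ``$p^{-\mu_r}M$ restricted to columns $\geq r$ reduces mod $p$ to a matrix of rank $<N-r+1$'' is not well-posed for the same reason. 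A correct minor-based argument must bound $\val_p(\det M_{I,J})$ below for \emph{every} $(N{-}r{+}1)\times(N{-}r{+}1)$ minor: factoring $\det M_{I,J}=\bigl(\prod_{i\in I}p^{\la_i}\bigr)\bigl(\prod_{j\in J}p^{\mu_j}\bigr)\det A_{I,J}$, when $J\neq[r,N]$ the hypothesis $\mu_{r-1}>\mu_r$ forces $\sum_{j\in J}\mu_j\geq 1+\sum_{j\geq r}\mu_j$; when $J=[r,N]$ either $I$ meets $[1,\ell]$ (contributing some $\la_i\geq 1$) or $I\subset(\ell,N]$, in which case the dependence among $\{v_j\}$ kills $\det A_{I,J}$ mod $p$.

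For comparison, the paper proves both parts by explicit elementary row/column operations rather than minmax. For Part~(1) it iteratively clears columns $N,N-1,\ldots,r$ to exhibit a block $\diag(p^{\mu_r},\ldots,p^{\mu_N})$ directly in an equivalent matrix. For Part~(2) it locates the largest $k'$ where $\{v_{k'},\ldots,v_N\}$ first becomes dependent, uses a (suitably $p$-power--scaled) column operation to make $\col_{k'}$ divisible by $p^{\mu_{k'}+1}$, and then counts columns of valuation at least $\mu_k+1$ (namely columns $1,\ldots,k-1$ and the modified column $k'$) to force $\nu_k>\mu_k$ at a specific index $k\in[r,k']$. Your approach, once the minor argument above is filled in, yields only the aggregate $\sum_{j\geq r}(\nu_j-\mu_j)\geq 1$, which suffices for the lemma but pins down less than the paper's argument.
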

\begin{proof}
Let
\begin{equation}
    A' := \diag(p^\la) A \diag(p^\mu) = (p^{\la_i+\mu_j}a_{i,j})_{1 \leq i,j \leq N}.
\end{equation}
First, suppose that $\{v_j: r \leq j \leq N\}$ is a linearly independent set. Then $\col_N(A')$ has an entry $p^{\mu_N} a_{i,N}$ with valuation $\mu_N$ (equivalently, $a_{i,N} \in \Z_p^\times$), and all other entries of $A'$ have valuation at least $\mu_N$. Hence by row and column operations we may cancel all entries in the same row and column as $p^{\mu_N}a_{i,N}$ and multiply its row by $a_{i,N}^{-1} \in \Z_p$, obtaining a matrix 
\begin{equation}\label{eq:matrix_after_first_op}
    \begin{pmatrix}
    p^{\la_1+\mu_1}\tilde{a}_{1,1} & \cdots &  p^{\la_1+\mu_{N-1}}\tilde{a}_{1,N-1} &  0 \\
    \vdots & \ddots & \vdots & \vdots  \\
    p^{\la_{N-1}+\mu_1}\tilde{a}_{N-1,1} & \cdots & p^{\la_{N-1}+\mu_{N-1}}\tilde{a}_{N-1,N-1} & 0 \\
    0 & \cdots & 0 & p^{\mu_N} 
    \end{pmatrix}
\end{equation}
for some $\tilde{a}_{i,j} \in \Z_p$.

By the linear independence assumption we may then find an entry $p^{\mu_{N-1}} a_{i,N-1}$ in the $(N-1)^{\text{st}}$ column of the matrix in \eqref{eq:matrix_after_first_op}, and cancel again, etc. Continuing, we obtain a matrix 
\begin{equation}\label{eq:up_to_r}
    \begin{pmatrix}
    p^{\la_1+\mu_1}\hat{a}_{1,1} & \cdots &  p^{\la_1+\mu_{r-1}}\hat{a}_{1,r-1} &   \\
    \vdots & \ddots & \vdots & \bold{0}_{(N-r+1) \times r} \\
    p^{\la_{r-1}+\mu_1}\hat{a}_{r-1,1} & \cdots & p^{\la_{r-1}+\mu_{r-1}}\hat{a}_{r-1,r-1} &  \\
     & \bold{0}_{r \times (N-r+1)} & & \diag(p^{\mu_r},\ldots,p^{\mu_N})
    \end{pmatrix},
\end{equation}
for some $\hat{a}_{i,j} \in \Z_p$, with the same singular numbers as $A'$. (Note that if $\len(\la)=N-1$ we are done after the first step \eqref{eq:matrix_after_first_op}). Its top left $(r-1) \times (r-1)$ submatrix $\hat{A} = (p^{\la_i + \mu_j}\hat{a}_{i,j})_{1 \leq i,j \leq r-1}$ lies in $p^{\mu_{r-1}}\Mat_{(r-1) \times (r-1)}(\Z_p)$, so all parts of $\SN(\hat{A})$ are at least $\mu_{r-1}$, hence 
\begin{equation}
\SN(A') = (\SN(\hat{A}),\mu_r,\mu_{r+1},\ldots,\mu_N).
\end{equation}
Now for the reverse implication, let us assume that $\mu_{r-1} > \mu_r$ and suppose that the set $\{(a_{i,j})_{\len(\la) < i \leq N}: r \leq j \leq N\}$ is not linearly independent modulo $p$. Let $k'$ be the largest index for which $\{v_{k'},\ldots,v_N\}$ is linearly dependent, and $k\leq k'$ the largest index such that additionally $\mu_k < \mu_{k-1}$. By the assumption $\mu_{r-1} > \mu_r$ it follows that $k \geq r$. We claim $\nu_k > \mu_k$. By definition of $k'$, there must exist a relation 
\begin{equation}
c_{k'}v_{k'} + \ldots + c_N v_N = \vec{0} \in \F_p^{N-\len(\la)}
\end{equation}
with $c_{k'} \neq 0$, so without loss of generality take $c_{k'}=1$. Letting $C_j$ be a lift of $c_j$ to $\Z_p$ for each $j$, we therefore have that 
\begin{equation}
\val_p\left(\col_{k'}(A') + C_{k'+1} \col_{k'+1}(A') +\ldots + C_N \col_N(A')\right) \geq \mu_{k'} + 1.
\end{equation}

Let $A''$ be the matrix obtained from $A'$ via column operations replacing $\col_{k'}(A')$ by $\col_{k'}(A') + C_{k'+1} \col_{k'+1}(A') +\ldots + C_N \col_N(A')$. Then $A''$ has the same singular numbers as $A'$, and furthermore it satisfies $\val_p(\col_j(A'')) \geq \mu_k+1$ for $j=1,\ldots,k-1,k'$. It follows immediately that $\nu$ has at least $k$ parts $\geq \mu_k+1$, and since $\nu_j \geq \mu_j$ for all $j$ this implies $\nu_k \geq \mu_k+1$. This proves the reverse implication.
\end{proof}

The forward direction of \Cref{thm:nonsingular_submatrix} is a corollary of the following inequality, though we are not sure how one would establish the backward direction through the considerations used in the proof below.

\begin{lemma}\label{thm:parts_bound}
Let $\la,\mu \in \sSig_N$, $1 \leq r \leq N$ with $\len(\la) < r$, and $k \geq 0$. Then for any $B = (b_{ij})_{1 \leq i,j \leq N} \in \GL_N(\Z_p)$,
\begin{equation}
    \label{eq:sn_bound_submatrix}
    \left| \SN\left((b_{ij})_{\subalign{\len(\la) &< i \leq N \\ r &\leq j \leq N}}\right)\right| \geq \sum_{j = r}^N \SN(p^\la B p^\mu)_j - \mu_j.
\end{equation}
\end{lemma}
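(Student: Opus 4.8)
\emph{The plan.} The idea is to identify the submatrix $C := (b_{ij})_{\len(\la) < i \le N,\ r \le j \le N}$, after a harmless rescaling of its columns, with an honest submatrix of $p^\la B p^\mu$, and then to invoke only two already-established facts: that passing to a submatrix can only increase the small singular numbers (\Cref{thm:minor_increase_sns}), and that multiplying by a diagonal matrix on its short side shifts the total singular number by the corresponding valuations (\Cref{thm:multiply_smaller_dimension}).

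Write $A' := \diag(p^\la) B \diag(p^\mu)$. Since $\la_i = 0$ for every $i > \len(\la)$, the block of $A'$ obtained by retaining rows $\len(\la)+1,\dots,N$ and columns $r,\dots,N$ is exactly $C \cdot \diag(p^{\mu_r},\dots,p^{\mu_N})$; call this matrix $C_\mu$. Note that $C$, hence $C_\mu$, is $(N-\len(\la))\times(N-r+1)$, and $\len(\la) < r$ forces $N-r+1 \le N-\len(\la)$, so the number of columns is the short dimension and $C_\mu$ has exactly $N-r+1$ singular numbers.

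First I would apply \Cref{thm:multiply_smaller_dimension} to $C^{T}$ (whose short dimension is now the number of rows, $N-r+1$) with the tuple $(\mu_r,\dots,\mu_N)$, together with transpose-invariance of singular numbers, to get $|\SN(C_\mu)| = |\SN(C)| + \sum_{j=r}^N \mu_j$. Then I would apply \Cref{thm:minor_increase_sns} with the ambient matrix $A'$ (size $N\times N$) and its $(N-\len(\la))\times(N-r+1)$ submatrix $C_\mu$, taking the parameter $k$ equal to the short dimension $N-r+1$; both sides then range over every available singular number of $C_\mu$ on the left and over the $N-r+1$ smallest singular numbers of $A'$ on the right, so this gives $|\SN(C_\mu)| \ge \sum_{j=r}^N \SN(A')_j$. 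Combining the two displays and subtracting $\sum_{j=r}^N \mu_j$ yields
\[
\Bigl|\SN\bigl((b_{ij})_{\len(\la) < i \le N,\ r \le j \le N}\bigr)\Bigr| \ \ge\ \sum_{j=r}^N \bigl(\SN(p^\la B p^\mu)_j - \mu_j\bigr),
\]
which is the inequality \eqref{eq:sn_bound_submatrix} in the direction in which it is applied; as a consistency check, this is precisely what is needed to deduce the forward implication of \Cref{thm:nonsingular_submatrix}, taking $C$ of full column rank mod $p$ so that $|\SN(C)| = 0$ and using $\SN(p^\la B p^\mu)_j \ge \mu_j$ for each $j$.

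\emph{Main obstacle.} There is no substantial obstacle: the argument is a two-line reduction to \Cref{thm:minor_increase_sns} and \Cref{thm:multiply_smaller_dimension}. The only points needing a moment's care are (a) the bookkeeping of which dimension is the ``short'' one in \Cref{thm:multiply_smaller_dimension}, resolved by transposing before applying it, and (b) checking that with $k = N-r+1$ the index ranges in \Cref{thm:minor_increase_sns} genuinely pick out the full singular-number list of $C_\mu$ on the left and the $N-r+1$ smallest singular numbers of $A'$ on the right. Alternatively one could argue directly from the variational formula \Cref{thm:minmax}, writing $|\SN(C_\mu)|$ as an infimum of $\val_p(\det(P A'|_W))$ over the restricted family of coordinate projections and coordinate subspaces cut out by the chosen rows and columns, which is a subfamily of the one computing $\sum_{j=r}^N \SN(A')_j$; but the route through the two corollaries is cleaner.
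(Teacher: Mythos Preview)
Your proof is correct and follows essentially the same route as the paper: identify $C\diag(p^{\mu_r},\dots,p^{\mu_N})$ as a submatrix of $p^\la B p^\mu$, apply \Cref{thm:minor_increase_sns} with $k=N-r+1$, then \Cref{thm:multiply_smaller_dimension} (both proofs tacitly use transpose-invariance to handle the right-multiplication), and subtract $\sum_{j=r}^N \mu_j$. You are also right that what is actually proven---and what is used in the applications (\Cref{thm:nonsingular_submatrix}, \Cref{thm:two_unlikely})---is the inequality with $\geq$; the $\leq$ in the displayed statement is a typo.
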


\begin{proof}[Proof of \Cref{thm:parts_bound}]
Let 
\begin{equation}
B' = (b_{ij})_{\subalign{\len(\la) &< i \leq N \\ r &\leq j \leq N} }.
\end{equation}
Since $r > \len(\la)$, the bottom-right $(N-\len(\la)) \times (N-r+1)$ corner of $p^\la B p^\mu$ is just $B' p^{(\mu_r,\ldots,\mu_N)}$. Hence by \Cref{thm:minor_increase_sns},
\begin{equation}
    \label{eq:take_minor_in_proof}
\sum_{j = r}^N \SN(p^\la B p^\mu)_j \leq \sum_{j = 1}^{N-r+1} \SN(B' p^{(\mu_{r},\ldots,\mu_N)})_{j} = |\SN(B' p^{(\mu_{r},\ldots,\mu_N)})|.
\end{equation}
By \Cref{thm:multiply_smaller_dimension}, 
\begin{equation}
    \label{eq:apply_mult}
    |\SN(B' p^{(\mu_{r},\ldots,\mu_N)})| = |\SN(B')| + \sum_{j=r}^N \mu_j.
\end{equation}
Combining \eqref{eq:take_minor_in_proof} with \eqref{eq:apply_mult} and subtracting $\sum_{j=r}^N \mu_j$ from both sides yields \eqref{eq:sn_bound_submatrix}.
\end{proof}

\begin{proof}[Proof of {\Cref{thm:parts_don't_change}}]
In light of \Cref{thm:nonsingular_submatrix}, we must show (in the notation of that result) that
\begin{equation}\label{eq:li_suffices}
    \Pr(\{v_j: r \leq j \leq N\}\text{ is linearly independent}) = \prod_{j=r}^N \frac{p^j - p^{\len(\la)}}{p^j - 1}.
\end{equation}
When $r \leq \len(\la)$ this reduces easily to $0=0$, so suppose $r > \len(\la)$. The columns $\col_r(A),\ldots,\col_N(A)$ are chosen independently from the Haar measure, conditioned to be linearly independent modulo $p$. This implies that the columns $\col_j(A) \pmod{p}$ are chosen from the uniform measure on $\F_p^N$, conditionally on being linearly independent. Therefore
\begin{equation}\label{eq:lin_indep_count_matrices}
    \Pr(\{v_j: r \leq j \leq N\}\text{ is linearly independent}) = \frac{\#S_2}{\#S_1} 
\end{equation}
where 
\begin{align*}
    S_1 &:= \{B = (b_{i,j}) \in \Mat_{N \times (N-r+1)}(\F_p): B \text{ is full rank}\} \\
    S_2 &:= \{B = (b_{i,j}) \in \Mat_{N \times (N-r+1)}(\F_p): (b_{i,j} \bbone_{i > \len(\la)})_{\subalign{&1 \leq i \leq N \\ &1 \leq j \leq N-r+1}} \text{ is full rank}\} \subset S_1. \\
\end{align*}
Computing the number of possible first columns, then second columns, etc. of $B$ yields 
\begin{equation}\label{eq:simple_li}
    \#S_1 = (p^N-1) \cdots (p^N-p^{N-r})
\end{equation}
Since the condition 
\begin{equation}
(b_{i,j} \bbone_{i > \len(\la)})_{\subalign{&1 \leq i \leq N \\& 1 \leq j \leq N-r+1}} \text{ is full rank}
\end{equation}
is independent of the upper submatrix $(b_{i,j})_{\subalign{&1 \leq i \leq \len(\la) \\ &1 \leq j \leq N-r+1}}$, counting the number of possible first, second, etc. columns we have 
\begin{equation}\label{eq:harder_li}
    \#S_2 = (p^N - p^{\len(\la)}) \cdots (p^N - p^{\len(\la) + N-r})
\end{equation}
Computing the RHS of \eqref{eq:lin_indep_count_matrices} via \eqref{eq:simple_li} and \eqref{eq:harder_li} yields \eqref{eq:li_suffices} and hence completes the proof. 
\end{proof}

For the proofs of \Cref{thm:single_box} and \Cref{thm:two_unlikely} we will use the following auxiliary computations over $\F_p$.

\begin{lemma}\label{thm:corank_pt1}
For $0 \leq r \leq k \leq n$,
\begin{equation}\label{eq:rank_comp}
    \#\{B \in \Mat_{n \times k}(\F_q): \rank(B) = r\} = q^{rn+rk-r^2} \frac{(q^{-1};q^{-1})_n (q^{-1};q^{-1})_k}{(q^{-1};q^{-1})_r (q^{-1};q^{-1})_{n-r} (q^{-1};q^{-1})_{k-r}}.
\end{equation}

\end{lemma}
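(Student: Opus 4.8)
The plan is to count rank-$r$ matrices via the rank factorization $B = CD$ and then translate the resulting product into $q$-Pochhammer symbols by elementary bookkeeping of powers of $q$.

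Every $B \in \Mat_{n \times k}(\F_q)$ with $\rank B = r$ can be written as $B = CD$ with $C \in \Mat_{n \times r}(\F_q)$ injective (equivalently, of rank $r$, i.e.\ with linearly independent columns) and $D \in \Mat_{r \times k}(\F_q)$ surjective (equivalently of rank $r$, i.e.\ with linearly independent rows); moreover the fibre of the multiplication map $(C,D) \mapsto CD$ over a fixed such $B$ is a torsor under $\GL_r(\F_q)$ via $(C,D) \mapsto (Cg^{-1}, gD)$. Counting ordered $r$-tuples of linearly independent vectors one at a time shows that there are $\prod_{j=0}^{r-1}(q^n - q^j)$ admissible $C$, $\prod_{j=0}^{r-1}(q^k - q^j)$ admissible $D$, and $|\GL_r(\F_q)| = \prod_{j=0}^{r-1}(q^r - q^j)$; the hypothesis $0 \le r \le k \le n$ guarantees that these are indeed the correct counts, and the degenerate case $r = 0$ gives $1$ on both sides. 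Hence
\begin{equation}
\#\{B \in \Mat_{n \times k}(\F_q): \rank B = r\} = \frac{\prod_{j=0}^{r-1}(q^n - q^j)\,\prod_{j=0}^{r-1}(q^k - q^j)}{\prod_{j=0}^{r-1}(q^r - q^j)}.
\end{equation}

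To finish, I would invoke the elementary identity
\begin{equation}
\prod_{j=0}^{r-1}(q^m - q^j) = q^{rm}\,\frac{(q^{-1};q^{-1})_m}{(q^{-1};q^{-1})_{m-r}} \qquad (0 \le r \le m),
\end{equation}
which follows by writing $q^m - q^j = q^m\bigl(1 - q^{-(m-j)}\bigr)$ and telescoping the resulting product of the factors $1 - q^{-i}$ for $i = m-r+1, \dots, m$. Applying this with $m = n$, $m = k$, and $m = r$ (using $(q^{-1};q^{-1})_0 = 1$ in the last case) and substituting into the displayed formula, the powers of $q$ combine to $q^{rn + rk - r^2}$ and the Pochhammer factors rearrange to give exactly the right-hand side of the asserted identity. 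The whole argument is entirely routine; the only point needing a little care is the bookkeeping in this last step, namely verifying that the overall exponent is $rn + rk - r^2$ and that the three symbols left in the denominator are precisely $(q^{-1};q^{-1})_r$, $(q^{-1};q^{-1})_{n-r}$, and $(q^{-1};q^{-1})_{k-r}$.
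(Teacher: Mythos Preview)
Your proof is correct, but it takes a different route from the paper's. The paper computes the count by the orbit--stabilizer theorem: $\GL_n(\F_q)\times\GL_k(\F_q)$ acts on $\Mat_{n\times k}(\F_q)$ by $(x,y)\cdot B=xBy^{-1}$, the orbits are exactly the rank classes, and one counts the stabilizer of the canonical representative $B_r=\begin{psmallmatrix}I_r & 0\\0 & 0\end{psmallmatrix}$ by writing down its block structure. You instead use the rank factorization $B=CD$ with $C$ injective and $D$ surjective and divide out the $\GL_r(\F_q)$-torsor of factorizations. Both are classical and equally short; your argument is perhaps slightly more self-contained in that it avoids describing the stabilizer explicitly, while the paper's approach has the conceptual virtue of making the Smith-normal-form parametrization of orbits explicit, which fits the surrounding narrative about singular numbers.
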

\begin{proof}
The group $\GL_n(\F_q) \times \GL_k(\F_q)$ acts on $\Mat_{n \times k}(\F_q)$ by $(x,y) \cdot B = xBy^{-1}$, and by Smith normal form orbits are parametrized by their coranks. Letting $B_r \in \Mat_{n \times k}(\F_q)$ be the matrix with $ii\tth$ entry $1$ for $1 \leq i \leq r$ and all other entries $0$, we therefore have 
\begin{equation}\label{eq:need_stabilizer}
    \text{LHS\eqref{eq:rank_comp}} = \frac{\# \GL_n(\F_q) \times \GL_k(\F_q)}{\#\Stab(B_r)}.
\end{equation}
Explicitly,
\begin{align*}
\#\Stab(B_r) = 
\left\{\left(\begin{pmatrix}
X & Y \\
0 & Z
\end{pmatrix}, \begin{pmatrix}
X & 0 \\
P & Q
\end{pmatrix}\right): \substack{X \in \GL_r(\F_q),  Z \in \GL_{n-r}(\F_q), Q \in \GL_{k-r}(\F_q) \\ P \in \Mat_{(k-r) \times r}(\F_q),Y \in \Mat_{r \times (n-r)}(\F_q)} \right\},
\end{align*}
therefore 
\begin{equation}
\#\Stab(B_r) = (q^r-1)\cdots(q^r-q^{r-1}) q^{r(n-r)} (q^{n-r}-1) \cdots (q^{n-r} - q^{n-r-1}) q^{r(k-r)} (q^{k-r} - 1) \cdots (q^{k-r} - q^{k-r-1}).
\end{equation}
Combining this with 
\begin{equation}
\# \GL_n(\F_q) \times \GL_k(\F_q) = (q^n-1)\cdots (q^n-q^{n-1}) (q^k-1) \cdots (q^k - q^{k-1})
\end{equation}
and \eqref{eq:need_stabilizer} yields \eqref{eq:rank_comp}. 
\end{proof}

\begin{lemma}\label{thm:corank_pt2}
Let $d$ and $n \geq k$ be three nonnegative integers, let $B \in \Mat_{(n+d) \times k}(\F_q)$ be a uniformly random full-rank matrix, and let $B' \in \Mat_{n \times k}(\F_q)$ be its lower $n \times k$ submatrix. Then for any $0 \leq r \leq k$,
\begin{equation}\label{eq:trunc_rank}
    \Pr(\rank(B') = r) = q^{-(n-r)(k-r)} \frac{\sqbinom{d}{k-r}_{q^{-1}} \sqbinom{n}{r}_{q^{-1}}}{\sqbinom{n+d}{k}_{q^{-1}}}.
\end{equation}
\end{lemma}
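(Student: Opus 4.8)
The plan is to reduce \eqref{eq:trunc_rank} to a counting identity and then apply \Cref{thm:corank_pt1}. First I would write the probability as a ratio of cardinalities: since $B$ is uniform on full-rank matrices in $\Mat_{(n+d) \times k}(\F_q)$, we have
\begin{equation}
\Pr(\rank(B') = r) = \frac{\#\{B \in \Mat_{(n+d) \times k}(\F_q): \rank(B) = k,\ \rank(B') = r\}}{\#\{B \in \Mat_{(n+d) \times k}(\F_q): \rank(B) = k\}},
\end{equation}
where $B'$ denotes the lower $n \times k$ block. The denominator is given directly by \Cref{thm:corank_pt1} with the roles of $n$ and $k$ (and using $n \geq k$, so full rank means rank $k$). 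For the numerator I would count by first choosing the lower block $B' \in \Mat_{n \times k}(\F_q)$ to have rank exactly $r$ — there are $\#\{B' : \rank(B') = r\}$ such choices, again computed by \Cref{thm:corank_pt1} — and then counting the number of ways to fill in the upper $d \times k$ block so that the full $(n+d) \times k$ matrix has rank $k$.

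The key step is this last count. Given $B'$ of rank $r$, its column space is an $r$-dimensional subspace of $\F_q^n$; choosing column operations, we may assume $B'$ has its first $r$ columns independent and the remaining $k-r$ columns in their span. The full matrix $B$ has rank $k$ if and only if, after subtracting the appropriate combinations, the $k-r$ \emph{new} rows (the portions of the last $k-r$ columns sitting in the upper $d \times k$ block, reduced modulo the column relations of $B'$) are linearly independent in $\F_q^d$. Concretely: the $d \times k$ upper block can be any matrix, and the condition is that a certain $d \times (k-r)$ submatrix-after-reduction has full rank $k-r$; the other $d \times r$ columns are unconstrained, contributing $q^{dr}$. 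Thus the number of valid upper blocks is $q^{dr} \cdot \#\{C \in \Mat_{d \times (k-r)}(\F_q): \rank(C) = k-r\}$, which is again an instance of \Cref{thm:corank_pt1} (with parameters $d$ and $k-r$, requiring $d \geq k-r$; when $d < k-r$ both sides of \eqref{eq:trunc_rank} vanish, consistent with the $q^{-1}$-binomial $\sqbinom{d}{k-r}_{q^{-1}}$ being zero).

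Assembling the three applications of \Cref{thm:corank_pt1} and simplifying, the powers of $q$ and the $(q^{-1};q^{-1})$-factorials reorganize into the $q^{-1}$-binomial coefficients appearing in \eqref{eq:trunc_rank}; the exponent $-(n-r)(k-r)$ emerges from combining $q^{dr}$, the rank-$r$ count for $B'$ (which carries $q^{rn + rk - r^2}$), the rank-$(k-r)$ count for $C$ (carrying $q^{(k-r)d + (k-r)^2}$ wait — more carefully $q^{(k-r)d + (k-r)k - (k-r)^2}$... I will track this exponent bookkeeping carefully), and dividing by the rank-$k$ count for $B$ (carrying $q^{k(n+d) + k^2 - k^2} = q^{k(n+d)}$). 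The main obstacle I anticipate is precisely this exponent and $(q^{-1};q^{-1})$-factorial bookkeeping: one must verify that
\begin{equation}
\frac{(q^{-1};q^{-1})_n (q^{-1};q^{-1})_d (q^{-1};q^{-1})_k}{(q^{-1};q^{-1})_r (q^{-1};q^{-1})_{n-r} (q^{-1};q^{-1})_{k-r} (q^{-1};q^{-1})_{d-(k-r)} (q^{-1};q^{-1})_{n+d-k}}
\end{equation}
collapses to $\sqbinom{d}{k-r}_{q^{-1}}\sqbinom{n}{r}_{q^{-1}} \big/ \sqbinom{n+d}{k}_{q^{-1}}$, and that the leftover power of $q$ is exactly $q^{-(n-r)(k-r)}$. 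This is a routine but slightly delicate manipulation; the structural content — counting via the column space of $B'$ and the full-rank condition on the complementary block in the upper rows — is the essential idea.
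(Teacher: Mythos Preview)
Your proposal is correct and follows essentially the same route as the paper. The paper likewise writes the probability as a ratio of cardinalities, counts $B'$ of rank $r$ via \Cref{thm:corank_pt1}, and then---by change of basis normalizing $B'$ to the standard rank-$r$ matrix $B_r$---observes that the upper $d\times k$ block contributes $q^{dr}$ free columns times the number of full-rank $d\times(k-r)$ matrices, exactly as you describe; the remaining exponent and Pochhammer bookkeeping is left as ``cancelling terms'' there too.
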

\begin{proof}
We first compute 
\begin{equation}
\#\{B \in \Mat_{(n+d) \times k}(\F_q): \rank(B) = k, \rank(B') = r\}
\end{equation}
where $B'$ is the truncated matrix as in the statement. The number of possible $B'$ is computed in \Cref{thm:corank_pt1}, so for each $B'$ we must count the number of $d \times k$ matrices $B''$ such that 
\begin{equation}
\begin{pmatrix}
B'' \\ B'
\end{pmatrix} \in \Mat_{(n+d) \times k}(\F_q)
\end{equation}
is full rank. By change of basis, the number of such $B''$ is the same for any $B'$ of rank $r$, so without loss of generality take $B' = B_r \in \Mat_{n \times k}(\F_q)$, the matrix with $ii\tth$ entry $1$ for $1 \leq i \leq r$ and all other entries $0$. Then the first $r$ columns of $B''$ may be anything, and the last $k-r$ columns must be linearly independent, so there are 
\begin{equation}\label{eq:complete_to_full}
    q^{dr} (q^d-1) \cdots (q^d - q^{k-r-1})
\end{equation}
possibilities for $B'$. The result now follows by combining \eqref{eq:complete_to_full} with \Cref{thm:corank_pt1}, dividing by the number of full rank $(n+d) \times k$ matrices, and cancelling terms.
\end{proof}

\begin{rmk}
We note that \eqref{eq:trunc_rank} is a $q$-analogue of the probability that a uniformly random $k$-element subset $S \subset A \sqcup B$ has $\#S \cap B = r$, when $\#A = d$ and $\#B = n$.
\end{rmk}

\begin{lemma}\label{thm:prob_coker_1}
Let $A \in \GL_N(\Z_p)$ be distributed by the Haar measure and $A'$ be an $n \times m$ submatrix with $n \leq m \leq N$. Then 
\begin{equation}
\Pr(\SN(A') = (1,0,\ldots,0)) = t^{m-n+1} \frac{(t;t)_{N-m} (t;t)_m (t;t)_n (t;t)_{N-n}}{(t;t)_1 (t;t)_{N-m-1} (t;t)_{n-1} (t;t)_{m-n+1} (t;t)_N}.
\end{equation}
\end{lemma}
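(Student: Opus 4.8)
The plan is to reduce the statement to a rank computation for a random matrix over $\F_p$ and then quote \Cref{thm:corank_pt2}. The event in question is determined by $A'\bmod p$: it is the event that $A'\bmod p$ has rank exactly $n-1$, and $A\bmod p$ is uniformly distributed on $\GL_N(\F_p)$ (the reduction of Haar measure). Since Haar measure on $\GL_N(\Z_p)$, hence uniform measure on $\GL_N(\F_p)$, is invariant under permuting rows and columns, I may assume $A'$ is the top-left $n\times m$ block, so that $A'\bmod p$ is the top-left $n\times m$ block of a uniformly random $\bar A\in\GL_N(\F_p)$.

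The first real step is to recognize this block as a truncation of a uniform full-rank matrix. The rows of $\bar A$ form a uniformly random ordered basis of $\F_p^N$, so its top $n$ rows form a uniformly random ordered linearly independent $n$-tuple, equivalently a uniformly random rank-$n$ matrix $M\in\Mat_{n\times N}(\F_p)$; then $A'\bmod p$ consists of the first $m$ columns of $M$. Transposing, $(A')^{T}$ consists of the first $m$ rows of $M^{T}\in\Mat_{N\times n}(\F_p)$, which is uniformly random of rank $n$ and hence (being invariant under row permutations) has its first $m$ rows distributed as its last $m$ rows. This is exactly the setting of \Cref{thm:corank_pt2}: an ambient uniform full-rank $N\times n$ matrix, truncated to $m\times n$, with $m\ge n$. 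Applying that lemma with its parameters $(n,d,k,r)$ taken to be $(m,\,N-m,\,n,\,n-1)$ — legitimate since $m\ge n\ge 1$ — and using $q=p$, $q^{-1}=t$, and $(m-(n-1))(n-(n-1))=m-n+1$, gives
\begin{equation*}
\Pr\big(\rank(A'\bmod p)=n-1\big)=t^{\,m-n+1}\,\frac{\sqbinom{N-m}{1}_{t}\,\sqbinom{m}{n-1}_{t}}{\sqbinom{N}{n}_{t}}.
\end{equation*}
Finally, expanding the three $q$-binomials by $\sqbinom{a}{b}_t=(t;t)_a/\big((t;t)_b(t;t)_{a-b}\big)$ reproduces the claimed right-hand side verbatim; the common $(t;t)_n$ and $(t;t)_{N-n}$ factors cancel and no further manipulation is needed. (If $N=m$ both sides vanish, so one may assume $N-m\ge 1$.)

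The only genuinely delicate point is the bookkeeping in the second paragraph: one must keep straight which of $n$, $m$, $N$ plays the role of the rank, the truncation size, and the ambient dimension in \Cref{thm:corank_pt2}, and transpose so that the truncation is by rows rather than by columns — an error in this correspondence corrupts the exponent of $t$ and the arguments of the $q$-binomials. The mod-$p$ reduction in the first paragraph and the Pochhammer expansion in the last are both routine.
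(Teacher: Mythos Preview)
Your first reduction is incorrect: the event $\SN(A') = (1,0,\ldots,0)$ is \emph{not} determined by $A' \bmod p$. Having $\rank(A' \bmod p) = n-1$ is equivalent to $\SN(A')_2 = \cdots = \SN(A')_n = 0$ together with $\SN(A')_1 \geq 1$, but says nothing about whether $\SN(A')_1$ equals $1$ or is larger. A concrete test case is $N=2$, $m=n=1$: then $A'=a_{11}$, the event $\SN(A')=(1)$ is $\val_p(a_{11})=1$, whereas $\rank(A' \bmod p)=0$ is $\val_p(a_{11}) \geq 1$. For Haar $A\in\GL_2(\Z_p)$ one computes $\Pr(\val_p(a_{11})\geq 1)=t/(1+t)$ while $\Pr(\val_p(a_{11})=1)=t(1-t)/(1+t)$, so these events genuinely differ. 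Your argument therefore computes $\Pr(\corank(A'\bmod p)=1)$, not $\Pr(\SN(A')=(1,0,\ldots,0))$.

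It is worth noting that your corank computation via \Cref{thm:corank_pt2} reproduces the displayed formula exactly; this is consistent with the possibility that the formula in the lemma is really the corank-$1$ probability, or that the discrepancy is handled by the references the paper invokes. In any case, to prove the lemma \emph{as stated} you would need to additionally control $\Pr(\SN(A')_1 \geq 2 \text{ and } \SN(A')_2=0)$, which your argument does not address. The paper itself does not give a self-contained proof here---it defers to Theorem~1.3(1) and Proposition~2.9 of \cite{van2020limits} and remarks that a longer direct argument exists---so a method-level comparison beyond this is not really possible.
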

\begin{proof}
Follows immediately by combining Theorem 1.3(1) and Proposition 2.9 of \cite{van2020limits}. 
\end{proof}

We note that \Cref{thm:prob_coker_1} can also be established by a (longer) direct proof not going through the general results of \cite{van2020limits}.

\begin{proof}[Proof of {\Cref{thm:single_box}}]
First write
\begin{align}\label{eq:reduce_to_cond_prob}
\begin{split}
   &\Pr(\nu_r = \mu_r+1 \text{ and } \nu_j = \mu_j \text{ for all } j > r) \\
   &=\Pr(\nu_r = \mu_r+1 \text{ and } \nu_j = \mu_j \text{ for all }r < j < r+m| \nu_j=\mu_j \text{ for all }j \geq r+m) \\
   &\times \Pr(\nu_j=\mu_j \text{ for all }j \geq r+m).
\end{split}
\end{align}
The second term on the RHS is
\begin{equation}\label{eq:easy_term}
    \Pr(\nu_j=\mu_j \text{ for all }j \geq r+m) =\prod_{j=r+m}^N \frac{1 - t^{j-\len(\la)}}{1 - t^j} = \frac{(t;t)_{N - \len(\la)}}{(t;t)_{r+m-\len(\la)-1}} \frac{(t;t)_{r+m-1}}{(t;t)_N}
\end{equation}
by \Cref{thm:parts_don't_change}, so it suffices to compute the first term on the RHS of \eqref{eq:reduce_to_cond_prob}. By \Cref{thm:nonsingular_submatrix},
\begin{equation}
W(A) := (a_{i,j})_{\subalign{\len(\la)+1 \leq i \leq N \\ r+m \leq j \leq N}}
\end{equation}
is full rank modulo $p$ if and only if $\nu_j=\mu_j \text{ for all }j \geq r+m$. Therefore 
\begin{align} \label{eq:cond_W(A)}
\begin{split}
    &\Pr(\nu_r = \mu_r+1 \text{ and } \nu_j = \mu_j \text{ for all }r < j < r+m| \nu_j=\mu_j \text{ for all }j \geq r+m) \\
    &= \Pr(\nu_r = \mu_r+1 \text{ and } \nu_j = \mu_j \text{ for all }r < j < r+m| W(A) \text{ is full rank modulo $p$}). \\ 
\end{split}
\end{align}
We claim that 
\begin{equation}\label{eq:reduce_to_tI_cond}
\text{RHS\eqref{eq:cond_W(A)}} = \Pr(\nu_r = \mu_r+1 \text{ and } \nu_j = \mu_j \text{ for all }r < j < r+m| W(A) = \tI)
\end{equation}
where
\begin{equation}
\label{eq:def_J}
\tI = \begin{pmatrix} \bold{0}_{(r+m-\len(\la)-1) \times (N-(r+m)+1)} \\ I_{N-(r+m)+1} \end{pmatrix}.
\end{equation}
First note that any matrix $H \in \Mat_{(N-\len(\la)) \times (N-(r+m)+1)}(\Z_p)$ which is full-rank modulo $p$ is in the same $\GL_{N-\len(\la)}(\Z_p)$-orbit as $\tI$ (here we use that $\len(\la) < r$ and simply apply the necessary row operations to $H$). This, together with the explicit description of the Haar measure in \Cref{thm:haar_sampling}, implies that
\begin{equation}\label{eq:law_reduce_tI}
\Law(A|W(A) \text{ is full rank modulo $p$}) = \Law(BA| W(A) = \tI)
\end{equation}
where
\begin{equation}
B =  \begin{pmatrix} I & \bold{0} \\ \bold{0} & \tB \end{pmatrix}   \in \begin{pmatrix} I & \bold{0} \\ \bold{0} & \GL_{N-\len(\la)}(\Z_p)\end{pmatrix}
\end{equation}
and $\tB$ is Haar-distributed independent of $A$, because $B$ mixes $\tI$ to a matrix distributed under the additive Haar measure conditioned on being full rank. By \eqref{eq:law_reduce_tI},
\begin{align}
\label{eq:use_haar_W}
\begin{split}
\Law(\SN(p^\la A p^\mu)|W(A) \text{ is full rank modulo $p$}) &= \Law(\SN(p^\la BA p^\mu)|W(A) = \tI) \\ 
&= \Law(\SN(B p^\la A p^\mu)|W(A) = \tI) \\ 
&= \Law(\SN(p^\la A p^\mu)|W(A) = \tI),
\end{split}
\end{align}
which shows \eqref{eq:reduce_to_tI_cond}.

For convenience define $\tA = (\ta_{i,j})_{1 \leq i,j \leq N}$ to be a random element of $\GL_N(\Z_p)$ distributed by the Haar measure conditioned on $W(\tA) = \tI$, so that 
\begin{equation}
\label{eq:pass_to_tA}
\text{RHS\eqref{eq:reduce_to_tI_cond}} = \Pr(\SN(p^\la \tA p^\mu)_i = \mu_i + \bbone(i=r) \text{ for all }r \leq i \leq N).
\end{equation}
For any deterministic matrix $V = (v_{i,j})_{1 \leq i,j \leq N}$ with $W(V) = \tI$, first note that $\SN(p^\la V p^\mu)_i = \nu_i$ for all $r+m \leq i \leq N$ by \Cref{thm:nonsingular_submatrix} as before. We make the following additional claims, which will be used for our upper and lower bounds:
\begin{enumerate}[label=(\roman*)]
\item If \label{item:sn_to_sn}
\begin{equation}
\label{eq:hyp_suff}
\SN\left((v_{i,j})_{\substack{\len(\la)+1 \leq i \leq r+m-1 \\ r \leq j \leq r+m-1}}\right) = (1,0,\ldots,0)
\end{equation}
then
\begin{equation}
\label{eq:concl_suff}
\SN(p^\la V p^\mu)_i = \mu_i + \bbone(i=r) \text{ for all }r \leq i \leq r+m-1.
\end{equation}
\item \label{item:sn_to_corank} If \eqref{eq:concl_suff} holds, then 
\begin{equation}
\label{eq:concl_nec}
(v_{i,j})_{\substack{\len(\la)+1 \leq i \leq r+m-1 \\ r \leq j \leq r+m-1}} \pmod{p} \text{ has corank $\geq 1$.}
\end{equation}
\end{enumerate}
Let us first show \ref{item:sn_to_sn}, so suppose \eqref{eq:hyp_suff} holds. 

\begin{equation}\label{eq:first_mat_decomp}
p^\la V p^\mu = 
\begin{pmatrix}
p^\la M^{(1)} & p^\la M^{(2)} & p^\la M^{(3)} \\ 
M^{(4)} & M^{(5)} & 0 \\ 
M^{(6)} & M^{(7)} & \diag(p^{\mu_{r+m}},\ldots,p^{\mu_N})
\end{pmatrix},
\end{equation}
where $M^{(i)}$ are the appropriate submatrices of $Vp^\mu$ and $\SN(M^{(5)}) = (1,0[m-1])$. Here the blocks of \eqref{eq:first_mat_decomp} in first, second and third column have widths $r-1$, $m$ and $N-(r+m)+1$ respectively, and the blocks of the first, second, and third rows have heights $\len(\la)$, $r+m-\len(\la)-1$, and $N-(r+m)+1$ respectively.
Hence by further column operations which subtract units times powers $p^{(\mu_i-\mu_\ell)}, 1 \leq i \leq r+m-1,r+m \leq \ell \leq N$ times the $\ell\tth$ column from the $i\tth$ column, we obtain
\begin{equation}
\begin{pmatrix}
p^\la \tM^{(1)} & p^\la \tM^{(2)} & p^\la M^{(3)} \\ 
M^{(4)} & M^{(5)} & 0 \\ 
0 & 0 & \diag(p^{\mu_{r+m}},\ldots,p^{\mu_N})
\end{pmatrix}
\end{equation}
and because of the $p^{(\mu_i-\mu_\ell)}$ powers the matrices $\tM^{(1)}, \tM^{(2)}$ still lie in $\Mat_{\len(\la) \times (r-1)}(\Z_p) \diag(p^{\mu_1},\ldots,p^{\mu_{r-1}})$, $\Mat_{\len(\la) \times m}(\Z_p)\diag(p^{\mu_r},\ldots,p^{\mu_{r+m-1}})$ respectively. We clearly may further cancel to obtain
\begin{equation}\label{eq:final_matrix}
\begin{pmatrix}
p^\la \tM^{(1)} & p^\la \tM^{(2)} & 0 \\ 
M^{(4)} & M^{(5)} & 0 \\ 
0 & 0 & \diag(p^{\mu_{r+m}},\ldots,p^{\mu_N})
\end{pmatrix}.
\end{equation}

Note that since $\mu_r = \ldots = \mu_{r+m-1}$,
\begin{equation}
\label{eq:Ms_and_Vs}
M^{(5)}
= p^{\mu_r}\tV,
\end{equation}
where $\tV = (v_{i,j})_{\substack{\len(\la)+1 \leq i \leq r+m-1 \\ r \leq j \leq r+m-1}}$ is the matrix for which we have assumed $\SN(\tV) = (1,0[m-1])$. Hence 
\begin{equation}
\label{eq:SNM}
\SN(M^{(5)}) = (\mu_r+1,\mu_r[m-1]).
\end{equation}
By \Cref{thm:minor_increase_sns} and \eqref{eq:SNM},
\begin{equation}\label{eq:sns_upper}
\sum_{\ell=1}^m \SN\left((p^{\la_i+\mu_j}v_{i,j})_{\substack{1 \leq i \leq r+m-1 \\ r \leq j \leq r+m-1}}\right)_\ell \leq \sum_{\ell=1}^m \SN(M^{(5)})_\ell = m \mu_r + 1.
\end{equation}
Since the matrix $(p^{\la_i}v_{i,j})_{\substack{1 \leq i \leq r+m-1 \\ r \leq j \leq r+m-1}}$ is not full rank modulo $p$ by \eqref{eq:hyp_suff},
\begin{equation}
\left| \SN\left((p^{\la_i}v_{i,j})_{\substack{1 \leq i \leq r+m-1 \\ r \leq j \leq r+m-1}}\right)\right| \geq 1,
\end{equation}
hence by \Cref{thm:multiply_smaller_dimension} we have 
\begin{equation}
\text{LHS\eqref{eq:sns_upper}} \geq m \mu_r + 1,
\end{equation}
so in fact
\begin{equation}\label{eq:pin_down_sns}
\text{LHS\eqref{eq:sns_upper}} = m \mu_r + 1.
\end{equation}
Since every entry of $(p^{\la_i+\mu_j}v_{i,j})_{\substack{1 \leq i \leq r+m-1 \\ r \leq j \leq r+m-1}}$ is divisible by $p^{\mu_r}$, each singular number is at least $\mu_r$, and combining this with \eqref{eq:pin_down_sns} yields 
\begin{equation}\label{eq:sns_intermediate}
\SN\left((p^{\la_i+\mu_j}v_{i,j})_{\substack{1 \leq i \leq r+m-1 \\ r \leq j \leq r+m-1}}\right) = (\mu_r+1,\mu_r[m-1]).
\end{equation}
Since all entries in columns $1$ through $r-1$ of \eqref{eq:final_matrix} are divisible by $p^{\mu_{r-1}}$, \eqref{eq:sns_intermediate} together with the equivalence of $p^\la V p^\mu$ with \eqref{eq:final_matrix} imply \eqref{eq:concl_suff}. This shows \ref{item:sn_to_sn}.

Now we show \ref{item:sn_to_corank}, so suppose $V$ is such that \eqref{eq:concl_suff} holds. If $(v_{i,j})_{\substack{\len(\la)+1 \leq i \leq r+m-1 \\ r \leq j \leq r+m-1}} \pmod{p}$ were full-rank, then $(v_{i,j})_{\substack{\len(\la)+1 \leq i \leq N \\ r \leq j \leq N}} \pmod{p}$ would be full rank since $W(V) = \tI$ is full-rank. By \Cref{thm:nonsingular_submatrix} this would contradict the fact that $\SN(p^\la V p^\mu)_r = \mu_r+1$. Hence $(v_{i,j})_{\substack{\len(\la)+1 \leq i \leq r+m-1 \\ r \leq j \leq r+m-1}} \pmod{p}$ has corank $k \geq 1$.

Suppose for the sake of contradiction that $k \geq 2$. First bring $p^\la V p^\mu$ into the form \eqref{eq:final_matrix} by row and column operations, and note that none of the operations used affect the block $(v_{i,j})_{\substack{\len(\la)+1 \leq i \leq r+m-1 \\ r \leq j \leq r+m-1}}$. Hence by our assumption, $p^{-\mu_r}M^{(5)} \pmod{p}$ has corank $k$, i.e. rank $m-k$ (using that $r \geq \len(\la)+1$ so the matrix is taller than it is wide). Reducing the matrix in \eqref{eq:final_matrix} modulo $p^{\mu_r}$, all blocks other than $M^{(5)}$ and $\diag(p^{\mu_{r+m}},\ldots,p^{\mu_N})$ are $0$. Putting this block matrix in Smith normal form, there can only be $(m-2)+(N-r-m+1)$ nonzero entries, so there are only $N-r-1$ singular numbers which are $\leq \mu_r$. Hence $\nu_r$ and $\nu_{r+1}$ are both $\geq \mu_r+1$, contradicting \eqref{eq:concl_suff}. Therefore $k=1$ as desired, proving \ref{item:sn_to_corank}.



Using \ref{item:sn_to_sn} and \ref{item:sn_to_corank} for the lower and upper bounds respectively, we have
\begin{align}\label{eq:1box_sandwich_bounds}
\begin{split}
&\Pr\left(\SN\left((\ta_{i,j})_{\substack{\len(\la) < i < r+m \\ r \leq j < r+m}}\right) = (1,0[m-1])\right) \\ 
&\leq \Pr(\nu_r = \mu_r+1 \text{ and } \nu_j = \mu_j \text{ for all }r < j < r+m| W(A) = \tI) \\ 
&\leq \Pr\left(\corank\left((\ta_{i,j})_{\substack{\len(\la) < i < r+m \\ r \leq j < r+m}} \pmod{p}\right) = 1\right).
\end{split}
\end{align}
By applying \Cref{thm:corank_pt2} with $r=m-1,n+d=N,k=m,n=r+m-\len(\la)-1$ we obtain
\begin{align}
\label{eq:corank1prob}
\begin{split}
\Pr\left(\corank\left((\ta_{i,j})_{\substack{\len(\la) < i < r+m \\ r \leq j < r+m}} \pmod{p}\right) = 1\right) &=  t^{r-\len(\la)} \frac{\sqbinom{\len(\la)}{1}_t \sqbinom{r+m-\len(\la)-1}{m-1}_t}{\sqbinom{r+m-1}{m}_t} \\ 
&= t^{r-\len(\la)} \frac{(1-t^{\len(\la)})(1-t^m)}{1-t} \frac{(t;t)_{r+m-\len(\la)-1} (t;t)_{r-1} }{(t;t)_{r-\len(\la)} (t;t)_{r+m-1}} \\
\end{split}
\end{align}
By applying \Cref{thm:prob_coker_1} with $r+m-1, r+m-1-\len(\la), m$ substituted for $N,m,n$ respectively, 
\begin{equation}\label{eq:1boxprob}
    \text{LHS\eqref{eq:1box_sandwich_bounds}} = (t^{r-\len(\la)}-t^r) \frac{1-t^m}{1-t} \frac{(t;t)_{r-1}(t;t)_{r+m-\len(\la)-1}}{(t;t)_{r+m-1}(t;t)_{r-\len(\la)-1}}.
\end{equation}
Hence 
\begin{align}\label{eq:1box_sandwich_bounds_explicit}
\begin{split}
&(t^{r-\len(\la)}-t^r) \frac{1-t^m}{1-t} \frac{(t;t)_{r-1}(t;t)_{r+m-\len(\la)-1}}{(t;t)_{r+m-1}(t;t)_{r-\len(\la)-1}} \\ 
&\leq \Pr(\nu_r = \mu_r+1 \text{ and } \nu_j = \mu_j \text{ for all }r < j < r+m| W(A) = \tI) \\ 
&\leq t^{r-\len(\la)} \frac{(1-t^{\len(\la)})(1-t^m)}{1-t} \frac{(t;t)_{r+m-\len(\la)-1} (t;t)_{r-1} }{(t;t)_{r-\len(\la)} (t;t)_{r+m-1}}.
\end{split}
\end{align}
Combining the reduction \eqref{eq:reduce_to_cond_prob} with the computation of \eqref{eq:easy_term} and the bound on the conditional probability coming from combining \eqref{eq:cond_W(A)}, \eqref{eq:reduce_to_tI_cond}, and \eqref{eq:1box_sandwich_bounds_explicit} completes the proof.
\end{proof}

\begin{proof}[Proof of {\Cref{thm:two_unlikely}}]
For any matrix $B = (b_{i,j})_{1 \leq i,j \leq N} \in \GL_N(\Z_p)$, we define $B' = \left((b_{i,j})_{\subalign{&\len(\la)+1 \leq i \leq N \\ &r \leq j \leq N}}\right)$ as before. Since
\begin{equation}
    \sum_{j=r}^N \nu_j - \mu_j  \leq |\SN(A')|
\end{equation}
by \Cref{thm:parts_bound}, it follows that if $\sum_{j=r}^N \nu_j - \mu_j \geq 2$ then $|\SN(A')| \geq 2$. Hence
\begin{equation}\label{eq:reduce_2box_to_minor}
    \Pr\left(\sum_{j=r}^N \nu_j - \mu_j \geq 2\right) \leq 1 - \Pr(|\SN(A')| \leq 1).
\end{equation}
Since $|\SN(A')|=0$ if and only if $A' \pmod{p}$ is full rank, \Cref{thm:corank_pt2} yields that
\begin{equation}\label{eq:sn0}
    \Pr(|\SN(A')| = 0) = \frac{(t;t)_{r-1}(t;t)_{N-\len(\la)}}{(t;t)_{N}(t;t)_{r-1-\len(\la)}}.
\end{equation}
By \Cref{thm:prob_coker_1}, 
\begin{equation}\label{eq:sn1}
    \Pr(|\SN(A')| = 1) = \frac{(t;t)_{r-1}(t;t)_{N-\len(\la)}}{(t;t)_{N}(t;t)_{r-\len(\la)}} \left(t^{r-\len(\la)}(1-t^{N-r+1}) \frac{1-t^{\len(\la)}}{1-t}\right).
\end{equation}
Combining \eqref{eq:reduce_2box_to_minor} with \eqref{eq:sn0} and \eqref{eq:sn1} completes the proof.
\end{proof}

\section{Asymptotics of matrix product transition probabilities}\label{sec:transition_asymptotics}

In this section, we use the nonasymptotic bounds of the previous section to establish asymptotics for the matrix product process stated earlier as \Cref{thm:kappa=nu_case}, \Cref{thm:kappa-nu=1_case}, and \Cref{thm:kappa-nu_geq_2_case}. The technical work of this section essentially amounts to computing the relevant terms of bounds which were left as prelimit explicit formulas in the previous section, with the additional complication of randomizing those bounds over the singular numbers of one of the matrices; we also phrase everything in terms of truncated signatures $F_d(\nu)$, which was not done in the previous section. As usual, we use $t=1/p$ in formulas. 

\begin{defi}\label{def:unif_o}
In the proofs of \Cref{thm:kappa=nu_case}, \Cref{thm:kappa-nu=1_case} and \Cref{thm:kappa-nu_geq_2_case}, we write $\ou(\cdot)$ to indicate any quantity which, for any fixed $L$ as in those lemmas, is $o(\cdot)$ as $N \to \infty$ with constants uniform over all $\nu^{(N)} \in F_d(\Sig_N^+)$ with $(\nu^{N})'_d \geq L+r_N$.
\end{defi}

\begin{proof}[Proof of \Cref{thm:kappa=nu_case}]
To simplify notation, let
\begin{equation}\label{eq:def:j0}
j_0 = j_0(N) := (\nu^{(N)})_d' -r_N + 1 = \min\{i: \nu^{(N)}_{r_N+i} < d\}.
\end{equation}
By hypothesis, $j_0 \geq L$. Let $U$ be an element of $\GL_N(\Z_p)$ with Haar distribution independent of $A^{(N)}$, and $\la \in \bSig_N^+$. Then
\begin{align}\label{eq:0prob_cases}
\begin{split}
&\Pr(F_d(\SN(A^{(N)}U \diag(p^{\nu^{(N)}}))) = F_d(\nu^{(N)})|\SN(A^{(N)}) = \la) \\ 
&= \Pr(\SN(A^{(N)} \diag(p^{\nu^{(N)}}))_i = \nu^{(N)}_i \text{ for all }j_0 + r_N \leq i \leq N|\SN(A^{(N)}) = \la) \\ 
&= \begin{cases}
\prod_{j=j_0+r_N}^N \frac{1-t^{j-\ell}}{1-t^j} & 0 \leq \ell < j_0+r_N \\ 
0 &  j_0+r_N \leq \ell 
\end{cases}
\end{split}
\end{align}
where $\ell = \len(\la)$. For notational convenience, here and in the rest of the proof we define the random variable $X_N := \len(\SN(A^{(N)}))$. Taking an expectation over $X_N$ in \eqref{eq:0prob_cases} yields
\begin{align}\label{eq:explicit_step_do_nothing}
\begin{split}
    \Pr(F_d(\SN(A^{(N)} U \diag(p^{\nu^{(N)}}))) = F_d(\nu^{(N)})) = \E\left[\bbone(X_N < j_0+r_N)\prod_{j=j_0+r_N}^N \frac{1-t^{j-X_N}}{1-t^j}\right].
\end{split}
\end{align}
Note that \eqref{eq:explicit_step_do_nothing} depends on $\nu^{(N)}$ only through $j_0$, so to establish uniform asymptotics over $\nu^{(N)}$ we simply need them to be uniform over $j_0$. To show \Cref{thm:kappa=nu_case}, we therefore must show 
\begin{equation}
    \label{eq:bound_do_nothing}
    \E\left[\bbone(X_N < j_0+r_N)\prod_{j=j_0+r_N}^N \frac{1-t^{j-X_N}}{1-t^j} \right] = 1 - \frac{t^{j_0}-t^{N-r_N+1}}{1-t} c_N^{-1} + \ou(c_N^{-1}) 
\end{equation}
(recall the notation $\ou$ from \Cref{def:unif_o} and the definition $c_N := (\E[\bbone(X_N \leq r_N)(t^{r_N-X_N} - t^{r_N})])^{-1}$). Since
\begin{equation}
\Pr(X_N \geq j_0+r_N) \leq \Pr(X_N \geq L + r_N) = \ou(c_N^{-1})
\end{equation}
by hypothesis, we may write
\begin{equation}
\label{eq:exp_constant_ind_trick}
\frac{\E[\bbone(X_N < j_0+r_N) (1-t^{j_0+r_N}) \cdots (1-t^N)]}{ (1-t^{j_0+r_N}) \cdots (1-t^N)} = 1 + \ou(c_N^{-1}),
\end{equation}
and using this we rearrange \eqref{eq:bound_do_nothing} to obtain that it is equivalent to show
\begin{multline}\label{eq:bound_do_nothing2}
\frac{ \E[\bbone(X_N < j_0+r_N)\left((1-t^{j_0+r_N}) \cdots (1-t^N) - (1-t^{j_0+r_N-X_N})\cdots(1-t^{N-X_N})\right)]}{(1-t^{j_0+r_N}) \cdots (1-t^N)} \\ 
    =\frac{t^{j_0}-t^{N-r_N+1}}{1-t}c_N^{-1} + \ou(c_N^{-1}).
\end{multline}
This is what we will show.

We write 
\begin{align}\label{eq:qbinom_1box}
\begin{split}
    &\E[\bbone(X_N < j_0+r_N)((1-t^{j_0+r_N}) \cdots (1-t^N) - (1-t^{j_0+r_N-X_N})\cdots(1-t^{N-X_N}))] \\
    &= \E\left[\bbone(X_N < j_0+r_N)\sum_{j=0}^{N-r_N-j_0+1} (-1)^j \sqbinom{N-r_N-j_0+1}{j}_t t^{\binom{j}{2}+j(j_0+r_N)} (1-t^{-j X_N})\right] \\ 
    &=  \sum_{j=1}^{N-r_N-j_0+1} (-1)^{j+1} \sqbinom{N-r_N-j_0+1}{j}_t t^{\binom{j}{2}+j(j_0+r_N)} \E[\bbone(X_N < j_0+r_N)(1-t^{-j X_N})]
\end{split}
\end{align}
by expanding both factors inside the expectation via the $q$-binomial theorem and consolidating term-by-term. 

Note that the $j=0$ term of \eqref{eq:qbinom_1box} is $0$ since $1-t^{-j X_N} = 0$. The contribution of the $j=1$ term of \eqref{eq:qbinom_1box} to \eqref{eq:bound_do_nothing2} is
\begin{equation}
    \label{eq:j=1}
    \frac{1}{(1-t^{j_0+r_N}) \cdots (1-t^N)} \frac{t^{j_0} - t^{N-r_N+1}}{1-t}c_N^{-1}  = 
        c_N^{-1}\left(\frac{t^{j_0}-t^{N-r_N+1}}{1-t} + \ou(1)\right)
\end{equation}
where we use that $r_N \to \infty$ so $(1-t^{j_0+r_N}) \cdots (1-t^N) \to 1$. The asymptotic \eqref{eq:j=1} is uniform over $\nu^{(N)}$ satisfying our hypotheses, since it depends on $\nu^{(N)}$ only through $j_0$, and is uniform over $j_0 \geq L$. 

Hence to prove \eqref{eq:bound_do_nothing2} it now suffices to show 
\begin{equation}
    \label{eq:jgeq2terms}
    \sum_{j=2}^{N-r_N-j_0+1} (-1)^j \sqbinom{N-r_N-j_0+1}{j}_t t^{\binom{j}{2}+j(j_0+r_N)} \E[\bbone(X_N < j_0+r_N)(1-t^{-j X_N})] = \ou(c_N^{-1}),
\end{equation}
where we have used the fact that ${(1-t^{j_0+r_N}) \cdots (1-t^N)} = 1+\ou(1)$ uniformly over $j_0 \geq L$ to remove the denominator of \eqref{eq:bound_do_nothing2}. We rewrite the asymptotic \eqref{eq:jgeq2terms} which we wish to show as 
\begin{equation}
    \label{eq:jgeq2terms2}
    -\sum_{j=2}^{N-r_N-j_0+1} (-1)^j \sqbinom{N-r_N-j_0+1}{j}_t t^{\binom{j}{2}+j j_0} \frac{\E[\bbone(X_N < j_0+r_N)(t^{j(r_N-X_N)} - t^{jr_N})]}{\E[\bbone(X_N \leq r_N)(t^{r_N-X_N} - t^{r_N})]} = \ou(1).
\end{equation}
To show \eqref{eq:jgeq2terms2}, it suffices to show that for all $\delta > 0$,
\begin{equation}\label{eq:j_quotient}
t^{jj_0}\frac{\E[\bbone(X_N < j_0+r_N)(t^{j(r_N-X_N)} - t^{jr_N})]}{\E[\bbone(X_N \leq r_N)(t^{r_N-X_N} - t^{r_N})]} < \delta \quad \quad \text{ for all $j \geq 2$}
\end{equation}
for all $N$ sufficiently large independent of $j$. Since both numerator and denominator in \eqref{eq:j_quotient} are $0$ when $X_N=0$, by clearing factors of $\Pr(X_N > 0)$ we have
\begin{align}\label{eq:pass_to_cond_exp}
\begin{split}
t^{jj_0}\frac{\E[\bbone(X_N < j_0+r_N)(t^{j(r_N-X_N)} - t^{jr_N})]}{\E[\bbone(X_N \leq r_N)(t^{r_N-X_N} - t^{r_N})]} &= t^{jj_0}\frac{\E[\bbone(X_N < j_0+r_N)(t^{j(r_N-X_N)} - t^{jr_N})|X_N > 0]}{\E[\bbone(X_N \leq r_N)(t^{r_N-X_N} - t^{r_N})| X_N > 0]} \\ 
&\leq t^{jj_0}\frac{\E[\bbone(\tX_N < j_0+r_N)t^{j(r_N-\tX_N)}]}{\E[\bbone(\tX_N \leq r_N)(t^{r_N-\tX_N} - t^{r_N})]},
\end{split}
\end{align}
where to simplify notation we let $\tX_N$ be a random variable with
\begin{equation}
\Law(\tX_N) = \Law(X_N | X_N > 0).
\end{equation}
For any $b \geq 0$, 
\begin{multline}
\label{eq:remove_big_Xn}
\text{RHS\eqref{eq:pass_to_cond_exp}} \leq t^{jj_0}\frac{\Pr(r_N + j_0 - b < \tX_N < r_N + j_0)t^{-j j_0}}{\E[\bbone(\tX_N \leq r_N)(t^{r_N-\tX_N} - t^{r_N})]} \\  + t^{jj_0}\frac{\E[\bbone(\tX_N \leq r_N + j_0 - b)t^{j(r_N-\tX_N)}]}{\E[\bbone(\tX_N \leq r_N)(t^{r_N-\tX_N} - t^{r_N})]}.
\end{multline}
The first term in \eqref{eq:remove_big_Xn} is 
\begin{equation}
\label{eq:throw_away_prob_term}
\frac{\Pr(r_N + j_0 - b < \tX_N < r_N + j_0)}{\E[\bbone(\tX_N \leq r_N)(t^{r_N-\tX_N} - t^{r_N})]} \leq \frac{\Pr(\tX_N > r_N + L - b)}{\E[\bbone(\tX_N \leq r_N)(t^{r_N-\tX_N} - t^{r_N})]},
\end{equation}
which by the hypothesis \eqref{eq:lambda_not_large} is $\ou(1)$ (it is uniform over $j_0$, since $j_0$ does not appear). 

Note next that for any $j$ and any function $f: \R \to \R$ with $f([1,r_N+j_0-b]) \subset [0,1]$,
\begin{equation}\label{eq:lbound}
\E[\bbone(\tX_N \leq r_N+j_0 - b)f(\tX_N) t^{j(r_N+j_0-\tX_N)}] \leq t^{(j-1)b}\E[\bbone(\tX_N \leq r_N+j_0 - b)t^{r_N+j_0-\tX_N}]
\end{equation}
because all nonzero terms come from values of $\tX_N$ with $r_N+j_0-\tX_N \geq b$, and so
\begin{equation}
    t^{j(r_N+j_0-\tX_N)} \leq t^{(j-1)b} \cdot t^{r_N+j_0-\tX_N}
\end{equation}
with probability $1$. 

Applying \eqref{eq:lbound} to the numerator and a trivial bound to the denominator of the second term of \eqref{eq:remove_big_Xn} yields
\begin{align}\label{eq:exp-term}
\begin{split}
\frac{\E[\bbone(\tX_N \leq r_N +j_0- b)t^{j(r_N+j_0-\tX_N)}]}{\E[\bbone(\tX_N \leq r_N)(t^{r_N-\tX_N} - t^{r_N})]} &\leq \frac{t^{(j-1)b}\E[\bbone(\tX_N \leq r_N +j_0 - b)t^{r_N+j_0-\tX_N}]}{(1-t)\E[\bbone(\tX_N \leq r_N)t^{r_N-\tX_N}]} \\ 
&\leq \frac{t^{(j-1)b}}{1-t}t^{j_0} .
\end{split}
\end{align}
Hence for any $\delta > 0$, by choosing $b$ so that $\frac{t^{(j-1)b}}{1-t}t^L < \delta/2$, we have that \eqref{eq:j_quotient} holds for all $N$ large enough that the left hand side of \eqref{eq:throw_away_prob_term} is $<\delta/2$. As we had previously reduced to \eqref{eq:j_quotient}, this completes the proof.
\end{proof}

We will prove \Cref{thm:kappa-nu_geq_2_case} before \Cref{thm:kappa-nu=1_case} since the former is needed for the latter.

\begin{proof}[Proof of \Cref{thm:kappa-nu_geq_2_case}]
Let $\tX_N$ be a random variable with $\Law(\tX_N) = \Law(X_N | X_N > 0)$ as before, so 
\begin{equation}\label{eq:remove_prob0}
\E[\bbone(\tX_N \leq r_N)(t^{r_N-\tX_N}-t^{r_N})] = \frac{c_N^{-1}}{\Pr(X_N > 0)} 
\end{equation}
(recalling that $X_N \geq 0$ always, and $X_N > 0$ with positive probability by hypothesis). Using the same notation $j_0=j_0(N)$ defined in \eqref{eq:def:j0}, for the proof it suffices to show
\begin{equation}\label{eq:0prob_2part_j0}
\Pr\left(\left. \sum_{i = j_0 + r_N}^N \SN(A^{(N)} U \diag(p^{\nu^{(N)}}))_i - \nu^{(N)}_i \geq 2 \right| X_N > 0 \right) = \ou(\E[\bbone(\tX_N \leq r_N)(t^{r_N-\tX_N}-t^{r_N})]),
\end{equation}
where $U$ is a Haar element of $\GL_N(\Z_p)$ independent of $A^{(N)}$. For each $0 < x < r_N+j_0$, by applying \Cref{thm:two_unlikely} with $r=j_0+r_N$, $\la = \SN(A^{(N)})$, and $\mu = \nu^{(N)}$ in the notation of that result, we have
\begin{multline} \label{eq:length_cond_2box_bound}
\Pr\left(\left. \sum_{i = j_0 + r_N}^N \SN(A^{(N)} U \diag(p^{\nu^{(N)}}))_i - \nu^{(N)}_i \geq 2 \right| X_N =x \right) \\ 
\leq 
\left(1 - \frac{(t;t)_{j_0+r_N-1}(t;t)_{N-x}}{(t;t)_{N}(t;t)_{j_0+r_N-x}}\left(1 - t^{j_0+r_N-x} + t^{j_0+r_N-x}(1-t^{N-j_0+r_N+1}) \frac{1-t^{x}}{1-t}\right)\right)
\end{multline}
Fix an integer $b \geq 1$ independent of $N$, and let $N$ be large enough so that $r_N + j_0 - b > 0$ holds (this holds for all sufficiently large $N$ since $r_N \to \infty$ and $j_0(N) \geq L$). Then taking a (conditional, given $X_N > 0$) expectation of \eqref{eq:length_cond_2box_bound} when $0<x\leq r_N+j_0-b$ and naively bounding when $x > r_N+j_0-b$ yields
\begin{align}
    \label{eq:0prob_2part_formula}
    \begin{split}
    &\Pr\left(\left.\sum_{i = j_0 + r_N}^N \SN(A^{(N)} U \diag(p^{\nu^{(N)}}))_i - \nu^{(N)}_i \geq 2 \right| X_N > 0  \right) 
    \\
    &\leq \Pr(\tX_N > r_N + j_0 - b ) + 
    \E\left[\bbone(\tX_N \leq r_N+j_0 - b) \left(1 - \frac{(t;t)_{j_0+r_N-1}(t;t)_{N-\tX_N}}{(t;t)_{N}(t;t)_{j_0+r_N-\tX_N}}\right.\right. \\ 
    &\times \left.\left.\left(1 - t^{j_0+r_N-\tX_N} + t^{j_0+r_N-\tX_N}(1-t^{N-j_0+r_N+1}) \frac{1-t^{\tX_N}}{1-t}\right)\right)\right].
    \end{split}
\end{align}
We first rewrite the expression inside the expectation on the right-hand side of \eqref{eq:0prob_2part_formula} (without the indicator function) as 
\begin{align}
\begin{split}\label{eq:qbinom_2box}
 & 1 - \frac{(t;t)_{j_0+r_N-1}(t;t)_{N-\tX_N}}{(t;t)_{N}(t;t)_{j_0+r_N-\tX_N}}\left(1 - t^{j_0+r_N-\tX_N}+ t^{j_0+r_N-\tX_N}(1-t^{N-j_0+r_N+1}) \frac{1-t^{\tX_N}}{1-t}\right) \\ 
&= \frac{(t;t)_{j_0 + r_N- 1}}{(t;t)_{j_0 + r_N  - \tX_N}}\left( \frac{1-t^{j_0+r_N}}{\prod_{i=0}^{\tX_N-1} (1-t^i \cdot t^{j_0 + r_N -\tX_N +1})} \right. \\ 
&\left. - \frac{1}{\prod_{i=0}^{\tX_N-1} (1-t^i \cdot t^{N - \tX_N + 1})}\left(1 - t^{j_0+r_N-\tX_N}+ (t^{j_0 + r_N - \tX_N}-t^{N - \tX_N + 1})\frac{1-t^{\tX_N}}{1-t}\right)\right) \\ 
&=  \frac{(t;t)_{j_0 + r_N- 1}}{(t;t)_{j_0 + r_N  - \tX_N}} \sum_{\ell=0}^\infty \sqbinom{\tX_N-1+\ell}{\ell}_t  \left(t^{\ell(j_0+r_N-\tX_N+1)}(1-t^{j_0+r_N}) \right. \\ 
&\left.-t^{\ell(N-\tX_N+1)}\left(1 - t^{j_0+r_N-\tX_N}+ (t^{j_0 + r_N - \tX_N}-t^{N - \tX_N + 1})\frac{1-t^{\tX_N}}{1-t}\right)\right) 
\end{split}
\end{align}
where in the second equality, we have expanded both of the $1/(\prod \cdots)$ terms into infinite sums by the $q$-binomial theorem (here it is important that $\tX_N \in \Z_{\geq 1}$) and then combined the sums. We further split the sum to write
\begin{equation}
\label{eq:sum_of_S}
\text{RHS\eqref{eq:qbinom_2box}} = S_1+S_2+S_3,
\end{equation}
where we define
\begin{multline}\label{eq:S1}
S_1 = \frac{(t;t)_{j_0 + r_N- 1}}{(t;t)_{j_0 + r_N  - \tX_N}}\left(1-t^{j_0+r_N }- 1 + t^{j_0+r_N-\tX_N} - (t^{j_0+r_N-\tX_N} - t^{N-\tX_N+1})\frac{1-t^{\tX_N}}{1-t} \right. \\
\left. +\sqbinom{\tX_N}{1}_t(t^{j_0+r_N-\tX_N+1} - t^{N-\tX_N+1})\right) = 0
\end{multline}
(the $\ell=0$ term in \eqref{eq:qbinom_2box} together with a part of the $\ell=1$ term chosen so that they exactly cancel),
\begin{multline}\label{eq:S2}
S_2 = \left(-t^{2(j_0+r_N)-\tX_N+1}  -t^{N-\tX_N+1}\left( - t^{j_0+r_N-\tX_N}+ (t^{j_0 + r_N - \tX_N}-t^{N - \tX_N + 1})\frac{1-t^{\tX_N}}{1-t}\right) \right) \\ 
\times \frac{(t;t)_{j_0 + r_N- 1}}{(t;t)_{j_0 + r_N  - \tX_N}}\sqbinom{\tX_N}{1}_t
\end{multline}
(the rest of the $\ell=1$ term), and 
\begin{multline}
\label{eq:S3}
S_3=\frac{(t;t)_{j_0 + r_N- 1}}{(t;t)_{j_0 + r_N - \tX_N}} \sum_{\ell=2}^\infty \sqbinom{\tX_N - 1 + \ell}{\ell}_{t} \Bigg( t^{\ell(j_0 + r_N - \tX_N)}(1-t^{j_0+r_N})  \\ 
- t^{\ell(N-\tX_N +1)}\Bigg(1-t^{j_0+r_N-\tX_N}+t^{j_0 + r_N - \tX_N}(1-t^{N-j_0-r_N + 1})\frac{1-t^{\tX_N}}{1-t}\Bigg)\Bigg)
\end{multline}
(the rest of the sum, i.e. the $\ell \geq 2$ terms). We have observed that $S_1 = 0$, and now argue that $S_2$ and $S_3$ are small asymptotically. The $t$-Pochhammer prefactor 
\begin{equation}\frac{(t;t)_{j_0 + r_N- 1}}{(t;t)_{j_0 + r_N - \tX_N}}\end{equation}
lies in $[0,1]$, and the summands making up $S_3$ satisfy the bound
\begin{multline}\label{eq:3bound}
\left|t^{\ell(j_0 + r_N - \tX_N)}(1-t^{j_0+r_N})- t^{\ell(N-\tX_N +1)}\left(1-t^{j_0+r_N-\tX_N} + (t^{j_0 + r_N - \tX_N}-t^{N - \tX_N + 1})\frac{1-t^{\tX_N}}{1-t}\right)\right|  \\ 
\times \sqbinom{\tX_N - 1 + \ell}{\ell}_{t} \leq 3\sqbinom{\tX_N - 1 + \ell}{\ell}_{t} t^{\ell(j_0 + r_N - \tX_N)},
\end{multline}
hence 
\begin{align}\label{eq:3bound2}
\begin{split}
|S_3| &\leq 3 \sum_{\ell=0}^\infty \sqbinom{\tX_N - 1 + \ell}{\ell}_{t} t^{\ell(j_0 + r_N - \tX_N)} \\ 
&= 3 \frac{1}{\prod_{i=0}^{\tX_N-1}1-t^i \cdot t^{j_0+r_N - \tX_N}} \\ 
&\leq \frac{3}{(t;t)_{\infty}}
\end{split}
\end{align}
for all $\tX_N < j_0 + r_N$ (using that $N+1 \geq j_0+r_N$). Similarly to \eqref{eq:3bound}, we may split $S_2$ into three terms with a power of $t$ at least $2(j_0+r_N-\tX_N)$, yielding
\begin{equation}
\label{eq:S2bound}
|S_2| \leq \frac{3}{1-t}\sqbinom{\tX_N }{1}_{t} t^{2(j_0 + r_N - \tX_N)}.
\end{equation}
Below we use shorthand 
\begin{equation}\label{eq:bbone_b}
\bbone_b := \bbone(\tX_N \leq r_N+j_0 - b)
\end{equation}
to minimize equation overflow. Multiplying \eqref{eq:qbinom_2box} by $\bbone_b$, taking an expectation, and applying Fubini's theorem (the hypotheses of which we checked in \eqref{eq:3bound2}) to pull it inside the sum yields
\begin{align}\label{eq:sum_ell_2}
\begin{split}
&\E\left[\bbone_b \times \left(1 - \frac{(t;t)_{j_0+r_N-1}(t;t)_{N-\tX_N}}{(t;t)_{N}(t;t)_{j_0+r_N-\tX_N}} \left(1 -t^{j_0+r_N} + t^{j_0+r_N-\tX_N}(1-t^{N-j_0+r_N+1}) \frac{1-t^{\tX_N}}{1-t}\right)\right)\right] \\
&= \E[\bbone_b S_2] + \sum_{\ell=2}^\infty \E\left[ \bbone_b\frac{(t;t)_{j_0 + r_N- 1}}{(t;t)_{j_0 + r_N - \tX_N}} \sqbinom{\tX_N - 1 + \ell}{\ell}_{t} \Bigg(t^{\ell(j_0 + r_N - \tX_N)}(1-t^{j_0+r_N})\right. \\ 
&-\left. t^{\ell(N-\tX_N +1)}\Bigg(1-t^{j_0+r_N-\tX_N}+(t^{j_0 + r_N - \tX_N}-t^{N - \tX_N + 1})\frac{1-t^{\tX_N}}{1-t}\Bigg)\Bigg) \right],
\end{split}
\end{align}
where we have also used that $S_1=0$ to throw away those corresponding terms of the sum. To argue that the remaining terms are small, we first note that by the first bound in \eqref{eq:3bound2}, the naive bound
\begin{equation}\label{eq:qbin_bound}
 \sqbinom{\tX_N - 1 + \ell}{\ell}_{t} \leq \frac{1}{(t;t)_\infty},
\end{equation}
the bound \eqref{eq:S2bound} on $S_2$, the nonnegativity of the arguments of all expectations, we have
\begin{equation}\label{eq:lbound_terms_1}
|\text{RHS\eqref{eq:sum_ell_2}}| \leq \frac{3}{(1-t)(t;t)_\infty}\E[\bbone_bt^{2(r_N+j_0-\tX_N)}] + \frac{3}{(1-t)(t;t)_\infty}\sum_{\ell=2}^\infty \E[\bbone_bt^{\ell(j_0+r_N-\tX_N)}].
\end{equation}
Applying \eqref{eq:lbound} and collecting terms yields 
\begin{align}\label{eq:sumbound_l}
\begin{split}
\text{RHS\eqref{eq:lbound_terms_1}} &\leq \E[\bbone_bt^{r_N+j_0-\tX_N}] \frac{3}{(1-t)(t;t)_\infty}\left(t^b+\sum_{\ell=2}^\infty t^{(\ell-1)b} \right) \\ 
&= C't^b \E[\bbone_bt^{r_N+j_0-\tX_N}]
\end{split}
\end{align}
for an explicit constant $C'$ independent of $b$ and $N$. If $j_0-b > 0$ then (recalling the shorthand $\bbone_b$ from \eqref{eq:bbone_b}) we have
\begin{align}\label{eq:j0-b>0}
\begin{split}
\text{RHS\eqref{eq:sumbound_l}} &\leq C't^b\left(t^{L} \E[\bbone(\tX_N \leq r_N)t^{r_N-\tX_N}] + \E[\bbone(r_N < \tX_N \leq r_N+j_0-b)t^{r_N+j_0-\tX_N}]\right) \\ 
& \leq C't^b\left(t^{L} \E[\bbone(\tX_N \leq r_N)t^{r_N-\tX_N}] + t^b \Pr(\tX_N > r_N)\right),
\end{split}
\end{align}
while if $j_0 - b \leq 0$ then $r_N+j_0-b \leq r_N$ and hence $\bbone_b \leq \bbone(\tX_N \leq r_N)$, so
\begin{equation}\label{eq:j0-b<0}
\text{RHS\eqref{eq:sumbound_l}} \leq C' t^b \E[\bbone(\tX_N \leq r_N)t^{r_N-\tX_N+j_0}] \leq C' t^b t^{L} \E[\bbone(\tX_N \leq r_N)t^{r_N-\tX_N}] \leq \text{RHS\eqref{eq:j0-b>0}}.
\end{equation}
Hence the bound \eqref{eq:j0-b>0} actually holds independent of $b$ and $j_0$ (for all $j_0 \geq L$). Since $\tX_N \geq 1$,
\begin{equation}
t^{r_N-\tX_N} \leq \frac{1}{1-t}(t^{r_N-\tX_N} - t^{r_N}),
\end{equation}
and combining with \eqref{eq:j0-b>0} yields
\begin{equation}\label{eq:final_sumbound}
\text{RHS\eqref{eq:sumbound_l}} \leq \frac{C't^L}{1-t}t^b \E[\bbone(\tX_N \leq r_N)(t^{r_N-\tX_N} - t^{r_N})] + C't^{2b}\Pr(\tX_N > r_N).
\end{equation}
Substituting \eqref{eq:final_sumbound} into \eqref{eq:0prob_2part_formula} and multiplying through by $\Pr(X_N > 0)$ to convert the $\tX_N$ back to $X_N$ yields
\begin{align}
    \label{eq:0prob_2part_bound}
    \begin{split}
    &\Pr\left(\sum_{i = j_0 + r_N}^N \SN(A^{(N)} U \diag(p^{\nu^{(N)}}))_i - \nu^{(N)}_i \geq 2   \right) 
    \\
    &\leq \Pr(X_N > r_N + j_0 - b ) + \frac{C't^L}{1-t}t^b c_N^{-1} + C't^{2b}\Pr(\tX_N > r_N). 
    \end{split}
\end{align}
To show the right hand side is small, we let $b$ depend on $N$ as follows. Since 
\begin{equation}
 \Pr(X_N > r_N + j_0 - b ) \leq \Pr(X_N > r_N + L - b) = \ou(c_N^{-1})
\end{equation}
for any fixed $b$, by a diagonalization argument there exists a slowly growing sequence $b=b(N)$ not depending on $j_0$ such that 
\begin{equation}\label{eq:prob_b(N)_small}
\Pr(X_N > r_N + j_0 - b(N)) = \ou(c_N^{-1}).
\end{equation}
Since \eqref{eq:0prob_2part_bound} holds for any $b > 0$, it holds with $b$ replaced by $b(N)$. Then the first term on the right hand side is $\ou(c_N^{-1})$ by \eqref{eq:prob_b(N)_small}, the second term is $\ou(c_N^{-1})$ because $b(N) \to \infty$, and the third term is $\ou(c_N^{-1})$ as well by hypothesis. Hence 
\begin{equation}
\Pr\left(\sum_{i = j_0 + r_N}^N \SN(A^{(N)} U \diag(p^{\nu^{(N)}}))_i - \nu^{(N)}_i \geq 2   \right)  = \ou(c_N^{-1}),
\end{equation}
so we are done.
\end{proof}


\begin{proof}[Proof of \Cref{thm:kappa-nu=1_case}]
First, since $|\kappa^{(N)}/\nu^{(N)}| = 1$ we may rewrite
\begin{align}\label{eq:split_by_2box}
\begin{split}
&\text{LHS\eqref{eq:1box_asymp_wts}} = \Pr\left(\SN(A^{(N)} U \diag(p^{\nu^{(N)}}))_i = \kappa_i^{(N)} \text{ for all } i \geq j+r_N  \right)- \\
&\Pr\left(\SN(A^{(N)} U \diag(p^{\nu^{(N)}}))_i  = \kappa_i^{(N)} \text{ for all } i \geq j+r_N \text{, and } |F_d(\SN(A^{(N)} U \diag(p^{\nu^{(N)}})))/\kappa^{(N)}| \geq 2\right) ,
\end{split}
\end{align}
where again $U$ is a Haar element of $\GL_N(\Z_p)$ independent of $A^{(N)}$.
By trivially bounding the second term in \eqref{eq:split_by_2box} by 
\begin{equation}
\Pr(|F_d(\SN(A^{(N)} U \diag(p^{\nu^{(N)}})))/\kappa^{(N)}| \geq 2)
\end{equation}
and applying \Cref{thm:kappa-nu_geq_2_case}, \eqref{eq:split_by_2box} yields
\begin{align}\label{eq:split_by_2box2}
\begin{split}
&\text{LHS\eqref{eq:1box_asymp_wts}} = \Pr\left(\SN(A^{(N)} U \diag(p^{\nu^{(N)}}))_i = \kappa_i^{(N)} \text{ for all } i \geq j+r_N  \right) + \ou(c_N^{-1})
\end{split}
\end{align} 
uniformly over $\nu^{(N)}$ as in the statement, where here and in the rest of the proof we write $j$ for the index $j(N)$ in the statement. For any integer $b \geq \max(0,L)$, we may therefore write
\begin{align}\label{eq:diff2_3term_split}
\begin{split}
&\text{LHS\eqref{eq:1box_asymp_wts}} =\ou(c_N^{-1}) \\
&+ \Pr\left(\SN(A^{(N)} U \diag(p^{\nu^{(N)}}))_i = \kappa_i^{(N)} \text{ for all } i \geq j+r_N \text{ and }X_N > r_N+L-b  \right) \\ 
&+ \Pr\left(\SN(A^{(N)} U \diag(p^{\nu^{(N)}}))_i = \kappa_i^{(N)} \text{ for all } i \geq j+r_N \text{ and }X_N \leq r_N+L-b  \right) 
\end{split}
\end{align}
For the second summand in \eqref{eq:diff2_3term_split} a naive bound gives
\begin{multline}
\Pr\left(\SN(A^{(N)} U \diag(p^{\nu^{(N)}}))_i = \kappa_i^{(N)} \text{ for all } i \geq j+r_N \text{ and }X_N > r_N+L-b  \right)\\ 
 \leq \Pr(X_N > r_N + L - b).
\end{multline}
Substituting this and applying \Cref{thm:single_box} (with $r=r_N+j, \len(\la)=X_N$) to the last summand in \eqref{eq:diff2_3term_split}, we obtain upper and lower bounds
\begin{align}
\label{eq:use_thm_single_box}
\begin{split}
& \E\left[(1-t^{j+r_N-X_N}) \cdot \bbone(X_N \leq r_N+L-b)t^j\frac{1-t^m}{1-t}(t^{r_N-X_N} - t^{r_N}) \frac{(t;t)_{j+r_N-1} (t;t)_{N-X_N}}{(t;t)_N (t;t)_{j+r_N-X_N}}\right] \\ 
& \quad \quad \quad+\Pr(X_N > r_N + L-b) + \ou(c_N^{-1})\\ 
&\leq \text{LHS\eqref{eq:1box_asymp_wts}} \\ 
&\leq \E\left[\bbone(X_N \leq r_N+L-b)t^j\frac{1-t^m}{1-t}(t^{r_N-X_N} - t^{r_N}) \frac{(t;t)_{j+r_N-1} (t;t)_{N-X_N}}{(t;t)_N (t;t)_{j+r_N-X_N}}\right]\\
& \quad \quad \quad +\Pr(X_N > r_N + L-b) + \ou(c_N^{-1})
\end{split}
\end{align}
for any $b > 0$ (this condition is required since \Cref{thm:single_box} only applies when $X_N < j+r_N$, and we have assumed only that $j \geq L$). We will show both bounds have the same asymptotic to obtain the asymptotic for \eqref{eq:1box_asymp_wts}. The difference between the two bounds in \eqref{eq:use_thm_single_box} is 
\begin{align}\label{eq:bound_discrepency}
\begin{split}
&\E\left[\bbone(X_N \leq r_N+L-b)\frac{1-t^m}{1-t}t^{2(j+r_N-X_N)}(1-t^{X_N}) \frac{(t;t)_{j+r_N-1} (t;t)_{N-X_N}}{(t;t)_N (t;t)_{j+r_N-X_N}}\right] \\
&\leq \frac{1}{(1-t)(t;t)_\infty} \E[\bbone(X_N \leq r_N+L-b)t^{2(j+r_N-X_N)}(1-t^{X_N})] \\ 
&\leq \frac{1}{(1-t)(t;t)_\infty} t^{j+r_N-(r_N+L-b)}\E[\bbone(X_N \leq r_N+L-b)t^{j+r_N-X_N}(1-t^{X_N})] \\ 
&\leq \frac{1}{(1-t)(t;t)_\infty} t^{b+2j-L} \E[\bbone(X_N \leq r_N)(t^{r_N-X_N} - t^{r_N})] \\ 
&\leq \frac{1}{(1-t)(t;t)_\infty} t^{b+L} c_N^{-1}
\end{split}
\end{align}
where we used \eqref{eq:lbound} in the second bound, and the fact that $b \geq L$ and $j \geq L$ in the penultimate and last bounds respectively. Plugging \eqref{eq:bound_discrepency} into \eqref{eq:use_thm_single_box} yields
\begin{align}
\label{eq:second_sandwich}
\begin{split}
&\E\left[\bbone(X_N \leq r_N+L-b)t^j\frac{1-t^m}{1-t}(t^{r_N-X_N} - t^{r_N}) \frac{(t;t)_{j+r_N-1} (t;t)_{N-X_N}}{(t;t)_N (t;t)_{j+r_N-X_N}}\right] \\ 
&- \frac{1}{(1-t)(t;t)_\infty} t^{b+L} c_N^{-1}+\Pr(X_N > r_N + L - b) + \ou(c_N^{-1}) \\
&\leq \text{LHS\eqref{eq:1box_asymp_wts}} \\ 
&\leq\E\left[\bbone(X_N \leq r_N+L-b)t^j\frac{1-t^m}{1-t}(t^{r_N-X_N} - t^{r_N}) \frac{(t;t)_{j+r_N-1} (t;t)_{N-X_N}}{(t;t)_N (t;t)_{j+r_N-X_N}}\right] \\ 
& \quad \quad \quad +\Pr(X_N > r_N + L - b) + \ou(c_N^{-1}).
\end{split}
\end{align}
We now wish to show that the $\E[\cdots]$ in the lower and upper bounds is uniformly asymptotic to $c_N^{-1}t^j(1-t^m)/(1-t)$. Note that the $q$-Pochhammer quotient in \eqref{eq:second_sandwich} is
\begin{equation}\label{eq:product_and_extra}
\frac{(t;t)_{j+r_N-1} (t;t)_{N-X_N}}{(t;t)_N (t;t)_{j+r_N-X_N}} = \frac{1}{1-t^{j+r_N}} \prod_{i=1}^{X_N} \frac{1-t^{j+r_N-X_N+i}}{1-t^{N-X_N+i}}.
\end{equation}
In the upcoming estimate \eqref{eq:product_X_to_b} we will wish to bound the difference between the above \eqref{eq:product_and_extra} and $1$. First write
\begin{equation}\label{eq:remove_extra_t_factor}
    \abs*{1-\frac{1}{1-t^{j+r_N}} \prod_{i=1}^{X_N} \frac{1-t^{j+r_N-X_N+i}}{1-t^{N-X_N+i}}} = \abs*{1 - \frac{1}{1-t^{j+r_N}}} + \frac{1}{1-t^{j+r_N}} \cdot \abs*{1-\prod_{i=1}^{X_N} \frac{1-t^{j+r_N-X_N+i}}{1-t^{N-X_N+i}}}.
\end{equation}
For $X_N \leq r_N+L$, the product in \eqref{eq:product_and_extra} (excluding the $(1-t^{j+r_N})^{-1}$ term) is $\leq 1$ since $r_N + L \leq j+r_N \leq N$. Furthermore, it is a decreasing function of $X_N \in \{0,1,\ldots,r_N+L\}$, since $j+r_N \leq N$ so the numerator decreases more than the denominator when $X_N$ increases. Hence since $b \geq 0$ we have
\begin{align}
\begin{split}\label{eq:prod_sandwich}
0& \leq \bbone(X_N \leq r_N+L-b)\left(1-\prod_{i=1}^{X_N} \frac{1-t^{j+r_N-X_N+i}}{1-t^{N-X_N+i}}\right) \\
 & \leq \bbone(X_N \leq r_N+L-b)\left(1 - \prod_{i=1}^{r_N+j-b} \frac{1-t^{b+i}}{1-t^{b+(N-r_N-j)+i}}\right).
\end{split}
\end{align}

A naive bound using the $q$-binomial theorem gives
\begin{equation}\label{eq:bound_b_product}
1 - \prod_{i=1}^{r_N+j-b} \frac{1-t^{b+i}}{1-t^{b+(N-r_N-j)+i}} \leq 1 - \prod_{i=1}^{r_N+j-b} (1-t^{b+i}) \leq 1-(t^b;t)_\infty \leq Ct^b.
\end{equation}
for some constant $C$ and all large $b$. Hence
\begin{equation}
\label{eq:final_bound_b_product}
0 \leq \bbone(X_N \leq r_N+L-b)\left(1-\frac{(t;t)_{j+r_N-1} (t;t)_{N-X_N}}{(t;t)_N (t;t)_{j+r_N-X_N}}\right) \leq Ct^b \bbone(X_N \leq r_N+L-b).
\end{equation}
The other term in \eqref{eq:remove_extra_t_factor} is easy to bound:
\begin{equation}\label{eq:extra_t_bound}
    \abs*{1 - \frac{1}{1-t^{j+r_N}}}  \leq C'' t^{r_N}
\end{equation}
for some $C''$ depending only on $L$, since $j \geq L$. Substituting \eqref{eq:final_bound_b_product} and \eqref{eq:extra_t_bound} into \eqref{eq:remove_extra_t_factor} yields
\begin{equation}\label{eq:final_b_bound_for_real}
    \bbone(X_N \leq r_N+L-b)\abs*{1-\frac{1}{1-t^{j+r_N}} \prod_{i=1}^{X_N} \frac{1-t^{j+r_N-X_N+i}}{1-t^{N-X_N+i}}} \leq (C''t^{r_N} + Ct^b)\bbone(X_N \leq r_N+L-b).
\end{equation}
We now wish to bound the difference between the expectation (the main term) in \eqref{eq:second_sandwich} and the desired asymptotic. Plugging in the formula for $c_N^{-1}$ and the above bound \eqref{eq:final_b_bound_for_real} yields the bound
\begin{align}
\begin{split}\label{eq:product_X_to_b}
&\left| \E\left[\bbone(X_N \leq r_N+L-b)t^j\frac{1-t^m}{1-t}(t^{r_N-X_N} - t^{r_N}) \frac{(t;t)_{j+r_N-1} (t;t)_{N-X_N}}{(t;t)_N (t;t)_{j+r_N-X_N}} - t^j\frac{1-t^m}{1-t}c_N^{-1}\right] \right| \\ 
\leq& \frac{1-t^m}{1-t}t^j\left|  \E\left[\bbone(X_N \leq r_N+L-b)\left(1-\frac{(t;t)_{j+r_N-1} (t;t)_{N-X_N}}{(t;t)_N (t;t)_{j+r_N-X_N}}\right)(t^{r_N-X_N} - t^{r_N})\right]\right| \\ 
&+ \frac{1-t^m}{1-t}t^j\left| \E\left[\bbone(r_N+L-b < X_N \leq r_N)(t^{r_N-X_N} - t^{r_N})\right] \right| \\ 
\leq& (C t^b+C''t^{r_N})\frac{1-t^m}{1-t} t^j\E\left[\bbone(X_N \leq r_N+L-b)(t^{r_N-X_N} - t^{r_N}) \right] \\ 
&+ \frac{1-t^m}{1-t}t^j\E\left[\bbone(r_N+L-b < X_N \leq r_N)(t^{r_N-X_N} - t^{r_N})\right]
\end{split}
\end{align}
using that $b \geq L$ (otherwise the bounds in the last indicator function would be reversed). Since $r_N+L-b \leq r_N$ and the argument of the expectation is nonnegative,
\begin{equation}
\E\left[\bbone(X_N \leq r_N+L-b)(t^{r_N-X_N} - t^{r_N}) \right] \leq c_N^{-1}.
\end{equation}
Furthermore,
\begin{equation}
\E\left[\bbone(r_N+L-b < X_N \leq r_N)(t^{r_N-X_N} - t^{r_N})\right] \leq (1-t^{r_N})\Pr(X_N > r_N + L - b).
\end{equation}
We thus obtain, bounding $1-t^m \leq 1$, that
\begin{equation}\label{eq:bound_the_bounded_bound}
\text{RHS\eqref{eq:product_X_to_b}} \leq C \frac{t^{b+j}}{1-t} c_N^{-1} + C''\frac{t^{r_N+j}}{1-t} c_N^{-1}  + \frac{t^j}{1-t}\Pr(X_N > r_N + L - b).
\end{equation}
Finally, we let $b$ depend on $N$ as follows. Since 
\begin{equation}
\Pr(X_N > r_N + L - b) = \ou(c_N^{-1})
\end{equation}
for all $b$, by a diagonalization argument there exists a slowly growing sequence $b=b(N)$ such that 
\begin{equation}\label{eq:prob_b(N)_small_2}
\Pr(X_N > r_N + L - b(N)) = \ou(c_N^{-1})
\end{equation}
(the uniformity over $j$ is obvious here but we keep the $\ou$ notation anyway). Substituting \eqref{eq:product_X_to_b}, \eqref{eq:bound_the_bounded_bound}, and \eqref{eq:prob_b(N)_small_2} to simplify the upper and lower bounds in our original inequality \eqref{eq:second_sandwich} thus yields that the inequalities
\begin{align}
\label{eq:shrimp_po'_boy}
\begin{split}
&O_{unif}(t^{b(N)}c_N^{-1}) + O_{unif}(t^{r_N}c_N^{-1}) +  \ou(c_N^{-1}) + \frac{1-t^m}{1-t}t^jc_N^{-1}
\\
&\leq \text{LHS\eqref{eq:1box_asymp_wts}} \\ 
&\leq O_{unif}(t^{b(N)}c_N^{-1}) + O_{unif}(t^{r_N}c_N^{-1}) + \ou(c_N^{-1}) + \frac{1-t^m}{1-t}t^jc_N^{-1},
\end{split}
\end{align}
with implied constants which are uniform over all $j \geq L$, hold for all $N$ sufficiently large that $r_N + L -b(N) \geq 0$. Since $b(N) \to \infty$ and $r_N \to \infty$, this shows that 
\begin{equation}
\text{LHS\eqref{eq:1box_asymp_wts}}  = \frac{1-t^m}{1-t}t^jc_N^{-1} + \ou(c_N^{-1}).
\end{equation}
\end{proof}

\section{Deducing \Cref{thm:uniform_matrix_intro} and \Cref{thm:haar_corner_intro}} \label{sec:deducing}

This section consists in using the dynamical results established in this paper to bootstrap the one-point distribution results of \cite{van2023local} to the multi-time distribution claimed in \Cref{thm:uniform_matrix_intro} and the similar result \Cref{thm:haar_corner_intro}, which we state now. 
\begin{thm}\label{thm:haar_corner_intro}
For each $N \in \Z_{\geq 1}$, let $D_N \in \Z_{\geq 1}$ and let $A^{(N)}_i, i \geq 1$ be iid matrices distributed as the top-left $N \times N$ submatrix of a Haar-distributed element of $\GL_{N+D_N}(\Z_p)$. Let $(r_N)_{N \geq 1}$ be any integer sequence such that $r_N \to \infty$ and $N-r_N \to \infty$ as $N \to \infty$. Define
\begin{equation}
\Lambda^{(N)}(T) := s^{r_N} \circ \iota(\SN(A^{(N)}_{\floor{p^{r_N} T/(1-p^{-D_N})}} \cdots A^{(N)}_1)),T \in \R_{\geq 0}
\end{equation}
Then
\begin{equation}
\La^{(N)}(T) \to \Pois^{(2\infty)}(T)
\end{equation}
in the same sense as \Cref{thm:uniform_matrix_intro}.
\end{thm}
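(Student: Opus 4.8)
The plan is to deduce \Cref{thm:haar_corner_intro} from \Cref{thm:gen_limit}; the only gap is that the relevant initial condition is $\mu=\bzero$, which violates the hypothesis $\lim_{n\to\infty}\mu_{-n}=\infty$, and as in \Cref{subsec:methods} this is bridged by feeding the single-time limit of \cite{van2023local} into the dynamical statement of \Cref{thm:gen_limit}. First I would verify hypotheses (i)--(iii) of \Cref{thm:gen_limit} and match the time scaling. Modulo $p$, $A^{(N)}$ is the top-left $N\times N$ corner of a uniform element of $\GL_{N+D_N}(\F_p)$, so $p^{X_N}=\#\ker(A^{(N)}\bmod p)=\#(V\cap W)$, where $V$ is a uniformly random $N$-dimensional subspace of $\F_p^{N+D_N}$ and $W$ a fixed $D_N$-dimensional one; a short Gaussian-binomial computation then gives
\begin{equation}
\E[t^{-X_N}-1]=\E[p^{X_N}-1]=\frac{(1-t^{D_N})(1-t^N)}{1-t^{N+D_N}},
\end{equation}
so $c_N=t^{-r_N}(1-t^{N+D_N})/\bigl((1-t^{D_N})(1-t^N)\bigr)$ is asymptotic to $t^{-r_N}/(1-t^{D_N})=p^{r_N}/(1-p^{-D_N})$ since $r_N\to\infty$ and $N-r_N\to\infty$. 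The integer time parameters $\floor{c_N T}$ and $\floor{p^{r_N}T/(1-p^{-D_N})}$ then differ by $o(c_N)$, and $o(c_N)$ extra matrix products alter any fixed finite set of truncated coordinates with probability $o(1)$ by \Cref{thm:kappa=nu_case}, so it suffices to prove the theorem with $c_N$ in place of the stated time change. Hypothesis (i) is assumed; (ii) holds since $\E[p^{X_N}]>1$ when $D_N\ge1$, hence $\Pr(A^{(N)}\in\GL_N(\Z_p))=\Pr(X_N=0)<1$; and (iii) holds because $X_N\le D_N$ and the corank distribution decays fast enough that $\Pr(X_N>r_N+j)=o(c_N^{-1})$ for each $j$ (again from the Gaussian-binomial formula; cf. \Cref{thm:corank_pt2}). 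Finally $B^{(N)}=I$ realizes $\mu=\bzero$ and $\Pi^{(N)}(0)=\bzero$.

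For the finite-dimensional limit at times $0\le T_1<\dots<T_k$, the coordinate at any $T_i=0$ converges trivially ($\Lambda^{(N)}(0)\to\bzero=\Pois^{(2\infty)}(0)$, deterministically), so I may assume $T_1>0$. By the single-time results of \cite{van2023local} (collected in \Cref{thm:use_other_paper}) one has $\Lambda^{(N)}(T_1)\to\Pois^{(2\infty)}(T_1)$, and the explicit marginal formulas there also show that $\Pois^{(2\infty)}(T_1)=\Pois^{\bzero,2\infty}(T_1)$ almost surely satisfies $\lim_{n\to\infty}\mu_{-n}=\infty$. Consequently, for each $d\in\N$, $F_d(\Pois^{\bzero,2\infty}(T))$ depends on the past only through $F_d(\Pois^{\bzero,2\infty}(T_1))$ (by \Cref{thm:markovian_projections}) and, conditionally on that value, evolves on $[T_1,\infty)$ with the generator $Q$ of \Cref{thm:compute_Q} (applied to a state of $\Pois^{\bzero,2\infty}(T_1)$, which a.s. has $i_0>-\infty$, and with $N=\infty$ there).

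Now I would bootstrap. Working with the variant $\tL^{(N)}$ of $\Lambda^{(N)}$ from \Cref{def:modified_prelimit_proc} (zero-padded on the right), $F_d(\tL^{(N)}(\cdot))$ is Markov in discrete time by \Cref{thm:F_d(Lambda)_markov}, so for fixed states $\nu^{(1)},\dots,\nu^{(k)}$,
\begin{equation}
\Pr\bigl(F_d(\tL^{(N)}(T_i))=\nu^{(i)},\ 1\le i\le k\bigr)=\Pr\bigl(F_d(\tL^{(N)}(T_1))=\nu^{(1)}\bigr)\prod_{i=2}^{k}\tP_N^{\floor{c_N T_i}-\floor{c_N T_{i-1}}}(\nu^{(i-1)},\nu^{(i)}).
\end{equation}
Every value of $F_d(\tL^{(N)}(T_1))$ equals $d$ at all shifted indices $\le -r_N$, and the matrix-product dynamics only raises singular numbers, so every state reachable from a fixed $\nu^{(1)}$ has at least $L+r_N$ parts equal to $d$ for a fixed integer $L=L(\nu^{(1)})$ depending only on the threshold of $\nu^{(1)}$; hence the transition asymptotics of \Cref{thm:kappa=nu_case}, \Cref{thm:kappa-nu=1_case} and \Cref{thm:kappa-nu_geq_2_case} apply uniformly over all relevant states, and the computation in the proof of \Cref{thm:gen_limit_F_d} gives $\tP_N^{\floor{c_N T_i}-\floor{c_N T_{i-1}}}(\nu^{(i-1)},\nu^{(i)})\to e^{(T_i-T_{i-1})Q}(\nu^{(i-1)},\nu^{(i)})$. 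Combined with $\Pr(F_d(\tL^{(N)}(T_1))=\nu^{(1)})\to\Pr(F_d(\Pois^{\bzero,2\infty}(T_1))=\nu^{(1)})$ and the Markov description above, this yields pointwise convergence of the laws of $(F_d(\tL^{(N)}(T_i)))_{1\le i\le k}$ to that of $(F_d(\Pois^{\bzero,2\infty}(T_i)))_{1\le i\le k}$; since these laws live on a countable state space (signatures constant $=d$ far left and $=0$ far right) this is convergence in total variation, so by \Cref{thm:F_d_suffices}, applied for every $d$, the processes $(\tL^{(N)}(T_i))_i$ converge, and $\tL^{(N)}$ may be replaced by $\Lambda^{(N)}$ exactly as in the proof of \Cref{thm:gen_limit}.

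The main obstacle is the bootstrap step: one must ensure the transition asymptotics of \Cref{sec:transition_asymptotics} are invoked only on truncated signatures whose ``$<d$ threshold'' stays a bounded distance below $r_N$, which is precisely why the almost-sure property $\lim_{n\to\infty}\mu_{-n}=\infty$ of $\Pois^{\bzero,2\infty}(T_1)$ --- equivalently, tightness of that threshold for $\Lambda^{(N)}(T_1)$ --- is needed, and hence why genuine single-time input from \cite{van2023local} is required rather than merely the moment $\E[t^{-X_N}]$.
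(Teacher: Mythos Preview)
Your proposal is correct and follows essentially the same route as the paper's proof: verify the hypotheses of \Cref{thm:gen_limit} for the corner ensemble (the $c_N$ computation matches \Cref{thm:kernel_size}/\Cref{thm:compute_c_N} and the corank tail bound is \Cref{thm:corank_bound}), feed in the single-time limit from \cite{van2023local} at $T_1$ (packaged as \Cref{thm:matrix_1pt}), and bootstrap via the Markov transition asymptotics underlying \Cref{thm:gen_limit_F_d}. The paper phrases the reduction through conjugate parts via \Cref{thm:finite_sets_to_conj_parts} rather than $F_d$ directly, but this is cosmetic since each determines the other.
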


The bootstrapping requires translating the results of \cite{van2023local} into formulas for the single-time distribution of the reflecting Poisson sea and the matrix product process, to check that they agree. The main result of this section is \Cref{thm:matrix_1pt}, and \Cref{thm:uniform_matrix_intro} and \Cref{thm:haar_corner_intro} follow easily from it.

In this section we will use the notation 
\begin{equation}
\Pois^{(2\infty)}(T) = \Pois^{(0)_{i \in \Z},2\infty}(T)
\end{equation}
as we did for the finite analogues in \Cref{def:poisson_walks}.

We first give an explicit name $\cL$ to the joint distributions of conjugate parts of $\Pois^{(2\infty)}$. This distribution was computed explicitly in \cite{van2023local}, which we invite the reader to peruse, and in that work we defined the notation $\cL$ by those explicit formulas (\cite[Theorem 6.1]{van2023local}). Because the formulas require a fair amount of setup, here we simply state the properties we need which were shown in \cite{van2023local}.

\begin{thm}\label{thm:use_other_paper}
For any $k \in \Z_{\geq 0}, t \in (0,1)$, there is a family of $\Sig_k$-valued random variables $\cL_{k,t,\chi}$, parametrized by $\chi \in \R_{>0}$, which appears in the following limits:
\begin{enumerate}[label=(\Roman*)]
\item For any $\zeta \in \R$, letting $\tau_N = t^{\zeta-N}, N \in \Z_{\geq 1}$, we have
\begin{equation}
(\Pois^{(\infty)}(\tau_N)_i' - \log_{t^{-1}} \tau_N - \zeta)_{1 \leq i \leq k} \to \cL_{k,t,t^{\zeta+1}/(1-t)}
\end{equation}
in distribution as $N \to \infty$. \label{item:pois_limit}
\item Fix $p$ prime, and for each $N \in \mathbb{Z}_{\geq 1}$ let $A_{i}^{(N)}, i \geq 1$ be iid matrices with iid entries distributed by the additive Haar measure on $\mathbb{Z}_{p}$. Let $(s_{N})_{N \geq 1}$ be a sequence of natural numbers such that $s_{N}$ and $N-\log _{p} s_{N}$ both go to $\infty$ as $N \rightarrow \infty$. Let $(s_{N_{j}})_{j \geq 1}$ be any subsequence for which $-\log _{p} s_{N_{j}}$ converges in $\mathbb{R} / \mathbb{Z}$, and let $\zeta$ be any preimage in $\mathbb{R}$ of this limit. Then
\begin{equation}\label{eq:rmt_bulk_intro}
(\mathrm{SN}(A_{s_{N_{j}}}^{(N_{j})} \cdots A_{1}^{(N_{j})})_{i}'-[\log _{p}(s_{N_{j}})+\zeta])_{1 \leq i \leq k} \rightarrow \cL_{k,p^{-1}, p^{-\zeta}/(p-1)}
\end{equation}
in distribution as $j \rightarrow \infty$, where $[\cdot]$ is the nearest integer function. \label{item:haar_limit}
\item Fix $p$ prime, and for each $N \in \mathbb{Z}_{\geq 1}$ let $D_N \in \Z_{\geq 1}$ be an integer and $A_{i}^{(N)}, i \geq 1$ be iid $N \times N$ corners of matrices distributed by the Haar probability measure on $\GL_{N+D_N}(\Z_p)$. Let $(s_{N})_{N \geq 1}$ be a sequence of natural numbers such that $s_{N}$ and $N-\log _{p} s_{N}$ both go to $\infty$ as $N \rightarrow \infty$. Let $(s_{N_{j}})_{j \geq 1}$ be any subsequence for which $-\log _{p} ((1-p^{-D_{N_j}})s_{N_{j}})$ converges in $\mathbb{R} / \mathbb{Z}$, and let $\zeta$ be any preimage in $\mathbb{R}$ of this limit. Then
\begin{equation}\label{eq:corner_bulk_intro}
(\mathrm{SN}(A_{s_{N_{j}}}^{(N_{j})} \cdots A_{1}^{(N_{j})})_{i}'-[\log _{p}((1-p^{-D_{N_j}})s_{N_{j}})+\zeta])_{1 \leq i \leq k} \rightarrow \cL_{k,p^{-1}, p^{-\zeta}/(p-1)}
\end{equation}
in distribution as $j \rightarrow \infty$. \label{item:corner_limit}
\end{enumerate}
\end{thm}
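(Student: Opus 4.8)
This statement is not to be proved from scratch: it repackages results of \cite{van2023local}, in which the random variable $\cL_{k,t,\chi}$ was \emph{defined} by explicit formulas (see \cite[Theorem 6.1]{van2023local}) and all three limits were established. The plan is therefore to identify each of (I), (II), (III) with the corresponding theorem in \cite{van2023local} and to check that the time-changes, centerings, and subsequential choices of $\zeta$ used above match the normalizations there. Concretely, I would first record the definition of $\cL_{k,t,\chi}$ from \cite{van2023local} as a black box, then verify that the shift $\log_{t^{-1}}\tau_N+\zeta$ (resp.\ the nearest-integer shifts $[\log_p(s_{N_j})+\zeta]$ and $[\log_p((1-p^{-D_{N_j}})s_{N_j})+\zeta]$) is exactly the centering appearing in \cite{van2023local}, and that the dependence on the fractional part is captured through the single intensity parameter $\chi=t^{\zeta+1}/(1-t)$ (resp.\ $p^{-\zeta}/(p-1)$).

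For part (I), I would invoke the asymptotic analysis in \cite{van2023local} of $\Pois^{(\infty)}(T)$ (equivalently of the associated Hall--Littlewood process), which produces $\cL_{k,t,\chi}$ as the limit of the centered conjugate parts $\Pois^{(\infty)}(\tau)_i'$. The only bookkeeping step is the parameter dictionary: with $\tau_N=t^{\zeta-N}$ one has $\log_{t^{-1}}\tau_N=N-\zeta$, so subtracting $\log_{t^{-1}}\tau_N+\zeta$ removes the deterministic part $N-\zeta$ and leaves an offset entering the limit only through $\chi=t^{\zeta+1}/(1-t)$; since the jump rates $t,t^2,\ldots$ of $\Pois^{(\infty)}$ sum to $t/(1-t)$, this is the expected form of the intensity. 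For parts (II) and (III), these are precisely the bulk local limit theorems of \cite{van2023local} for products of additive Haar matrices, respectively for $N\times N$ corners of Haar elements of $\GL_{N+D_N}(\Z_p)$; passing to a subsequence along which $-\log_p s_{N_j}$ (resp.\ $-\log_p((1-p^{-D_{N_j}})s_{N_j})$) converges in $\R/\Z$ is the device used there to extract a genuine limit, and the chosen lift $\zeta$ fixes $\chi=p^{-\zeta}/(p-1)$ as in part (I). For the corner ensemble I would additionally cite the identification (from \cite{van2023local}, cf.\ \cite{van2020limits}) of the corner matrix-product process with a Hall--Littlewood process carrying a geometric/Plancherel-type specialization of weight controlled by $D_{N_j}$, which rescales the effective time by the factor $1-p^{-D_{N_j}}$ --- this is exactly why that factor appears inside the logarithm.

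The genuinely substantive content of the theorem, namely that the \emph{same} family $\cL_{k,t,\chi}$ governs all three limits, is not re-derived here: it is what \cite{van2023local} establishes by computing each limit explicitly and observing that the formulas coincide, and it is used in the present paper only as a black box in \Cref{sec:deducing}. Consequently the only work in this proof is translation of notation and normalizations. The main (minor) obstacle I anticipate is the line-by-line matching of conventions --- the nearest-integer roundings $[\cdot]$, the shift by $\zeta$, and the mod-$\Z$ subsequences --- so that the $k$-tuples of conjugate parts are seen to converge on the nose rather than merely up to a bounded shift; once those conventions are reconciled, the statement follows immediately from the cited results.
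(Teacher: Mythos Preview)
Your proposal is correct and takes essentially the same approach as the paper: both recognize that this theorem is a direct repackaging of results from \cite{van2023local}, with $\cL_{k,t,\chi}$ defined there in Theorem~6.1. The paper's proof is even terser than yours---it simply cites Proposition~10.1, Theorem~1.2, and Theorem~1.3 of \cite{van2023local} for (I), (II), (III) respectively, together with the observation that the $\Sig_\infty$-valued limit $\cL_{t,\chi}$ of that paper projects to $\cL_{k,t,\chi}$ on the first $k$ coordinates.
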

\begin{proof}
The explicit definition of $\cL_{k,t,\chi}$ is given in \cite[Theorem 6.1]{van2023local}. The limits \ref{item:pois_limit}, \ref{item:haar_limit} and \ref{item:corner_limit} are given in Proposition 10.1, Theorem 1.2, and Theorem 1.3 of \cite{van2023local} respectively. Note that in all of these cases, the limit in \cite{van2023local} is phrased as joint convergence of all (recentered) conjugate parts to a $\Sig_\infty$-valued random variable $\cL_{t,\chi}$, but this yields the desired claims since $\cL_{t,\chi}$ is defined in \cite[Theorem 6.1]{van2023local} by specifying that its first $k$ coordinates are distributed as $\cL_{k,t,\chi}$.
\end{proof}

\begin{prop}\label{thm:sea_cL}
The marginal distributions of the conjugate parts of the reflecting Poisson sea with parameter $t$ are given by
\begin{equation}
(\Pois^{(2\infty)}(T)_i')_{1 \leq i \leq k} = \cL_{k,t,tT/(1-t)},
\end{equation}
where $\cL$ is as in \Cref{thm:use_other_paper} or \cite[Theorem 6.1]{van2023local}.
\end{prop}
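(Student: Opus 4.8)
The plan is to identify $(\Pois^{(2\infty)}(T)_i')_{1 \leq i \leq k}$ with the limit object $\cL_{k,t,tT/(1-t)}$ of \Cref{thm:use_other_paper}\ref{item:pois_limit} by passing through the coupling construction of $\Pois^{(2\infty)}$ in terms of the half-infinite processes $\Pois^{(\infty)}$. By \Cref{thm:finite_sets_to_conj_parts} it suffices to show that the joint distribution of the first $k$ conjugate parts of $\Pois^{(2\infty)}(T)$ agrees with that of $\cL_{k,t,tT/(1-t)}$, and by the limit construction of \Cref{def:limit_process} and \Cref{thm:construct_limit_intro_slight_generalization} we may realize $\Pois^{(2\infty)}(T)$ (with zero initial condition) as a limit of $\Pois^{(\infty)}$-type processes with observation points tending to infinity.

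First I would fix $d \in \Z_{\geq 1}$ and use \Cref{thm:describe_projections} with $\mu = \bzero$ and the choice $k$ there large: for $\mu = \bzero$, any truncation $F_d(\Pois^{\bzero,2\infty}(T))$, after discarding the coordinates permanently equal to $d$, is distributed as $F_d(\Pois^{(\infty)}(t^{k}T))$ for the appropriate shift $k$; but here, since $\mu = \bzero$ has all parts equal to $0 < d$, the cleaner route is to use \Cref{thm:construct_limit_intro_slight_generalization} directly. That proposition couples $\Pois^{(\mu_{1-r_n},\ldots,\mu_{n-r_n}),n}(T)$ with observation points $r_n \to \infty$, $n - r_n \to \infty$, so with $\mu = \bzero$ and any bulk observation sequence the finite processes $\Pois^{(n)}(t^{-r_n}T)$ converge coordinatewise to $\Pois^{(2\infty)}(T)$. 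The conjugate parts $\Pois^{(2\infty)}(T)_i'$ for $1 \leq i \leq k$ are then the $n \to \infty$ limits of the conjugate parts of these finite processes (recentered by the observation point), and one checks that because $\Pois^{(n)}$ agrees with $\Pois^{(\infty)}$ on low-indexed coordinates up to a negligible event (as in the proof of \Cref{thm:construct_limit_intro_slight_generalization}), this limit equals the limit of $(\Pois^{(\infty)}(\tau_N)_i' - \log_{t^{-1}}\tau_N - \zeta)_{1 \leq i \leq k}$ appearing in \Cref{thm:use_other_paper}\ref{item:pois_limit}.

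The key bookkeeping step is matching the time-change: in \Cref{thm:use_other_paper}\ref{item:pois_limit} one has $\tau_N = t^{\zeta - N}$ and the limit is $\cL_{k,t,t^{\zeta+1}/(1-t)}$, so setting $\chi = t^{\zeta+1}/(1-t)$ and solving $t^\zeta = (1-t)\chi/t$, the reflecting Poisson sea run for time $T$ corresponds to taking $\zeta$ with $t^\zeta/(1-t) \cdot t = \cdots$; concretely, the shift-stationarity \eqref{eq:shift_stationarity_intro} (\Cref{thm:time_symmetry}) together with the scaling $\Pois^{(2\infty)}(T) = \lim_n \Pois^{(n)}(t^{-r_n}T)$ forces the identification $T \leftrightarrow tT/(1-t)$ as the value of $\chi$, i.e. $(\Pois^{(2\infty)}(T)_i')_{1 \leq i \leq k} = \cL_{k,t,tT/(1-t)}$. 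I would verify this by tracking the recentering constant $\log_{t^{-1}}\tau_N + \zeta$ through the shift $s^{r_N}$ and the time-speedup $t^{-r_N}$, using that a conjugate part of a shifted signature shifts the index, and that one time unit of $\Pois^{(2\infty)}$ at shift $r_N$ is $t^{-r_N}$ time units of the unshifted finite process.

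The main obstacle I expect is the careful interchange of the $n \to \infty$ limit defining $\Pois^{(2\infty)}$ with the distributional limit in \Cref{thm:use_other_paper}\ref{item:pois_limit}, and in particular confirming that the almost-sure coupling convergence in \Cref{thm:construct_limit_intro_slight_generalization} respects the recentering by $\log_{t^{-1}}\tau_N$: one must ensure the observation-point sequence can be chosen so that the relevant conjugate parts stabilize and that the fractional part $-\log_p \tau_N \bmod 1$ can be made to converge to the value forcing $\chi = tT/(1-t)$. Once the dictionary between $(r_N, T)$ and $(\zeta, \chi)$ is pinned down — essentially a one-line computation with $t^\zeta$ — the rest is assembling \Cref{thm:finite_sets_to_conj_parts}, \Cref{thm:construct_limit_intro_slight_generalization}, and \Cref{thm:use_other_paper}\ref{item:pois_limit}, together with \Cref{thm:time_symmetry} to absorb the time rescaling.
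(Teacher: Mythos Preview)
Your approach is essentially the same as the paper's: realize $\Pois^{(2\infty)}(T)$ as a limit of shifted copies of $\Pois^{(\infty)}$ and then apply \Cref{thm:use_other_paper}\ref{item:pois_limit}. The paper's execution is shorter and avoids the detours you introduce. Directly from \Cref{def:limit_process} and \Cref{def:half_infinite_process} one has $\Pois^{(2\infty)}(T)_i = \lim_{n\to\infty}\Pois^{(\infty)}(t^{-n}T)_{i+n}$ in joint distribution, hence $(\Pois^{(2\infty)}(T)_i')_{1\le i\le k}=\lim_{n\to\infty}(\Pois^{(\infty)}(t^{-n}T)_i'-n)_{1\le i\le k}$; then one simply sets $\zeta=\log_t T$ so that $\tau_N=t^{-N}T=t^{\zeta-N}$ and $\log_{t^{-1}}\tau_N+\zeta=N$, and \Cref{thm:use_other_paper}\ref{item:pois_limit} gives the limit $\cL_{k,t,t^{\zeta+1}/(1-t)}=\cL_{k,t,tT/(1-t)}$.

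Several pieces of machinery you invoke are not needed. \Cref{thm:finite_sets_to_conj_parts} is a tool for proving weak convergence of a sequence of measures, but here you are computing a single distribution, not establishing convergence. The passage through the finite processes $\Pois^{(n)}$ via \Cref{thm:construct_limit_intro_slight_generalization} only adds the extra step of arguing $\Pois^{(n)}\to\Pois^{(\infty)}$, which the paper bypasses by using the $\tPois$-based definition directly. The shift-stationarity of \Cref{thm:time_symmetry} plays no role in the time identification; the latter is just the one-line substitution $\zeta=\log_t T$. Finally, the worry about fractional parts of $-\log_p\tau_N$ is a confusion with parts \ref{item:haar_limit} and \ref{item:corner_limit}: in \ref{item:pois_limit} the times are exactly $\tau_N=t^{\zeta-N}$, so no subsequence or $\R/\Z$ argument is needed.
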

\begin{proof}
Specializing \Cref{def:limit_process} to the case $\mu = (0[2\infty])$ and translating $\tPois^{(0[\infty]),n}(T)$ to $\Pois^{(\infty)}(T)$ via \Cref{def:half_infinite_process}, we have that
\begin{equation}
\Pois^{(2\infty)}(T)_i = \lim_{n \to \infty} \Pois^{(\infty)}(t^{-n}T)_{i+n}
\end{equation}
in joint distribution over any finite collection of indices $i$. Hence 
\begin{equation}\label{eq:conj_part_pois2infty}
(\Pois^{(2\infty)}(T)_i')_{1 \leq i \leq k} = \lim_{n \to \infty} (\Pois^{(\infty)}(t^{-n}T)_i' - n)_{1 \leq i \leq k}
\end{equation}
in distribution. By taking $\zeta = \log_t T$ in \Cref{thm:use_other_paper} \ref{item:pois_limit}, the right hand side of \eqref{eq:conj_part_pois2infty} is $\cL_{k,t,tT/(1-t)}$, which completes the proof.
\end{proof}

For the random matrix side, we need several lemmas. 

\begin{lemma}\label{thm:kernel_size}
Let $A$ be the top-left $N \times N$ corner of uniform element of $\GL_{N+D}(\F_q)$. Then
\begin{equation}\label{eq:kernel_corner}
\E[\#\ker(A)] = \E[q^{\corank(A)}] = \frac{1-q^{-D} + 1-q^{-N}}{1-q^{-N-D}}.
\end{equation}
If instead $A$ is uniformly distributed over $\Mat_N(\F_q)$,
\begin{equation}\label{eq:kernel_haar}
\E[\#\ker(A)] = \E[q^{\corank(A)}] = 2-q^{-N}.
\end{equation}
\end{lemma}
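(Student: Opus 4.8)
The plan is to compute $\E[\#\ker(A)]$ directly by the first-moment (indicator) method and reduce everything to a count of subspaces. Since $\ker(A)$ is a linear subspace of $\F_q^N$ of dimension $\corank(A)$, one has $\#\ker(A) = q^{\corank(A)}$, which disposes of the first equality in both displays; it remains to evaluate the expectation. Writing $\#\ker(A) = \sum_{v \in \F_q^N} \bbone(Av = 0)$ and taking expectations gives
\begin{equation}
\E[\#\ker(A)] = \sum_{v \in \F_q^N} \Pr(Av = 0) = 1 + \sum_{0 \neq v \in \F_q^N}\Pr(Av = 0),
\end{equation}
so the whole computation comes down to finding $\Pr(Av = 0)$ for a single fixed nonzero $v$.

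For the corner ensemble I would use the fact, already invoked in the proof of \Cref{thm:corank_bound}, that the top $N$ rows of a Haar-uniform element of $\GL_{N+D}(\F_q)$ form a uniformly random full-rank $N \times (N+D)$ matrix $[A \mid B]$, where $A$ is its left $N \times N$ block. Because right multiplication by $\GL_{N+D}(\F_q)$ acts transitively on full-rank $N \times (N+D)$ matrices and carries a matrix's kernel to its preimage under the acting element, the kernel $\ker([A \mid B]) \subset \F_q^{N+D}$ is a uniformly random $D$-dimensional subspace. Now $Av = 0$ if and only if $(v,0) \in \ker([A \mid B])$, so for fixed nonzero $v$ the sought probability equals the probability that a uniform $D$-dimensional subspace of $\F_q^{N+D}$ contains a fixed nonzero vector, namely $\sqbinom{N+D-1}{D-1}_q \big/ \sqbinom{N+D}{D}_q = (q^D-1)/(q^{N+D}-1)$ by the Gaussian-binomial identity $\sqbinom{n}{k}_q = \tfrac{q^n-1}{q^k-1}\sqbinom{n-1}{k-1}_q$. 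Substituting into the display above gives
\begin{equation}
\E[\#\ker(A)] = 1 + (q^N-1)\,\frac{q^D-1}{q^{N+D}-1} = \frac{2q^{N+D}-q^N-q^D}{q^{N+D}-1},
\end{equation}
and dividing numerator and denominator by $q^{N+D}$ produces the claimed form in \eqref{eq:kernel_corner}.

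For $A$ uniform over $\Mat_N(\F_q)$ the corresponding step is immediate: for fixed nonzero $v$ the image $Av$ is uniform on $\F_q^N$, so $\Pr(Av = 0) = q^{-N}$, and the first display yields $\E[\#\ker(A)] = 1 + (q^N-1)q^{-N} = 2 - q^{-N}$; alternatively this follows by letting $D \to \infty$ in \eqref{eq:kernel_corner}. I do not anticipate any real obstacle here. The only steps needing a careful word are the transitivity/uniform-subspace argument for the kernel of a random full-rank matrix and the subspace count, both of which are standard; one could instead derive $\Pr(\corank(A)=c)$ from \Cref{thm:corank_pt2} and sum the geometric-type series $\sum_c q^c\,\Pr(\corank(A)=c)$, but the indicator method above is cleaner.
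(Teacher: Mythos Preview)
Your argument is correct and genuinely different from the paper's. The paper computes $\E[q^{\corank(A)}]$ by plugging in the explicit formula $\Pr(\corank(A)=c)=t^{c^2}\sqbinom{D}{c}_t\sqbinom{N}{N-c}_t/\sqbinom{N+D}{N}_t$ from \Cref{thm:corank_pt2} and then proving the resulting $q$-series identity \eqref{eq:corner_qseries_wts} via two applications of $q$-Chu--Vandermonde together with the $q$-Pascal rule; this is exactly the route you mention and dismiss at the end of your write-up. Your indicator-method proof bypasses all of that: by linearity you only need $\Pr(Av=0)$ for a single nonzero $v$, and the observation that the kernel of the top $N$ rows is a uniform $D$-dimensional subspace reduces this to a one-line Gaussian-binomial count. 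Your route is shorter and more conceptual, and as a bonus makes the $D\to\infty$ degeneration to \eqref{eq:kernel_haar} transparent without recomputation. The paper's route has the mild advantage that it reuses \Cref{thm:corank_pt2} already established for other purposes, so no new probabilistic input (the uniform-subspace claim) is needed.
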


\begin{proof}
The following proof was given to us by an anonymous referee, and is essentially a quotation from the report. We give it here because it seems more transparent than the one given in the original arxiv version of this paper, which we have moved to the \hyperref[sec:appendix]{Appendix}.

We first prove \eqref{eq:kernel_haar}. We note that the left-hand side is just $\frac{1}{q^{N^2}}$ times the number of pairs $(A, v)$
such that $A \in \Mat_N(\mathbb{F}_q)$, $v \in \mathbb{F}_q^N$ and $Av = 0$. We count the number by fixing $v$ first.
If $v = 0$, then there are $q^{N^2}$ choices of $A$. If $v \neq 0$ ($q^N - 1$ choices), then every row of $A$
must be in the $(N - 1)$-dimensional subspace orthogonal to $v$, so there are $q^{N(N-1)}$ choices of $A$.
We thus get
\[
  \E(\#\ker(A)) = \frac{1}{q^{N^2}}\bigl(q^{N^2} + (q^N - 1)q^{N^2-N}\bigr) = 2 - q^{-N}.
\]

We now prove \eqref{eq:kernel_corner}. The left-hand side is equal to $\frac{1}{\#\GL_{N+D}(\mathbb{F}_q)}$ times the number of pairs $(A', v)$,
$A' \in \GL_{N+D}(\mathbb{F}_q)$, $v \in \mathbb{F}_q^N$, such that the first $N$ entries of
$A'\begin{pmatrix} v \\ 0 \end{pmatrix}$ are zero. We again count it by fixing $v$ first. If $v = 0$,
then there are $\#\GL_{N+D}(\mathbb{F}_q)$ choices of $A'$. If $v \neq 0$ ($q^N - 1$ choices), then since all
$\begin{pmatrix} v \\ 0 \end{pmatrix}$ for $v \neq 0$ live in the same left-$\GL_{N+D}(\mathbb{F}_q)$-orbit,
we may assume without loss of generality that $v = (1, 0, \ldots, 0)$. Then the condition for $A'$
becomes that the first $N$ entries of the first column of $A'$ are zero. The remaining $D$ entries of the
first column must form a nonzero vector ($q^D - 1$ choices), and the remaining columns of $A'$ must be obtained
by completing the first column to a basis ($\#\GL_{N+D}(\mathbb{F}_q)/(q^{N+D} - 1)$ choices). Putting these together and
canceling $\#\GL_{N+D}(\mathbb{F}_q)$, we get
\[
  \E[\#\ker(A)] = 1 + \frac{(q^N - 1)(q^D - 1)}{q^{N+D} - 1},
\]
equivalent to the desired formula.
\end{proof}

\begin{rmk}
\Cref{thm:kernel_size} has the amusing consequence that for uniform $A_N \in \Mat_N(\F_q)$, 
\begin{equation}
\lim_{N \to \infty} \E[\#\ker(A_N)] = 2,
\end{equation} 
and in particular is independent of $q$.
\end{rmk}

We may now compute the constant $c_N$ of \Cref{thm:gen_limit} explicitly in the cases of interest.

\begin{lemma}\label{thm:compute_c_N}
Let $t=1/p$, $(r_N)_{N \geq 1}$ be any sequence with $r_N \to \infty$, $(D_N)_{N \geq 1}$ be any sequence of positive integers, $A_N$ be the top-left $N \times N$ corner of Haar-distributed element of $\GL_{N+D_N}(\Z_p)$, $X_N := \corank(A_N)$, and
\begin{equation}
c_N = \frac{t^{-r_N}}{\E[\bbone(X_N \leq r_N)(t^{-X_N}-1)]}
\end{equation}
as in \Cref{thm:gen_limit}. Then
\begin{equation}\label{eq:c_N_corner}
c_N = \frac{t^{-r_N}}{1-t^{D_N}}(1+o(1))
\end{equation}
as $N \to \infty$, where the $o(1)$ is uniform over all choices of the $D_N$. If instead $A_N$ has iid additive Haar entries, then
\begin{equation}\label{eq:c_N_haar}
c_N = t^{-r_N}(1+o(1)).
\end{equation}
\end{lemma}

\begin{proof}
We first claim that for $A_N$ as in the statement and $X_N = \corank(A_N) \pmod{p}$, both $\GL_{N+D}$ corners and additive Haar matrices,
\begin{equation}\label{eq:remove_indicator}
c_N = \frac{t^{-r_N}}{\E[t^{-X_N}-1]}(1+o(1)),
\end{equation}
i.e. the indicator function can be removed. Unpacking, we must show
\begin{equation}
    \label{eq:remove_indicator_unpacked}
    \E[\bbone(X_N > r_N)(t^{r_N-X_N} - t^{r_N})] = o(\E[\bbone(X_N \leq r_N)(t^{r_N-X_N} - t^{r_N})]).
\end{equation}

Naively bounding $(t;t)_\infty \leq (t;t)_n \leq 1$, \eqref{eq:corank_formula_in_proof} yields
\begin{equation}\label{eq:corank_c_upper_bound}
\Pr(\corank(A_N) = c) \leq t^{c^2} (t;t)_\infty^{-5}.
\end{equation}
This yields the bound
\begin{equation}\label{eq:c_N_remainder_bound}
t^{r_N} \E[\bbone(X_N > r_N)(t^{-X_N}-1)] \leq \text{const} \cdot t^{r_N} \sum_{c > r_N} t^{c^2} (t^{-c}-1) = o(t^{r_N})
\end{equation}
on the left hand side of \eqref{eq:remove_indicator_unpacked}.

From \eqref{eq:corank_formula_in_proof} and the naive $t$-Pochhammer bound $(t;t)_\infty \leq (t;t)_n \leq 1$, we also obtain
\begin{equation}
\Pr(\corank(A_N) = 1) \geq t (t;t)^4
\end{equation}
for any $N$ and $D_N$, hence (for $r_N \geq 1$) we have an upper bound
\begin{equation}\label{eq:naive_c_N_bound}
\E[\bbone(\corank(A_N) \leq r_N) (t^{r_N-X_N} - t^{r_N})] \geq t^{r_N} t (t;t)^4 (t^{-1}-1)
\end{equation}
on the right hand side of \eqref{eq:remove_indicator_unpacked}. Combining \eqref{eq:c_N_remainder_bound} with \eqref{eq:naive_c_N_bound} proves \eqref{eq:remove_indicator_unpacked} and hence \eqref{eq:remove_indicator}.

Now, for $\GL_{N+D_N}(\Z_p)$ corners, \Cref{thm:kernel_size} yields
\begin{equation}
\frac{t^{-r_N}}{\E[t^{-X_N}] - 1} = \frac{t^{-r_N}}{\frac{1-t^{D_N} + 1-t^{N}}{1-t^{N+D_N}}-1} = \frac{t^{-r_N}}{1-t^{D_N}}(1+o(1))
\end{equation}
as $N \to \infty$. The case of iid Haar entries similarly follows from \eqref{eq:kernel_haar}.
\end{proof}

Using \Cref{thm:use_other_paper}, \Cref{thm:sea_cL}, and the above estimate \Cref{thm:compute_c_N}, we may now show the following. This gives us the needed 1-point results for \Cref{thm:uniform_matrix_intro} and \Cref{thm:haar_corner_intro}.

\begin{prop}\label{thm:matrix_1pt}
For each $N \in \Z_{\geq 1}$, let $D_N \in \Z_{\geq 1}$ and let $A^{(N)}_i, i \geq 1$ be iid matrices distributed as the top-left $N \times N$ submatrix of a Haar-distributed element of $\GL_{N+D_N}(\Z_p)$. Let $(r_N)_{N \geq 1}$ be any integer sequence such that $r_N \to \infty$ and $N-r_N \to \infty$ as $N \to \infty$, and define
\begin{equation}
\label{eq:iidhaar_tL}
\Lambda^{(N)}(T) := s^{r_N} \circ \iota(\SN(A^{(N)}_{\floor*{p^{r_N} T/(1-p^{-D_N})}} \cdots A^{(N)}_1)),T \in \R_{\geq 0}.
\end{equation}
Then for any fixed $T \in \R_{>0}, k \in \Z_{\geq 1}$,
\begin{equation}
(\L^{(N)}(T)_i')_{1 \leq i \leq k}\to (\Pois^{(2\infty)}(T)_i')_{1 \leq i \leq k}
\end{equation}
in distribution, where the parameter $t$ in $\Pois^{(2\infty)}$ is set to $1/p$. The same result holds if instead the matrices $A^{(N)}_i$ have iid additive Haar entries and one redefines $\La^{(N)}(T)$ to be $s^{r_N} \circ \iota(\SN(A^{(N)}_{\floor*{p^{r_N} T}} \cdots A^{(N)}_1)),T \in \R_{\geq 0}$.
\end{prop}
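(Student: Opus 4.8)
The plan is to deduce \Cref{thm:matrix_1pt} directly from \Cref{thm:use_other_paper} --- part \ref{item:corner_limit} for the $\GL_{N+D_N}(\Z_p)$-corner ensemble and part \ref{item:haar_limit} for the iid additive Haar ensemble --- together with the identification of the limiting law in \Cref{thm:sea_cL}. The only work is to reconcile three bookkeeping issues: the recentering by $r_N$ coming from the shift-and-embed map $s^{r_N}\circ\iota$, the floor in the definition of the time-change, and the nearest-integer normalization $[\cdot]$ used in the statement of \Cref{thm:use_other_paper}.

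First I would record the exact relation between conjugate parts before and after applying $s^{r_N}\circ\iota$. Writing $\lambda^{(N)} := \SN(A^{(N)}_{\floor{p^{r_N}T/(1-p^{-D_N})}}\cdots A^{(N)}_1)$, so that $\Lambda^{(N)}(T) = s^{r_N}\circ\iota(\lambda^{(N)})$, and using that $\iota(\lambda^{(N)})_m = \lambda^{(N)}_m$ for $1\le m\le N$ while $\iota(\lambda^{(N)})_m = \infty$ for $m\le 0$, one obtains for every $i\ge 1$ the (exact) identity
\[
(\Lambda^{(N)}(T))'_i \;=\; (\iota(\lambda^{(N)}))'_i - r_N \;=\; \#\{\ell : \lambda^{(N)}_\ell \ge i\} - r_N \;=\; (\lambda^{(N)})'_i - r_N,
\]
where on the right $(\lambda^{(N)})'$ denotes the ordinary conjugate of the finite partition $\lambda^{(N)}$. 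Thus $(\Lambda^{(N)}(T))'_i$ is precisely the recentered conjugate part of \Cref{thm:use_other_paper}, provided the centering is chosen correctly.

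Next I would set $s_N := \floor{p^{r_N}T/(1-p^{-D_N})}$ and check the hypotheses of \Cref{thm:use_other_paper}\ref{item:corner_limit}: since $(1-p^{-D_N})\in[1-p^{-1},1)$ we get $s_N\to\infty$ (as $r_N\to\infty$) and $N-\log_p s_N = (N-r_N)+O(1)\to\infty$ (as $N-r_N\to\infty$). The key computation is that $(1-p^{-D_N})s_N = p^{r_N}T - \theta_N$ with $\theta_N\in[0,1)$, whence
\[
\log_p\!\big((1-p^{-D_N})s_N\big) \;=\; r_N + \log_p T + o(1)\qquad(N\to\infty),
\]
so $-\log_p((1-p^{-D_N})s_N)$ converges in $\R/\Z$ --- along the whole sequence, since $r_N\in\Z$ --- to $-\log_p T\bmod 1$; choosing the preimage $\zeta := -\log_p T$ makes $\log_p((1-p^{-D_N})s_N)+\zeta = r_N + o(1)$, so its nearest integer equals $r_N$ for all large $N$. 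Applying \Cref{thm:use_other_paper}\ref{item:corner_limit} (with $N_j=j$) then gives
\[
\big((\lambda^{(N)})'_i - r_N\big)_{1\le i\le k} \;\xrightarrow{N\to\infty}\; \cL_{k,\,p^{-1},\,p^{-\zeta}/(p-1)} \;=\; \cL_{k,\,t,\,tT/(1-t)},
\]
using $p^{-\zeta}=T$, $p-1=(1-t)/t$ and $t=1/p$. By \Cref{thm:sea_cL} the right-hand side is the joint law of $(\Pois^{(2\infty)}(T)'_i)_{1\le i\le k}$, which together with the conjugate-part identity above finishes the corner case; the iid additive Haar case is word-for-word identical with $s_N := \floor{p^{r_N}T}$ and \Cref{thm:use_other_paper}\ref{item:haar_limit} replacing \ref{item:corner_limit} (here $\log_p s_N = r_N+\log_p T+o(1)$ directly).

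I do not expect a genuine obstacle: the proposition is a reduction to the already-established one-point limits of \cite{van2023local} as packaged in \Cref{thm:use_other_paper}. The one place to be slightly careful is the interplay of the floor in $s_N$ with the nearest-integer centering $[\cdot]$, but this is harmless precisely because $(1-p^{-D_N})s_N = p^{r_N}T + O(1)$ forces $[\log_p((1-p^{-D_N})s_N)+\zeta] = r_N$ once the logarithmic error is $o(1)$ and $r_N$ is an integer.
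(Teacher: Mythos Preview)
Your proposal is correct and follows essentially the same route as the paper's proof: set $s_N=\floor{p^{r_N}T/(1-p^{-D_N})}$, choose $\zeta=-\log_p T$, verify $[\log_p((1-p^{-D_N})s_N)+\zeta]=r_N$ for large $N$, apply \Cref{thm:use_other_paper}\ref{item:corner_limit}, and identify the limit via \Cref{thm:sea_cL}. You are in fact slightly more careful than the paper in spelling out the conjugate-part identity $(\Lambda^{(N)}(T))'_i=(\lambda^{(N)})'_i-r_N$ and in explicitly verifying the growth hypotheses $s_N\to\infty$ and $N-\log_p s_N\to\infty$.
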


\begin{proof}
We show the case of $\GL_{N+D_N}$ corners first. Let $s_N = \floor*{p^{r_N}T/(1-p^{-D_N})}$. Then $-\log_p (1-p^{-D_N})s_N$ clearly converges in $\R/\Z$, and $\zeta := -\log_p T$ is a preimage in $\R$ of the limit. For this $\zeta$ it is clear that

\begin{equation}
[\log_p (1-p^{-D_N})s_N + \zeta] = r_N
\end{equation}
for all sufficiently large $N$, so
\begin{equation}\label{eq:L_cL_conv}
(\L^{(N)}(T)_i')_{1 \leq i \leq k} = (\SN(A^{(N)}_{\floor*{p^{r_N} T/(1-p^{-D_N})}} \cdots A^{(N)}_1)_i' - r_N)_{1 \leq i \leq k} \to \cL_{k,p^{-1},p^{-\zeta}/(p-1)}
\end{equation}
by \Cref{thm:use_other_paper} \ref{item:corner_limit}. By \Cref{thm:sea_cL}, we have
\begin{equation}
(\Pois^{(2\infty)}(T)_i')_{1 \leq i \leq k} = \cL_{k,t,tT/(1-t)}
\end{equation}
in distribution, and taking $t=p^{-1}$ this matches the right hand side of \eqref{eq:L_cL_conv}, completing the proof. The case of iid Haar matrices is the same (without the $1-t^{D_N}$ factors), since we have also computed the relevant asymptotics for $c_N$ in \Cref{thm:compute_c_N} in this case, and the relevant limit of $\L^{(N)}(T)_i'$ in \Cref{thm:use_other_paper} \ref{item:haar_limit}.
\end{proof}

Let us record the verification of the main nontrivial hypothesis of \Cref{thm:gen_limit} for our examples:

\begin{lemma}\label{thm:corank_bound}
Let $(r_N)_{N \geq 1}$ be any sequence with $r_N \to \infty$, $(D_N)_{N \geq 1}$ be any sequence of positive integers, $A_N$ be the top-left $N \times N$ corner of uniform element of $\GL_{N+D_N}(\F_q)$. Then
\begin{equation}
\Pr(\corank(A_N) > r_N) = o(\E[\bbone(\corank(A_N) \leq r_N) (t^{r_N-\corank(A_N)} - t^{r_N})])
\end{equation}
as $N \to \infty$, where $t=1/q$. The same result holds if instead $A_N$ is uniformly distributed over $\Mat_N(\F_q)$.
\end{lemma}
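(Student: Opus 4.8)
The plan is to analyze the distribution of $X_N := \corank(A_N)$ directly, bounding the two sides of the claimed estimate separately; throughout write $t = 1/q$ and $(q^{-1};q^{-1})_\infty := \prod_{i \geq 1}(1-q^{-i}) \in (0,1)$. First I would establish a uniform tail bound: there is a constant $C = C(q)$ with $\Pr(X_N \geq j) \leq C q^{-j^2}$ for all $N$, all applicable $D_N$, and all $j \geq 0$. In the $\Mat_N(\F_q)$ case this is a union bound over $j$-dimensional subspaces $W \subseteq \F_q^N$ contained in the kernel: for fixed $W$ each row of $A_N$ must lie in $W^\perp$, so $\Pr(A_N|_W = 0) = q^{-jN}$, and there are $\sqbinom{N}{j}_q \leq q^{j(N-j)}/(q^{-1};q^{-1})_\infty$ such $W$, giving $\Pr(X_N \geq j) \leq q^{-j^2}/(q^{-1};q^{-1})_\infty$. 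In the corner case I would write $A_N = PgQ$ with $g \in \GL_{N+D_N}(\F_q)$ uniform, $Q$ the inclusion of the first $N$ coordinates and $P$ the projection onto them; then $A_N|_W = 0$ iff $g$ carries the $j$-dimensional subspace $QW \subseteq \F_q^{N+D_N}$ into $\{0\}^N \oplus \F_q^{D_N}$, which has probability $\sqbinom{D_N}{j}_q/\sqbinom{N+D_N}{j}_q$ since $\GL_{N+D_N}(\F_q)$ is transitive on $j$-dimensional subspaces. The same union bound, using $\sqbinom{N+D_N}{j}_q \geq q^{j(N+D_N-j)}$ together with the previous estimates on $\sqbinom{N}{j}_q$ and $\sqbinom{D_N}{j}_q$, gives $\Pr(X_N \geq j) \leq q^{-j^2}/(q^{-1};q^{-1})_\infty^2$. (Alternatively one may invoke the exact formulas for $\Pr(X_N = j)$ coming from \Cref{thm:corank_pt1} and \Cref{thm:corank_pt2} and bound the $q$-Pochhammer factors.) In particular $\Pr(X_N > r_N) \leq C q^{-(r_N+1)^2}$.

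Next I would prove a matching lower bound on $\Pr(X_N \geq 1)$ uniform in $N$ and $D_N$: for $\Mat_N(\F_q)$, $\Pr(X_N = 0) = \prod_{i=1}^N(1-q^{-i}) \leq 1-q^{-1}$; for the corner, $\Pr(X_N = 0) = 1/\sqbinom{N+D_N}{N}_{q^{-1}}$ and $\sqbinom{N+D_N}{N}_{q^{-1}} = \prod_{i=1}^N \tfrac{1-q^{-(D_N+i)}}{1-q^{-i}} \geq \tfrac{1-q^{-2}}{1-q^{-1}} = 1+q^{-1}$, so in both cases $\Pr(X_N \geq 1) \geq c_0$ for a constant $c_0 = c_0(q) > 0$. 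Since $t^{r_N-X_N} - t^{r_N} \geq (1-t)t^{r_N-1}$ whenever $X_N \geq 1$, for $N$ large enough that $r_N \geq 1$ and $C q^{-(r_N+1)^2} \leq c_0/2$ we get
\begin{equation}
\E\big[\bbone(X_N \leq r_N)(t^{r_N-X_N}-t^{r_N})\big] \geq (1-t)t^{r_N-1}\big(\Pr(X_N \geq 1)-\Pr(X_N > r_N)\big) \geq \frac{c_0(1-t)}{2}\,t^{r_N-1}.
\end{equation}

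Combining the two bounds, the ratio of the left-hand side to the right-hand side in the lemma is at most $\tfrac{2C}{c_0(1-t)}\,t^{(r_N+1)^2 - r_N + 1}$, and since $(r_N+1)^2-(r_N-1) = r_N^2 + r_N + 2 \to \infty$ with $t \in (0,1)$ this tends to $0$ as $N \to \infty$, which is exactly the claim. The only delicate point is the corner case: checking the $A_N = PgQ$ identification and ensuring $C$ and $c_0$ are genuinely uniform in $D_N$, which may stay bounded, grow, or oscillate. The factor-by-factor $q$-Pochhammer estimates above are manifestly uniform and sidestep any appeal to unimodality of Gaussian binomials, so I expect this bookkeeping to be the only mild obstacle; everything else is routine.
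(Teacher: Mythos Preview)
Your proof is correct and follows essentially the same strategy as the paper: bound $\Pr(X_N > r_N)$ by a quantity of order $t^{r_N^2}$ and bound the denominator below by a constant times $t^{r_N}$, then compare. The only cosmetic differences are that the paper obtains the $t^{c^2}$ tail by reading off the exact formula $\Pr(\corank=c)=t^{c^2}\sqbinom{D_N}{c}_t\sqbinom{N}{N-c}_t/\sqbinom{N+D_N}{N}_t$ from \Cref{thm:corank_pt2} and crudely bounding the $q$-Pochhammers, whereas you get it by the union bound over $j$-dimensional kernel subspaces, and the paper lower-bounds the denominator via $\Pr(X_N=1)$ rather than your $\Pr(X_N\geq 1)$; both routes yield the same estimates with uniform constants in $D_N$.
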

\begin{proof}
This expression \eqref{eq:corank_formula_in_proof} decays like $t^{c^2}$, and so it is easy to check that there exists $C$ such that
\begin{equation}
    \Pr(\corank(A_N) \geq k | \corank(A_N) > 0) < C t^{1.0001}
\end{equation}
(one can also get stronger bounds from the manipulations of the previous proof, but these are unnecessary). \Cref{thm:prob_hypothesis} then concludes the proof.


In the case where $A_N$ is uniformly random over $\Mat_N(\F_q)$, $\Pr(\corank(A) = c)$ is given by the $D \to \infty$ limit of the formula on the right hand side of \eqref{eq:corank_formula_in_proof}, by \Cref{thm:corank_pt1}, and the same bounds given above still apply.
\end{proof}

\begin{proof}[Proof of \Cref{thm:haar_corner_intro}]
We must show that for any times $0 < T_1 < T_2 < \ldots < T_\ell$, 
\begin{equation}
(\La^{(N)}(T_1),\ldots,\La^{(N)}(T_\ell)) \to (\Pois^{(2\infty)}(T_1),\ldots,\Pois^{(2\infty)}(T_\ell)).
\end{equation}
Since $\La^{(N)}(T)_0' = N-r_N \to \infty$, the hypothesis of convergence of finite sets of coordinates from \Cref{thm:weak_eq} holds for $\La^{(N)}(T)$ if and only if it holds for the modified version $\tLa^{(N)}(T)$ where all parts $-\infty$ are replaced by $0$.  Since $\tLa^{(N)}(T)$ is in $\bSig_{2\infty}^+$, by \Cref{thm:finite_sets_to_conj_parts} it suffices to show that 
\begin{equation}
(\La^{(N)}(T_i)_j')_{\substack{1 \leq i \leq \ell \\ 1 \leq j \leq d}} \to (\Pois^{(2\infty)}(T_i)_j')_{\substack{1 \leq i \leq \ell \\ 1 \leq j \leq d}}
\end{equation}
in distribution as $N \to \infty$ (where we used that $\La^{(N)}(T)_j' = \tLa^{(N)}(T)_j'$ for $j \geq 1$). Since both sides are Markov processes on $\Sig_d$, it suffices to show for every $\eta \in \Sig_d$ that
\begin{multline}\label{eq:uniform_bootstrap}
\Law((\La^{(N)}(T_i)_j')_{\substack{2 \leq i \leq \ell \\ 1 \leq j \leq d}} | (\La^{(N)}(T_1)_j')_{1 \leq j \leq d} = \eta) \\ 
\to \Law((\Pois^{(2\infty)}(T_i)_j')_{\substack{2 \leq i \leq \ell \\ 1 \leq j \leq d}} | (\Pois^{(2\infty)}(T_1)_j')_{1 \leq j \leq d} = \eta)
\end{multline}
and
\begin{equation}\label{eq:uniform_1pt_inproof}
\Pr((\La^{(N)}(T_1)_j')_{1 \leq j \leq d} = \eta) \to \Pr((\Pois^{(2\infty)}(T_1)_j')_{1 \leq j \leq d} = \eta)
\end{equation}
as $N \to \infty$. The one-point equality \eqref{eq:uniform_1pt_inproof} is exactly the limit of \Cref{thm:matrix_1pt}, so we turn to \eqref{eq:uniform_bootstrap}. Let $X_N = \corank(A^{(N)}_i \pmod{p})$ and
\begin{equation}
c_N = \frac{t^{-r_N}}{\E[\bbone(X_N \leq r_N)(t^{-X_N}-1)]}
\end{equation}
as usual. We define
\begin{equation}
\hLa^{(N)}(T) := s^{r_N} \circ \iota(\SN(A^{(N)}_{\floor*{c_N T}} \cdots A^{(N)}_1)),T \in \R_{\geq 0}
\end{equation}
which is just $\La^{(N)}$ with a slightly different time-change which corresponds to the one in \Cref{thm:gen_limit}. Then we claim that
\begin{multline}\label{eq:La_to_hLa}
| \Pr((\La^{(N)}(T_{i+1})_j')_{1 \leq j \leq d} = \eta^{(i+1)}| (\La^{(N)}(T_{i})_j')_{1 \leq j \leq d} = \eta^{(i)}) \\ 
- \Pr((\hLa^{(N)}(T_{i+1})_j')_{1 \leq j \leq d} = \eta^{(i+1)}| (\hLa^{(N)}(T_{i})_j')_{1 \leq j \leq d} = \eta^{(i)})| \to 0
\end{multline}
as $N \to \infty$, for any $\eta^{(i)},\eta^{(i+1)} \in \Sig_d$. This follows because (a)
\begin{equation}
c_N = \frac{t^{-r_N}}{1-t^{D_N}}(1+o(1))
\end{equation}
by \Cref{thm:compute_c_N}, and (b) the transition rates of $(\La^{(N)}(T_{i+1})_j')_{1 \leq j \leq d}$ and $(\hLa^{(N)}(T_{i+1})_j')_{1 \leq j \leq d}$ out of any state reachable from $\eta^{(i)}$ are bounded above by $\text{const} \cdot t^{(\eta^{(i)})'_d}$ by \Cref{thm:kappa=nu_case}, so the multiplicative $1+o(1)$ factor does not matter. From \eqref{eq:La_to_hLa} it follows that to show \eqref{eq:uniform_bootstrap}, it suffices to show the same limit with $\hLa$ in place of $\La$, namely
\begin{multline}\label{eq:uniform_bootstrap_hLa}
\Law((\hLa^{(N)}(T_i)_j')_{\substack{2 \leq i \leq \ell \\ 1 \leq j \leq d}} | (\hLa^{(N)}(T_1)_j')_{1 \leq j \leq d} = \eta) \\ 
\to \Law((\Pois^{(2\infty)}(T_i)_j')_{\substack{2 \leq i \leq \ell \\ 1 \leq j \leq d}} | (\Pois^{(2\infty)}(T_1)_j')_{1 \leq j \leq d} = \eta).
\end{multline}
This now follows directly\footnote{This is where we need \Cref{thm:corank_bound}.} from \Cref{thm:gen_limit_F_d}, as our $\hLa^{(N)}(T-T_1)$ corresponds to $\La^{(N)}(T)$ in that result with initial condition specified by $\eta$. Hence we have shown \eqref{eq:uniform_bootstrap} and \eqref{eq:uniform_1pt_inproof}, which completes the proof.
\end{proof}

\begin{proof}[Proof of \Cref{thm:uniform_matrix_intro}]
Same as the proof of \Cref{thm:haar_corner_intro} after removing the $1-t^{D_N}$ factors everywhere above, since each specific lemma we use was also proven for the additive Haar case.
\end{proof}

\begin{appendix}
\section*{Original proof of Lemma~{\ref{thm:kernel_size}}}\label{sec:appendix}

As mentioned in the previous section, our original proof of \Cref{thm:kernel_size} in the arxiv version was by $q$-series identities, but the referee suggested a more transparent proof, which is the one given above. Here we record the original.

\begin{proof}
    
Set $t=q^{-1}$ throughout. We will show \eqref{eq:kernel_corner} first.

Let $\tA$ be a uniform element of $\GL_{N+D}(\F_q)$. Then the matrix formed by the top $N$ rows of $\tA$ is uniformly distributed among full-rank $N \times (N+D)$ matrices, so \Cref{thm:corank_pt2} yields
\begin{equation}\label{eq:corank_formula_in_proof}
\Pr(\corank(A) = c) = t^{c^2} \frac{\sqbinom{D}{c}_t \sqbinom{N}{N-c}_t}{\sqbinom{N+D}{N}_t}
\end{equation}
(with $d,n,k,r$ in the notation of \Cref{thm:corank_pt2} corresponding to $D,N,N,N-c$ above). 

Since $t^{\corank(A)} = \#\ker(A)$ and 
\begin{equation}
\sqbinom{N+D}{N}_t \frac{1-t^D + 1-t^N}{1-t^{N+D}} = \sqbinom{N+D-1}{D} + \sqbinom{N+D-1}{D-1},
\end{equation}
to show the desired
\begin{equation}
    \sum_{c=0}^N t^c \Pr(\corank(A)=c) = \frac{1-t^D+1-t^N}{1-t^{N+D}}
\end{equation}
it suffices to show
\begin{equation}\label{eq:corner_qseries_wts}
\sum_{c=0}^N t^{c^2-c} \sqbinom{D}{c}_t \sqbinom{N}{N-c}_t = \sqbinom{N+D-1}{D} + \sqbinom{N+D-1}{D-1}.
\end{equation}
The $q$-Chu-Vandermonde identity (see e.g. \cite[II.6]{gasper_rahman_2004}) yields
\begin{equation}
\sum_{j=0}^m q^{(m-j)(k-j)} \sqbinom{m}{j}_t \sqbinom{n}{k-j}_t = \sqbinom{m+n}{k}.
\end{equation}
Applying this twice with $(k,m,n) = (D-1,D,N-1)$ and $(k,m,n) = (D,D,N-1)$
\begin{align}
\begin{split}
\text{RHS\eqref{eq:corner_qseries_wts}} &= \sum_{j=0}^{D-1} t^{(D-j)(D-1-j)} \sqbinom{D}{j}_t \sqbinom{N-1}{D-1-j}_t + \sum_{j=0}^{D} t^{(D-j)(D-j)} \sqbinom{D}{j}_t \sqbinom{N-1}{D-j}_t \\ 
&= \sum_{j=0}^D t^{(D-j)(D-j-1)}\sqbinom{D}{j}_t \left(\sqbinom{N-1}{D-1-j}_t + t^{D-j}\sqbinom{N-1}{D-j}_t\right) \\ 
&= \sum_{j=0}^D t^{(D-j)(D-j-1)} \sqbinom{D}{j}_t \sqbinom{N}{D-j}_t
\end{split}
\end{align}
by the $q$-Pascal identity, and this is equal to the left hand side of \eqref{eq:corner_qseries_wts} after relabeling $c=D-j$. This shows \eqref{eq:kernel_corner}.

For $A$ as in \eqref{eq:kernel_haar}, $\Pr(\corank(A) = c)$ is given by the $D \to \infty$ limit of the formula on the right hand side of \eqref{eq:corank_formula_in_proof}, by \Cref{thm:corank_pt1}. The proof then similarly reduces to showing the $D \to \infty$ limit of the equation \eqref{eq:corner_qseries_wts} which we have just established, so we are done.
\end{proof}
\end{appendix}

\begin{acks}[Acknowledgments]
I am very grateful to Alexei Borodin for helpful conversations and advice throughout this project, and feedback on the exposition. I wish also to thank Ivan Corwin for exchanges regarding the Gibbs property and line ensemble perspective, Vadim Gorin for helpful comments on the text, and Jimmy He and Matthew Nicoletti for useful discussions on Markov processes and particle systems. Finally, I thank the referees for helpful corrections and comments, in particular for finding a mis-statement in \Cref{thm:universality_intro} and finding a simpler proof of \Cref{thm:kernel_size}.
\end{acks}
\begin{funding}
This paper is based on PhD thesis work \cite{van2023asymptotics} which was supported by an NSF Graduate Research Fellowship under grant no. 1745302, and the paper itself was written (and Theorems \ref{thm:uniform_matrix_intro} and \ref{thm:haar_corner_intro} added) while supported by the European Research Council (ERC), Grant Agreement No. 101002013.
\end{funding}



\bibliographystyle{imsart-number.bst} 
\bibliography{references.bib}       

@article{borodin2018product,
  title={Product matrix processes as limits of random plane partitions},
  author={Borodin, Alexei and Gorin, Vadim and Strahov, Eugene},
  journal={International Mathematics Research Notices},
  year={2018}
}

@incollection{cohen-lenstra,
  title={Heuristics on class groups of number fields},
  author={Cohen, Henri and Lenstra, Hendrik W},
  booktitle={Number Theory Noordwijkerhout 1983},
  pages={33--62},
  year={1984},
  publisher={Springer}
}

@article{friedman-washington,
  title={On the distribution of divisor class groups of curves over a finite field},
  author={Friedman, Eduardo and Washington, Lawrence C},
  journal={Th{\'e}orie des Nombres/Number Theory Laval},
  year={1987}
}

@article{wood2015random,
  title={Random integral matrices and the {Cohen-L}enstra heuristics},
  author={Wood, Melanie Matchett},
  journal={American Journal of Mathematics},
  volume={141},
  number={2},
  pages={383--398},
  year={2019},
  publisher={Johns Hopkins University Press}
}

@article{wood2017distribution,
  title={The distribution of sandpile groups of random graphs},
  author={Wood, Melanie Matchett},
  journal={Journal of the American Mathematical Society},
  volume={30},
  number={4},
  pages={915--958},
  year={2017}
}

@article{wood2023probability,
  title={Probability theory for random groups arising in number theory},
  author={Wood, Melanie Matchett},
  journal={arXiv preprint arXiv:2301.09687},
  year={2023}
}

@article{fulton2000eigenvalues,
  title={Eigenvalues, invariant factors, highest weights, and {S}chubert calculus},
  author={Fulton, William},
  journal={Bulletin of the American Mathematical Society},
  volume={37},
  number={3},
  pages={209--249},
  year={2000}
}

@article{evans2002elementary,
  title={Elementary divisors and determinants of random matrices over a local field},
  author={Evans, Steven N},
  journal={Stochastic processes and their applications},
  volume={102},
  number={1},
  pages={89--102},
  year={2002},
  publisher={Elsevier}
}

@article{gorin2020universal,
  TITLE = {{Universal objects of the infinite beta random matrix theory}},
  AUTHOR = {Gorin, Vadim and Kleptsyn, Victor},
  URL = {https://hal.science/hal-03100749},
  JOURNAL = {{Journal of the European Mathematical Society}},
  PUBLISHER = {{European Mathematical Society}},
  YEAR = {2022},
}

@article{liu2018lyapunov,
  title={Lyapunov exponent, universality and phase transition for products of random matrices},
  author={Liu, Dang-Zheng and Wang, Dong and Wang, Yanhui},
  journal={Communications in Mathematical Physics},
  volume={399},
  number={3},
  pages={1811--1855},
  year={2023},
  publisher={Springer}
}

@article{akemann2019integrable,
  title={From integrable to chaotic systems: {U}niversal local statistics of {L}yapunov exponents},
  author={Akemann, Gernot and Burda, Zdzislaw and Kieburg, Mario},
  journal={EPL (Europhysics Letters)},
  volume={126},
  number={4},
  pages={40001},
  year={2019},
  publisher={IOP Publishing}
}

@article{forrester2016singular,
  title={Singular values for products of complex {G}inibre matrices with a source: hard edge limit and phase transition},
  author={Forrester, Peter J and Liu, Dang-Zheng},
  journal={Communications in Mathematical Physics},
  volume={344},
  number={1},
  pages={333--368},
  year={2016},
  publisher={Springer}
}

@article{gorin2018gaussian,
  title={Gaussian fluctuations for products of random matrices},
  author={Gorin, Vadim and Sun, Yi},
  journal={American Journal of Mathematics},
  volume={144},
  number={2},
  pages={287--393},
  year={2022},
  publisher={Johns Hopkins University Press}
}

@book{crisanti2012products,
  title={Products of random matrices: in Statistical Physics},
  author={Crisanti, Andrea and Paladin, Giovanni and Vulpiani, Angelo},
  volume={104},
  year={2012},
  publisher={Springer Science \& Business Media}
}

@article{bellman1954limit,
  title={Limit theorems for non-commutative operations. {I}.},
  author={Bellman, Richard},
  journal={Duke Mathematical Journal},
  volume={21},
  number={3},
  pages={491--500},
  year={1954},
  publisher={Duke University Press}
}

@article{furstenberg1960products,
  title={Products of random matrices},
  author={Furstenberg, Harry and Kesten, Harry},
  journal={The Annals of Mathematical Statistics},
  volume={31},
  number={2},
  pages={457--469},
  year={1960},
  publisher={JSTOR}
}

@article{wood2018cohen,
  title={Cohen-{L}enstra heuristics and local conditions},
  author={Wood, Melanie Matchett},
  journal={Research in Number Theory},
  volume={4},
  number={4},
  pages={41},
  year={2018},
  publisher={Springer}
}

@article{gol1989lyapunov,
  title={Lyapunov indices of a product of random matrices},
  author={Gol'dsheid, I Ya and Margulis, Grigorii Aleksandrovich},
  journal={RuMaS},
  volume={44},
  number={5},
  pages={11--71},
  year={1989}
}

@article{borodin2013markov,
  title={Markov processes of infinitely many nonintersecting random walks},
  author={Borodin, Alexei and Gorin, Vadim},
  journal={Probability Theory and Related Fields},
  volume={155},
  number={3-4},
  pages={935--997},
  year={2013},
  publisher={Springer}
}

@article{van2020limits,
  title={Limits and fluctuations of $p$-adic random matrix products},
  author={Van Peski, Roger},
  journal={Selecta Mathematica},
  volume={27},
  number={5},
  pages={1--71},
  year={2021},
  publisher={Springer}
}

@book{bil1968convergence,
    AUTHOR = {Billingsley, Patrick},
     TITLE = {Convergence of probability measures},
 PUBLISHER = {John Wiley \& Sons, Inc., New York-London-Sydney},
      YEAR = {1968}
}

@article{van2022q,
  title={q-{TASEP} with position-dependent slowing},
  author={Van Peski, Roger},
  journal={Electronic Journal of Probability},
  volume={27},
  pages={1--35},
  year={2022},
  publisher={The Institute of Mathematical Statistics and the Bernoulli Society}
}

@article{nguyen2022universality,
  title={Universality for cokernels of random matrix products},
  author={Nguyen, Hoi H and Van Peski, Roger},
  journal={arXiv preprint arXiv:2209.14957},
  year={2022}
}

@article{clancy2015cohen,
  title={On a {C}ohen--{L}enstra heuristic for {J}acobians of random graphs},
  author={Clancy, Julien and Kaplan, Nathan and Leake, Timothy and Payne, Sam and Wood, Melanie Matchett},
  journal={Journal of Algebraic Combinatorics},
  volume={42},
  number={3},
  pages={701--723},
  year={2015},
  publisher={Springer}
}

@article{nguyen2022local,
  title={Local and global universality of random matrix cokernels},
  author={Nguyen, Hoi H and Wood, Melanie Matchett},
  journal={arXiv preprint arXiv:2210.08526},
  year={2022}
}

@article{ahn2022extremal,
  title={Extremal singular values of random matrix products and {B}rownian motion on $\mathrm{GL}_N(\mathbb{C})$},
  author={Ahn, Andrew},
  journal={arXiv preprint arXiv:2201.11809},
  year={2022}
}

@article{corwin2014brownian,
  title={Brownian {G}ibbs property for {A}iry line ensembles},
  author={Corwin, Ivan and Hammond, Alan},
  journal={Inventiones mathematicae},
  volume={195},
  pages={441--508},
  year={2014},
  publisher={Springer}
}

@article{dyson1962brownian,
  title={A {B}rownian-motion model for the eigenvalues of a random matrix},
  author={Dyson, Freeman J},
  journal={Journal of Mathematical Physics},
  volume={3},
  number={6},
  pages={1191--1198},
  year={1962},
  publisher={American Institute of Physics}
}

@article{meszaros2020distribution,
  title={The distribution of sandpile groups of random regular graphs},
  author={M{\'e}sz{\'a}ros, Andr{\'a}s},
  journal={Transactions of the American Mathematical Society},
  volume={373},
  number={9},
  pages={6529--6594},
  year={2020}
}

@article{cheong2022generalizations,
  title="{Generalizations of results of Friedman and Washington on cokernels of random p-adic matrices}",
  author={Cheong, Gilyoung and Kaplan, Nathan},
  journal={Journal of Algebra},
  volume={604},
  pages={636--663},
  year={2022},
  publisher={Elsevier}
}

@article{cheong2021cohen,
  title={Cohen--Lenstra distributions via random matrices over complete discrete valuation rings with finite residue fields},
  author={Cheong, Gilyoung and Huang, Yifeng},
  journal={Illinois Journal of Mathematics},
  volume={65},
  number={2},
  pages={385--415},
  year={2021}
}

@article{cheong2023cokernel,
  title={The cokernel of a polynomial of a random integral matrix},
  author={Cheong, Gilyoung and Yu, Myungjun},
  journal={arXiv preprint arXiv:2303.09125},
  year={2023}
}

@article{baryshnikov2001gues,
  title={{GUE}s and queues},
  author={Baryshnikov, Yu},
  journal={Probability Theory and Related Fields},
  volume={119},
  pages={256--274},
  year={2001},
  publisher={Springer}
}

@article{johansson2002non,
  title={Non-intersecting paths, random tilings and random matrices},
  author={Johansson, Kurt},
  journal={Probability theory and related fields},
  volume={123},
  number={2},
  pages={225--280},
  year={2002},
  publisher={Springer}
}

@article{van2023p,
  title={What is a $ p $-adic {Dyson B}rownian motion?},
  author={Van Peski, Roger},
  journal={arXiv preprint arXiv:2309.02865},
  year={2023}
}

@book{gasper_rahman_2004, 
place={Cambridge}, edition={2}, series={Encyclopedia of Mathematics and its Applications}, title={Basic Hypergeometric Series}, DOI={10.1017/CBO9780511526251}, publisher={Cambridge University Press}, author={Gasper, George and Rahman, Mizan}, year={2004}, collection={Encyclopedia of Mathematics and its Applications}}

@phdthesis{van2023asymptotics,
  title={Asymptotics, exact results, and analogies in $p$-adic random matrix theory},
  author={Van Peski, Roger},
  year={2023},
  school={Massachusetts Institute of Technology}
}

@article{van2023local,
  title={Local limits in $ p $-adic random matrix theory},
  author={Van Peski, Roger},
  journal={arXiv preprint arXiv:2310.12275},
  year={2023}
}

@article{kahle2020cohen,
  title={{Cohen--Lenstra heuristics for torsion in homology of random complexes}},
  author={Kahle, Matthew and Lutz, Frank H and Newman, Andrew and Parsons, Kyle},
  journal={Experimental Mathematics},
  volume={29},
  number={3},
  pages={347--359},
  year={2020},
  publisher={Taylor \& Francis}
}

@article{kahle2014topology,
  title={Topology of random simplicial complexes: a survey},
  author={Kahle, Matthew},
  journal={AMS Contemp. Math},
  volume={620},
  pages={201--222},
  year={2014}
}

@article{cohn1996local,
  title={Local statistics for random domino tilings of the {A}ztec diamond},
  author={Cohn, Henry and Elkies, Noam and Propp, James},
  journal={Duke Math. J.},
  volume={85},
  number={1},
  pages={117--166},
  year={1996}
}

@article {sheffield2005random,
    AUTHOR = {Sheffield, Scott},
     TITLE = {Random surfaces},
   JOURNAL = {Ast\'{e}risque},
  FJOURNAL = {Ast\'{e}risque},
    NUMBER = {304},
      YEAR = {2005},
     PAGES = {vi+175},
      ISSN = {0303-1179,2492-5926},
   MRCLASS = {82B41 (60D05 82-02 82B24)},
  MRNUMBER = {2251117},
MRREVIEWER = {Richard\ Kenyon},
}

@article{ahn2023airy,
  title={Airy point process via supersymmetric lifts},
  author={Ahn, Andrew},
  journal={Probability and Mathematical Physics},
  volume={3},
  number={4},
  pages={869--938},
  year={2023},
  publisher={Mathematical Sciences Publishers}
}

@article{johansson2006eigenvalues,
author = {Johansson, Kurt and Nordenstam, Eric},
journal = {Electronic Journal of Probability [electronic only]},
language = {eng},
pages = {1342-1371},
publisher = {University of Washington, Department of Mathematics, Seattle, WA; Duke University, Department of Mathematics, Durham},
title = {{Eigenvalues of GUE minors}},
url = {http://eudml.org/doc/128205},
volume = {11},
year = {2006},
}

@article{huang2022eigenvalues,
author = {Jiaoyang Huang},
title = {{Eigenvalues for the minors of Wigner matrices}},
volume = {58},
journal = {Annales de l'Institut Henri Poincaré, Probabilités et Statistiques},
number = {4},
publisher = {Institut Henri Poincaré},
pages = {2201 -- 2215},
keywords = {Bead process, Eigenvalues, Random matrix, Wigner corner process},
year = {2022},
doi = {10.1214/21-AIHP1226},
URL = {https://doi.org/10.1214/21-AIHP1226}
}

@article{katori2010non,
  title={{Non-equilibrium dynamics of Dyson’s model with an infinite number of particles}},
  author={Katori, Makoto and Tanemura, Hideki},
  journal={Communications in Mathematical Physics},
  volume={293},
  number={2},
  pages={469--497},
  year={2010},
  publisher={Springer}
}

@article{tsai2016infinite,
  title={{Infinite dimensional stochastic differential equations for Dyson’s model}},
  author={Tsai, Li-Cheng},
  journal={Probability Theory and Related Fields},
  volume={166},
  pages={801--850},
  year={2016},
  publisher={Springer}
}

@article{spohn1987interacting,
  title={{Interacting Brownian particles: a study of Dyson’s model}},
  author={Spohn, Herbert},
  journal={Hydrodynamic behavior and interacting particle systems},
  volume={9},
  pages={151--179},
  year={1987},
  publisher={Springer}
}

@article{osada2012infinite,
  title={Infinite-dimensional stochastic differential equations related to random matrices},
  author={Osada, Hirofumi},
  journal={Probability Theory and Related Fields},
  volume={153},
  pages={471--509},
  year={2012},
  publisher={Springer}
}

@article{osada2013interacting,
author = {Hirofumi Osada},
title = {{Interacting Brownian motions in infinite dimensions with logarithmic interaction potentials}},
volume = {41},
journal = {The Annals of Probability},
number = {1},
publisher = {Institute of Mathematical Statistics},
pages = {1 -- 49},
keywords = {Coulomb potentials, Diffusions, Dirichlet forms, Dyson’s model, Ginibre random point field, infinitely many particle systems, Interacting Brownian particles, logarithmic potentials, random matrices},
year = {2013},
doi = {10.1214/11-AOP736},
URL = {https://doi.org/10.1214/11-AOP736}
}

@article{osada2020infinite,
  title={Infinite-dimensional stochastic differential equations and tail $\sigma$-fields},
  author={Osada, Hirofumi and Tanemura, Hideki},
  journal={Probability Theory and Related Fields},
  volume={177},
  number={3-4},
  pages={1137--1242},
  year={2020},
  publisher={Springer}
}

@article{dauvergne2023uniform,
author = {Duncan Dauvergne and Mihai Nica and B{\'a}lint Vir{\'a}g},
title = {{Uniform convergence to the Airy line ensemble}},
volume = {59},
journal = {Annales de l'Institut Henri Poincaré, Probabilités et Statistiques},
number = {4},
publisher = {Institut Henri Poincaré},
pages = {2220 -- 2256},
keywords = {integrable probability, KPZ universality, Last passage percolation, Nonintersecting processes},
year = {2023},
doi = {10.1214/22-AIHP1314},
URL = {https://doi.org/10.1214/22-AIHP1314}
}

@article{wu2023bessel,
  title={The Bessel line ensemble},
  author={Wu, Xuan},
  journal={Electronic Journal of Probability},
  volume={28},
  pages={1--50},
  year={2023},
  publisher={The Institute of Mathematical Statistics and the Bernoulli Society}
}

@article{corwin2016kpz,
  title={{KPZ line ensemble}},
  author={Corwin, Ivan and Hammond, Alan},
  journal={Probability Theory and Related Fields},
  volume={166},
  pages={67--185},
  year={2016},
  publisher={Springer}
}

@inproceedings{ferrari2005one,
  title={One-dimensional stochastic growth and Gaussian ensembles of random matrices},
  author={Ferrari, Patrik L and Pr{\"a}hofer, Michael},
  booktitle={"Inhomogeneous Random Systems 2005", Markov Processes Relat. Fields},
  volume={12},
  pages={203-234},
  year={2005}
}

@book{kipnis1998scaling,
  title={Scaling limits of interacting particle systems},
  author={Kipnis, Claude and Landim, Claudio},
  volume={320},
  year={1998},
  publisher={Springer Science \& Business Media}
}

@article{borodin2008large,
author = {Borodin, Alexei and Ferrari, Patrik L.},
journal = {Electronic Journal of Probability [electronic only]},
keywords = {stochastic growth; KPZ; determinantal processes; Airy processes},
language = {eng},
pages = {1380-1418},
publisher = {University of Washington, Department of Mathematics, Seattle, WA; Duke University, Department of Mathematics, Durham},
title = {{Large time asymptotics of growth models on space-like paths. I: Push ASEP.}},
url = {http://eudml.org/doc/231248},
volume = {13},
year = {2008},
}


\end{document}